\newtheorem{theorem}{Theorem}[section]
\newtheorem{thm}[theorem]{Theorem}
\newtheorem{fact}[theorem]{Fact}
\newtheorem{pro}[theorem]{Proposition}
\newtheorem{lem}[theorem]{Lemma}
\newtheorem{exe}[theorem]{Example}
\newtheorem{rem}[theorem]{Remark}
\theoremstyle{definition}
\newtheorem{Def}[theorem]{Definition}
\newtheoremstyle{named}{}{}{\itshape}{}{\bfseries}{.}{.5em}{\thmnote{#3}#1}
\theoremstyle{named}
\numberwithin{equation}{subsection}
\newcommand{\bbC}{\mathbb{C}}
\newcommand{\bbP}{\mathbb{P}}
\newcommand{\bbR}{\mathbb{R}}
\newcommand{\bbN}{\mathbb{N}}
\newcommand{\cC}{\mathcal{C}}
\newcommand{\cD}{\mathcal{D}}
\newcommand{\cE}{\mathcal{E}}
\newcommand{\cU}{\mathcal{U}}
\newcommand{\sH}{\mathscr{H}}
\newcommand{\gO}{\Omega}
\newcommand{\ga}{\alpha}
\newcommand{\gb}{\beta}
\newcommand{\go}{\omega}
\newcommand{\gl}{\lambda}
\newcommand{\invs}{^{-1}}
\newcommand{\ol}{\overline}
\newcommand{\ti}[1]{\tilde{#1}}
\newcommand{\wt}{\widetilde}
\newcommand{\vol}{\operatorname{vol}}
\newcommand{\Dom}{\mathrm{Dom}}
\newcommand{\End}{\mathrm{End}}
\newcommand{\supp}{\mathrm{supp}}
\newcommand{\nb}{\nabla}
\renewcommand{\dbar}{\bar{\partial}}
\newcommand{\dr}{\partial}
\newcommand{\kla}{\square}
\renewcommand{\(}{\left(}
\renewcommand{\)}{\right)}
\renewcommand{\[}{\left[}
\renewcommand{\]}{\right]}
\newcommand{\<}{\left\langle}
\renewcommand{\>}{\right\rangle}
\title[Toeplitz Quantization]{Semi-Classical Asymptotic Expansions for Toeplitz Quantizations on Complex Manifolds and Orbifolds}
\begin{document}

\author[Yi-Hsin Tsai]{Yi-Hsin Tsai}
\address{Department of Mathematics, National Taiwan University}
\thanks{Master's thesis at National Taiwan University, under the supervision of Prof. Chin-Yu Hsiao.}
\email{r12221003@ntu.edu.tw}

\begin{abstract}
In this thesis, we introduce complex manifolds with local spectral gaps and study their asymptotic behavior using the scaling method. With these asymptotics, we obtain an asymptotic expansion for the Bergman kernel of a Hermitian holomorphic orbifold line bundle satisfying the local spectral gap condition. Furthermore, we establish the full asymptotic expansion of both the Bergman kernel and the Toeplitz operator, using the observations of the scaled Bergman kernel and the stationary phase formula. In addition, we establish the deformation quantization for Toeplitz operators with pseudodifferential operators. 
\end{abstract}

\date{\today}
\maketitle

\tableofcontents
\vspace{-10pt}
\textbf{Keywords:} Complex manifolds, Deformation quantization

\textbf{Mathematics Subject Classification:} 32Qxx, 32A25, 53D55 
\tableofcontents

\section{Introduction}
\label{Introduction}

The Bergman kernel is a fundamental object in complex analysis and differential geometry, with deep connections to the geometry of Kähler manifolds and orbifolds. Introduced by Stefan Bergman in 1922  \cite{Bergmann1922}, the Bergman kernel has since been extensively studied and applied in various fields, including mathematical physics, several complex variables, and algebraic geometry.

Let $(L,h^L)\rightarrow M$ be a Hermitian line bundle over a complex manifold $M$. We denote $(L^k, h^{L^k})$ the $k$-th tensor of this line bundle and the Bergman projection $B_k$ is the orthogonal projection from the space of square-integrable sections $L^2(M, L^k)$ to the space of holomorphic square-integrable sections $H^0(M, L^k)$. Let $B_{k}(z,w)$ be the distribution kernel of $B_k$, known as the Bergman kernel. The study of large $k$ behavior of the Bergman kernel plays an important role in complex geometry and mathematical physics. 

For the asymptotic expansion of the Bergman kernel $B_k(z,w)$ on complex manifolds, Tian, via Hörmander's $L^2$-estimate  \cite{Tian1990} and Bouche via the Heat kernel \cite{Bouche1990}, obtained the first term of the asymptotic expansion of the Bergman kernel in 1990. Bouche \cite{Bouche96} and Berman  \cite{berman2002} gave another proof using the scaling method. In 1998 and 1999, Zelditch  \cite{Zelditch1998}and Catlin \cite{Catlin1999} independently obtained a full asymptotic expansion of Bergman kernels on the diagonal by using Boutet de Monvel-Sjöstrand theorem for Szegő kernel on strictly pseudoconvex Cauchy-Riemann Manifold\cite{boutet1975singularite}. Then, in 2006, Dai, Liu, and Ma \cite{Dai-Liu-Ma2006} and Ma and Marinescu \cite{Ma-Marinescu2006} obtained the full asymptotic expansion for generalized Bergman kernels of the spin$^c$-Dirac operator using the Heat kernel method and the localization technique of Bismut-Lebeau. There are other proofs by Berman, Berndtsson, and Sjöstrand in 2008 \cite{Berman-Berndtosson-Sjöstrand2008} by the approximate Bergman kernel and  Hezari, Kelleher, Seto, and Xu in 2014 \cite{Hezari-Kelleher-Seto-Xu2014asymptotic} \cite{Seto2015} by the approximate Bergmann-Fock kernel. In this thesis, we establish the leading behavior and full asymptotic expansion of the Bergman kernel in several new settings by using the scaling method and the stationary phase formula (see Theorems \ref{MFD-Main-Thm}, \ref{spectral kernel theorem}, \ref{Bergman Full Expansion Theorem}).

Once we have the full asymptotic expansion for the Bergman kernel, one of the next goals is to compute its coefficients. The coefficients of the full expansion are related to several geometric problems \cite{donaldson2001, Fine2010}. However, computing the coefficients is a cumbersome process; we just have some results. In the polarized case, Lu \cite{lu1999lowerordertermsasymptotic} computed the first four coefficients by using the peak section method, while Ma and Marinescu \cite{Ma2011} calculated the first three coefficients for Toeplitz operators by using the heat kernel method. Hsiao \cite{hsiao2011coefficientsasymptoticexpansionkernel} computed the first three coefficients for both the Toeplitz operators and the Bergman kernel by using the complex stationary phase formula. Additionally, Xu \cite{Xu2012} connected these coefficients with graph theory. In this thesis, we introduce a different, relatively direct, and elementary method to compute the coefficients (see Subsection~\ref{bergman full expansion}).

 The scaling method is a widely used approach in the study of partial differential equations (PDEs), particularly for equations like the heat equation and the wave equation. However, its application to the Bergman kernel remains relatively unexplored, with notable exceptions in the work of Bouche, Berman, Berndtsson, and Sjöstrand \cite{Bouche96,berman2002, Berman-Berndtosson-Sjöstrand2008}, who focused on the behavior of the kernel along the diagonal. In this thesis, we develop a scaling method to analyze the entire Bergman kernel. More precisely, we introduce the scaling kernel $B_{(k)}(z,w) = k^{-n} B_k\(\frac{z}{\sqrt{k}}, \frac{w}{\sqrt{k}}\)$ which also captures the behavior of the off-diagonal part  (see Section~\ref{Standard Results}).

This thesis will also study the asymptotic expansion of the Bergman kernels and Toeplitz operators on complex orbifolds  (see Section~\ref{Orbifold Case}). Orbifolds generalize the concept of manifolds by allowing for certain types of singularities, making them a rich subject for complex geometric methods. The study of orbifolds dates back to the 1956 with the pioneering work of Satake \cite{Satake56} \cite{Satake57}, who introduced the notion of V-manifolds, which called orbifold recently, and further developed by Thurston \cite{Thurston79} \cite{Thurston82} \cite{Thurston97} in the context of geometric structures on 3-manifolds.

The theory of pseudodifferential operators($\Psi$DO) was introduced by Kohn and Nirenberg \cite{kohn1965}, Hörmander \cite{hormander1965}, and several other mathematicians in the mid-1960s. Pseudodifferential operators have significant applications, such as in the Atiyah–Singer index theorem \cite{atiyah1968}. Toeplitz operators are closely related to deformation quantization \cite{schlichenmaier2010}. In particular, we can replace the multiplication operator in a Toeplitz operator with a pseudodifferential operator. In this thesis, we derive the full asymptotic expansion of Toeplitz operators with pseudodifferential symbols and develop a method to compute their expansion coefficients. Additionally, we study the commutator of such Toeplitz operators and establish its connection to deformation quantization  (see Subsections~\ref{Toeplitz operator} and \ref{Toeplitz operator with PDO}). Quantization of the Toeplitz operators with pseudodifferential operators on compact complex manifolds is a fundamental problem in complex geometry and deformation quantization. In this thesis,  we answer this fundamental problem.

We also have the following related results: Ross and Thomas gave the weighted Bergman kernel and weighted projective embedding with positive line bundles in 2011 \cite{RossThomas2011} \cite{RossThomas2011Bergman}. In 2014, Hsiao and Marinescu provided local asymptotic results under certain spectral gap conditions \cite{Hsiao-Marinescu2014}. The same year, Hsiao also gave an analytic proof of the Kodaira Embedding Theorem using the asymptotic behavior of the Bergman kernel \cite{hsiao2014bergman}. In 2018, Puchol proved the Holomorphic Morse inequalities for orbifolds using the heat kernel \cite{Puchol2018}. In this thesis, we modify the method by Hsiao in 2014 to get an analytic proof of Kodaira-Baily Embedding Theorem  (see Theorem~\ref{Kodaira-Baily Theorem}).

The outline of the thesis is the following. Section~\ref{Preliminaries} sets the stage by introducing necessary preliminaries, including the definitions and standard results concerning complex manifolds, differential operators, and the Bergman kernel. In Section~\ref{Standard Results}, we explore the leading term of the Bergman kernel in different settings: the Euclidean case, the compact manifold case, and manifolds without spectral gaps. Section~\ref{Orbifold Case} is dedicated to orbifold cases, where we prove the leading term of the Bergman kernel on an orbifold and apply this result to provide a pure analytic proof of the Kodaira-Baily Embedding Theorem and study the asymptotic behavior of Toeplitz operators. In Section~\ref{Full Expansion}, we explain the method to get the full expansion and how to compute the coefficient of the Bergman kernel. Also, we apply our method to the full expansion of the Toeplitz operator and generalize the Toeplitz operator for the pseudodifferential operator cases.

Our approach leverages various mathematical tools, such as the Toeplitz operators and spectral projection methods, to achieve a comprehensive understanding of the Bergman kernel asymptotic behavior. The Kodaira-Baily Embedding Theorem is one of the notable results we prove using our asymptotic expansions. The original version of this theorem, proved by Kodaira \cite{Kodaira1954} and Baily \cite{Baily1957} in the 1950s, is a cornerstone in the theory of complex manifolds.

Overall, the findings presented in this thesis not only contribute to the theoretical foundation of complex geometry but also have potential applications in mathematical physics and other related fields. By advancing our understanding of the Bergman kernel on orbifolds, we open new avenues for research in complex geometry and its applications.

\subsection{Statement of Main Results and Applications}
We now present the main results of the thesis. We refer to section \ref{Preliminaries} and subsection \ref{orbifold} for relevant notations and terminology used here. Let $X$ be a complex orbifold of dimension $n$. Let $(L,h^L)$ be a Hermitian orbifold line bundle and $L^k : =  L^{\otimes k}$, for $k\in \bbN$.  For a local trivialization $s$ of $L$ over $U\subset X$, $|s|^2_{h^L} = e^{-2\phi}$ with $\phi\in \cC^\infty(U)$ which called the local weight of $h$ with respect to $s$, we also have $h^{L^k}$ on $L^k$ with local weight $k\phi$. Fix a  Hermitian metric $\Omega$ on $X$ and $h^L$ on $L$, which induce $L^2$-norm $\|\cdot\|_{k,q}$ and $L^2$-inner product $\(\cdot| \cdot\)_{k,q}$ with the $L^2$ space $L^2_{(0,q)}(X, L^k)$ be the completion of $\Omega^{(0,q)}_c(X,L^k)$ with respect to the norm (see \eqref{inner product}, \eqref{integral on orbifold} and subsections \ref{Complex Manifold}, \ref{orbifold}). When $q=0$, We write $\(\cdot|\cdot\)_k = \(\cdot|\cdot\)_{k,q}$, $\|\cdot\|_{k} = \|\cdot \|_{k,0}$ and $L^2(X,L^k) = L^2_{(0,0)}(X,L^k) $.

Let $\dbar_k: \Omega(X,L^k)= \cC^\infty(X,L^k) \rightarrow \Omega^{(0,1)}(X,L^k)$ be the Cauchy–Riemann operator and $\dbar_k^*:\Omega^{(0,1)}(X,L^k) \rightarrow \Omega(X,L^k)$ be the formal adjoint of $\dbar_k$ with respect to 
$\( \cdot | \cdot \)_{k,1} $ and the Kodaira Laplacian $ \kla_k = \dbar^*_k \dbar_k:\cC^\infty(X,L^k)\rightarrow \cC^\infty(X,L^k)$. Note that we can define the Gaffney extension of Kodaira Laplacian $\kla_k: \Dom \kla_k \subset L^2(X, L^k) \rightarrow L^2(X, L^k)$, see \eqref{Gaffney Extension}. Then we let $$P_k: L^2(X, L^k) \rightarrow \ker \kla_k$$ be the orthogonal projection, known as the Bergman projection, where $\ker \kla_k = \{u\in\Dom \kla_k| \kla_k u = 0\} $.

To state our results, we first need to define the local spectral gap property.

\begin{Def}
    \label{Local Spectral Gap}
    For any open set $U\subset X$, we say that $\kla_{k} $ has local spectral gap on $U$ if there are  $C>0 $ and $r\in\bbR$ such that for all $k$ large enough $$\|(I- P_k) u \|_k^2 \le C k^r \(\kla_k u |u \)_k \text{ for all } u \in \Omega_c(U, L^k), $$ where $P_k$ is the Bergman projection.
\end{Def}

\begin{Def}
    \label{Ck Local Spectral Gap}
    For any open set $U\subset X$, we say that $\kla_{k} $ has $C_k$ local spectral gap on $U$ if there is  $C_k>0 $ satisfying $\lim_{k\to \infty} \frac{kC_k}{\log^{1-\varepsilon}k} = 0$ for some $\varepsilon>0$ such that for all $k$ large enough $$\|(I- P_k) u \|_k^2 \le C_k \(\kla_k u |u \)_k \text{ for all } u \in \Omega_c(U, L^k), $$ where $P_k$ is the Bergman projection.
\end{Def}

Let $L^\infty(X)$ be the space of essentially bounded measurable functions. Given $f\in L^\infty(X)$, we can define the Toeplitz operator $T_{f,k} = P_k \circ M_f\circ P_k$ where $M_f$ is the multiple operator. Let $T_{f,k}(x,y) \in \cC^\infty(X\times X, L^k\boxtimes (L^k)^*)$ be the distribution kernel of $T_{f,k}$. Note that $T_{f,k}(x,y) = \int_X P_k(x,z) f(z) P_k(z, y)d\vol_X(z)$, where $d\vol_X$ is the volume form of the orbifold $X$ induced by the Hermitian form $\Omega$. 

Let $p\in X$. Notice that near $p$, we can always find a local holomorphic coordinate $z$, $z(p)=0$, and  local holomorphic trivialization $s: U\subset X \rightarrow L$ has local weight $\phi(z) = \sum_j \lambda_j |z_j|^2 + O(|z|^3) $, 
\begin{equation}
\label{weight}
    \begin{cases}
        \phi = \phi_0+\phi_1 \\
        \phi_0 = \sum_{j=1}^n \lambda_j |z_j|^2\\
        \phi_1 = O(|z|^3),
    \end{cases}
\end{equation} 
and  $\Omega = \sum_{j,\ell=1}^n (\delta_{j,\ell} + O(|z|))dz_j\wedge d\overline{z}_\ell$ (see Lemma \ref{chern-moser coordinate}). We will fix this trivialization $(z,s)$, which is called the Chern-Moser trivialization on $U$ centered at $p$, in the following and hence identify $L^2(U, L^k)$ with the $L^2$ space $L^2(U,d\vol_X)$ by 

$$
\begin{aligned}
\tau:\Omega_c(U,L^k) &\rightarrow \Omega_c(U)\\
u\otimes s^k &\mapsto  ue^{-k\phi}.
\end{aligned}
$$ With the inner product $$\(u|v\)_{U,q} =\int_{U} \<u|v\>_{\Omega,q} d\vol_M,\, \forall u,v\in \Omega^{(0,q)}_c(U),$$ the $L^2$-norm $\|\cdot\|_{U,q}$ and the $L^2$-space. Sometimes, for simplicity, we will omit the subscript $q$ when $q=0$. This identify also give the localized Cauchy-Riemann operator $\dbar_{k,s}:\Omega_c(U) \rightarrow \Omega_c^{(0,1)}(U)$ satisfy $\tau(\dbar_k u) = \dbar_{k,s}\tau(u)$, its formal adjoint $\dbar_{k,s}^*:\Omega_c^{(0,1)}(U) \rightarrow \Omega_c(U)$ with respect to the $L^2$-inner product $\(\cdot|\cdot\)_{U,q}$ and the Gaffney extension localized Kodaira Laplacian $\kla_{k,s}:\Dom \kla_{k,s}\subset L^2(U,d\vol_X) \rightarrow L^2(U,d\vol_X)$. The localization Bergman projection $$P_{k,s} :L^2_{comp}(U,d\vol_X)\rightarrow L^2(U,d\vol_X) $$ which satisfying $$P_k(u\otimes s^k) = (e^{k\phi}P_{k,s}(e^{-k\phi}u)) \otimes s^k $$ and the localization Toeplitz operator $T_{f,k,s}: L^2_{comp}(U,d\vol_X) \rightarrow L^2(U,d\vol_X)$ with $T_{f,k}(u\otimes s^k) = (e^{k\phi}T_{f,k,s}(e^{-k\phi}u)) \otimes s^k$. Also, let $P_{k,s}(x,y)$ and $T_{f,k,s}(x,y)$ be their distribution kernels respectively.

For $\phi(z)\in\cC^\infty(\bbC^n, \bbR)$, we can define $$\(u|v\)_{\phi} = \int_{\bbC^n} u(z)\overline{v}(z) e^{-2\phi(z)} d\lambda(z),  $$ $\|\cdot\|_\phi$ and the $L^2$-space $L^2(\bbC^b, \phi)$. The model case of the asymptotic is the orthogonal projection of the Bergman-Fock space, that is, the orthogonal projection
$$B_{\phi_0}: L^2(\bbC^n, \phi_0)\rightarrow H^0(\bbC^n, \phi_0)= \{u\in L^2(\bbC^n,\phi_0)| \dbar u =0 \} ,$$ with $\phi_0 = \sum_{j=1}^n \lambda_j |z_j|^2$ with $\lambda_j>0$, as in \eqref{weight}. The distribution kernel of $B_{\phi_0}$ is known as following 
$$(B_{\phi_0}u)(x) = \int_{\bbC^n}B_{\phi_0}(x,y) u(y) d\lambda(y),\, \forall u\in L^2(\bbC^n, \phi_0) $$ where $d\lambda(z) = i^n dz_1d\overline{z}_1 \cdots dz_nd\overline{z}_n$ and 
$$B_{\phi_0}(x,y) = \(\prod_{j=1}^n \frac{\lambda_j}{\pi}\)e^{2\sum_j \lambda_j(x_j \overline{y}_j -|y_j|^2)}.$$
We will denote $P_{\phi_0}$ be the conjugate of $B_{\phi_0}$  by $e^{k\phi_0}$. That is $P_{\phi_0} = e^{-\phi_0}B_{\phi_0} e^{\phi_0} $ and 
\begin{equation}
\label{conjugate bergman kernel}
P_{\phi_0}(x,y)=\(\prod_{j=1}^n \frac{\lambda_j}{\pi}\)e^{\sum_j \lambda_j(2x_j \overline{y}_j-|x_j|^2 -|y_j|^2)}.
\end{equation}

After the above preparation, we can now state the main result. We begin by considering the case of orbifolds.

\begin{thm}[Orbifold Case]
    \label{Orbifold Case Main Theorem in Introduction}
     For any $p\in X$ and an open neighborhood $U\subset X$ with $L$ positive over $U $, $f\in L^\infty(X)\cap \cC^0(X)$ and $\kla_k$ satisfying $C_k$ local spectral gap  on $U$, then there is a Chern-Moser trivialization $(z,s)$ on $U$ centered at $p$ such that 
     $$T_{f,k} (x,y)=k^n \sum_{g\in G_p} P_{\phi_0}(\sqrt{k}\ti{x} , \sqrt{k}(g\cdot\ti{y})) f(p) + \varepsilon_k(x,y)  $$ where $G_p$ is the isotropic group of $p$,  $\pi(\ti{x})= x $ is the natural projection and  $\varepsilon_k(x,y)$ satisfying $$\left\|\varepsilon_k\(\frac{\ti{x}}{\sqrt{k}}, \frac{ \ti{y}}{\sqrt{k}}\)\right\|_{\cC^\ell(K)}=o(k^n)$$ for all $ \ell>0$ and any compact subset $K\subset B(0,\log k):= \{z\in \bbC^n\mid |z|< log k\}$.
    In particular, when $f=1$ is just the Bergman kernel asymptotic expansion. Recall that $\phi_0$ and $P_{\phi_0}$ are as in \eqref{weight} and \eqref{conjugate bergman kernel} respectively.
\end{thm}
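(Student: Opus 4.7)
The plan is to lift the problem to a local uniformizing chart at $p$, so that the orbifold Toeplitz kernel on $U$ is represented as a finite sum over $G_p$ of pull-backs of the corresponding manifold kernel, and then apply the scaling method in that smooth chart combined with the $C_k$ local spectral gap to compare with the Bergman-Fock model $P_{\phi_0}$.

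First I would choose a uniformizing chart $\pi\colon(\wt U, G_p)\to U$ centered at $p$ together with a $G_p$-equivariant Chern-Moser trivialization $(\ti{z},\wt s)$ lifting $(z,s)$, so that the local weight on $\wt U$ is $G_p$-invariant and of the form $\phi=\phi_0+\phi_1$ with $\phi_1=O(|\ti{z}|^3)$. Since $L^2$-sections of $L^k|_U$ correspond to $G_p$-invariant $L^2$-sections on $\wt U$, averaging over $G_p$ gives
\begin{equation*}
T_{f,k,s}(x,y)=\sum_{g\in G_p}\wt T_{f,k,\wt s}(\ti{x},g\cdot\ti{y}),
\end{equation*}
where $\wt T_{f,k,\wt s}$ is the localized Toeplitz kernel built from the manifold Bergman projection on $\wt U$. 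This reduces the orbifold statement to its manifold analogue on the smooth chart $\wt U\subset\bbC^n$, namely the leading expansion of $k^{-n}\wt T_{f,k,\wt s}\bigl(\ti{x}/\sqrt k,\ti{y}/\sqrt k\bigr)\to P_{\phi_0}(\ti{x},\ti{y})f(p)$ in $\cC^\ell$ on growing balls of scaled radius $\log k$.

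For the manifold version I introduce the scaled kernel $k^{-n}\wt T_{f,k,\wt s}(\ti{x}/\sqrt k,\ti{y}/\sqrt k)$. Under the rescaling $z\mapsto\ti{z}/\sqrt k$, the weight becomes $\phi_0(\ti{z})+k^{-1/2}\cdot O(|\ti{z}|^3)$ and the Hermitian form $\Omega$ becomes the Euclidean one to leading order, so the scaled conjugated Kodaira Laplacian converges on compacts to the Bergman-Fock Laplacian whose conjugated projection is $P_{\phi_0}$. Combined with the Gaussian off-diagonal decay of $P_{\phi_0}$ and the continuity of $f$ at $p$, the natural candidate for the limit is $P_{\phi_0}(\ti{x},\ti{y})f(p)$.

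The quantitative comparison is driven by the $C_k$ local spectral gap. Given an approximate holomorphic model section $u$ supported in a scaled ball of radius $\log k$, the standard $\dbar$-correction together with Definition~\ref{Ck Local Spectral Gap} produces a genuine section in $\ker\kla_k$ differing from $u$ by an $L^2$-error controlled by $\sqrt{C_k\,(\kla_k u|u)_k}$; the hypothesis $kC_k/\log^{1-\varepsilon}k\to0$ is exactly what keeps this error of smaller order than the main term $k^n$ on scaled balls of radius $\log k$. Testing against suitable truncations of $P_{\phi_0}$ and using the characterization of $P_k$ as the orthogonal projection onto $\ker\kla_k$ yields $L^2$-convergence of the scaled Toeplitz kernel to $P_{\phi_0}(\cdot,\cdot)f(p)$ on compacts, and elliptic regularity for the scaled Cauchy-Riemann system upgrades this to the required $\cC^\ell$-convergence. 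The main obstacle I expect is uniformity over the $\log k$-ball: one must verify that the Chern-Moser remainder $O(|z|^3)$ produces only subleading contributions over a physical ball of radius $(\log k)/\sqrt k$, track the weight factors $e^{-k\phi}$ versus $e^{-\phi_0}$ carefully in the scaled frame, and ensure that the spectral-gap constants remain effective under scaling so that the bound $o(k^n)$ holds uniformly on every compact $K\subset B(0,\log k)$.
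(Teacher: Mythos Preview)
Your proposal contains a genuine gap at the very first reduction step. The claimed decomposition
\[
T_{f,k,s}(x,y)=\sum_{g\in G_p}\wt T_{f,k,\wt s}(\ti{x},g\cdot\ti{y})
\]
presupposes a well-defined object $\wt T_{f,k,\wt s}$ on the smooth cover $\wt U$, built from a ``manifold Bergman projection on $\wt U$''. But $P_k$ (and hence $T_{f,k}$) is a \emph{global} operator on the orbifold $X$; there is no canonical local manifold Bergman projection on $\wt U$ whose $G_p$-average reproduces $P_k$ exactly. The general orbifold kernel identity $K(x,y)=\sum_{g\in G_p}\wt K(\ti{x},g\cdot\ti{y})$ works when you \emph{start} from a kernel $\wt K$ already defined on the cover and descend; it does not by itself allow you to split a global orbifold kernel into smooth manifold pieces. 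So the sentence ``this reduces the orbifold statement to its manifold analogue on the smooth chart $\wt U$'' is precisely the nontrivial step, not a formality.

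The paper closes this gap by inserting a genuinely local intermediary: the \emph{spectral} projection $P_{k,\mu_k}=\chi_{[0,\mu_k]}(\kla_{k,s})$ of the local Kodaira Laplacian on $\wt U\subset\bbC^n$, with $\mu_k=k/\log^{1-\varepsilon}k$. One sets $\hat P_{k,\mu_k}(x,y)=\sum_{g\in G_p}P_{k,\mu_k}(\ti{x},g\cdot\ti{y})$ and proves, using off-diagonal decay together with the $C_k$ local spectral gap, that $\hat\chi P_k\chi-\hat\chi\hat P_{k,\mu_k}\chi$ has Sobolev norms of order $o(k^\ell)$; this is where the hypothesis $kC_k/\log^{1-\varepsilon}k\to0$ is actually used, since the comparison produces a factor $C_k\mu_k$. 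The local spectral kernel $P_{k,\mu_k}$ on $\wt U$ is then handled by the scaling method for manifolds \emph{without} spectral gap (Theorem~\ref{spectral kernel theorem}), which requires only $\mu_k=o(k)$ and $k/\mu_k=o(\log k)$. The Toeplitz statement itself is then obtained, not by scaling $T_{f,k}$ directly as you propose, but by inserting the Bergman kernel expansion into $T_{f,k}(x,y)=\int P_k(x,z)f(z)P_k(z,y)\,d\vol_X$ and applying stationary phase to control the cross terms. Your outline conflates these two stages and, more importantly, never supplies the local object that makes the $G_p$-sum decomposition meaningful.
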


\begin{rem}
    With the same notations as above. For $f\in L^\infty(X) \cap \cC^s(X)$, we have $$T_{f,k} (x,y)=k^n \sum_{g\in G_p} P_{\phi_0}(\sqrt{k}\ti{x} , \sqrt{k}(g\cdot\ti{y})) f(\ti{x}) + \varepsilon_k(x,y)$$ with 
    $$ \left\|\varepsilon_k\(\frac{\ti{x}}{\sqrt{k}}, \frac{\ti{y}}{\sqrt{k}}\)\right\|_{\cC^\ell(K)} =o(k^n), \, \text{ for all } \ell<s, $$ and any compact subset $K\subset B(0,\log k).$
\end{rem}

We then consider the smooth manifold cases.
\begin{thm}[Manifold Case]
    \label{Manifold Case Main Theorem in Introduction}
    Let $M$ be a complex manifold with a Hermitian line bundle $(L,h^L)\rightarrow M$. With the same notations used in orbifold case, for any $p\in M$ and an open neighborhood $U\subset M$ with $L$ positive over $U $, $f\in L^\infty(M)\cap \cC^0(M)$ and $\kla_k$ satisfying local spectral gap on $U$, then there is a Chern-Moser trivialization  $(z,s)$ on $U$ centered at $p$ such that $$ \left\|\frac{1}{k^n}T_{f,k,s}(\frac{x}{\sqrt{k}},\frac{y}{\sqrt{k}}) - P_{\phi_0}(x, y) f(p)\right\|_{\cC^{\ell}(K)} \rightarrow 0, \, \text{ for all } \ell \in \bbN_0,\, K\Subset B(0,\log k).$$ 
    In particular, when $f=1$ is just the Bergman kernel asymptotic expansion. 
\end{thm}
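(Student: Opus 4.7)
The plan is to reduce the Toeplitz kernel asymptotics to the Bergman kernel asymptotics, using the integral representation of $T_{f,k,s}$ and the reproducing property of the model Bergman-Fock projection $P_{\phi_0}$. The key input is the scaled Bergman kernel convergence already obtained in Theorem~\ref{MFD-Main-Thm}, namely that $P_{(k),s}(x,y) := k^{-n} P_{k,s}(x/\sqrt{k}, y/\sqrt{k})$ converges to $P_{\phi_0}(x,y)$ in $\cC^\ell$ on compacta of $B(0,\log k)$, together with the uniform Gaussian-type off-diagonal bounds on $P_{(k),s}$ that belong to the same package.

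First I would fix a Chern-Moser trivialization $(z,s)$ at $p$, so that the localized weight is $\phi = \phi_0 + O(|z|^3)$ and the volume density takes the form $V(z)\,d\lambda(z)$ with $V$ smooth and $V(0)=1$. Writing
$$T_{f,k,s}(x,y) = \int_U P_{k,s}(x,z)\, f(z)\, P_{k,s}(z,y)\, V(z)\, d\lambda(z)$$
and performing the change of variables $z = w/\sqrt{k}$, I obtain the scaled identity
$$k^{-n} T_{f,k,s}\!\left(\tfrac{x}{\sqrt{k}}, \tfrac{y}{\sqrt{k}}\right) = \int_{\sqrt{k}\,U} P_{(k),s}(x,w)\, P_{(k),s}(w,y)\, f\!\left(\tfrac{w}{\sqrt{k}}\right) V\!\left(\tfrac{w}{\sqrt{k}}\right) d\lambda(w).$$

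Next I would pass to the limit inside this integral. On any fixed ball $|w|\le R$, continuity of $f$ at $p$ gives $f(w/\sqrt{k}) \to f(p)$, the volume factor $V(w/\sqrt{k}) \to 1$, and $P_{(k),s}(x,\cdot)$, $P_{(k),s}(\cdot,y)$ converge in $\cC^\ell$ to $P_{\phi_0}(x,\cdot)$, $P_{\phi_0}(\cdot,y)$ by the Bergman kernel result. The target limit is
$$f(p) \int_{\bbC^n} P_{\phi_0}(x,w)\, P_{\phi_0}(w,y)\, d\lambda(w) = f(p)\, P_{\phi_0}(x,y),$$
where the second equality is the reproducing identity, which follows from $B_{\phi_0}$ being an orthogonal projection and the conjugation $P_{\phi_0} = e^{-\phi_0} B_{\phi_0} e^{\phi_0}$. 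Derivatives in $x,y$ are handled identically by differentiating under the integral sign, since a derivative in $x$ (resp.\ $y$) only hits $P_{(k),s}(x,w)$ (resp.\ $P_{(k),s}(w,y)$) and these derivatives enjoy the same convergence and decay properties.

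The main obstacle is making the tail estimate in $w$ uniform as $(x,y)$ ranges over the growing window $B(0,\log k)$. The model integrand $P_{\phi_0}(x,w)P_{\phi_0}(w,y)$ decays like $e^{-c(|w-x|^2+|w-y|^2)}$, so outside a ball of radius $R$ its contribution is $O(e^{-cR^2})$ uniformly in $(x,y)$ with $|x|,|y|\le \log k$, easily dominating any polynomial growth. What one really needs is the analogous uniform Gaussian off-diagonal bound for $P_{(k),s}$, which follows from the scaling argument under the local spectral gap hypothesis: in scaled coordinates the localized Kodaira Laplacian converges to the model operator on $(\bbC^n,\phi_0)$, whose Bergman projection has explicit Gaussian decay, and the convergence is strong enough to transfer this decay to $P_{(k),s}$ in a $k$-uniform fashion. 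With these uniform bounds in hand together with $\|f\|_{L^\infty}$, a dominated-convergence argument applied to the scaled identity above concludes the proof.
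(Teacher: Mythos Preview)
Your approach is essentially the paper's: write the Toeplitz kernel as $\int P_{k,s}(x,z)\,f(z)\,P_{k,s}(z,y)\,d\vol$, scale, and reduce to the reproducing identity $\int P_{\phi_0}(x,w)P_{\phi_0}(w,y)\,d\lambda(w)=P_{\phi_0}(x,y)$. The paper only carries this out in the orbifold case (Theorem~\ref{toeplitz theorem}); the manifold statement is that computation with $G_p=\{e\}$, so there is no separate proof to compare against beyond that.

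The one place your argument and the paper's diverge is the tail control in $w$. You assert a uniform Gaussian off-diagonal bound for $P_{(k),s}$ itself and say it ``follows from the scaling argument under the local spectral gap hypothesis.'' The paper does not prove this, and the scaling method of Section~\ref{Standard Results} only gives uniform $\cC^\ell$ bounds on compacta (Lemma~\ref{compact manifold lemma}), not pointwise Gaussian decay of the scaled kernel. What the paper actually uses to localize the $z$-integral is the off-diagonal property $\chi_k P_k(1-\tau_k)=O(k^{-\infty})$, which confines the integration to $\supp\tau_k$ (a ball of scaled radius $\sim\log k$). After that localization, the paper expands $P_{(k),s}=P_{\phi_0}+\delta_k$ and handles the cross terms: in each of them one factor is the explicit model $P_{\phi_0}$, whose genuine Gaussian decay in $w$ makes the integral converge, while the other factor ($\delta_k$ or the bounded $f$) is controlled by the uniform-on-compacta bound and is $o(1)$ in $\cC^\ell$. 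Your dominated-convergence strategy is fine in spirit, but the dominating function should come from one copy of $P_{\phi_0}$ together with the uniform compacta bound on $P_{(k),s}$, not from an unproved Gaussian estimate on $P_{(k),s}$.
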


We will divide the proof of the main theorem into two parts, the first is in section~\ref{Standard Results}, where we will derive the asymptotic of the Bergman kernel from $\bbC^n$, a positive compact complex manifold, and then near the positive points in a semi-positive complex manifold. The second part is using the previous result to get the asymptotic of the Bergman kernel on positive points of a complex orbifold with a local spectral gap in section~\ref{Orbifold Case}.
Note that because the proof of the convergence of Toeplitz operators is similar, we will only prove the orbifold case.

As the application of Theorem \ref{Orbifold Case Main Theorem in Introduction}, we will give an analytic proof of the Kodaira-Baily embedding theorem \cite{Baily1957}. 
\begin{thm}[Kodaira-Baily Embedding Theorem(cf. Theorem \ref{Kodaira-Baily Theorem})]
    The Kodaira-Baily map $\Phi_k$ is injective and immersion near every regular point. Near singular point, let $p$ be a singular point and let $(\ti{U}, G_U)$ be an orbifold chart defined near $p$. Then, there is an immersion $\hat{\Phi}_k$ defined near $\ti{V}$ such $\Phi_k(x)=\sum_{g\in G_U}\hat{\Phi}_k(g\cdot x)$ where $\pi(\ti{p})=p, \pi:\ti{U}\to U$ is the natural projection.
\end{thm}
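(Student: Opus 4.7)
The plan is to give an analytic proof of the Kodaira-Baily embedding theorem following the strategy Hsiao used for smooth manifolds in 2014, adapted to the orbifold setting via Theorem~\ref{Orbifold Case Main Theorem in Introduction}. The central idea is to use the Bergman kernel asymptotics to produce \emph{peak sections} that simultaneously separate points and separate first-order jets.

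For any point $p\in X$, consider the normalized Bergman kernel section $s_{k,p}(\cdot):=B_k(\cdot,p)/\sqrt{B_k(p,p)}\in H^0(X,L^k)$. Applying Theorem~\ref{Orbifold Case Main Theorem in Introduction} with $f\equiv 1$ at $x=y=p$ gives $B_k(p,p)\sim k^n |G_p|\prod_j \lambda_j/\pi^n$, so $|s_{k,p}(p)|$ has order $k^{n/2}$, while the Gaussian decay of $P_{\phi_0}(x,y)\sim e^{-\sum_j\lambda_j|x-y|^2}$ from \eqref{conjugate bergman kernel} implies $|s_{k,p}(q)|/|s_{k,p}(p)|\to 0$ rapidly once $d(p,q)\gg 1/\sqrt{k}$. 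This yields injectivity at regular points: for distinct regular $p,q$, the span $\{s_{k,p},s_{k,q}\}$ distinguishes $p$ and $q$ in projective space for $k$ large, so $\Phi_k(p)\ne \Phi_k(q)$. For the immersion property at a regular $p$, differentiating the Bergman kernel in the second argument produces $n+1$ sections whose $1$-jets at $p$ are linearly independent by the explicit form of $P_{\phi_0}$; this shows $d\Phi_k|_p$ is injective.

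Near a singular point $p$ with orbifold chart $(\tilde U,G_U)$, global sections of $L^k$ pull back to $G_U$-invariant sections on $\tilde U$, and by Theorem~\ref{Orbifold Case Main Theorem in Introduction} the lifted Bergman kernel is asymptotically $k^n\sum_{g\in G_U}P_{\phi_0}(\sqrt k\,\tilde x,\sqrt k\,g\cdot\tilde y)$, the $G_U$-symmetrization of the regular model. Accordingly, I would construct a \emph{non-invariant} peak section $\hat s_{k,\tilde p}$ on $\tilde U$ whose leading part is $k^n P_{\phi_0}(\sqrt k\,\tilde x,\sqrt k\,\tilde p)$, corrected at lower order by a standard $L^2$-solution of the $\bar\partial$-equation to produce a true holomorphic section of the pulled-back line bundle; then $s_{k,p}:=\sum_{g\in G_U}g^*\hat s_{k,\tilde p}$ is a $G_U$-invariant lift of a genuine global section on $X$. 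Performing this construction for $\hat s_{k,\tilde p}$ together with its derivatives in $\tilde p$ yields a basis of peak-type sections, and with respect to this basis I would define $\hat\Phi_k$ on $\tilde U$ coordinate-wise using the non-averaged sections $\hat s$; the relation $\Phi_k(x)=\sum_{g\in G_U}\hat\Phi_k(g\cdot x)$ then translates the symmetrization identity $s_{k,p}=\sum_g g^*\hat s_{k,\tilde p}$ into the chosen coordinates.

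The main obstacle is showing that $\hat\Phi_k$ is itself an immersion near $\tilde p$: a priori the non-invariant peak $\hat s_{k,\tilde p}$ could have its jet information washed out by $G_U$-averaging, even if the averaged section $s_{k,p}$ behaves well on $X$. The saving observation is that on $\tilde U$ the off-diagonal contributions $P_{\phi_0}(\sqrt k\,\tilde x,\sqrt k\,g\cdot\tilde p)$ with $g\ne e$ are exponentially suppressed relative to the $g=e$ piece for $\tilde x$ close to $\tilde p$, because $|\tilde p-g\cdot\tilde p|$ is bounded below by the nontrivial $G_U$-action at $\tilde p$. Hence $\hat s_{k,\tilde p}$ and its jets retain, on $\tilde U$, the same separation power as in the regular case, reducing the immersion property of $\hat\Phi_k$ to the differentiated peak-section argument of the previous paragraph applied on the uniformizing cover. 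Combining the regular and singular analyses completes the proof.
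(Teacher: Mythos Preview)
Your singular-point argument contains a genuine gap. You claim the off-diagonal contributions $P_{\phi_0}(\sqrt k\,\tilde x,\sqrt k\,g\cdot\tilde p)$ with $g\ne e$ are exponentially suppressed because ``$|\tilde p-g\cdot\tilde p|$ is bounded below by the nontrivial $G_U$-action at $\tilde p$.'' But $p$ is a \emph{singular} point, so by definition $G_U$ is the isotropy group of $\tilde p$: every $g\in G_U$ fixes $\tilde p$, i.e.\ $g\cdot\tilde p=\tilde p$. Hence $|\tilde p-g\cdot\tilde p|=0$ and there is no suppression whatsoever at (or near) $\tilde p$; all $|G_U|$ summands contribute equally to the leading order of the averaged section. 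This is precisely why the singular case is harder than the regular one, and your reduction to the regular peak-section argument on the cover does not go through as written.

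The paper handles this differently. It does not try to separate the $g$-summands by decay; instead it uses the decomposition $P_k(x,y)=\sum_{g\in G_U}P_{k,\mu_k}(\tilde x,g\cdot\tilde y)+\delta_k$ coming from Theorem~\ref{Orbifold-Main-Thm}, where $P_{k,\mu_k}$ is the spectral kernel on the uniformizing chart $\tilde U$ (not $G_U$-invariant). Applying the global Bergman projection to $G_U$-symmetrized test data $\hat f_{j,k}\chi_{G,k}$ produces global sections $v_k^{(j)}\in H^0(X,L^k)$ which, by this decomposition, satisfy $v_k^{(j)}(x)=\sum_{g\in G_U}\tilde v_k^{(j)}(g\cdot\tilde x)+\varepsilon_k$ with $\tilde v_k^{(j)}:=P_{k,\mu_k}(\tilde f_{j,k}\chi_k)$. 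The non-invariant map $\tilde\Phi_k$ is then built from the $\tilde v_k^{(j)}$, and its immersion property follows from the regular-point argument applied on $\tilde U$. The point is that the splitting into $g$-pieces is achieved \emph{algebraically} via the kernel identity, not analytically via decay.

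A secondary issue: your injectivity sketch (``the span $\{s_{k,p},s_{k,q}\}$ distinguishes $p$ and $q$'') only covers the case where $p,q$ are at distance $\gg 1/\sqrt k$. The paper runs a contradiction argument with sequences $x_k\ne y_k$, $\Phi_k(x_k)=\Phi_k(y_k)$, and after reducing to $x_k,y_k\to p$ must still treat the near-diagonal regime $\sqrt k\,|x_k-y_k|\to 0$ separately, via a second-derivative (Cauchy--Schwarz) argument on $t\mapsto |P_k(tx_k+(1-t)y_k,y_k)|^2/(P_k(\cdot)\,P_k(y_k))$. Your proposal should address this regime as well.
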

Our proof is analogous to the proof by Hsiao \cite{hsiao2014bergman}. But with orbifold points, the immersion will be the problem. There is an easier proof if we assume an extra condition mentioned in Remark \ref{Extra Condition}. For the other cases, we just prove the Kodaira-Baily map can be represented by embedding in the chart acted by all elements in the isotropic group.

Inspired by the leading term of the Bergman kernel, we want to determine the difference between $B_k$ ($P_k$) and $B_{k\phi_0}$ in a small open set. When we use the self-adjoint and off-diagonal property of the Bergman kernel, we notice that we can use the composition of $B_{k\phi_0}$ and its adjoint to represent $B_{k\phi}$. Then we apply the Stationary Phase formula on this representation, which gives us the full-expansion of $B_{k\phi}$ and another way to compute the coefficient.

To state the next result, we first introduce the concept of asymptotic sum.
\begin{Def}
\label{Asymptotic sum}
Let $A_k, a_j$ be smooth functions on a compact manifold $M$. We denote that $A_k(x) \sim \sum_{j=0}^\infty k^{n-j} a_j(x)$ in $\cC^\infty(M) $ if for all $N,\ell\in\bbN$, there is a positive constant $C_{N,\ell} $ satisfying $$\|A_k(x) - \sum_{j=0}^N k^{n-j} a_j(x) \|_{\cC^\ell(M)} \le C_{N,\ell} k^{n-N-1},\, \text{ for all } k\gg 1,\,\ell\in \bbN_0.$$
\end{Def}

\begin{thm}[cf. Theorem \ref{Bergman Full Expansion Theorem}]
    Let $M$ be a compact complex manifold $M$ with Hermitian line bundle $(L,h^L)$, $R^L>0$ on $M$. There are $a_j(z) \in \cC^\infty(M)$ for $j=1,2,\ldots$ such that $$P_{k,s}(z,z) = B_k(z)\sim \sum_{j=0}^\infty k^{n-j} a_j(z)$$ in $\cC^\infty(M)$.
    More precisely, let $p\in M$ and $(z,s)$ be the Chern-Moser trivialization on $U$ centered at $p$, we can compute the localized Bergman kernel by $$P_{k,s}(0,0)= k^nC_0+k^nC_0 \sum_{\ell=1}^\infty \sum_{j=1}^\infty \frac{(2k)^{-j}}{j!} \Delta^j_{\ell} u_{k,\ell,0} (0)$$ where $\phi = \phi_0 + \phi_1$, $\phi_1=O(|z|^3)$, is the local weight of the local trivialization $s$, we denote $$\psi_k(z) = \tau\(\frac{\sqrt{k} z}{\log k}\) \phi_1(z),\, \vol_{M,k}(z)  = \(1+ \tau\(\frac{\sqrt{k}z }{\log k}\) \(\vol_M(z)-1\)\)$$ with some cut-off function $\tau$ equal to $1$ near the origin,
    $$\begin{aligned}
    &u_{k,\ell,z}(w^1,\ldots, w^\ell) \\
    &~= \(e^{2k(\psi_k(w^\ell)-\psi_k(z))}\vol_{M,k}^{-1}(w^\ell)\vol_{M,k}(z)-1\)\prod_{j=1}^{\ell-1} \(e^{2k(\psi_k(w^j)-\psi_k(w^{j+1}))}\vol_{M,k}^{-1}(w^j)\vol_{M,k}(w^{j+1})-1\).\end{aligned}$$
    and 
    $$\Delta_{\ell} = \sum_{j=1}^n \frac{1}{\lambda_j} \(\sum_{\nu=1}^\ell \frac{\dr^2}{\dr w_j^\nu \dr \overline{w}_j^\nu} +\sum_{\nu<\mu}\frac{\dr^2}{\dr \overline{w}_j^\nu \dr w_j^{\mu}} \).$$
\end{thm}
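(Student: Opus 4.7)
The plan is to bootstrap from the leading-order scaled convergence established in Theorem~\ref{Manifold Case Main Theorem in Introduction} by iterating the idempotence of the Bergman projection, comparing with the flat model kernel $P_{k\phi_0}$, and then applying the complex stationary phase formula. First I would fix the Chern-Moser trivialization $(z,s)$ centered at $p$, so that the local weight splits as $\phi = \phi_0+\phi_1$ with $\phi_1 = O(|z|^3)$ and the volume form as $\vol_M = 1+O(|z|)$. Cut off both corrections at scale $\log k/\sqrt{k}$ using $\tau(\sqrt{k}z/\log k)$ to produce $\psi_k$ and $\vol_{M,k}$; outside the resulting region, the contribution to $P_{k,s}(0,0)$ is $O(k^{-N})$ for every $N$, by the Gaussian off-diagonal decay of $P_{k\phi_0}$ together with the spectral-gap control on $P_{k,s}$ already used in the leading-term theorem.

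Next, for every $\ell\ge 1$ I would rewrite $P_{k,s}(0,0)$ via the identity $P_{k,s} = (P_{k,s})^{\ell+1}$ as an iterated integral against $P_{k,s}(w^\nu,w^{\nu+1})$. The key algebraic observation is that, after the conjugation by $e^{k\phi}$, the true localized kernel equals the model $P_{k\phi_0}$ multiplied by the factor $e^{2k(\psi_k(w^\nu)-\psi_k(w^{\nu+1}))}\vol_{M,k}^{-1}(w^\nu)\vol_{M,k}(w^{\nu+1})$ plus a negligible tail. Writing each such factor as $1+(\text{factor}-1)$ and expanding the telescoping product gives, at the $\ell$th level, exactly the amplitude $u_{k,\ell,0}(w^1,\dots,w^\ell)$ multiplied by the Gaussian composition $\prod_{\nu=0}^{\ell} P_{k\phi_0}(w^\nu,w^{\nu+1})/C_0$ with $w^0=w^{\ell+1}=0$, plus the single leading term $k^n C_0$ that one gets when every $(\text{factor}-1)$ is replaced by $0$.

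Finally, I would apply the complex stationary phase formula to each $\ell$-fold integral. The composite quadratic phase from the product $\prod_\nu P_{k\phi_0}$ has a unique non-degenerate critical point at $w^1=\cdots=w^\ell=0$, and a short computation with the diagonal Hessian determined by $\mathrm{diag}(\lambda_j)$ shows that the corresponding differential operator produced by stationary phase is precisely the $\Delta_\ell$ of the theorem. This yields the contribution $\sum_{j\ge 1}(2k)^{-j}\Delta_\ell^j u_{k,\ell,0}(0)/j!$ at level $\ell$. Summing over $\ell$ and reorganizing by powers of $k$ reassembles the double series into the claimed asymptotic expansion $P_{k,s}(z,z)\sim\sum_j k^{n-j} a_j(z)$ in the sense of Definition~\ref{Asymptotic sum}.

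The main obstacle I expect is controlling the $\ell$-sum uniformly in $k$ and matching it with the stationary phase error. On the support of the cutoff one has $k\psi_k = O(k^{-1/2}\log^3 k)$ because $\phi_1 = O(|z|^3)$, so each step in the telescoping product contributes a factor of this size and $u_{k,\ell,0}$ is small of order $(k^{-1/2}\log^3 k)^\ell$; this gives geometric decay in $\ell$ and lets one truncate at any finite level. The delicate bookkeeping is to show that, after truncation in both $\ell$ and $j$, the remainder satisfies the $\cC^\ell$-bounds of Definition~\ref{Asymptotic sum} uniformly in $z$, which requires tracking the joint dependence of the stationary-phase remainder on the integration dimension $n\ell$.
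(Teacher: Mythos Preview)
Your proposal has a genuine gap at the step where you claim ``the true localized kernel equals the model $P_{k\phi_0}$ multiplied by the factor $e^{2k(\psi_k(w^\nu)-\psi_k(w^{\nu+1}))}\vol_{M,k}^{-1}(w^\nu)\vol_{M,k}(w^{\nu+1})$ plus a negligible tail.'' This is false: that factor multiplied against $B_{k\phi_0}(x,y)$ is exactly the kernel of $B_{k\phi_0}^*$, the adjoint of the \emph{flat} projection taken with respect to the \emph{perturbed} inner product $(\cdot\mid\cdot)_{k\hat\phi_k}$, not the kernel of $P_{k,s}$ (or $B_{k\hat\phi_k}$). If your identity held even to two orders, the Bergman kernel would be explicit. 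Worse, feeding this approximation into your idempotence scheme $P_{k,s}=(P_{k,s})^{\ell+1}$ yields $(B_{k\phi_0}^*)^{\ell+1}=B_{k\phi_0}^*$ (it is a conjugated projection, hence idempotent), which at $(0,0)$ collapses to $k^nC_0$ and produces no higher coefficients. Idempotence of $P_{k,s}$ alone is circular: it expresses the unknown in terms of itself.

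The mechanism in the paper is different and is what you are missing. One works with \emph{two} projections, $B_{k\phi_0}$ and $B_{k\hat\phi_k}$, which have the \emph{same range} (the weighted $L^2$ spaces coincide as sets, hence so do their holomorphic subspaces) but are self-adjoint for \emph{different} inner products. From $B_{k\phi_0}B_{k\hat\phi_k}=B_{k\hat\phi_k}$ and $B_{k\hat\phi_k}B_{k\phi_0}=B_{k\phi_0}$ one takes the $(\cdot\mid\cdot)_{k\hat\phi_k}$-adjoint to obtain the fixed-point relation
\[
B_{k\hat\phi_k}=B_{k\phi_0}+B_{k\hat\phi_k}\,r_k,\qquad r_k:=B_{k\phi_0}^*-B_{k\phi_0},
\]
whose kernel is precisely $B_{k\phi_0}(x,y)$ times your ``$(\text{factor}-1)$''. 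Since $\|r_k\|=O(k^{-1/2}\log^3 k)$ this iterates to a convergent Neumann series $B_{k\hat\phi_k}=B_{k\phi_0}\sum_{\ell\ge0}r_k^\ell$, and it is to $B_{k\phi_0}\circ r_k^\ell$---not to powers of $P_{k,s}$---that stationary phase is applied, producing exactly $u_{k,\ell,0}$ and $\Delta_\ell$. The passage to the manifold is then the single input $\tau_kB_{k,s}\chi_k-\tau_kB_{k\hat\phi_k}\chi_k=O(k^{-\infty})$, after which one argues that the half-integer powers in the resulting local expansion must vanish, yielding globally smooth $a_j$.
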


We say that $P$ is a pseudodifferential operator of order $m$ on $L$ if $P$ is a continuous operator from $\cC^\infty(M, L)$ to $\cC^\infty(M,L)$ and for each $p\in M$, take a Chern-Moser trivialization $(z,s)$ on $U$ centered at $p$,  we can write $P(u\otimes s^k) = (e^{k\phi} P_s(e^{-k\phi} u))\otimes s^k$ with $P_s $ is a pseudodifferential operator of order $m$. We will denote the symbol of $P_s$ by $p(z,\theta) \sim \sum_{j=0}^\infty p_j(z,\theta)\in S^m_{1,0}(U\times\bbR^{2n}) $ with $p_j \in S^{m-j}_{1,0}(U\times \bbR^{2n})$, where $S^N_{1,0}(U\times \bbR^{2n}) $ denotes the Hörmander symbol space of order $N$ and of type $(1,0)$ \cite{grigis1994microlocal}. In this thesis, we only use the type $(1,0)$, so we omit the subscript in the following. Moreover, we say that $P$ is classical if $p_j(z,\lambda\theta)= \lambda^{m-j} p_j(z,\theta)$ for all  $\lambda\ge 1$, $\theta \ne 0 $ and $j$.
\begin{thm}
\label{full-pseuToeplitz}
    Using the notations above. Let $M$ be a compact complex manifold $M$ with Hermitian line bundle $(L,h^L)$, $R^L>0$ on $M$, and $P$ is a classical pseudodifferential operator of order $m$ on $L$. There are $t_j(z) \in \cC^\infty(M)$ for $j=1,2,\ldots$ such that $$T_{P,k}(z)\sim \sum_{j=0}^\infty k^{n+m-j} t_j(z)$$ in $\cC^\infty(M)$.

    More precisely, let $p\in M$ and let $(z,s)$ be the Chern-Moser trivialization on $U$ centered at $p$, we can compute the Toeplitz operator by
    $$
    \begin{aligned}
        \chi_k T_{P,k}\chi_k(0,0) &= k^m B_{k\phi_0}(0,0)\sum_{j,\ell=0}^\infty k^{-j}\sum_{j_1,j_2=0}^\infty \frac{(2k)^{-j_1}}{j_1!}\frac{(2k)^{-j_2}}{j_2!}\sum_{\ell_1+\ell_2=\ell}\\
        &\Delta_{\ell+1}^{j_1} \[u_{k,\ell_1,w^{\ell_1+1}}\frac{e^{k\psi_k(w^{\ell_1+1})-k\psi_k(w^{\ell_1+2})}}{\sqrt{\vol(w^{\ell_1+1})}}(\Delta_{z,\theta}^{j_2}\wt{b_j})(w^{\ell_1+1},w^{\ell_1+2})u_{k,\ell_2,0}\](0),
    \end{aligned}
    $$ 
    if $\ell_2 = 0 $, $w^{\ell_1+2} = 0$, where $u_{k,\ell,z}$ and $\Delta_{\ell} $ are same as above, $p(z,\theta)\sim \sum_{j=0}^\infty p_j(z,\theta)$, $$b_\ell(z,\theta,x,y)= e^{-k\psi_k(z)+k\psi_k(y)}p_\ell(x,\theta)\tau_k(z)\sqrt{\vol(z)},$$
    and $$\Delta_{\theta,z} = \sum_{j=1}^n  2\lambda_j \(\frac{\dr^2 }{\dr \theta_j^1\dr \theta_j^1 }+\frac{\dr^2 }{\dr \theta_j^2\dr \theta_j^2 }\)-2i \sum_{j=1}^n \(\frac{\dr^2}{\dr z_j^1\dr \theta_j^1}+ \frac{\dr^2}{\dr z_j^2\dr \theta_j^2}\),$$
    with $\ti{z}_j=x_j$, $\theta_j^1(x,y) = 2i \lambda_j(x_j^1- \overline{y}_j)$, $\theta_j^2(x,y) = 2i \lambda_j(x_j^2- i \overline{y}_j)$ and  $\ti{\quad}$ means an almost analytic extension (see Theorem \ref{stationary phase 2}).
\end{thm}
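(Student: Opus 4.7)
The plan is to mirror the proof of Theorem \ref{Bergman Full Expansion Theorem} verbatim, while inserting a pseudodifferential operator in the middle of the Toeplitz composition. Because $M$ is compact and $R^L>0$, the Kodaira Laplacian $\kla_k$ has a global spectral gap, which in particular provides the local spectral gap hypothesis needed for Theorem \ref{Manifold Case Main Theorem in Introduction} and, more importantly, for all the machinery of Section \ref{Full Expansion}. For a fixed $p\in M$, I would pick a Chern-Moser trivialization $(z,s)$ centered at $p$ and a cutoff $\chi_k$ supported in a ball of radius $O(\log k/\sqrt{k})$. The off-diagonal decay of $P_{k,s}(x,y)$, together with the spectral gap, then reduces the problem to computing $\chi_k T_{P,k,s}\chi_k(0,0)$, with the outer Bergman projections replaced by $\chi_k P_{k,s}\chi_k$ modulo $O(k^{-\infty})$.

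Next, following the derivation of Theorem \ref{Bergman Full Expansion Theorem}, I would write $P_{k,s}$ locally as the model Bergman-Fock projection $B_{k\phi_0}$ plus an iterated correction: one expands $e^{-2k\psi_k}\mathrm{vol}_{M,k}$ around the exact Bergman-Fock weight $e^{-2k\phi_0}$, producing a series of kernels built from the $u_{k,\ell,z}$ in the statement. Carrying this out on \emph{both} sides of the middle operator $P_s$ gives a representation of $\chi_k P_{k,s}\chi_k P_s \chi_k P_{k,s}\chi_k$ as a double series indexed by $\ell_1$ (corrections on the right) and $\ell_2$ (corrections on the left); the sum over $\ell=\ell_1+\ell_2$ in the claim simply records the total number of correction insertions.

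The genuinely new input is the PDO sandwiched in the middle. Using the classical symbol expansion $p(z,\theta)\sim\sum_j p_j(z,\theta)$, one writes $P_s$ as an oscillatory integral with phase $(z-w)\cdot\theta$, so the composition becomes a many-fold integral whose integrand is a product of Bergman-Fock Gaussians, the amplitude $b_\ell(z,\theta,x,y)$ (which absorbs the cutoff $\tau_k$, the volume factor, and the conjugation by $e^{k\psi_k}$), and the oscillatory exponential. The stationary-phase conditions in the $(\theta,z)$-variables are solved by the critical points $\theta_j^1(x,y)=2i\lambda_j(x_j^1-\overline{y}_j)$, $\theta_j^2(x,y)=2i\lambda_j(x_j^2-i\overline{y}_j)$ listed in the theorem, and the Hessian of the phase with respect to $(\theta,z)$ is precisely the second-order operator $\Delta_{\theta,z}$. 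Applying the complex stationary phase formula (Theorem \ref{stationary phase 2}) to expand in $j_2$, together with the Bergman-type stationary phase applied to the composition kernels (producing $\Delta_{\ell+1}^{j_1}$, exactly as in the Bergman case), yields the claimed quadruple expansion. Evaluating at $(0,0)$ and using \eqref{conjugate bergman kernel} gives the prefactor $k^m B_{k\phi_0}(0,0)$ and overall leading order $k^{n+m}$.

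The main obstacle will be the bookkeeping: one must verify that the quadruple sum over $j,\ell,j_1,j_2$ assembles into a genuine asymptotic expansion in the sense of Definition \ref{Asymptotic sum}. Concretely, the $j_1$- and $j_2$-th stationary-phase remainders must gain $k^{-j_1-j_2}$ uniformly in the other indices, while the $j$-sum inherits $k^{-j}$ from the order drop of $p_j$ in the classical symbol expansion. The uniformity statements are standard once the relevant amplitudes are shown to lie in Hörmander symbol classes $S^{m-j}$, but checking that the factors $u_{k,\ell,z}$ do not spoil that uniformity requires the same delicate estimates on the cutoff scale $\log k/\sqrt{k}$ that were already exploited for the Bergman kernel expansion. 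Finally, uniformity in $p\in M$ — required for the $\cC^\infty(M)$ statement — follows from the smooth dependence of the Chern-Moser trivialization on the base point and compactness of $M$.
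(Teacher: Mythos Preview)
Your proposal is correct and follows essentially the same route as the paper: reduce to the local model via off-diagonal decay and Proposition~\ref{k-infty approx}, replace each $B_{k\hat\phi_k}$ by the Neumann series $B_{k\phi_0}(1+r_k+r_k^2+\cdots)$ from \eqref{full eqn1}, and then apply stationary phase twice. The only point worth sharpening is the order of operations in the middle: the paper explicitly composes $\tau_k\ti P_k\tau_k$ with the \emph{adjacent} copy of $B_{k\phi_0}$ first, obtaining a single oscillatory integral with phase $\Psi(z,\theta,x,y)=(x-z)\cdot\theta+i\sum_j\lambda_j(|z_j|^2+|y_j|^2-2z_j\ol y_j)$, and applies Theorem~\ref{stationary phase 2} in $(z,\theta)$ to that block; only afterwards does it run the Bergman-type stationary phase over the remaining $w^1,\dots,w^{\ell+1}$ variables to produce $\Delta_{\ell+1}^{j_1}$. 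Your description is compatible with this, but making the two-stage structure explicit is what keeps the bookkeeping tractable and yields the stated formula directly.
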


Using the notation introduced above and with more details provided in Section~\ref{Full Expansion}, let $P$ and $Q$ be pseudodifferential operators of order $m_1$ and $m_2$, respectively. The result of Theorem~\ref{full-pseuToeplitz} yields the following deformation quantization that generalizes the classical result of Kontsevich to pseudodifferential operators: 
\begin{thm}
$$[T_{P,k},T_{Q,k}] = T_{[P,Q],k} + \frac{1}{k}T_{D_1(P,Q)-D_1(Q,P),k}+O(k^{-2})$$ in the $L^2$-sense, where $[\cdot,\cdot] $ denotes the commutator of two pseudodifferential operators. The operator  $D_1(P,Q)-D_1(Q,P)$ is a pseudodifferential operator of order $m_1+m_2$, whose principal symbol satisfies $$\sigma_0(D_1(P,Q)-D_1(Q,P))(z,-Jd\phi(z))  = i\{p_0(z,-Jd\phi(z)),q_0(z,-Jd\phi(z))\}_L$$ where $\sigma_0$ denotes the principal symbol and $\{\cdot,\cdot\}_L$ is the Poisson bracket associated with the line bundle $L$ (cf. Definition~\ref{Line bundle Poisson bracket}). 
\end{thm}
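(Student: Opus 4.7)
The plan is to establish a composition rule for Toeplitz operators with pseudodifferential symbols and then antisymmetrize. Iterating the stationary phase argument underlying Theorem~\ref{full-pseuToeplitz} applied to $T_{P,k}\circ T_{Q,k}=P_k\, P\, P_k\, Q\, P_k$, I expect an asymptotic expansion of the form
\[
T_{P,k}\circ T_{Q,k} \sim \sum_{j=0}^{\infty} k^{-j}\, T_{D_j(P,Q),\,k}
\]
in the $L^2$-operator sense, where each $D_j(P,Q)$ is a pseudodifferential operator of order $m_1+m_2$ whose symbol is a universal bidifferential expression in $P$, $Q$ and in the derivatives of the local weight $\phi$ and the volume factor. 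The leading term satisfies $D_0(P,Q)=PQ$: at the critical covector $(z,-Jd\phi(z))$ of the composed phase, the zeroth stationary-phase contribution is just the product of the full symbols, which reproduces the pseudodifferential composition.

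Subtracting the analogous expansion for $T_{Q,k}\circ T_{P,k}$ then yields
\[
[T_{P,k},T_{Q,k}] = T_{[P,Q],k} + \tfrac{1}{k}\,T_{D_1(P,Q)-D_1(Q,P),\,k} + O(k^{-2}),
\]
since $D_0(P,Q)-D_0(Q,P)=[P,Q]$, with the tail controlled by the uniform remainder bounds from Theorem~\ref{full-pseuToeplitz}. The operator $D_1(P,Q)$ is to be read off from the $k^{-1}$-coefficient of that theorem, and it contains a single application of the second-order operator $\Delta_{\theta,z}$ to an almost analytic extension of the symbols.

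To compute $\sigma_0(D_1(P,Q)-D_1(Q,P))$ at $(z,-Jd\phi(z))$, I would decompose $\Delta_{\theta,z}$ into the pure $\partial_\theta^2$ part (with coefficient $2\lambda_j$) and the mixed $\partial_z\partial_\theta$ part (with coefficient $-2i$). The pure $\theta$-part produces only terms that are symmetric under swapping $(P,Q)$ at the critical covector and hence cancels after antisymmetrization, while the mixed part yields a skew-symmetric bilinear form in $\partial_z p_0$ and $\partial_\theta q_0$. Changing from the Chern--Moser variables $\theta_j^{1,2}$ to canonical cotangent coordinates on $T^*M$ converts this skew form into the standard symplectic Poisson bracket; its restriction to the Lagrangian $\{\theta=-Jd\phi(z)\}$ is, by Definition~\ref{Line bundle Poisson bracket}, exactly the line-bundle Poisson bracket $\{p_0,q_0\}_L$. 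The factor of $i$ in the claim comes from the $-2i$ coefficient in $\Delta_{\theta,z}$.

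The main obstacle will be this last identification: one must carefully track how the mixed second-order stationary-phase operator descends to the canonical symplectic structure on $T^*M$ and how the graph Lagrangian $\{\theta=-Jd\phi(z)\}$ emerges from the critical set of the composed phase. As a sanity check I would specialize to $M=\bbC^n$ with $\phi=\phi_0$, where $P_{\phi_0}$ is the explicit kernel \eqref{conjugate bergman kernel} and the kernel of $T_{P,k}T_{Q,k}$ reduces to a Gaussian integral; this should reproduce the stated Poisson-bracket expression directly and pin down the overall normalization. Since both sides of the asserted formula are invariantly defined, agreement in this model case then propagates to the general compact polarized setting.
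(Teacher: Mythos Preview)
Your overall architecture---establish a composition expansion and antisymmetrize---matches the paper. But the mechanism you identify for the antisymmetric $k^{-1}$ term is wrong, and this is not just a normalization issue.

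You propose to read off $D_1(P,Q)$ from a single application of $\Delta_{\theta,z}$ and then argue that its pure $\partial_\theta^2$ part is symmetric in $(P,Q)$ while the mixed $\partial_z\partial_\theta$ part is skew. The problem is that in Theorem~\ref{full-pseuToeplitz} the operator $\Delta_{\theta,z}$ acts on the symbol of a \emph{single} pseudodifferential factor (it arises from the stationary phase combining $\tilde P_k$ with one copy of $B_{k\phi_0}$). It never sees $p_0$ and $q_0$ simultaneously, so decomposing it cannot produce the $P$--$Q$ cross terms at all. In the composition $P_kPP_kQP_k$ the interaction between $P$ and $Q$ comes from the \emph{outer} stationary phase over the chain of $w$-variables, governed by $\Delta_{\ell+2}=\sum_j\lambda_j^{-1}\bigl(\sum_\nu\partial_{w_j^\nu}\partial_{\bar w_j^\nu}+\sum_{\nu<\mu}\partial_{\bar w_j^\nu}\partial_{w_j^\mu}\bigr)$. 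The asymmetry in $(P,Q)$ is generated by the strictly triangular cross term $\sum_{\nu<\mu}\partial_{\bar w_j^\nu}\partial_{w_j^\mu}$: one derivative lands on the $P$-block (sitting at $w^{\ell_1+1}$) and the other on the $Q$-block (at $w^{\ell_1+\ell_2+2}$), and the order $\nu<\mu$ is what records which operator acts first.

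The paper accordingly does not argue symmetry/antisymmetry of pieces of $\Delta_{\theta,z}$; it writes out the full second-order expansion of both $T_{P\circ Q,k}$ and $T_{P,k}\circ T_{Q,k}$ in Chern--Moser coordinates (with $\phi_1=O(|z|^4)$), extracts $C_1(P,Q)$ as an explicit sixteen-term bilinear form in $\partial_{x},\partial_{y},\partial_{\theta^1},\partial_{\theta^2}$ of $p_0,q_0$, and then antisymmetrizes by hand. The result $C_1(P,Q)-C_1(Q,P)$ contains antisymmetric contributions from \emph{all three} sectors (pure $z$, mixed, and pure $\theta$), and only after restricting to $\theta=-Jd\phi(z)$ and separating out the piece equal to $i\{p_0,q_0\}_\Psi$ (which is absorbed into $T_{[P,Q],k}$) does the remainder collapse to $i\{p_0(z,-Jd\phi),q_0(z,-Jd\phi)\}_L$. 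In particular your claim that the pure $\partial_\theta^2$ part is symmetric is false at this level: the explicit computation produces the term $-i\lambda_j\bigl(\partial_{\theta_j^1}p_0\,\partial_{\theta_j^2}q_0-\partial_{\theta_j^2}p_0\,\partial_{\theta_j^1}q_0\bigr)$. If you want a conceptual shortcut, the object to analyze is the triangular part of $\Delta_\ell$, not $\Delta_{\theta,z}$.
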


\section{Preliminaries}
\label{Preliminaries}
\subsection{Some Standard Notations}
We denote $\bbN:=\{1,2,\ldots\}$ by the set of natural numbers and $\bbN_0 = \bbN\cup \{0\}$. For a multi-index $\ga = (\ga_1,\ldots, \ga_n)\in \bbN_0^n$, we denote $|\ga|:= \sum_{j=1}^n \ga_j$ and $\ga!= \ga_1!\cdots \ga_n!$. In $\bbC^n$, $B(z,r) $ denoted the open ball with radius $r>0$ and center $z\in \bbC^n$. We also denote the natural inner product on $\bbC^n$ to be $\<z|w\>_{\bbC^n}= \sum_{j=1}^n z_j \ol{w}_j$ where $z= (z_1,\ldots, z_n) $ and $w= (w_1,\ldots,w_n)$. Let $\delta_{j,\ell} $ be Kronecker delta, which equal to $1$ if $j=\ell$ and $0$ otherwise.

\subsection{Complex Manifold}
\label{Complex Manifold}

We omit certain foundational concepts of complex geometry, including complex manifolds, Hermitian metrics, and holomorphic sections. For detailed discussions on these topics, we refer to  \cite{huybrechts2005complex} and   \cite{demailly1997complex, demailly2012analytic}

Let $M$ be an $n$-dimensional complex manifold. We introduce some standard notations of function spaces. Let $U$ be an open subset of $M$. Denote $\cC^\infty(U)= \gO(U)$ the space of smooth functions on $U$ and $\cC^\infty_c(U) = \gO_c(U) $ be the subspace of $\cC^\infty(U) $ whose elements have compact support in $U$. Let $H^0(U)$ denote the space of holomorphic functions on $U$. Let $E\rightarrow M$ be a complex vector bundle, we denote $\cC^\infty(U,E)= \gO(U,E)$ the space of smooth sections over $U$ and $\cC^\infty_c(U,E) = \gO_c(U,E) $ be the subspace of $\cC^\infty(U,E) $ whose elements have compact support in $U$. Also, we denote $\cD'(U, E)$ the space of distribution sections of $E$ over $U$ and $\cE'(U, E) $ the subspace of $\cD'(U, E)$ whose elements have compact support in $U$. For $E$ a holomorphic vector bundle over $U$, we let $H^0(U, E)$ be the space of holomorphic sections of $ E$ over $U$. 

Let $M$ be a complex manifold with complex structure $J: TM\rightarrow TM$. Hence, $J$ induces an eigenspace decomposition $\bbC TM = T^{(1,0)}M\oplus T^{(0,1)} M$ where $\bbC TM$ denotes the complexified tangent bundle, $T^{(1,0)} M $ is the $\sqrt{-1}$-eigenspace and $T^{(0,1)}M$ is $-\sqrt{-1}$-eigenspace of $J$. We also have the eigenspace decomposition for $\bbC T^* M = T^{*,(1,0)} M\otimes T^{*,(0,1)} M$ where $\bbC T^*M$ denotes the complexified cotangent bundle, $T^{*,(1,0)}M $ and $T^{*,(0,1)}M $ are the dual bundles of $T^{(1,0)}M $ and $T^{(0,1)}M $ respectively. Denote the $(p,q)$-forms $\gO^{(p,q)}(M) = \cC^\infty(M,T^{*,(p,q)}M)$ where $T^{*,(p,q)}M = \bigwedge^p T^{*,(1,0)}M\oplus\bigwedge^q T^{*,(0,1)} M$, also the $(p,q)$-form valued in vector bundle $E$ over $M$, $\gO^{(p,q)}(M,E) = \cC^\infty(M, T^{*,(p,q)}M \otimes E) $.

When we fix a positive $(1,1)$-form $\go$ on $M$, which called Hermitian form, then it can induce an inner product $\<\cdot|\cdot\>_{\go}$ on $ T^{(1,0)}M$ and can be extended to $\bbC TM$. Locally, we have $$\go= \sum_{j,\ell=1}^\infty \go_{j,\ell} dz_j\wedge d\ol{z}_\ell,\, \<\frac{\dr}{\dr z_j}| \frac{\dr }{\dr z_\ell}\>_{\go} = \go_{j,\ell},\,\<\frac{\dr}{\dr z_j} | \frac{\dr }{\dr\ol{z}_\ell}\>_{\go} =0, \text{ and }\<\frac{\dr}{\dr \ol{z}_j}|\frac{\dr}{\dr \ol{z}_j}\>_{\go}  = \ol{\go}_{j,\ell}. $$
By duality, we know $$\<dz_j|dz_\ell\>_{\go}  = \go^{\ell,j},\, \<dz_j| d\ol{z}_\ell\>_{\go}  =0,\text{ and } \<d\ol{z}_j| d\ol{z}_\ell\>_{\go}  = \ol{\go}^{\ell,j}$$ where $\(\go^{j,\ell}\)_{j,\ell=1}^n $ is the inverse of $\(\go_{j,\ell}\)_{j,\ell=1}^n$. For $(0,q)$-form, we define $$\<u_Idz_I| v_Jdz_J\>_{\go,q}= u_I\ol{v}_J \frac{\det (\ol{\go}^{j_\ell,i_\ell})_{\ell=1}^n}{q!} $$ with $I=\{i_1,\ldots, i_q\},\,J=\{j_1,\ldots,j_q\}$ and $dz_I = d\ol{z}_{i_1} \wedge \cdots \wedge d\ol{z}_{i_q}$. Also, we will let $d\vol_M$ be the volume form of $M$ induced by $\go$, i.e. $d\vol_M = \frac{\go^n}{n!}$.

For a holomorphic line bundle $L\rightarrow M$ with Hermitian metric $h^L$, given a holomorphic trivialization $\{(U_j, s_j)\}$ of $L \rightarrow M$, where $\{U_j\} $ is an open covering of $M$ and $s_j: U_j\rightarrow L$ is a non-zero holomorphic section of $L$, we define the \textbf{local weight} $\phi_j\in \cC^\infty(U_j, \bbR)$ of $h^L$ with respect to $s_j$ by $|s_j|_{h^L}^2= e^{-2\phi_j} $. For $u,v\in \cC^\infty(U_j, L)$, we can write $u= f_j s_j $ and $v= g_js_j$ for some $f_j, \, g_j \in \cC^\infty(U_j, \bbC) $ with $\<u|v\>_{h^L}= f_j\ol{g}_je^{-2\phi_j} $. For $L^k := L^{\otimes k}$, there is a natural Hermitian metric $h^{L^k}$ on $L^k$ induced by $h^L$. Thus, the local weight of $L^k$ is $k\phi_j$ if $\phi_j$ is the local weight of $L$. Let $R^L(h^L) = R^L$ be the \textbf{curvature form} on $M$ induced by $h^L$. Recall that for a holomorphic trivialization $(U,s)$ of $(L, h^L) $ over $U$ and $\phi$ be the local weight of $h^L$, the curvature form is locally given by $$R^L(h^L) = 2\dr \dbar \phi = 2 \sum_{j,\ell = 1} ^n \frac{\dr^2 \phi}{\dr z_j \dr \ol{z}_\ell } dz_j \wedge d\ol{z}_\ell.$$ We can define the \textbf{curvature operator} $\dot{R}^L \in \cC^\infty(M,\End (T^{(1,0)} M )) $ as $$\sqrt{-1}R^L(h^L)(p) (v\wedge\ol{w}) = \< \dot{R^L}(h^L)(p) v| w\>,\, v,w\in T^{1,0}M.$$ 

\begin{lem}
\label{chern-moser coordinate}
    Let $M$ be a complex manifold, and let $(L,h^L)$ be a Hermitian line bundle over $M$. Fix a point $p\in M$, we can choose a local complex coordinate $(z_1, \ldots, z_n) $ on an open neighborhood $U\subset M$ of $p$ and a holomorphic trivializing section $s\in H^0(U, L) $ such that $z_j(p)=0 $, $\<\frac{\dr}{\dr z_j}|\frac{\dr }{\dr z_\ell}\>  = \delta_{j,\ell}+O(|z|)$ for $j,\ell=1,\ldots, n$, and $|s(z)|^2_{h^L} = e^{-2\phi(z)} $  with local weight $\phi(z) = \sum_{j=1}^n \lambda_{j,p} |z_j|^2+ O(|z|^3)$, where $2\lambda_{j,p} $ are eigenvalues of curvature operator at $p$. We usually denote $\phi_0(z) = \sum_{j=1}^n \lambda_{j, p} |z_j|^2$.
\end{lem}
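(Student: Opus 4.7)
The plan is a standard three-stage normalization. Start from arbitrary holomorphic coordinates $w$ centered at $p$ and an arbitrary holomorphic trivializing section $s_0$ of $L$ on a neighborhood of $p$.

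\textbf{Stage 1 (normalize the metric at $p$).} The matrix $H := \bigl(\langle \partial/\partial w_j | \partial/\partial w_\ell\rangle_p\bigr)_{j,\ell}$ is Hermitian positive definite, so I pick $A \in GL_n(\bbC)$ with $A^* H A = I$ and introduce new holomorphic coordinates $z = A^{-1} w$. Then $\langle \partial/\partial z_j | \partial/\partial z_\ell\rangle_p = \delta_{j,\ell}$, and smoothness gives $\delta_{j,\ell} + O(|z|)$ in a neighborhood of $p$.

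\textbf{Stage 2 (strip off the pluriharmonic $2$-jet of the weight).} Writing $|s_0|_{h^L}^2 = e^{-2\psi(z)}$ and Taylor-expanding the real-valued $\psi$ at $0$ yields
$$\psi(z) = \psi(0) + 2\,\Re\!\Bigl(\sum_j a_j z_j + \sum_{j,\ell} b_{j,\ell} z_j z_\ell\Bigr) + \sum_{j,\ell} c_{j,\ell}\, z_j \bar z_\ell + O(|z|^3),$$
with $(c_{j,\ell})$ Hermitian. Set $f(z) := -\psi(0) - 2\sum_j a_j z_j - 2\sum_{j,\ell} b_{j,\ell} z_j z_\ell$, a holomorphic polynomial, and replace $s_0$ by the still-holomorphic trivializing section $s := e^{-f(z)}\,s_0$. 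Since $|s|_{h^L}^2 = e^{-2\Re f}\,|s_0|_{h^L}^2$, the new local weight is
$$\phi(z) := \psi(z) + \Re f(z) = \sum_{j,\ell} c_{j,\ell}\, z_j \bar z_\ell + O(|z|^3).$$

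\textbf{Stage 3 (diagonalize and identify with the curvature).} Because $C := (c_{j,\ell})$ is Hermitian, there exist a unitary $V \in U(n)$ and real numbers $\lambda_1, \ldots, \lambda_n$ such that the linear change of variables $z = V z'$ converts $\sum c_{j,\ell} z_j \bar z_\ell$ into $\sum_j \lambda_j |z'_j|^2$. Unitarity of $V$ preserves the identity metric at $p$, so $\langle \partial/\partial z'_j|\partial/\partial z'_\ell\rangle = \delta_{j,\ell} + O(|z'|)$ still holds, and since $V$ is linear the higher-order remainder stays $O(|z'|^3)$. Renaming $z'$ as $z$ yields the coordinates and trivialization in the statement. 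Finally, $R^L = 2\,\partial\bar\partial\phi$ evaluates at $p$ to $2\sum_j \lambda_j\, dz_j \wedge d\bar z_j$, and combining this with the orthonormality of $\partial/\partial z_j$ at $p$ in the definition $\sqrt{-1}\,R^L(p)(v\wedge\bar w) = \langle \dot R^L(p) v|w\rangle$ identifies the $2\lambda_j$ as the eigenvalues of $\dot R^L$ at $p$.

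\textbf{Main difficulty.} There is no deep obstacle; the whole argument is linear-algebraic bookkeeping. The one point deserving attention is Stage 2, which rests on the elementary fact that a real polynomial in $(z,\bar z)$ of degree $\le 2$ whose mixed $\partial\bar\partial$-part vanishes is the real part of some holomorphic polynomial of degree $\le 2$; this is what allows the whole pluriharmonic $2$-jet of $\psi$ to be absorbed into a single holomorphic change of trivialization, leaving the Hermitian part intact for Stage 3 to diagonalize.
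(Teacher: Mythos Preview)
Your proof is correct and follows essentially the same approach as the paper's own proof: normalize the metric at $p$ by a linear coordinate change, absorb the pluriharmonic $2$-jet of the weight into a holomorphic change of trivialization, and diagonalize the remaining Hermitian form. The only cosmetic difference is the order in which the steps are carried out: the paper first chooses coordinates $w$ in which \emph{both} the metric is orthonormal at $p$ and the curvature operator $\dot R^L(p)$ is already diagonal (i.e.\ it fuses your Stages~1 and~3 into the initial choice of $w$), and only then performs the change of trivialization; you instead sandwich the trivialization change between the metric normalization and the unitary diagonalization. Both orderings work because the unitary $V$ preserves the orthonormality achieved in Stage~1, which you correctly note.
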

We will call $(z,s)$ a Chern-Moser trivialization on $U$ centered at $p$.
\begin{proof}
    Let $w = (w_1,\ldots, w_n)$ be a complex coordinate of $M$ on an open neighborhood $U$ of $p$ such that $\<\frac{\dr}{\dr w_j}|\frac{\dr }{\dr w_\ell}\>  = \delta_{j,\ell}+O(|w-w(p)|)$ and $\dot{R^L}(h^L)(p)\frac{\dr}{\dr w_j} = \mu_j\frac{\dr}{\dr w_j}$. Consider $z_i(x) = w_i(x) - w_i(p) $, then $z(p) = 0 $ and still be a complex coordinate. We consider a small enough open neighborhood $V\subset U$ of $p$ such that $L|_V$ can be trivialized by $s_1: V\rightarrow L$. Suppose $|s_1|_{h^L}^2 = e^{-2\phi_1(z)}$ with $\phi_1\in \cC^\infty(V,\bbR)$. By Taylor expansion of $\phi_1$, 
    we have 
    $$
    \begin{aligned}
    \phi_1(z) &= \phi_1(0)+ \sum_{j=1}^n [\frac{\dr \phi_1}{\dr z_j }(0) z_j + \frac{\dr \phi_1}{\dr \ol{z}_j} (0)\ol{z}_j] \\
    &+ \sum_{j,\ell=1}^n [\frac{\dr^2 \phi_1}{\dr z_j \dr z_\ell}(0) z_jz_\ell+\frac{\dr^2 \phi_1}{\dr z_j \dr \ol{z}_\ell}(0) z_j\ol{z}_\ell+\frac{\dr^2 \phi_1}{\dr \ol{z}_j \dr \ol{z}_\ell}(0) \ol{z}_j\ol{z}_\ell]+ O(|z|^3).
    \end{aligned}
    $$
    Consider $\psi(z) = \phi_1(0)+ 2\sum_{j=1}^n \frac{\dr \phi_1}{\dr z_j }(0) z_j + \sum_{j,\ell=1}^n \frac{\dr^2 \phi_1}{\dr z_j \dr z_\ell}(0) z_jz_\ell$ and $s= e^{\psi(z)} s_1(z) $. The local weight $\phi$ of $h $ with respect to $s$ then satisfy $$\phi(z) = \phi_1 - \frac{\psi(z) + \ol{\psi(z) }}{2} = \sum_{j,\ell}  \frac{\dr^2 \phi_1 }{\dr z_j \dr \ol{z}_\ell} (0) z_j \ol{z}_\ell + O(|z|^3).$$

    Since that we take the coordinate $z$ is some translate of $w$, and thus $\begin{pmatrix}
        \frac{\dr^2\phi}{\dr z_j \dr\ol{z}_\ell}(0)
    \end{pmatrix}_{j,\ell}$, is diagonal Hermitian matrix.
    By the above discussion, we have the expected section $s$ and corresponding local weight $\phi$, which proves this lemma. 
\end{proof}

Recall that, given a Hermitian metric $\go$ on $M$, we have a volume form $d\vol_M= \frac{\go^n}{n!}$ and inner product on $(0,q)$-form.  Using this, we can introduce the \textbf{$L^2$-inner product} on various function spaces. First, for  $\gO_c(M)$, we have $$(f|g)_M = \int \<f|g\>_{\bbC} d\vol_M \text{ for all } f, g\in \gO_c(M).$$  Let $L^2(M)$ be the completion of $\gO_c(M)$ with respect to the inner product $(\cdot | \cdot)_M$ and its corresponding norm $\|\cdot \|_M$. For any open set $U$ in $M$, denote $$(f|g)_U = \int_U \<f|g\>_{\bbC} d\vol_M \text{ for all } f,g\in \gO_c(U). $$ $L^2(U,d\vol_M)$ is the completion of $\gO_c(U) $ with respect to $(\cdot | \cdot)_U$ and its corresponding norm $\|\cdot \| _U $. If $d\vol_M = dV_{\bbC^n}$ on $U$, we will denote $L^2(U,d\vol_M)$ as $L^2(U)$ for simplify.

For the sections, we can similarly define $$\(u|v\)_{k} = \int \<u|v\>_{h^{L^k}} d\vol_M\text{ for all } u,v\in \gO_c(M,L^k). $$ Then $\|\cdot \|_{k}$, $L^2(M,L^k)$, $\(\cdot| \cdot\)_{k,U}$, $\|\cdot\|_{k,U}$, and  $L^2(U,L^k)$ are also similar defined. 

For $(0,q)$-form,  we locally have  $$ \<u|v\>_{h^{L^k},\,\go,q} = \<\hat{u}\otimes s| \hat{v}\otimes t\>_{h^{L^k},\,\go} = \<\hat{u}|\hat{v}\>_{\go,q}\<s|t\>_{h^{L^k}} \text{ for } u,v\in \cC^\infty(M, T^{*,(0,q)}M\otimes L^k).$$ Then we also have an inner product defined by 
$$(u,v)_{M,q} =\int_{M}\<u|v\>_{\go,q} d\vol_M \text{ for all } u,v\in \gO_c^{(0,q)} (M)$$
and
\begin{equation}
\label{inner product}
(u|v)_{k,q} = \int \<u|v\>_{h^{L^k},\,\go,\,q} d \vol_M \text{ for all } u,v\in \gO_c^{(0,q)} (M,L^k).
\end{equation}
Also, the related norm and $L^2$-space will use similar notation to the section case. 

Note that we will fix a Hermitian form $\go$ in the following, and sometimes, for simplicity, we will ignore the subscripts $q$, $\go$, and $M$.

\subsection{Differential Operators}
\label{Differntial Operators}

In this section, we will just discuss manifold cases. The orbifold cases are straightforward. We will refer  \cite{huybrechts2005complex},  \cite{demailly1997complex,demailly2012analytic} and  \cite{ma2007holomorphic} for detail either manifold or orbifold cases.

Let $\dbar_k^{(q)}: \gO^{(0,q)}(M,L^k)\rightarrow \gO^{(0,q+1)} (M,L^k) $ be the \textbf{Cauchy-Riemann operator}, which can extend to $\Dom \dbar_k^{(q)} = \{u\in L^2_{(0,q)} (M,L^k)| \dbar_k^{(q)} u \in L^2_{(0,q+1)}(M,L^k)\}\subset L^2_{(0,q)}(M,L^k)$, and $\dbar_k^{*,(q+1)}: \Dom \dbar_k^{*,(q+1)} \subset L^2_{(0,q+1)} (M,L^k) \rightarrow L^2_{(0,q)}(M,L^k)$ be the Hilbert space adjoint with respect to $(\cdot | \cdot)_{k}$. The \textbf{Kodaira Laplacian} is define by $$\kla_k^{(q)} = \dbar_k^{*,(q+1)}\dbar_k^{(q)}+ \dbar_k^{(q-1)} \dbar_k^{*,(q)}: \gO^{(0,q)}(M,L^k)\rightarrow \gO^{(0,q)}(M,L^k)$$ which have the semi-positive and self-adjoint \textbf{Gaffney extension} \cite{Gaffney1955HilbertSM}, \cite{ma2007holomorphic} \begin{equation} \label{Gaffney Extension}\kla_k^{(q)}: \Dom \kla_k^{(q)} \subset L^2_{(0,q)}(M, L^k) \rightarrow L^2_{(0,q)}(M, L^k)\end{equation} with $$\Dom \kla_k^{(q)} := \{u\in \Dom \dbar_k^{(q)} \cap \Dom \dbar_k^{*,(q)} | \dbar_k^{(q)} u \in \Dom \dbar_k^{*,(q+1)}, \, \dbar_k^{*,(q)} u \in \Dom \dbar_k^{(q-1)}\}.$$ In the following, we will denote $\kla_k^{(0)} $ as $\kla_k $. 

For any local holomorphic trivialization $s: U \rightarrow L$ with local weight $\phi$, we can have a \textbf{$L^2$ identification} as follows. Through this trivialization, for any $u_1$, $u_2\in \gO_c^{(0,q)}(U)$ and $u_1\otimes s^k$, $u_2\otimes s^k\in \gO^{(0,q)}(U, L^k)$, $$\(u_1 \otimes s^k| u_2 \otimes s^k\)_k = \( u_1 e^{-k\phi}| u_2e^{-k\phi}\)_{U}.$$ We localize the smooth sections from $U$ to $L^k$ by a unitary map $$ L^2_{(0,q)}(U, L^k) \simeq  L^2_{(0,q)}(U,d\vol_M) \text{ by } s^k\otimes u \leftrightarrow e^{-k\phi} u .$$ We denote this identification $\tau: s^k\otimes u \rightarrow e^{-k\phi}u$. This unitary map gives an idea to define the \textbf{localized Cauchy-Riemann operator} $\dbar_{k,s}$ such that $$s^k\otimes e^{k\phi}(\dbar_{k,s}^{(q)} e^{-k\phi}u) =  \dbar_k^{(q)}(s^k \otimes u), $$  and its adjoint $\dbar_{k,s}^*$, which satisfy $$s^k\otimes e^{k\phi}(\dbar_{k,s}^{*,(q)} e^{-k\phi}u) =  \dbar_k^{*,(q)}(s^k \otimes u), $$ with respect to $(\cdot|\cdot)_U$, and the \textbf{localized Kodaira Laplacian} $$\kla_{k,s}^{(q)}= \dbar_{k,s}^{*,(q+1)}\dbar_{k,s}^{(q)}+ \dbar_{k,s}^{(q-1)} \dbar_{k,s}^{*,(q)}.$$

For $e\in T^{*,(0,1)}M $, $e^{\wedge,*}: T^{*,(0,q+1)} M\rightarrow T^{*,(0,q)}M $ denote the adjoint of $e^\wedge: T^{*,(0,q)}M \rightarrow T^{*,(0,q+1)}M$ with respect to $\<\cdot | \cdot \>$, i.e. $\<e^\wedge u | v\> = \<u|e^{\wedge,*} v\>$ for all $u\in T^{*,(0,q)} M $ and $v\in T^{*,(0,q+1)}M$. Then we can easily check that 
$$\dbar_{k,s} = \dbar + k(\dbar \phi)^{\wedge},$$
$$\dbar_{k,s}^* = \dbar^*+ k(\dbar \phi)^{\wedge,*}$$
and 
$$\kla_{k,s} =\dbar_{k,s}^*  \dbar_{k,s}+\dbar_{k,s}\dbar_{k,s}^* $$
where $\dbar^*$ is the formal adjoint of $\dbar$ with respect to the standard $L^2$-inner product in $U$. For $p\in U$, we take $\{e_j\}_{j=1}^n$ be an orthonormal frame of $T^{*,0,1}_p U$, let $Z_j$ be the corresponding dual of $e_j$ and $Z_j^*$ be its formal adjoint of $Z_j$ with respect to the standard inner product. Then we have the $$\dbar_{k,s} = \sum_{j=1}^n (e_j^\wedge\circ (Z_j+ kZ_j(\phi)) +(\dbar e_j)^\wedge e_j^{\wedge,*})  $$
and $$\dbar_{k,s}^* = \sum_{j=1}^n (e_j^{\wedge,*}\circ (Z_j^*+ k\ol{Z}_j(\phi)) +e_j^\wedge(\dbar e_j)^{\wedge,*}).$$ We can give an expression for the localized Kodaira Laplacian as follows, 
    \begin{equation}
    \label{Localized Kodaira Laplacian}
    \begin{aligned}
        \kla_{k,s}^{(q)} & = \dbar_{k,s}^{(q-1)}\dbar^{(q),*}_{k,s}+  \dbar_{k,s}^{(q+1),*}\dbar_{k,s}^{(q)}\\
        &= \sum_{j,\ell=1}^n (e_j^\wedge\circ (Z_j+ kZ_j(\phi)) +(\dbar e_j)^\wedge e_j^{\wedge,*}) (e_\ell^{\wedge,*}\circ (Z_\ell^*+ k\ol{Z}_\ell(\phi)) +e_\ell^\wedge(\dbar e_\ell)^{\wedge,*})\\
        &\quad+ (e_\ell^{\wedge,*}\circ (Z_\ell^*+ k\ol{Z}_\ell(\phi)) +e_\ell^\wedge(\dbar e_\ell)^{\wedge,*}) (e_j^\wedge\circ (Z_j+ kZ_j(\phi)) +(\dbar e_j)^\wedge e_j^{\wedge,*})\\
        &= \sum_{j,\ell =1}^n e_j^\wedge e_\ell^{\wedge,*}(Z_j+ kZ_j(\phi))(Z_j^*+ k\ol{Z}_j(\phi)) + e_\ell^{\wedge,*}e_j^\wedge(Z_j^*+ k\ol{Z}_j(\phi))(Z_j+ kZ_j(\phi))+ O(1)\\
        &= \sum_{j=1}^n (Z_j+ kZ_j(\phi))(Z_j^*+ k\ol{Z}_j(\phi)) + \sum_{j,\ell=1}^n e_j^\wedge e_\ell^{\wedge,*}[(Z_j+ kZ_j(\phi)),(Z_\ell^*+ k\ol{Z}_\ell(\phi))]+O(1).\\
    \end{aligned}
    \end{equation}
    where $O(1) $ denote the term of the form $$
    \begin{aligned}
    \sum_{j,\ell} &(e_j^\wedge e_\ell^\wedge(\dbar e_\ell)^{\wedge,*}) (Z_j+ kZ_j(\phi))+(e_\ell^{\wedge,*} (\dbar e_j)^{\wedge}e_j^{\wedge,*}) (Z_\ell^*+ k\ol{Z}_\ell(\phi))\\&+ (\dbar e_j)^{\wedge}e_j^{\wedge,*} e_\ell^\wedge(\dbar e_\ell)^{\wedge,*}+e_\ell^\wedge(\dbar e_\ell)^{\wedge,*} (\dbar e_j)^{\wedge}e_j^{\wedge,*}\\
    &= \sum_{j,\ell} a_{j,\ell}(Z_j+ kZ_j(\phi))+b_{j,\ell} (Z_\ell^*+ k\ol{Z}_\ell(\phi))+ c_{j,\ell}\end{aligned}$$ which is vanishing after we scaling. 

We will give some estimates about the Kodaira Laplacian in the Sobolev space. We refer  \cite{ma2007holomorphic},  \cite{taylor2010partial} and  \cite{taylor2010partial2} for the detail. As in the smooth case, for $m\in \bbN \cup \{\infty\} $, denote $\cC^m(M, E) $ and $\cC^m_0(M, E) $ be the space $\cC^m$ section of $E$ on $M$. For a compact set $K\subset M$ we denote $\cC^m_0(K,M):= \{ s\in \cC^m_0(M,E)| \supp(s) \subset K\}.$ Then we define \textbf{ $\cC^m(M)$-norm} and \textbf{Sobolev norm} $\|\cdot \|_m$ for $s\in \cC^m_0 (M,E)$ such that $$|s|_{\cC^m(M)} =\sum_{\ell= 0} ^m \sup_M| (\nb^E)^\ell s| $$
and $$\|s\|^2_m = \sum_{\ell= 0} ^m \| (\nb^E)^\ell s\|^2 $$
where $\|\cdot\|$ is the $L^2$-norm defined in subsection \ref{orbifold}. 
Let $K$ be a compact subset of $M$, then the \textbf{Sobolev space} $H^m_0(K,E)$ is the completion of $\cC^\infty_0(K,E) $ with respect to $\|\cdot \|_k$. Assume $K= \ol{U}$ be a compact subset and $U$ be the relative open subset, we have $H^m_0(K, E)$ coincides with the closure of $\cC^\infty_0(U,E) $ with respect to $\|\cdot \|_m$ and denote by $H^m_0(U,E)$.

Recall that a generalized Laplacian is an operator $H$ of the form $$H = \Delta^E + Q$$ where $\Delta^E$ is the Bochner Laplacian on $E$ and $Q$ is a Hermitian section of $\End(E)$. Then we have the following estimates. 
\begin{thm}[Elliptic estimate, cf.  \cite{ma2007holomorphic}]
    Let $K\subset M$ be compact. For any $m\in \bbN$, there exists $C_1, C_2>0$ such that for any $s\in H^{m+2}_0(K,E) $, we have $$\|s\|_{m+2} \le C_1 \| Hs\|_m + C_2\|s\|.$$
\end{thm}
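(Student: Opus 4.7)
The strategy is standard elliptic regularity: the operator $H = \Delta^E + Q$ has principal symbol $|\xi|_g^2 \cdot \mathrm{id}_E$ and is therefore elliptic, so the estimate follows from Gårding's inequality together with a bootstrap argument on the Sobolev index $m$. The plan is to (i) establish the base case $m=0$ via localization and Fourier analysis, then (ii) induct on $m$ using commutator identities.

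For the base case $m=0$, I would cover $K$ by finitely many charts $(U_\alpha,\varphi_\alpha)$ on which $E$ is trivialized, with a subordinate partition of unity $\{\chi_\alpha\}$. It suffices to estimate each $\chi_\alpha s$. Pulling back to $\bbR^n$, the operator $H$ becomes a second-order matrix-valued differential operator whose principal symbol at any point $x_0$ is $g^{ij}(x_0)\xi_i\xi_j \cdot I_E$, which is uniformly elliptic. Freeze coefficients at $x_0$: for $u\in\cC_0^\infty(B(x_0,r),\bbC^N)$ supported in a small ball, Plancherel gives
\[
\sum_{|\alpha|=2}\|\dr^\alpha u\|^2 \le C_0\bigl\|(g^{ij}(x_0)\dr_i\dr_j) u\bigr\|^2,
\]
and the variable-coefficient remainder $\bigl(g^{ij}(x)-g^{ij}(x_0)\bigr)\dr_i\dr_j u$ is controlled in $L^2$ by $\omega(r)\|u\|_2$ where $\omega(r)\to 0$ as $r\to 0$, so it can be absorbed into the left-hand side for $r$ small enough. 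Lower-order terms (including $Q$ and the first-order connection terms from $\nb^E$) are handled by the interpolation inequality $\|u\|_1 \le \varepsilon\|u\|_2 + C_\varepsilon\|u\|$. Gluing through the partition of unity and using compactness of $K$ to extract finite covers yields $\|s\|_2 \le C_1\|Hs\| + C_2\|s\|$ uniformly for $s\in H_0^2(K,E)$.

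For the inductive step, assume the estimate holds at level $m$. Given $s\in H_0^{m+3}(K,E)$, work in a chart and apply a first-order derivative $\dr$. Applying the $m$-level estimate to $\dr s$ gives
\[
\|\dr s\|_{m+2} \le C_1^{(m)}\|H\dr s\|_m + C_2^{(m)}\|\dr s\|.
\]
Now $H\dr s = \dr Hs + [H,\dr]s$, and the commutator $[H,\dr]$ is a differential operator of order $2$ with coefficients depending on derivatives of the metric, connection, and $Q$, so $\|[H,\dr]s\|_m \le C\|s\|_{m+2}$. Combining this with the Sobolev-norm equivalence $\|s\|_{m+3}^2 \asymp \|s\|^2 + \sum_i \|\dr_i s\|_{m+2}^2$ (up to lower-order corrections absorbed by interpolation), one obtains $\|s\|_{m+3} \le C\|Hs\|_{m+1} + C\|s\|_{m+2}$. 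Invoking the induction hypothesis to bound $\|s\|_{m+2}$ by $\|Hs\|_m + \|s\| \le \|Hs\|_{m+1} + \|s\|$ closes the induction.

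The main obstacle is bookkeeping rather than ideas: one must verify that all constants (from the coefficient-freezing, the cover of $K$, and the commutators involving $\nb^E$ and $Q$) can be chosen uniformly, and that the absorption of lower-order and variable-coefficient remainders genuinely works with the stated ellipticity constant. The $\End(E)$-valued nature of $Q$ and the appearance of curvature terms from $[\nb^E,\nb^E]$ in the commutator $[H,\dr]$ do not change the order count, but they require care to ensure the Hermitian structure on $E$ is respected when passing between $\cC^m$ and Sobolev norms.
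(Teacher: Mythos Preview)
Your approach is the standard elliptic regularity argument and is correct in outline. Note, however, that the paper does not supply its own proof of this statement: it is quoted as a known result with a reference to Ma--Marinescu's book \cite{ma2007holomorphic}, so there is no in-paper argument to compare against. What you have written is essentially the textbook proof one finds there (localization, coefficient freezing via Fourier/Plancherel for the principal part, absorption of lower-order and variable-coefficient remainders, then induction on $m$ using commutators $[H,\partial]$ of order $\le 2$). The bookkeeping caveats you flag---uniformity of constants over the finite cover of $K$, and that $Q$ and the curvature terms from $[\nabla^E,\nabla^E]$ contribute only at order $\le 2$---are exactly the points that need to be checked, and they go through without difficulty since $K$ is compact and all data are smooth.
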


\begin{thm}[Gårding's inequality, cf.  \cite{ma2007holomorphic}]
    Let $K\subset M$ be compact. There exists $C>0$ such that for any $s\in H^1_0(K,E)$, we have $$\|s\|^2_1 \le C(\<Hs, s\>+\|s\|^2).$$
\end{thm}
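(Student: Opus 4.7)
The plan is to reduce Gårding's inequality to the standard Bochner identity together with boundedness of the zeroth-order term $Q$ on the compact set $K$. Recall that the Bochner Laplacian is defined via the Hermitian connection $\nabla^E$ on $E$ and, for compactly supported smooth sections, satisfies the integration by parts identity
$$
\langle \Delta^E s, s\rangle = \|\nabla^E s\|^2.
$$
This is the key structural fact: the principal part of $H$ is non-negative and its quadratic form is exactly the first-order part of the Sobolev norm $\|\cdot\|_1$ (minus the $\|s\|^2$ contribution).

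First I would fix $s \in \cC^\infty_0(K,E)$ and compute
$$
\langle H s, s\rangle \;=\; \langle \Delta^E s, s\rangle + \langle Q s, s\rangle \;=\; \|\nabla^E s\|^2 + \langle Q s, s\rangle.
$$
Since $Q$ is a Hermitian section of $\End(E)$ and $K$ is compact, the pointwise operator norm of $Q$ achieves a finite maximum $M_Q := \sup_{x \in K} |Q(x)|_{\End(E)} < \infty$, so the Cauchy--Schwarz inequality in the fiber gives $|\langle Q s, s\rangle| \le M_Q \|s\|^2$. Rearranging,
$$
\|\nabla^E s\|^2 \;\le\; \langle H s, s\rangle + M_Q \|s\|^2,
$$
and therefore
$$
\|s\|_1^2 \;=\; \|s\|^2 + \|\nabla^E s\|^2 \;\le\; \langle H s, s\rangle + (1+M_Q)\|s\|^2.
$$
Taking $C := 1 + M_Q$ yields the claimed inequality on $\cC^\infty_0(K,E)$.

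Finally, I would extend to $H^1_0(K,E)$ by density: given $s \in H^1_0(K,E)$, pick $s_n \in \cC^\infty_0(U,E)$ with $\operatorname{supp} s_n$ contained in a fixed relatively compact neighborhood of $K$ and $s_n \to s$ in $\|\cdot\|_1$. Both sides of the inequality are continuous in $\|\cdot\|_1$, since $\langle Hs_n,s_n\rangle$ couples $\nabla^E s_n$ with $\nabla^E s_n$ and $s_n$ with $s_n$ through bounded coefficients on $K$, so passing to the limit preserves the inequality.

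The only step requiring genuine care is the Bochner identity $\langle \Delta^E s, s\rangle = \|\nabla^E s\|^2$, which depends on the precise normalization of $\Delta^E$ as $(\nabla^E)^* \nabla^E$; once this is in hand, everything else is a compactness plus density argument. In particular, unlike the elliptic estimate stated just above, no parametrix construction or pseudodifferential machinery is needed here, because we are only controlling the first Sobolev norm from the quadratic form rather than the full $H^{m+2}$ norm from $\|Hs\|_m$.
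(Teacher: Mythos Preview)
Your proof is correct and is the standard argument for this version of G\aa rding's inequality when $H=\Delta^E+Q$ with $\Delta^E=(\nabla^E)^*\nabla^E$. The paper itself does not supply a proof: the statement is quoted as a tool with a reference to \cite{ma2007holomorphic}, so there is nothing to compare against beyond noting that your argument is exactly the elementary Bochner-identity proof one would expect in that reference. One small remark: in your density step you may as well use approximants $s_n\in\cC^\infty_0(K',E)$ for a single fixed compact $K'\supset K$ and apply the inequality with the constant $1+M_{Q,K'}$, so that the constant is uniform along the sequence; this is implicit in what you wrote but worth making explicit.
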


\subsection{Bergman Kernel}
\label{pre of Bergman kernel}

To introduce the kernels, we first recall the Spectral Theorem, 
\begin{thm}[Spectral Theorem \cite{davies1995spectral}]
    Let $H$ be a self-adjoint operator on a Hilbert space $\sH$ with spectrum $S$. Then there exists a finite measure $\mu$ on $S\times \bbN$ and an unitary operator $$U: \sH\rightarrow L^2:=L^2(S\times \bbN, d\mu) $$ with the following properties. If $h: S\times \bbN \rightarrow \bbR$ is the function $h(s,n) = s $ then the element $\xi$ of $\sH$ lies in $\Dom H$ if and only if $h\cdot U(\xi) \in L^2$. We have $$UHU\invs\psi = h\psi$$ for all $\psi \in U\{\Dom (H)\} $, and also $$Uf(H)U\invs \psi = f(h) \psi $$ for all $f\in \cC_0(\bbR) $ and $\psi \in L^2(S\times \bbN, d\mu)$
\end{thm}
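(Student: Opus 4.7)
The plan is to follow the standard route: establish the theorem first for bounded self-adjoint operators via continuous functional calculus and a cyclic subspace decomposition, then reduce the unbounded case to the bounded one via the Cayley transform. First I would construct, for a bounded self-adjoint $H$, a continuous functional calculus $\Phi:\cC(S)\to \mathcal{B}(\sH)$, $f\mapsto f(H)$, by defining $p(H)$ for polynomials $p$ and extending by continuity (using $\|p(H)\|=\|p\|_{\cC(S)}$, which follows from the spectral radius formula for normal elements and the Stone--Weierstrass density of polynomials in $\cC(S)$). For each $\xi\in\sH$, the positive linear functional $f\mapsto \<f(H)\xi\,|\,\xi\>$ yields, via the Riesz representation theorem, a finite positive Borel measure $\mu_\xi$ on $S$.

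Next, I would apply a Zorn's lemma argument to decompose $\sH$ as an at most countable orthogonal direct sum $\bigoplus_{n\in\bbN}\sH_n$ of $H$-cyclic subspaces, each generated by a unit vector $\xi_n$. On each $\sH_n$ the densely defined map $f(H)\xi_n\mapsto f$ extends to a unitary isomorphism $U_n:\sH_n\to L^2(S,d\mu_{\xi_n})$ intertwining $H$ with multiplication by the coordinate function $s$. Assembling these after rescaling the $\mu_{\xi_n}$ by $2^{-n}$ so the total mass is finite, I obtain the required unitary $U:\sH\to L^2(S\times\bbN, d\mu)$ sending $H$ to multiplication by $h(s,n)=s$, and $f(H)$ to multiplication by $f\circ h$ for $f\in\cC(S)$.

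For a general (possibly unbounded) self-adjoint $H$, I would pass to the Cayley transform $V=(H-i)(H+i)\invs$, which is a unitary operator on $\sH$ with $1\notin\sigma_p(V)$, and apply the bounded spectral theorem to the normal operator $V$ (equivalently, to the commuting pair $\tfrac12(V+V^*)$, $\tfrac{1}{2i}(V-V^*)$). This produces a unitary $U:\sH\to L^2(S'\times\bbN, d\mu')$ sending $V$ to multiplication by a measurable function $g$ of modulus one with $g\ne 1$ almost everywhere. Inverting the Cayley transform, $H=i(I+V)(I-V)\invs$ corresponds to multiplication by the real-valued function $h:=i(1+g)/(1-g)$; pushing $\mu'$ forward along $(s',n)\mapsto(h(s'),n)$ and passing to the spectrum $S\subset\bbR$ produces the claimed measure $\mu$.

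The main obstacle I anticipate is the precise domain characterization, namely that $\xi\in\Dom H$ if and only if $h\cdot U\xi\in L^2$, and the functional calculus identity $Uf(H)U\invs\psi=f(h)\psi$ for $f\in\cC_0(\bbR)$. For the domain statement, the key is that $\xi\in\Dom H$ iff $U\xi\in\mathrm{Ran}(I-g)$ on the multiplication side, equivalently $(1-g)\invs U\xi\in L^2$, which after the algebraic identity $h=i(1+g)/(1-g)$ is exactly $h\cdot U\xi\in L^2$. For the functional calculus, one notes $f\circ h\in L^\infty$ for $f\in\cC_0(\bbR)$, and uses that the resolvent functions $(h\pm i)\invs$ generate a subalgebra dense in $\cC_0(\bbR)$, so the identity $Uf(H)U\invs=f(h)$ propagates from the already-established bounded functional calculus for $V$ to all $f\in\cC_0(\bbR)$ by continuity.
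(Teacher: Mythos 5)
The paper itself contains no proof of this statement: it is quoted as a known result from Davies' book, so there is no internal argument to compare against, and your proposal must be judged on its own and against the cited source. Your sketch is a correct outline of the classical proof of the multiplication-operator form of the spectral theorem: continuous functional calculus plus a cyclic-subspace decomposition for bounded $H$, then reduction of the unbounded case via the Cayley transform $V=(H-i)(H+i)^{-1}$. The step you flag as the main obstacle is handled correctly: from $g=(h-i)/(h+i)$ one gets $(1-g)^{-1}=(h+i)/(2i)$, so $\Dom H=\mathrm{Ran}(I-V)$ translates exactly into $h\cdot U\xi\in L^2$, and the $\cC_0(\bbR)$ calculus does transfer from the bounded calculus for $V$ by Stone--Weierstrass density of the algebra generated by $(h\pm i)^{-1}$. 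Two points you should make explicit: the decomposition into \emph{countably many} cyclic subspaces, and hence the statement with $S\times\bbN$ and a finite measure, requires $\sH$ separable (this is the standing hypothesis in Davies and holds for the $L^2$ spaces used in this paper); and the spectral theorem you invoke for $V$ is the normal-operator (or commuting self-adjoint pair) version, whose cyclic argument uses the $*$-algebra generated by $V$ and $V^*$, not polynomials in a single self-adjoint element, so that version must be established first. For comparison, the cited reference takes a genuinely different route: Davies builds the functional calculus for unbounded self-adjoint operators directly via the Helffer--Sj\"ostrand formula with almost analytic extensions and then extracts the $L^2(S\times\bbN,d\mu)$ representation, bypassing the Cayley transform entirely; your route is the more classical one found in Reed--Simon or Rudin, while Davies' approach is closer in spirit to the almost analytic extensions this paper later uses in its stationary phase arguments.
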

From the self-adjoint and semi-positive of Kodaira Laplacian $\kla_k$ and the spectral Theorem, we can define the \textbf{spectral projection} $$P_{k,\mu_k}= \chi_{[0,\mu_k]}(\kla_k): L^2(M,L^k) \rightarrow E_{\mu_k}(M, L^k) $$ where $\chi_I(x)  = \begin{cases}
    1& \text{if } x\in I \\
    0 & \text{otherwise}
\end{cases}$, $\chi_{[0,\mu_k]} (\kla_k)$ denotes the functional calculus of $\kla_k$ with respect to $\chi_{[0,\mu_k]}$  and $E_{\mu_k}(M,L^k)$ denote the image of $P_{k,\mu_k} $. 
For $\mu_k=0$, we denote $P_k = P_{k,0}$ and called \textbf{Bergman Projection}. Using the following Schwartz Kernel Theorem, we can introduce the\textbf{ Bergman kernel} and \textbf{Spectral kernel}, denote by $P_k(x,y)$ and $P_{k,\mu_k}(x,y) $. 
\begin{fact}[Schwartz Kernel Theorem \cite{hörmander2003analysis}]
\label{Schwartz Kernel Theorem}
\quad
\begin{enumerate}
    \item Let $A: \cC^\infty(M,E) \rightarrow \cD'(M,F)$ be a linear continuous operator. Then there exists a unique distribution $K\in \cD'(M \times M, F \boxtimes E^*)$, called the Schwartz kernel distribution such that $A = A_K$, i.e., $A(u)(v) = K(v\otimes u) $ for any $u\in \cC^\infty_0(M,E),\, v\in \cC^\infty_0(M, F^*)$. 
    \item Let $\cE'(M,E)$ be the space of distributions of compact support. Assume that $A: \cE'(M,E) \rightarrow \cC^\infty(M, F) $ is linear continuous. Then there exists a smooth kernel $K\in \cC^\infty(M\times M, F \boxtimes E^*) $ called the Schwartz kernel of $A$ such that $(Au)(x) = \int_M K(x,y) u(y) d\vol_y,\, \text{for any }u\in \cE'(M,E). $ 
    \end{enumerate}
\end{fact}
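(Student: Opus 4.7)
The plan is to establish part (1) first via localization plus the classical Schwartz nuclear theorem, and then derive part (2) as a regularity refinement using the smoothing hypothesis on $A$.

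For part (1), I would reduce to the flat Euclidean setting. Choose a locally finite open cover $\{U_i\}$ of $M$ by coordinate charts on which both $E$ and $F$ trivialize, and let $\{\chi_i\}$ be a subordinate partition of unity. Writing $A = \sum_{i,j} M_{\chi_j}\circ A \circ M_{\chi_i}$ with $M_\chi$ denoting multiplication by $\chi$, it suffices to build a kernel $K_{ij}\in \cD'(U_j\times U_i, F\boxtimes E^*)$ for each component: the $K_{ij}$ are supported in $\supp\chi_j\times\supp\chi_i$, so the family is locally finite and sums to a global distribution $K$ on $M\times M$, whose action on $v\otimes u$ recovers $(Au)(v)$. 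On each chart the problem reduces to: given a separately (hence jointly, by LF-continuity) continuous bilinear form $B(u,v):=(Au)(v)$ on $\gO_c(\bbR^n)\times\gO_c(\bbR^n)$, produce $K\in \cD'(\bbR^{2n})$ with $B(u,v)=K(v\otimes u)$. This is precisely the content of the Schwartz nuclear theorem, which exploits the nuclearity of $\cD(\bbR^n)$ and the topological isomorphism $\cD(\bbR^n)\,\widehat{\otimes}\,\cD(\bbR^n)\cong \cD(\bbR^{2n})$. Uniqueness of $K$ follows because decomposable tensors $v\otimes u$ span a dense subspace of $\cD(\bbR^{2n})$.

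For part (2), part (1) already yields a distributional kernel $K$; the task is to upgrade to joint smoothness. The hypothesis that $A:\cE'(M,E)\to \cC^\infty(M,F)$ is continuous means $A$ is smoothing in its argument, and by duality the formal adjoint $A^*$ is also smoothing. I would argue as follows: for any $y\in M$ and any local frame $e$ of $E$ near $y$, the slice $K(\cdot,y)$, interpreted as $A(\delta_y\otimes e(y))$, is smooth in $x$ by the hypothesis; to obtain joint smoothness one regularizes in $y$ by mollifying with an approximate identity $\rho_\varepsilon$, observes that the resulting $K_\varepsilon(x,y):=\bigl(A(\rho_\varepsilon(\cdot,y)\otimes e)\bigr)(x)$ is jointly smooth and converges to $K$ in $\cD'(M\times M, F\boxtimes E^*)$, and uses the uniform $\cC^\infty_{\mathrm{loc}}$-bound furnished by continuity of $A:\cE'\to \cC^\infty$ (via Banach--Steinhaus) to pass to the limit and conclude $K\in \cC^\infty$.

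The main obstacle is the nuclear-theorem input in part (1): a fully self-contained derivation would require a substantial detour through nuclear Frechet and LF-space theory, which is why Hörmander is cited and the statement is labeled a \emph{Fact} rather than proved in situ. Once that kernel-theoretic existence is in hand, the partitioning reduction and the mollifier/Banach--Steinhaus smoothing argument for part (2) are essentially routine bookkeeping.
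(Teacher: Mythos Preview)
Your proposal is a reasonable outline of the standard approach, and you have already correctly diagnosed the situation: the paper does not prove this statement at all. It is stated as a \emph{Fact} with a citation to H\"ormander and is used as a black box, so there is no ``paper's own proof'' to compare against. Your closing paragraph anticipates exactly this.
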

By using elliptic partial differential equations, it is easy to check that the Bergman projection satisfies the condition in the second statement of Fact \ref{Schwartz Kernel Theorem}, thus the Bergman kernel is smooth.

With the $L^2$ identification, we have localized Kodaira Laplacian $\kla_{k,s}$, and we can define the \textbf{localized spectral projection} $$P_{k,\mu_k,s}: L^2_{comp}(U,d\vol_M)\rightarrow L^2(U,\vol_M) $$ which satisfy $P_{k,\mu_k}(s^k \otimes u) = s^k \otimes e^{k\phi}P_{k,\mu_k,s}(e^{-k\phi}u ),  $ also we have the \textbf{localized Bergman projection} $P_{k,s}$. We denote $P_{k, \mu_k,s} (x,y)$ the \textbf{localized spectral kernel}, and $P_{k,s}(x,y)$ the  \textbf{localized Bergman kernel}. 

\subsection{Stationary Phase Formula}
\label{Stationary Phase Formula}

To derive several results in this thesis, we rely on the following lemmas, which are adapted from Chapter 7 of \cite{hörmander2003analysis}.

\begin{lem}[cf  \cite{hörmander2003analysis} Theorem 7.7.5]
    Let $K\subset \bbR^n$ be a compact set, $X$ an open neighborhood of $K$, and $k$ a positive integer. If $u\in \cC^{2k}_0(K) $, $f\in \cC^{3k+1}(X) $ and $\operatorname{Im} f\ge 0 $ in $X$, $\operatorname{Im} f(x_0) = 0$, $f'(x_0)=0 $, $\det f''(x_0) \ne 0$, $f'\ne 0 $ in $K\backslash \{x_0\}$ then $$|\int u(x) e^{i\omega f(x)} dx - e^{i\go f(x_0)} (\det (\frac{\go f''(x_0)}{2\pi i}))^{\frac{-1}{2}} \sum_{j< k} \go^{-j} L_j u|\le C \go^{-k} \sum_{|\ga|\le 2k} \sup |D^\ga u|, \, \go>0.$$
    Here $C$ is bounded when $f$ stays in a bounded set in $\cC^{3k+1}(X)$ and $\frac{|x-x_0|}{|f'(x)|}$ has a uniform bound. With $$g_{x_0}(x) = f(x) -f(x_0) - \frac{1}{2}\<f''(x_0)(x-x_0), x-x_0\>$$ which vanishes of third order at $x_0$ we have $$L_j u = \sum_{\nu - \mu= j}\sum_{2\nu \ge 3 \mu} i^{-j} 2^{-\nu} \<f''(x_0)\invs D , D\>^\nu \frac{(g^\mu_{x_0} u)(x_0)}{\mu! \nu!}.$$
    This is a differential operator of order $2j$ acting on $u$ at $x_0$. The coefficients are rational homogeneous functions of degree $-j$ in $f''(x_0),\ldots, f^{(2j+2)}(x_0)$  with denominator $(\det f''(x_0))^{3j}$. In every term, the total number of derivatives of $u$ and of $f''$ is at most $2j$.
\end{lem}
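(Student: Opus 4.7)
The plan is to split the integral into a piece concentrated near $x_0$ and a piece supported away from $x_0$, treat each by a different method, and then collect terms. First fix a small ball $B(x_0,\delta)$ and a cutoff $\chi\in\cC^\infty_0(B(x_0,\delta))$ with $\chi\equiv 1$ in a neighborhood of $x_0$. On $K\cap\supp(1-\chi)$ we have $f'\ne 0$, so the non-stationary first-order operator
$$L=\frac{1}{i\omega\,|f'|^2}\sum_{j=1}^n\overline{\partial_j f}\,\partial_j$$
satisfies $Le^{i\omega f}=e^{i\omega f}$, and because $\operatorname{Im} f\ge 0$ gives $|e^{i\omega f}|\le 1$, integration by parts $N$ times produces a contribution bounded by $C_N\omega^{-N}\sum_{|\alpha|\le N}\sup|D^\alpha u|$; choosing $N\ge k$ makes this piece absorbable into the target remainder.

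Near $x_0$, translate so that $x_0=0$ and $f(0)=0$, and set $A=f''(0)$, so
$$f(x)=\tfrac12\langle Ax,x\rangle+g(x),\qquad g(x)=g_{x_0}(x)=O(|x|^3).$$
Expanding
$$e^{i\omega g(x)}=\sum_{\mu=0}^{M-1}\frac{(i\omega g(x))^\mu}{\mu!}+R_M(x)$$
reduces the near-$x_0$ piece to a finite sum of Gaussian integrals of the form $\int v(x)\,e^{i\omega\langle Ax,x\rangle/2}\,dx$ with $v=\chi g^\mu u/\mu!$. The core identity is the complex quadratic stationary phase formula
$$\int v(x)\,e^{i\omega\langle Ax,x\rangle/2}\,dx=\Bigl(\det\tfrac{\omega A}{2\pi i}\Bigr)^{-1/2}\sum_{\nu<N_0}\frac{\omega^{-\nu}}{\nu!\,(2i)^\nu}\langle A^{-1}D,D\rangle^\nu v(0)+O(\omega^{-N_0}),$$
valid for symmetric $A$ with $\operatorname{Im}A\ge 0$ and $\det A\ne 0$, which I would establish by taking the Fourier transform of $v$, completing the square in $x$, and justifying the resulting contour shift through an almost analytic extension of the Gaussian; this is precisely where the hypothesis $\operatorname{Im}f\ge 0$ enters. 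Collecting by powers of $\omega$ and using that $\langle A^{-1}D,D\rangle^\nu(g^\mu u)(0)=0$ unless $2\nu\ge 3\mu$ (since $g$ vanishes to order three at $0$), the coefficient of $\omega^{-j}$ is exactly the stated $L_j u$ with $\nu-\mu=j$.

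The error splits into three parts: the integration-by-parts tail, the Taylor remainder $R_M$ (for $M$ large enough, using $2\nu\ge 3\mu$ to cut off at a finite $\mu$), and the error in the quadratic identity; each is bounded by $C\omega^{-k}\sum_{|\alpha|\le 2k}\sup|D^\alpha u|$, with $C$ uniform provided $f$ stays in a bounded set in $\cC^{3k+1}(X)$ and $|x-x_0|/|f'(x)|$ is uniformly bounded on $K\setminus\{x_0\}$. The main obstacle is the quadratic Gaussian identity for complex symmetric $A$ with only $\operatorname{Im}A\ge 0$: the real part may be singular or indefinite, so one cannot simply diagonalize and reduce to a product of one-dimensional real Gaussians. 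I would handle this by a homotopy $A_t=(1-t)\,iI+tA$ in the matrix variable together with a Stokes-type argument on an almost analytic extension, verifying that both sides are holomorphic in $A$ on $\{\operatorname{Im}A\ge 0,\ \det A\ne 0\}$ and agree at $A=iI$, where the integral converges absolutely. Once this is in place, the bookkeeping for the constants $i^{-j}2^{-\nu}/(\mu!\,\nu!)$, the order-$2j$ differential operator structure of $L_j$, and the homogeneity of the coefficients in $f''(x_0),\dots,f^{(2j+2)}(x_0)$ is routine.
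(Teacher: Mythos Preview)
The paper does not prove this lemma; it is quoted verbatim from H\"ormander (Theorem~7.7.5) and used as a black box, so there is no ``paper's own proof'' to compare against. Your outline is essentially the standard argument from that reference: localize with a cutoff, handle the non-stationary region by repeated integration by parts against the first-order operator $L$ with $Le^{i\omega f}=e^{i\omega f}$, and near $x_0$ Taylor-expand $e^{i\omega g}$ and invoke the complex quadratic Gaussian identity (H\"ormander's Lemma~7.7.3) term by term. The identification of the coefficient of $\omega^{-j}$ via the constraint $2\nu\ge 3\mu$, $\nu-\mu=j$ is correct, and your diagnosis that the only genuinely delicate point is the Gaussian integral for complex symmetric $A$ with $\operatorname{Im}A\ge 0$, $\det A\ne 0$ is accurate; analytic continuation (or your homotopy $A_t$) from the absolutely convergent region $\operatorname{Im}A>0$ is exactly how H\"ormander handles it. One small remark: when you integrate by parts $N$ times in the non-stationary region, the transposed operator $L^t$ picks up derivatives of the coefficients $\overline{\partial_j f}/|f'|^2$, so the resulting bound involves derivatives of $f$ up to order $N+1$ and the uniform control on $|x-x_0|/|f'(x)|$; this is why the regularity hypothesis on $f$ is $\cC^{3k+1}$ rather than $\cC^{2k+2}$, and you should make that dependence explicit when you track constants.
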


We also require another variant. To state it clearly, we pause to introduce some concepts and notation. For details, we refer to \cite{MelinSj75} and appendix A in \cite{hsiao2008projections}.
\begin{Def}
    \label{analytic extension}
    Let $U$ be an open subset of $\bbC^N$. For $f\in \cC^\infty(U)$, we say that $f$ is almost analytic if for any compact subset $K$ of $U$ and $M\in \bbN$, there is a constant $C_{K,M}>0$ such that $$ |\dbar f|^2 \le C_{K,M}\( |\operatorname{Im} z|\)^M,\, \forall z\in K.$$
\end{Def}

\begin{Def}
    For $f_1,f_2\in \cC^\infty(U)$, we say that $f_1$ and $f_2$ are equivalent if for any compact subset $K$ of $U$ and $M\in \bbN$, there is a constant $C_{K,M}>0$ such that $$|(f_1-f_2)(z)| \le C_{K,M}\( |\operatorname{Im} z|\)^M,\, \forall z\in K.$$
\end{Def}

\begin{pro}
    Let $U\subset \bbC^N$ be an open set, and let $U_{\bbR^N} = U \cap \bbR^N$. If $f\in\cC^\infty (U_{\bbR^N})$, then there exists an almost analytic extension of $f$, unique up to equivalence. 
\end{pro}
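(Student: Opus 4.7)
The plan is to prove existence by a Borel-type construction that formally solves $\bar\partial\tilde f = 0$ to infinite order along $U_{\bbR^N}$, and to prove uniqueness by showing that any two almost analytic extensions of the same real-analytic datum must agree to infinite order on $U_{\bbR^N}$ in all directions.

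For existence, I would write $z = x + iy$ with $x,y\in\bbR^N$ and take as candidate the formal series
\begin{equation*}
\tilde f(x+iy) \;\sim\; \sum_{\alpha\in\bbN_0^N} \frac{(iy)^\alpha}{\alpha!}\,\partial_x^\alpha f(x),
\end{equation*}
which is the unique formal power series in $y$ whose restriction to $y=0$ equals $f$ and whose formal $\bar\partial$ vanishes (since $\bar\partial_j = \tfrac12(\partial_{x_j} + i\partial_{y_j})$ kills each Taylor polynomial in $y$ of the holomorphic extension). Since this series generally diverges, I would apply Borel's trick: fix a cutoff $\chi\in\cC^\infty_c(\bbR)$ with $\chi\equiv 1$ near $0$ and set
\begin{equation*}
\tilde f(x+iy) \;=\; \sum_{\alpha} \chi(\lambda_\alpha |y|)\,\frac{(iy)^\alpha}{\alpha!}\,\partial_x^\alpha f(x),
\end{equation*}
on a locally finite covering of $U$ by relatively compact sets $K_j\Subset U$, choosing a sequence $\lambda_\alpha\to\infty$ fast enough (depending on $\sup_{K_j}|\partial^\alpha f|$) so that the series, together with each of its partial derivatives, converges uniformly on each $K_j$. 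This makes $\tilde f\in\cC^\infty(V)$ for some open $V\subset\bbC^N$ with $V\cap\bbR^N = U_{\bbR^N}$.

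The key estimate is then that on any compact $K\subset V$ and for any $M$, each cutoff $\chi(\lambda_\alpha|y|)$ equals $1$ whenever $\lambda_\alpha|y|$ is small; consequently, for $|y|$ small, truncating the series at order $M$ leaves an error $O(|y|^{M+1})$, and applying $\bar\partial$ term-by-term kills the leading contributions while the $\bar\partial$ of $\chi(\lambda_\alpha|y|)$ vanishes identically for $|y|$ in a neighborhood of $0$ whose size is controlled by $\lambda_\alpha$. This yields $|\bar\partial\tilde f(z)|\le C_{K,M}|\operatorname{Im} z|^M$ for all $z\in K$, which is Definition~\ref{analytic extension}.

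For uniqueness up to equivalence, suppose $f_1,f_2$ are both almost analytic extensions of $f$, and let $g = f_1 - f_2$. Then $g|_{U_{\bbR^N}}=0$ and $g$ is almost analytic. The main point is to show, by induction on $|\alpha|+|\beta|$, that every mixed derivative $\partial_x^\alpha\partial_y^\beta g$ vanishes on $U_{\bbR^N}$. The base case $g|_{U_{\bbR^N}}=0$ is given, and since differentiating a function that vanishes on $U_{\bbR^N}$ in the $x$-directions preserves this property, it suffices to pass from $\beta$ to $\beta + e_j$; this is where the almost analyticity is used, via the identity $\partial_{y_j} = -i\partial_{x_j} + 2i\,\bar\partial_j$ combined with the fact that $\bar\partial g$ vanishes to infinite order on $U_{\bbR^N}$ (hence so does $\partial_x^\alpha\partial_y^\beta\bar\partial_j g$ on $U_{\bbR^N}$, by the almost-analyticity estimate together with the inductive hypothesis on lower-order $y$-derivatives). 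Taylor expanding $g$ in $y$ around an arbitrary $x_0\in U_{\bbR^N}$ then gives $|g(x_0+iy)|\le C_{K,M}|y|^M$, which is the required equivalence.

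The main obstacle I anticipate is the inductive step for uniqueness: extracting that all $y$-derivatives vanish on $U_{\bbR^N}$ from the pointwise estimate $|\bar\partial g|\le C_{K,M}|\operatorname{Im} z|^M$ requires care, because that estimate controls only $\bar\partial g$ itself, not its derivatives. The standard remedy is to differentiate the estimate using interior Cauchy-type inequalities on small polydiscs around points of $U_{\bbR^N}$ to upgrade the $M$-th order vanishing of $\bar\partial g$ to $M$-th order vanishing of its derivatives, at the cost of a loss in $M$ that is harmless since $M$ is arbitrary. Once this is in hand, the induction and Taylor argument close the proof cleanly.
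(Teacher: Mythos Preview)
The paper does not prove this proposition; it is quoted as background from \cite{MelinSj75} and \cite{hsiao2008projections} in Subsection~\ref{Stationary Phase Formula}. Your Borel-type construction for existence and the induction on $y$-derivatives for uniqueness are exactly the standard argument found in those references, so in that sense your proposal matches what the paper relies on.

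One remark on your anticipated obstacle: you do not need any Cauchy-type interior estimate. If $h\in\cC^\infty$ satisfies $|h(x+iy)|\le C_M|y|^M$ for every $M$, then Taylor's formula in $y$ at $y=0$ forces every coefficient $\partial_y^\beta h(x,0)$ to vanish (the degree-$<M$ Taylor polynomial in $y$ would otherwise dominate $|y|^M$ as $y\to 0$); differentiating in $x$ then kills all mixed derivatives. Apply this directly to each component $\bar\partial_j g$, and the identity $\partial_{y_j}=i\partial_{x_j}-2i\,\bar\partial_j$ (your sign is flipped, but harmlessly) closes the induction without any loss in $M$.
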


\begin{pro}
\label{extension critical value}
    Let $f(x,w) $ be a complex valued smooth function in a neighborhood of $(0,0)$ in $\bbR^{n+m}$ with $$\operatorname{Im}f\ge 0, \operatorname{Im}f(0,0)= 0, f_x'(0,0) = 0, \det f''_{xx} (0,0) \ne 0. $$ Let $\ti{f}(z,w)$, $z =x+iy$, $w\in\bbC^m$, denote an almost analytic extension of $f$ to a complex neighborhood of $(0,0)$ and let $z(w)$ denote the solution of $$\frac{\dr \ti{f}}{\dr z}(z(w),w) =0 $$ in a neighborhood of $0\in \bbC^m$. Then for $w$ is real $$\frac{\dr }{\dr w} (\ti{f}(z(w),w) - \frac{\dr \ti{f}}{\dr w } (z,w)|_{z=z(w)}$$ vanishes to infinite order at $0\in \bbR^m$. Moreover, there is a constant $c>0$ such that near the origin and $w\in \bbR^m$ we have $$\operatorname{Im}\ti{f}(z(w), w) \ge c|\operatorname{Im}z(w)|^2,$$ and $$\operatorname{Im}\ti{f}(z(w),w)\ge c\inf_{x\in\gO}\(\operatorname{Im} f(x,w)+ |d_xf(x,w)|^2\)$$ where $\gO$ is some open neighborhood of the origin of $\bbR^n$. We call $\ti{f}(z(w),w)$ the corresponding critical value. 
\end{pro}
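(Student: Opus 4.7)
My plan has three stages: existence of the critical point $z(w)$, infinite-order agreement of $\partial_w\ti f(z(w),w)$ with $(\partial\ti f/\partial w)(z(w),w)$, and the two imaginary-part lower bounds. For existence I apply the implicit function theorem to $\Phi(z,w):=\partial\ti f/\partial z(z,w)$ at the origin. Almost analyticity $(\partial_x+i\partial_y)\ti f=O(|\operatorname{Im}(z,w)|^\infty)$ on the real axis forces $\partial_z\ti f=\partial_x\ti f$ to infinite order there, so $\Phi(0,0)=f'_x(0,0)=0$ and $\partial_z\Phi(0,0)=f''_{xx}(0,0)$ is nonsingular by hypothesis. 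Viewed as a real system in $(\operatorname{Re}z,\operatorname{Im}z)$, the $(2n)\times(2n)$ Jacobian realifies the complex matrix $f''_{xx}(0,0)$ and has determinant $|\det f''_{xx}(0,0)|^2\ne 0$, and the IFT yields a smooth $z(w)$ with $z(0)=0$.

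For the infinite-order vanishing, the chain rule gives
\[\frac{\partial}{\partial w_j}\ti f(z(w),w)-\frac{\partial\ti f}{\partial w_j}(z(w),w)=\sum_l\frac{\partial\ti f}{\partial\ol z_l}(z(w),w)\frac{\partial\ol z_l}{\partial w_j},\]
since the $\partial/\partial z_l$ sum is killed by $\partial_z\ti f(z(w),w)=0$. Almost analyticity supplies $|\partial\ti f/\partial\ol z_l(z,w)|\le C_N(|\operatorname{Im}z|^2+|\operatorname{Im}w|^2)^{N/2}$ for every $N$; on real $w$ this reduces to $C_N|\operatorname{Im}z(w)|^N$, and smoothness of $z$ with $z(0)=0$ real gives $|\operatorname{Im}z(w)|\le|z(w)|=O(|w|)$. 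Thus the right side is $O(|w|^N)$ for every $N$, i.e. vanishes to infinite order at $w=0$.

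For the imaginary-part bounds, fix real $w$ and Taylor expand $\ti f$ in the holomorphic variable around $z(w)$:
\[\ti f(z,w)=\ti f(z(w),w)+\tfrac12(z-z(w))^TH(w)(z-z(w))+R(z,w),\]
with $H(w):=\partial_z^2\ti f(z(w),w)$; the remainder $R$ collects $O(|z-z(w)|^3)$ holomorphic terms and all mixed $\ol z$-terms, the latter bounded by $O(|\operatorname{Im}z(w)|^N)$ for every $N$. Setting $z=x$ real (so $\ti f(x,w)=f(x,w)$), writing $\xi:=x-\operatorname{Re}z(w)$, $\eta:=\operatorname{Im}z(w)$, and $H=H_R+iH_I$, the imaginary part of the expansion reads
\[\operatorname{Im}f(x,w)=\operatorname{Im}\ti f(z(w),w)+\tfrac12(\xi^TH_I\xi-2\xi^TH_R\eta-\eta^TH_I\eta)+O(|\xi|^3+|\eta|^3).\]
In parallel, the almost-analytic expansion of $\partial\ti f/\partial z$ around $\operatorname{Re}z(w)$ combined with $\partial_z\ti f(z(w),w)=0$ yields $f'_x(\operatorname{Re}z(w),w)=-iH(w)\eta+O(|\eta|^2)$, hence $|d_xf(x,w)|^2=|H(w)(\xi-i\eta)|^2+O(\cdot)$, which is bounded below by a positive-definite quadratic form in $(\xi,\eta)$ since $H(0)$ is invertible (invertibility of $H(0)=H_R(0)+iH_I(0)$ forces $H_R(0)^2+H_I(0)^2>0$). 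Using $\operatorname{Im}f\ge 0$, a completion-of-the-square argument combining the two expansions—absorbing the cross term $\xi^TH_R\eta$ and the possibly degenerate $\xi^TH_I\xi$ into a small positive multiple of $|d_xf|^2$—produces bound (2): $\operatorname{Im}\ti f(z(w),w)\ge c\inf_{x\in\gO}(\operatorname{Im}f(x,w)+|d_xf(x,w)|^2)$ uniformly for real $w$ near $0$. Bound (1) then follows by setting $x=\operatorname{Re}z(w)$ in bound (2), dropping $\operatorname{Im}f(x,w)\ge 0$, and using $|f'_x(\operatorname{Re}z(w),w)|\gtrsim|\eta|$ from the critical-point asymptotic.

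The main obstacle is the completion-of-the-square in bound (2): $H_I(0)\ge 0$ is only semi-definite (from $\operatorname{Im}f\ge 0$ with minimum at the origin), so $\xi\mapsto\xi^TH_I\xi$ may degenerate on $\ker H_I(0)$. The fix is that on this kernel the real part $H_R(0)$ must be invertible—otherwise $\det f''_{xx}(0,0)=\det(H_R(0)+iH_I(0))=0$, contradicting nondegeneracy—and $H_R(0)$ enters $|d_xf|^2$ via the $(H_R^2+H_I^2)$-term. A block decomposition of $\xi,\eta$ relative to $\ker H_I(0)$ and its orthogonal complement, combined with a small-parameter choice to close the square against the cubic remainder, then delivers a uniform positive $c$ in a neighborhood of the origin.
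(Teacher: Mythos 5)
Your Stages 1 and 2 are essentially sound: the implicit function theorem argument (with the realified Jacobian of determinant $|\det f''_{xx}(0,0)|^2$) is correct, and the chain-rule computation for the infinite-order vanishing works, up to the harmless omission of the term $\frac{\partial\tilde f}{\partial\bar w_j}(z(w),w)$, which obeys the same $O(|\operatorname{Im}z(w)|^N)=O(|w|^N)$ bound. (For reference, the paper itself gives no proof of this proposition; it quotes it from Melin--Sj\"ostrand \cite{MelinSj75} and the appendix of \cite{hsiao2008projections}, so the relevant comparison is with that standard argument.) The genuine gap is in Stage 3, and it is twofold. First, the logic is inverted: bound (2) has the infimum on the minorized side, so you cannot "set $x=\operatorname{Re}z(w)$ in bound (2)" to deduce bound (1) --- from $\operatorname{Im}\tilde f(z(w),w)\ge c\inf_{x}F(x)$ you only control the infimum, which may be attained far from $\operatorname{Re}z(w)$ and be much smaller than $F(\operatorname{Re}z(w))\gtrsim|\eta|^2$; rescuing this step would require the separate estimate $\inf_{x\in\Omega}\bigl(\operatorname{Im}f(x,w)+|d_xf(x,w)|^2\bigr)\gtrsim|\operatorname{Im}z(w)|^2$, which you never prove. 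Conversely, your completion-of-the-square proof of (2) cannot be an inequality valid for all $x\in\Omega$ (take $\eta$ tiny and $|\xi|$ comparable to the size of $\Omega$: the right-hand side is of order one while the left-hand side is $O(|\eta|^2)$), and at the only natural choice $x=\operatorname{Re}z(w)$ the cross terms you propose to absorb vanish identically, leaving untouched exactly the hard inequality $\operatorname{Im}f(\operatorname{Re}z(w),w)+\tfrac12\eta^{T}H_I\eta\ge c|\eta|^2$.

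Second, and decisively, the key lemma is missing: nonnegativity of $\operatorname{Im}f$ must enter not merely as a sign to drop but through the classical gradient estimate for nonnegative $\mathcal{C}^2$ functions, $|d_x\operatorname{Im}f(x,w)|^2\le C\,\operatorname{Im}f(x,w)$ (Lemma 7.7.2 in \cite{hörmander2003analysis}). Combined with $d_xf(\operatorname{Re}z(w),w)=-iH\eta+O(|\eta|^2)$, hence $d_x\operatorname{Im}f(\operatorname{Re}z(w),w)=-H_R\eta+O(|\eta|^2)$, it gives $\operatorname{Im}f(\operatorname{Re}z(w),w)\ge c|H_R\eta|^2-O(|\eta|^3)$; then your own observation that $H_R(0)$ is injective on $\ker H_I(0)$ (since $H(0)$ is invertible and $H_I(0)\ge0$) shows $|H_R\eta|^2+\eta^{T}H_I\eta\ge c_1|\eta|^2$, and the expansion $\operatorname{Im}\tilde f(z(w),w)=\operatorname{Im}f(\operatorname{Re}z(w),w)+\tfrac12\eta^{T}H_I\eta+O(|\eta|^3)$ yields (1); bound (2) then follows from (1) simply by taking $x=\operatorname{Re}z(w)$ inside the infimum. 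Without this use of nonnegativity (or an equivalent), your quadratic bookkeeping cannot exclude the configuration $\eta\in\ker H_I(0)$ with $\operatorname{Im}f(\operatorname{Re}z(w),w)$ arbitrarily small, so no constant $c$ in (1) is obtained. Your proposed fix --- that $H_R$ "enters $|d_xf|^2$" --- does not help: $|d_xf|^2$ sits inside the infimum on the smaller side of (2) and does not appear in (1) at all, so a lower bound for it cannot substitute for the missing lower bound on $\operatorname{Im}f$ itself.
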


Then we can go back to our stationary phase formula. 
\begin{thm}
\label{stationary phase 2}
    Let $f(x,w)$ be as in Proposition \ref{extension critical value}. Then there are neighborhoods $U$ and $V$ of the origin in $\bbR^n$ and $\bbR^m$ respectively and differential operators $L_{f,j}$ in $x$ of order $\le 2j$ which are $\cC^\infty$ functions of $w\in V$ such that  $$|\int u e^{it f} dx - e^{it \ti{f}(z(w),w)} (\det (\frac{t \ti{f}''_{zz}(z(w),w)}{2\pi i}))^{\frac{-1}{2}} \sum_{j< N} t^{-j} (L_{f,j}\ti{u})(z(w),w) |\le C_N t^{-N- \frac{n}{2}}, \, t\ge 1,$$ where $u\in \cC^\infty_c(U\times V)$. Here $\ti{f}$ and $\ti{u}$ are almost analytic extensions of $f$ and $u$ respectively. The function $(\det (\frac{t \ti{f}''_{zz}(z(w),w)}{2\pi i}))^{\frac{-1}{2}}$ is the branch of the square root of  $(\det (\frac{t \ti{f}''_{zz}(z(w),w)}{2\pi i}))^{-1}$ which is continuously deformed into $1$ under the homotopy $s\in [0,1]\rightarrow i^{-1} (1-s)\ti{f}_{zz}(z(w),w) +sI \in GL(n,\bbC)$.
\end{thm}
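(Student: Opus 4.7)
The plan is to reduce Theorem \ref{stationary phase 2} to the real stationary phase formula (Theorem 7.7.5 above) by deforming the integration contour $\bbR^n$ to a complex contour in $\bbC^n$ passing through the complex critical point $z(w)$. The almost analytic machinery of Definition \ref{analytic extension} and Proposition \ref{extension critical value} exists precisely to control the error of such a deformation, since neither $f$ nor $\ti{f}$ is holomorphic. First, fix almost analytic extensions $\ti{f}$ and $\ti{u}$ of $f$ and $u$. Since $\ti{f}''_{zz}(0,0)=f''_{xx}(0,0)$ is invertible, the map $w\mapsto z(w)$ solving $\dr\ti{f}/\dr z(z(w),w)=0$ is smooth on a neighborhood $V$ of the origin in $\bbR^m$.

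Next, construct the contour. Choose $\chi\in\cC^\infty_c(U)$ equal to $1$ on a neighborhood of $\operatorname{Re}z(w)$ for every $w\in V$, and set $\gamma_s(x,w)=x+is\chi(x)\operatorname{Im}z(w)$ for $s\in[0,1]$; write $\Gamma_w:=\gamma_1(\cdot,w)(\bbR^n)$. Since $\Gamma_w$ coincides with $\bbR^n$ outside $\supp\chi$ and $u$ has compact support in $U$, applying Stokes' theorem to the holomorphic $n$-form $\ti{u}\,e^{it\ti{f}}\,dz_1\wedge\cdots\wedge dz_n$ on the $(n+1)$-chain $\bigcup_{s\in[0,1]}\gamma_s(\cdot,w)(\bbR^n)$ yields
\begin{equation*}
\int_{\bbR^n} u\, e^{itf}\, dx = \int_{\Gamma_w}\ti{u}\, e^{it\ti{f}}\, dz + R_w(t),
\end{equation*}
where $R_w(t)$ is the integral of $\dbar(\ti{u}\,e^{it\ti{f}})\wedge dz$ over the chain.

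To show $R_w(t)=O(t^{-\infty})$, combine two estimates: almost analyticity yields $|\dbar\ti{f}|,\,|\dbar\ti{u}|\le C_M|\operatorname{Im}z|^M$ for every $M$, and Proposition \ref{extension critical value} (together with the fact that $\operatorname{Im}z(w)\to 0$ as $w\to 0$) gives $\operatorname{Im}\ti{f}(z,w)\ge c|\operatorname{Im}z|^2$ uniformly on the chain, after possibly shrinking $V$ and $\chi$. Hence $|e^{it\ti{f}}|\le e^{-ct|\operatorname{Im}z|^2}$ on the chain, and integrating $e^{-cty^2}y^M$ in $y=|\operatorname{Im}z|$ produces an arbitrary negative power of $t$. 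Having absorbed $R_w(t)$ into the remainder, pull back the $\Gamma_w$-integral via $x\mapsto\gamma_1(x,w)$:
\begin{equation*}
\int_{\Gamma_w}\ti{u}\, e^{it\ti{f}}\, dz = \int_{\bbR^n}\ti{u}(\gamma_1(x,w),w)\, e^{itF(x,w)}\det(\dr_x\gamma_1(x,w))\, dx,
\end{equation*}
with $F(x,w):=\ti{f}(\gamma_1(x,w),w)$. Then $F$ is a smooth complex phase with $\operatorname{Im}F\ge 0$, a nondegenerate critical point at $x_0=\operatorname{Re}z(w)$, $F(x_0,w)=\ti{f}(z(w),w)$, and $F''_{xx}(x_0,w)=(\dr_x\gamma_1)^T\ti{f}''_{zz}(z(w),w)(\dr_x\gamma_1)$.

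Applying Theorem 7.7.5 to this real integral produces an asymptotic expansion; the factor $\det(\dr_x\gamma_1(x_0,w))$ in the amplitude combines with $(\det F''_{xx}/\det\ti{f}''_{zz})^{-1/2}=(\det\dr_x\gamma_1)^{-1}$, up to the square-root branch specified in the theorem statement, to produce exactly $(\det(t\ti{f}''_{zz}(z(w),w)/(2\pi i)))^{-1/2}$, while the operators $L_j$ of Theorem 7.7.5 transform under the substitution into differential operators $L_{f,j}$ of order $\le 2j$ in $z$ acting on $\ti{u}$ at $(z(w),w)$, with rational coefficients in the higher derivatives of $\ti{f}$. The hard part will be justifying the uniform lower bound $\operatorname{Im}\ti{f}\gtrsim|\operatorname{Im}z|^2$ on the entire deformation chain, not merely at $z(w)$; this is where almost analyticity, the nondegeneracy of $\ti{f}''_{zz}$, and Proposition \ref{extension critical value} combine critically, since one simultaneously needs exponential damping of $|e^{it\ti{f}}|$ and rapid vanishing of $\dbar(\ti{u}\,e^{it\ti{f}})$ to make $R_w(t)$ negligible to all orders in $t^{-1}$.
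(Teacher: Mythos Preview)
The paper does not give its own proof of this theorem: it is stated as a preliminary result in Subsection~\ref{Stationary Phase Formula}, with the reader referred to \cite{MelinSj75} and to Appendix~A of \cite{hsiao2008projections} for details. So there is no ``paper's proof'' to compare against; your proposal is effectively a reconstruction of the Melin--Sj\"ostrand argument that the paper is quoting.

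Your outline is the correct one and matches the cited references: deform the real contour to a complex contour through the (complex) critical point $z(w)$ using almost analytic extensions, control the Stokes error $R_w(t)$ via the combination of $|\dbar\ti f|,|\dbar\ti u|\lesssim |\operatorname{Im}z|^M$ and exponential damping $|e^{it\ti f}|\le e^{-ct|\operatorname{Im}z|^2}$, and then run stationary phase on the pulled-back integral. Two points deserve more care than you give them. First, the bound $\operatorname{Im}\ti f\ge c|\operatorname{Im}z|^2$ along the \emph{whole} deformation chain does not follow from Proposition~\ref{extension critical value} alone (that proposition controls $\operatorname{Im}\ti f$ only at $z(w)$); one obtains it by Taylor expanding $\ti f$ in $\operatorname{Im}z$ about the real locus, using $\operatorname{Im} f\ge 0$, almost analyticity, and the nondegeneracy of $f''_{xx}$, after shrinking $U,V$. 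Second, your pulled-back phase $F$ need not satisfy $\operatorname{Im}F(x_0,w)=0$, so Theorem~7.7.5 as stated does not apply verbatim; in Melin--Sj\"ostrand this is handled either by noting that when $\operatorname{Im}\ti f(z(w),w)>0$ both sides of the asserted estimate are $O(e^{-ct})$ and the inequality is trivial, or by a direct steepest-descent argument on the deformed contour that does not require vanishing of the imaginary part at the critical point. With those two refinements your sketch goes through.
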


\section{Asymptotic Behavior of Bergman Kernels on Complex Manifolds}
\label{Standard Results}
In this section, our goal is to derive the large $k$-behavior of the Bergman kernel using the scaling method in three different cases.

In subsection \ref{Euclidean Case}, we use the scaling method to derive the leading term of the Bergman kernel in $\bbC^n$. In subsection \ref{Compact Complex Manifold}, we use the spectral gap of the localized Kodaira Laplacian and the result in the Euclidean Case to get the leading term of the Bergman kernel. In subsection \ref{No Spectral Gap Manifold Case}, we use spectral project and spectral kernel to derive the leading term of the Bergman kernel near points with positive curvature.

\subsection{Euclidean Case}
\label{Euclidean Case}
Let $\phi\in \cC^\infty(\bbC^n,\bbR)$, $\phi(z)= \phi_0(z) + \phi_1(z) = \sum_{j=1}^n \gl_j |z_j|^2 + O(|z|^3) $, $\phi_1=O(|z|^3)$, and $\tau\in \cC^\infty_c(\bbC^n,[0,1])$ with $\tau = 1$ on $B(0,1)$ and vanish outside $B(0,2)$. We will denote the weight $\hat{\phi}_k = \phi_0 + \tau(\frac{\sqrt{k}z}{\log k}) \phi_1$, the Hermitian form $\hat{\go}_k =i \sum_{j,\ell=1}^n( \delta_{j,\ell} + \tau(\frac{\sqrt{k}z}{\log k })O(|z|))dz_j\wedge d\ol{z}_\ell$, and the volume form $d\hat{vol}_{k}(z) = (1+ \tau(\frac{\sqrt{k}z}{\log k}) O(|z|)) d\lambda(z)$ where $d\gl(z) = i^n d z_1 d\ol{z}_1 \cdots d z_n d\ol{z}_n $ is the standard volume form. Notice that $\hat{\phi}_k = \phi_0$ and $d\hat{\vol}$ is flat outside $B(0,\frac{2\log k}{\sqrt{k}})$. The Hermitian metric on $\bbC^n$ induces an inner product on $\gO^{(0,q)}_c(\bbC^n)$ which given by $$\(u|v\)_{k\hat{\phi}_k}^{(q)} := \int_{\bbC^n} \<u|v\>_{\hat{\go}_k, q} e^{-2k\hat{\phi}_k} d\hat{\vol}_k(z) $$ where $\<u_Id\ol{z}_I| v_Jd\ol{z}_J\>_{\hat{\go}_k}= u_I\ol{v}_J \frac{\det (\ol{\hat{\go}_k}^{j_\ell,i_\ell})_{\ell=1}^n}{q!} $ and $\(\hat{\go}_k^{i,j}\)_{i,j=1}^n$ is the inverse of $\(\hat{\go}_{k,i,j}\)_{i,j=1}^n = \(\<dz_i|dz_j\>_{\hat{\go}_k}\)_{i,j=1}^n$.

We denote $\dbar_{k\hat{\phi}_k}^{(q),*}$ to be the formal adjoint of the Cauchy-Riemann operator with respect to this inner product. That is $$\(\dbar^{(q)} u | v\)_{k\hat{\phi}_k}^{(q+1)} = \(u|\dbar_{k\hat{\phi}_k}^{(q+1),*}v\)_{k\hat{\phi}_k}^{(q)},\, \forall u\in \gO_c^{(0,q)} (\bbC^n) \text{ and } v\in \gO^{(0,q+1)}(\bbC^n). $$ Then the  Gaffney extension of  Kodaira Laplacian with respect to $k\hat{\phi}_k$ is given by $\kla_{k\hat{\phi}_k}^{(q)} = \dbar^{(q-1)} \dbar_{k\hat{\phi}_k}^{(q),*}+\dbar_{k\hat{\phi}_k}^{(q+1),*}\dbar^{(q)}$. To simplify the notations, we will denote $\kla_{k\hat{\phi}_k} = \kla_{k\hat{\phi}_k}^{(0)}$ and $\dbar_{k\hat{\phi}_k}^* =\dbar_{k\hat{\phi}_k}^{(1),*} $.

In this section we will focus on the weighted $L^2$ space $L^2_{(0,q)}(\bbC^n, k\hat{\phi}_k,d\hat{\vol}_{k})$, which is the completion of $\gO^{(0,q)}_c(\bbC^n)$ with respect to $(\cdot| \cdot)_{k\hat{\phi}_k}^{(q)}$,  and the Bergman space $ H^0(\bbC^n,k\hat{\phi}_k,d\hat{\vol}_{k})  = L^2(\bbC^n ,k\hat{\phi}_k,d\hat{\vol}_{k}) \cap \ker \dbar$. The natural projection from $L^2(\bbC^n,k\hat{\phi}_k,d\hat{\vol}_{k})$ to $ H^0(\bbC^n,k\hat{\phi}_k,d\hat{\vol}_{k})$ is called the Bergman projection, denote by $B_{k\hat{\phi}_k}$.
We denote $B_{k\hat{\phi}_k}(x,y) $ to be its distribution kernel, that is $$(B_{k\hat{\phi}_k}u)(x) = \int_{\bbC^n} B_{k\hat{\phi}_k}(x,y) u(y) d\hat{\vol}_k.$$ The Bergman projection has the following properties
\begin{equation}
\label{Bergman kernel condition}
    \begin{cases}
        \kla_{k\hat{\phi}_k} B_{k\hat{\phi}_k} u = 0 & \forall u\in L^2(\bbC^n,k\hat{\phi}_k,d\hat{\vol}_{k}) \\
        B_{k\hat{\phi}_k} u = u &\forall u\in H^0(\bbC^n,k\hat{\phi}_k,d\hat{\vol}_{k})
    \end{cases},
\end{equation}
and its kernel is smooth. 

We will use a semi-classical method to study the large $k$ behavior of the Bergman kernel. First, we define the scaling Kodaira Laplacian $\kla_{(k\hat{\phi}_k)}$ by the Partial differential equation $$\delta_k(\kla_{k\hat{\phi}_k} u ) = k \kla_{(k\hat{\phi}_k)} \delta_k(u),$$ where $\delta_k(u) (z) = u\(\frac{z}{\sqrt{k}}\), u\in \cC^\infty(\bbC^n) ,$ and the scaling Bergman kernel $$B_{(k\hat{\phi}_k)} (x,y) = k^{-n} B_{k\hat{\phi}_k}\(\delta_k(x) , \delta_k(y)\).$$ From this kernel, we can define an operator $$B_{(k\hat{\phi}_k)}:L^2(\bbC^n, \delta_k(k\hat{\phi}_k),d\hat{\vol}_{(k)}) \rightarrow H^0(\bbC^n,\delta_k(k\hat{\phi}_k),d\hat{\vol}_{(k)})$$ which is called the scaling Bergman projection. This operator satisfy similar properties as (\ref{Bergman kernel condition}), 
\begin{equation}
    \begin{cases}
         \kla_{(k\hat{\phi}_k)} B_{(k\hat{\phi}_k)} u = 0 & \forall u\in L^2(\bbC^n,\delta_k(k\hat{\phi}_k),d\hat{\vol}_{(k)}) \\
        B_{(k\hat{\phi}_k)} u = u &\forall u\in H^0(\bbC^n,\delta_k(k\hat{\phi}_k),d\hat{\vol}_{(k)})
    \end{cases}.
\end{equation}
Moreover, we know that $B_{(k\hat{\phi}_k)}$ is a bounded continuous operator, that is $$\|B_{(k\hat{\phi}_k)} u\|_{\delta_k(k\hat{\phi}_k)} \le \|u\|_{\delta_k(k\hat{\phi}_k)},\, \forall u\in L^2(\bbC^n, \delta_k(k\hat{\phi}_k),d\hat{\vol}_{(k)}).$$

We could also define a operator $ (I+\kla_{(k\hat{\phi}_k)})^{-1}$ since  $$I+\kla_{(k\hat{\phi}_k)}: \Dom \kla_{(k\hat{\phi}_k)}\subset L^2(\bbC^n, \delta_k(k\hat{\phi}_k),d\hat{\vol}_{(k)}) \rightarrow  L^2(\bbC^n, \delta_k(k\hat{\phi}_k),d\hat{\vol}_{(k)}) $$ is $1$-$1$ and onto, which is satifying  $$(I+\kla_{(k\hat{\phi}_k)})^{-s} B_{(k\hat{\phi}_k)} = B_{(k\hat{\phi}_k)}.$$  

Our main theorem in the Euclidean case is that the large $k$ behavior of the scaling Bergman kernel is similar to the well-known Bergman kernel with respect to $\phi_0(z) = \sum_j \lambda_j|z_j|^2$. That is 
\begin{thm}[Main Theorem in subsection \ref{Euclidean Case}]
\label{EUC-Main-Thm}
    If $\dr\dbar \phi>0 $ on $\bbC^n$, then $$\lim_{k\to \infty} B_{(k\hat{\phi}_k)}(x,y) = B_{\phi_0} (x,y) $$ locally uniformly in $\cC^\infty$-topology on $\bbC^n\times \bbC^n$.
\end{thm}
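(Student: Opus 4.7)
The plan is to implement the scaling method directly at the level of the kernel, combining uniform regularity with a compactness-uniqueness argument. First I would unpack what happens after the rescaling $z\mapsto z/\sqrt{k}$ together with the $k^{-n}$ normalization. Since $\phi_0$ is homogeneous of degree $2$, one has $k\phi_0(z/\sqrt{k}) = \phi_0(z)$, while the perturbation $k\tau(z/\log k)\phi_1(z/\sqrt{k})$ is of order $|z|^3/\sqrt{k}$ on any compact set (for $k$ large, $\tau(z/\log k)\equiv 1$ there). Similarly the rescaled Hermitian form $\hat{\go}_k(\cdot/\sqrt{k})$ and rescaled volume form differ from the flat model by $O(|z|/\sqrt{k})$. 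Consequently the coefficients of the scaled Kodaira Laplacian $\kla_{(k\hat{\phi}_k)}$ converge in $\cC^\infty_{\mathrm{loc}}$ to those of the model operator $\kla_{\phi_0}$ on $\bbC^n$.

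Next I would establish uniform $\cC^\infty_{\mathrm{loc}}$ bounds on $B_{(k\hat{\phi}_k)}(x,y)$. The extremal characterization of the Bergman kernel, combined with a sub-mean value inequality for functions holomorphic against a (uniformly) strictly plurisubharmonic weight, yields a uniform pointwise diagonal bound $B_{(k\hat{\phi}_k)}(x,x)\le C_K$ on every compact $K$. Since $x\mapsto B_{(k\hat{\phi}_k)}(x,y)$ lies in $\ker\kla_{(k\hat{\phi}_k)}$ (in fact in $\ker\dbar_x$) and the coefficients of that operator are uniformly $\cC^\infty$-bounded by Step~1, the interior elliptic and Gårding estimates recalled in subsection~\ref{Differntial Operators} upgrade $L^2_{\mathrm{loc}}$ bounds to $\cC^\infty_{\mathrm{loc}}$ bounds in $x$. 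The conjugate equation $\dbar_y B_{(k\hat{\phi}_k)}(x,y) = -2\dbar_y(\delta_k^*(k\hat{\phi}_k))(y)\,B_{(k\hat{\phi}_k)}(x,y)$ gives the corresponding regularity in $y$, hence joint $\cC^\infty_{\mathrm{loc}}$ bounds.

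By Arzelà–Ascoli, along a subsequence $B_{(k_j\hat{\phi}_{k_j})}\to B^{*}$ in $\cC^\infty_{\mathrm{loc}}(\bbC^n\times\bbC^n)$. To identify $B^{*} = B_{\phi_0}$ I would verify the characterizing properties of the Bergman–Fock projection kernel in the limit: (i) $\dbar_x B^{*}(x,y)=0$ and $\dbar_y B^{*}(x,y)=-2\dbar_y\phi_0(y)B^{*}(x,y)$, both obtained by passing to the limit in the corresponding equations for $B_{(k\hat{\phi}_k)}$ using the $\cC^\infty_{\mathrm{loc}}$ convergence of the weights; and (ii) the reproducing identity in the $\phi_0$-weighted inner product. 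For (ii) I would test the scaled identity $B_{(k\hat{\phi}_k)}u = u$ (valid for $u\in H^0$) against compactly supported smooth $u$ obtained by truncating an arbitrary element of $H^0(\bbC^n,\phi_0)$, using the $\cC^\infty_{\mathrm{loc}}$ convergence of both the kernel and the rescaled weight on the support of $u$. Uniqueness of the orthogonal projection onto $H^0(\bbC^n,\phi_0)$ then forces $B^{*}=B_{\phi_0}$, and since the limit is subsequence-independent the full sequence converges.

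The hard part is the simultaneous passage to the limit of the kernel, the weight and the volume form in the reproducing identity: the weighted $L^2$ space itself depends on $k$, so one cannot directly invoke weak-$*$ convergence of projections. The resolution is to test only against compactly supported smooth functions, where the $k$-dependent exponential weight $e^{-2k\hat{\phi}_k(\cdot/\sqrt{k})}$ collapses uniformly to $e^{-2\phi_0}$ on the support, and to control the off-diagonal tails of $B_{(k\hat{\phi}_k)}(x,\cdot)$ via the uniform diagonal bound together with the Gaussian decay inherited from the (almost) strict plurisubharmonicity of the rescaled weight. A secondary but genuine difficulty is establishing the uniform diagonal bound itself: the extremal principle reduces it to a sub-mean value inequality for holomorphic functions against $\delta_k^*(k\hat{\phi}_k)$, but this weight agrees with $\phi_0$ only on compact sets, so one must carefully exploit that $\delta_k^*(k\hat{\phi}_k)$ is globally a smooth small perturbation of $\phi_0$ to get constants independent of $k$.
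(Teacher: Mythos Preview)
Your overall architecture---uniform $\cC^\infty_{\mathrm{loc}}$ bounds, Arzel\`a--Ascoli, then identification of the limit by verifying the defining properties of $B_{\phi_0}$---is exactly the paper's strategy. Two points deserve comment.

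First, your route to the uniform bounds (extremal characterization plus a sub-mean-value inequality for the diagonal, then elliptic bootstrap in each variable) is a valid alternative. The paper instead exploits $(I+\kla_{(k\hat{\phi}_k)})^{-s}B_{(k\hat{\phi}_k)}=B_{(k\hat{\phi}_k)}$ together with G\aa rding's inequality to obtain Sobolev mapping bounds $H^{-2\ell}_{\mathrm{comp}}\to H^{2\ell}_{\mathrm{loc}}$ directly (Lemma~\ref{Euc-lem-1}), and then reads off joint $\cC^\ell$ kernel bounds by pairing with approximate delta functions (Lemma~\ref{Euc-lem-2}). That yields joint regularity in $(x,y)$ in one stroke, whereas your approach still owes an argument for passing from separate $\cC^\infty$ bounds in $x$ and in $y$ to joint bounds.

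Second, and more seriously, your step (ii) has a slip. You propose to ``test the scaled identity $B_{(k\hat{\phi}_k)}u=u$ (valid for $u\in H^0$) against compactly supported smooth $u$ obtained by truncating an element of $H^0(\bbC^n,\phi_0)$''. A truncated holomorphic function is not holomorphic, so the reproducing identity does not apply to it; this is precisely the point that needs work. The paper closes the gap with H\"ormander's $L^2$-estimate (Theorem~\ref{Hörmander L2 Estimate}): for $u=z^\alpha$ one sets $v_k=\chi(\cdot/\log k)\,u$, solves $\dbar r_k=\dbar v_k$ with $\|r_k\|=O(k^{-\infty})$, and works with the genuinely holomorphic $u_k=v_k-r_k$. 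For the residual tail you suggest invoking uniform Gaussian off-diagonal decay of $B_{(k\hat{\phi}_k)}(x,\cdot)$, which is a nontrivial statement not established here; the paper's device is much cheaper: localize $u_k$ by a further cutoff $\rho_\ell=\rho(\cdot/\ell)$ and control the remainder using only $\|B_{(k\hat{\phi}_k)}\|_{\mathrm{op}}\le 1$ together with the uniform norm equivalence $\|\cdot\|_{\delta_k(k\hat{\phi}_k)}\asymp\|\cdot\|_{\phi_0}$.
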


Before the proof of the Main theorem, we will provide the following lemmas about the estimate of the Bergman kernel and Bergman projection.

\begin{lem}
\label{Euc-lem-1}
    Let $u\in\cC^\infty_c(\bbC^n)$. For all $\ell \in \bbN$, $U\Subset W\subset \bbC^n$ be open subsets,  there is a constant $C_{\ell,U,W}$ independent of $k$ such that
    $$\| B_{(k\hat{\phi}_k)}  u \|_{2\ell,U}\le C_\ell\|u\|_{-2\ell,W}$$ for all $k\gg 1$. 
    Sometimes, we omit the subscripts $U$ and $W$ for simplicity.
\end{lem}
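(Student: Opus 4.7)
The plan combines three standard ingredients: $L^2$-boundedness of $B_{(k\hat\phi_k)}$ as an orthogonal projection, uniform interior elliptic regularity for the scaled Kodaira Laplacian $\kla_{(k\hat\phi_k)}$, and self-adjointness in the weighted inner product. First I would verify that everything is tame in $k$ on fixed compact sets: since
$$\delta_k(k\hat\phi_k)(z) = \phi_0(z) + k\tau(z/\log k)\phi_1(z/\sqrt{k}),$$
and $\phi_1 = O(|w|^3)$, the correction term is $O(k^{-1/2})$ on any fixed compact set, so $\delta_k(k\hat\phi_k) \to \phi_0$ in $\cC^\infty_{loc}(\bbC^n)$; likewise $d\hat\vol_{(k)}$ and the Hermitian form converge smoothly to their Euclidean model values. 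As a consequence, $\kla_{(k\hat\phi_k)}$ is a family of second-order operators with $\cC^\infty_{loc}$-bounded coefficients and uniformly elliptic principal symbol, and the weighted $L^2$-norm is uniformly equivalent to the standard $L^2$-norm on any fixed compact set.

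The next step is the one-sided bound: for any $w\in\cC^\infty_c(\bbC^n)$,
$$\|B_{(k\hat\phi_k)} w\|_{2\ell,U}\le C_\ell\|w\|_0.$$
Since $B_{(k\hat\phi_k)}$ is a contraction in the weighted $L^2$, $\|B_{(k\hat\phi_k)} w\|_{L^2(V)}\le C\|w\|_0$ uniformly in $k$. Applying the interior elliptic estimate to $\kla_{(k\hat\phi_k)}(B_{(k\hat\phi_k)} w) = 0$ iteratively on nested open sets $U\Subset U_1\Subset\cdots\Subset V$ via the usual cutoff/commutator argument yields $\|B_{(k\hat\phi_k)} w\|_{2\ell,U}\le C_\ell\|B_{(k\hat\phi_k)} w\|_{L^2(V)}$ with constants uniform in $k$, and the one-sided bound follows.

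The negative Sobolev norm on the right is then produced by duality. For $v\in\cC^\infty_c(V_0)$ with $U\Subset V_0\Subset W$, the self-adjointness of $B_{(k\hat\phi_k)}$ in the weighted inner product gives an identity of the form
$$(B_{(k\hat\phi_k)} u, v)_{L^2} = (u,\ g_k^{-1}\overline{B_{(k\hat\phi_k)}(g_k v)})_{L^2},$$
where $g_k$ is the smooth positive conversion factor between the weighted and standard $L^2$-pairings (uniformly bounded above and away from zero on any compact set). Inserting a cutoff $\chi\in\cC^\infty_c(W)$ equal to $1$ near $\text{supp}\,u$ and applying the one-sided bound from the previous paragraph to $g_k v$, one estimates $\|\chi\cdot g_k^{-1}\overline{B_{(k\hat\phi_k)}(g_k v)}\|_{H^{2\ell}(W)} \le C_\ell \|v\|_0$, giving $\|B_{(k\hat\phi_k)} u\|_{L^2(V_0)} \le C_\ell \|u\|_{-2\ell,W}$. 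Finally, applying interior elliptic regularity directly to $B_{(k\hat\phi_k)} u$, which lies in $\ker\kla_{(k\hat\phi_k)}$, yields $\|B_{(k\hat\phi_k)} u\|_{2\ell,U}\le C'_\ell\|B_{(k\hat\phi_k)} u\|_{L^2(V_0)}$, and chaining the two bounds completes the proof.

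The main technical hurdle is ensuring that all elliptic-regularity constants are genuinely uniform in $k$. Because $\phi_1 = O(|w|^3)$ and the cutoff $\tau(\sqrt{k}z/\log k)$ confines the $\phi_1$-correction in $\hat\phi_k$ to $|z|\le 2\log k/\sqrt{k}$ (which becomes $|z|\le 2\log k$ after scaling by $\sqrt{k}$), the zeroth- and first-order coefficients of $\kla_{(k\hat\phi_k)}$ remain $\cC^\infty_{loc}$-bounded on every fixed compact set, while outside a large ball $\delta_k(k\hat\phi_k)$ coincides with the model $\phi_0$. This $k$-uniformity of the underlying elliptic operator and of the weight equivalence is what makes each constant in the chain above independent of $k$.
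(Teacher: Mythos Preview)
Your proposal is correct and follows essentially the same route as the paper: interior elliptic regularity for the scaled Laplacian (the paper packages this via $(I+\kla_{(k\hat\phi_k)})^{-s}B_{(k\hat\phi_k)}=B_{(k\hat\phi_k)}$ and G\aa rding's inequality) to get $\|B_{(k\hat\phi_k)}u\|_{2\ell,U}\lesssim\|B_{(k\hat\phi_k)}u\|_{0,V}$ uniformly in $k$, followed by the self-adjointness/duality step to upgrade the right-hand side to $\|u\|_{-2\ell,W}$. Your explicit handling of the weight conversion factor $g_k$ in the duality pairing is exactly what the paper's one-line ``take the adjoint'' is implicitly using, since the weighted and standard $L^2$-norms are uniformly equivalent on compacta.
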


\begin{proof}
    Fix $s\in \bbN$. For any bounded open sets $U\Subset V \Subset W\Subset D$, by applying Gårding inequality, we have the following estimate 
    \begin{equation}
    \label{ECU-GARDING}
    \begin{aligned}
        \|B_{(k\hat{\phi}_k)}u \|_{2s, U} &\approx \| (I+ \kla_{(k\hat{\phi}_k)})^{-s} B_{(k\hat{\phi}_k)} u \|_{2s,U}\\
        &\le C_1( \| B_{(k\hat{\phi}_k)} u \| _{0 , V} + \|(I+ \kla_{(k\hat{\phi}_k)})^{-s} B_{(k\hat{\phi}_k)} u\|_{0,V} )\\
        &\le C_2 \| B_{(k\hat{\phi}_k)} u \|_{0,V} 
    \end{aligned}
    \end{equation}
    where $C_1$ and $C_2$ are positive constant which is independent of $k$ and $u$. From this we know $$B_{(k\hat{\phi}_k)}: L^2_{comp}(D,\delta_k(k\hat{\phi}_k),d\hat{\vol}_{(k)}) \rightarrow H^{2s}_{loc}(D,\delta_k(k\hat{\phi}_k),d\hat{\vol}_{(k)}),\, \forall s\in\bbN_0.$$ Then we take the adjoint operator, we have $$B_{(k\hat{\phi}_k)}:H^{-2s}_{comp} (D,\delta_k(k\hat{\phi}_k),d\hat{\vol}_{(k)}) \rightarrow L^2_{loc}(D,\delta_k(k\hat{\phi}_k),d\hat{\vol}_{(k)}), $$
    and $$\|B_{(k\hat{\phi}_k)}u\|_{0,V} \lesssim \|u\|_{-2s,W},$$ which complete the proof of this lemma.
\end{proof}

\begin{lem}
\label{Euc-lem-2}
    Fix a compact subset $K\subset\bbC^n$. For all $\ell \in \bbN$, there exists a constant $C_\ell $ independent of $k$ and $(x,y)$ such that $$\sup_{\genfrac{}{}{0pt}{}{\ga+\gb \le \ell}{ x,y\in K}} |\dr_x^\ga \dr_y^\gb B_{(k\hat{\phi}_k)}(x,y)|\le C_\ell.$$ 
\end{lem}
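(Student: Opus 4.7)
The strategy reduces to obtaining a uniform $L^\infty$ bound on $B_{(k\hat\phi_k)}$ over a slightly larger compact set, from which all derivative bounds follow via Cauchy estimates. The key structural fact is that for each fixed $y$ the function $B_{(k\hat\phi_k)}(\cdot,y)$ lies in $H^0(\bbC^n,\delta_k(k\hat\phi_k),d\hat\vol_{(k)})$ and is therefore holomorphic in $x$; by the self-adjointness of the projection one has $\overline{B_{(k\hat\phi_k)}(x,y)}=B_{(k\hat\phi_k)}(y,x)$, so the kernel is anti-holomorphic in $y$. Hence for any $(x_0,y_0)\in K\times K$ and any compact $K'\subset \bbC^n$ containing $K$ in its interior, Cauchy's integral formula on a small polydisc around $(x_0,y_0)$ gives
$$|\dr_x^\ga\dr_y^\gb B_{(k\hat\phi_k)}(x_0,y_0)|\le C_{\ga,\gb}\sup_{K'\times K'}|B_{(k\hat\phi_k)}|,$$
and it suffices to bound the right-hand side uniformly in $k$.

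To obtain the sup-norm bound, use the reproducing identity $B_{(k\hat\phi_k)}\circ B_{(k\hat\phi_k)}=B_{(k\hat\phi_k)}$ together with Cauchy--Schwarz in the weighted $L^2$-inner product, which gives
$$|B_{(k\hat\phi_k)}(x,y)|^2\le B_{(k\hat\phi_k)}(x,x)\,B_{(k\hat\phi_k)}(y,y),$$
reducing the problem to a uniform on-diagonal bound. Applying the sub-mean-value inequality to the plurisubharmonic function $|B_{(k\hat\phi_k)}(\cdot,y)|^2$ on a fixed small Euclidean ball $B(y,r)$, and then converting the unweighted $L^2$-integral into the weighted one using that the weight $\delta_k(k\hat\phi_k)(z)=\phi_0(z)+O(k^{-1/2}|z|^3)$ and the density of $d\hat\vol_{(k)}$ against Lebesgue are both uniformly bounded above and below on $K'$ for $k\gg 1$, one obtains
$$|B_{(k\hat\phi_k)}(y,y)|^2\le C\,\|B_{(k\hat\phi_k)}(\cdot,y)\|_{\delta_k(k\hat\phi_k)}^2 = C\,B_{(k\hat\phi_k)}(y,y),$$
where the last equality is the reproducing property. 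Thus $B_{(k\hat\phi_k)}(y,y)\le C$ uniformly in $y\in K'$ and $k$, which combined with the first paragraph completes the proof.

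The main obstacle is the $k$-uniform comparison between the weighted and Euclidean $L^2$-structures, which is exactly what the truncated definitions of $\hat\phi_k$ and $\hat\go_k$ in Section~\ref{Euclidean Case} are engineered to supply: the cut-off $\tau(\sqrt{k}z/\log k)$ confines the non-quadratic corrections to the zone $|z|\le 2\log k/\sqrt{k}$, ensuring that $\delta_k(k\hat\phi_k)\to\phi_0$ locally uniformly and that the volume form converges to $d\lambda$. Without this calibration, neither the sub-mean-value step nor the reproducing-kernel step would give $k$-uniform constants. Alternatively, one could invoke Lemma~\ref{Euc-lem-1} directly: choosing $\ell$ with $2\ell>2n$ and applying the Sobolev embedding $H^{2\ell}_{loc}\hookrightarrow \cC^0_{loc}$, then testing $B_{(k\hat\phi_k)}$ against a mollifier approximation to $\delta_{y_0}$, yields the desired sup-norm control; this avoids plurisubharmonic arguments at the expense of careful bookkeeping between weighted and unweighted Sobolev norms.
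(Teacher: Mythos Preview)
Your alternative approach at the end---invoking Lemma~\ref{Euc-lem-1} and testing against mollifiers approximating $\delta_{y_0}$---is exactly what the paper does. The paper writes $B_{(k\hat\phi_k)}(x,y)=\lim_{\varepsilon\to 0}\int B_{(k\hat\phi_k)}(x,z)\chi_\varepsilon(z-y)\,d\lambda(z)$, bounds $\|\chi_\varepsilon\|_{-2n-1}$ uniformly in $\varepsilon$ via the Fourier side, and then applies the $H^{2\ell}\leftarrow H^{-2\ell}$ mapping estimate of Lemma~\ref{Euc-lem-1}; derivatives are handled the same way with $\dr^\alpha\chi_\delta$ and $\dr^\beta\chi_\varepsilon$.

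Your main approach (Cauchy estimates $+$ sub-mean-value $+$ reproducing property) is a genuinely different and more elementary route that exploits the holomorphy specific to the Bergman setting, whereas the paper's argument is purely Sobolev-theoretic and would apply equally to the spectral kernels $B_{(k),\mu_k}$ of Subsection~\ref{No Spectral Gap Manifold Case}, which are not holomorphic. One caution: in the paper's normalization the kernel is taken with respect to $d\hat\vol_k$ rather than the weighted measure, so $B_{(k\hat\phi_k)}(x,y)=K(x,y)e^{-2\delta_k(k\hat\phi_k)(y)}$ for the Hermitian-symmetric reproducing kernel $K$. Consequently the identities $\overline{B(x,y)}=B(y,x)$, $|B(x,y)|^2\le B(x,x)B(y,y)$, and $\|B(\cdot,y)\|_{\delta_k(k\hat\phi_k)}^2=B(y,y)$ each hold only up to factors of $e^{\pm 2\delta_k(k\hat\phi_k)}$, and $B(x,\cdot)$ is not literally anti-holomorphic. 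Since these weight factors are uniformly bounded on compacts (your own observation that $\delta_k(k\hat\phi_k)\to\phi_0$ locally uniformly), the argument survives intact after inserting them; but as written the displayed identities are off by exactly those factors.
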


\begin{proof}
    For any $x, y\in K\subset U\subset \bbC^n$, fixed a cut-off function $\chi$ support near $0$, and denote $\chi_\varepsilon(z) = \varepsilon^{-2n}\chi(\varepsilon^{-1} z) $. Using Fourier transform we have for any open subset $V$, $\| \chi_\varepsilon\|_{-2n-1,V}\le \| \chi_\varepsilon\|_{-2n-1}< C$ for some $C$ is independent of $ \varepsilon$. From the support of $\chi_\varepsilon$ will be contained in $K$ for $\varepsilon$ small enough then by Lemma \ref{Euc-lem-1}, we have
    $$
    \begin{aligned}
    |B_{(k\hat{\phi}_k)}(x,y)|&= \lim_{\varepsilon\to 0 }|\int B_{(k\hat{\phi}_k)} (x,z) \chi_\varepsilon(z-y)  d\lambda(z)|\\
    &= \lim_{\varepsilon\to 0 }|\int_U B_{(k\hat{\phi}_k)} (x,z) \chi_\varepsilon(z-y)  d\lambda(z)| \le \|B_{(k\hat{\phi}_k)}\|_{2n+1,U} \| \chi_{\varepsilon}\|_{-2n-1,U-y},
    \end{aligned}
    $$ both have uniform bound independent of $(x,y)\in K\times K$ and $k$. For $\cC^\ell $ norm, we using the fact $$| \dr_x^\ga \dr_y^\gb B_{(k\hat{\phi}_k)}(x,y) | = \lim_{\varepsilon, \delta\to 0 } |\int B_{(k\hat{\phi}_k)} (z,w) (\dr_z^\ga \chi_\delta(z-x))(\dr_w^\gb \chi_{\varepsilon}(w-y))d\lambda(z)d\lambda(w)|.$$ This has the uniform bound give by $\|\dr_z^\ga \chi_\delta(z-x)\|_{-2n-1 +|\alpha|,U}\| \dr_w^\gb \chi_{\varepsilon}(w-y)\|_{-2n-1+ |\beta|,U}  $ which is independent of $(x,y)\in K\times K$ and $k$.
    Thus $\sup_{\genfrac{}{}{0pt}{}{\ga+\gb \le \ell}{ x,y\in K}} |\dr_x^\ga \dr_y^\gb B_{(k\hat{\phi}_k)}(x,y)|\le C_\ell$  for some constant $C_\ell $ independent of $(x,y) $ and $k$.
\end{proof}

From Lemma \ref{Euc-lem-2}, we know $\{B_{(k\hat{\phi}_k)}(x,y)\}$ is locally uniform bounded and equi-continuous. Thus, by Arzelà–Ascoli theorem, there exists a subsequence of $B_{(k\hat{\phi}_k)}(x,y)$ that converges locally uniformly to some $B(x,y)$. Thus, we can define an operator $B$ from the limit kernel. We will use Hörmander $L^2$-Estimate(cf.  \cite{hormander19652,Introductiontocpxanainserveralvariables-Hormander},  \cite{Demailly1982} and  \cite{chen2001partial}) to check that $B$ is exactly $B_{\phi_0}$.

We pause and introduce some notations. 
A function $\phi$ defined on $\gO\subset \bbC^n$ is plurisubharmonic if it is upper semi-continuous and subharmonic on each complex line, that is, for any complex line $L$, $\phi|_{\gO\cap L} $ is subharmonic. This is equivalent to that $\sum_{j,\ell=1} ^n \frac{\dr^2 \phi}{\dr z_j \dr\ol{z}_\ell} (p) \xi_j\ol{\xi}_\ell$ is semi-positive Hermitian form for all $p$ and we call the function is strictly plurisubharmonic if it is positive definite. We call a domain pseudoconvex if it has a strictly plurisubharmonic exhaustion function.
\begin{thm}[Hörmander $L^2$-Estimate \cite{berndtsson_l2-methods_nodate}]
\label{Hörmander L2 Estimate}
     Let $\gO$ be a pseudoconvex domain in $\bbC^n$, and let $\phi$ be a plurisubharmonic in $\gO$. Suppose $\phi= \psi + \xi$, where $\xi$ is an arbitrary plurisubharmonic function, and $\psi$ is a smooth, strictly plurisubharmonic function. Then for any $f$, a $\dbar$-closed $(0,1)$-form in $\gO$, we can solve $\dbar u= f $, with $u$ satisfying $$\int |u|^2 e^{-\phi} \le \int \sum \psi^{j\ol{k}} f_j\ol{f}_k e^{-\phi}$$ if right hand side is finite. Here $(\psi^{j\ol{k}}) = (\psi_{j\ol{k}})^{-1}$.
\end{thm}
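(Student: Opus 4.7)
The plan is to prove the $L^2$-estimate via a Hilbert-space duality argument combined with the Bochner-Kodaira-Nakano identity, treating the splitting $\phi = \psi + \xi$ as a device to transfer positivity from $\psi$ while keeping $e^{-\phi}$ as the weight throughout.

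First, I would realize $\dbar$ as a densely defined, closed linear operator $T\colon L^2(\gO, e^{-\phi}) \to L^2_{(0,1)}(\gO, e^{-\phi})$ with Hilbert-space adjoint $T^*$, and introduce $S\colon L^2_{(0,1)}(\gO, e^{-\phi}) \to L^2_{(0,2)}(\gO, e^{-\phi})$ corresponding to the next differential in the Dolbeault complex. By the Hahn-Banach and Riesz representation theorems, solving $Tu=f$ with $\|u\|_\phi^2 \le \int \sum \psi^{j\ol{k}} f_j\ol{f}_k\, e^{-\phi}$ is equivalent to showing that for every $\alpha\in \Dom(T^*)\cap \Dom(S)$,
\begin{equation*}
|(f,\alpha)_\phi|^2 \le \Bigl(\int \sum \psi^{j\ol{k}} f_j\ol{f}_k\, e^{-\phi}\Bigr)\Bigl(\|T^*\alpha\|_\phi^2 + \|S\alpha\|_\phi^2\Bigr).
\end{equation*}
An orthogonal decomposition $\alpha=\alpha_1+\alpha_2$ with $\alpha_1\in\ker S$ and $\alpha_2\in(\ker S)^\perp$, together with $Sf=0$ (so $(f,\alpha_2)_\phi=0$) and $ST=0$ (so $T^*\alpha_2=0$), reduces the task to $S$-closed forms $\alpha_1$.

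Second, for smooth compactly supported $\alpha$ on a relatively compact subdomain where $\phi$ is smooth, I would derive the Bochner-Kodaira-Nakano identity by explicit integration by parts, producing an inequality of the form
\begin{equation*}
\|T^*\alpha\|_\phi^2 + \|S\alpha\|_\phi^2 \ge \int \sum_{j,k}\phi_{j\ol{k}}\,\alpha_j\ol{\alpha}_k\, e^{-\phi}.
\end{equation*}
Since $\xi$ is plurisubharmonic, $(\phi_{j\ol{k}})\ge(\psi_{j\ol{k}})$ as Hermitian matrices, so the right-hand side dominates $\int \sum \psi_{j\ol{k}}\alpha_j\ol{\alpha}_k e^{-\phi}$. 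Combining this with the fibrewise Cauchy-Schwarz inequality in the metric $(\psi_{j\ol{k}})$,
\begin{equation*}
|(f,\alpha)_\phi|^2 \le \Bigl(\int \sum \psi^{j\ol{k}} f_j\ol{f}_k\, e^{-\phi}\Bigr)\Bigl(\int \sum \psi_{j\ol{k}}\alpha_j\ol{\alpha}_k\, e^{-\phi}\Bigr),
\end{equation*}
yields the required bound on smooth, compactly supported test forms.

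The main obstacle is relaxing the two regularity hypotheses simultaneously: the Bochner identity requires $\phi\in\cC^2$ and a compactly supported test form, while the statement only assumes plurisubharmonic $\xi$ on a possibly unbounded pseudoconvex $\gO$. I would handle this by exhausting $\gO$ by relatively compact sublevel sets $\gO_\nu$ of a smooth strictly plurisubharmonic exhaustion function (present by pseudoconvexity), replacing $\xi$ on $\gO_\nu$ by its standard mollification $\xi_\varepsilon$ (smooth and plurisubharmonic), and invoking the Friedrichs-type density lemma that $\cC^\infty_c$-forms are dense in $\Dom(T^*)\cap\Dom(S)$ in the graph norm on $\gO_\nu$. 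Solving $\dbar u_{\nu,\varepsilon}=f$ on $\gO_\nu$ with weight $\phi_\varepsilon=\psi+\xi_\varepsilon$ gives solutions with a uniform $L^2$ bound independent of $\nu$ and $\varepsilon$; a weak-compactness argument as $\varepsilon\to 0$ and then $\nu\to\infty$, combined with Fatou's lemma to pass to the limit in the weight $e^{-\phi_\varepsilon}\to e^{-\phi}$, produces the desired $u$ on $\gO$ with the stated estimate.
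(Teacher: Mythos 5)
The paper itself does not prove this theorem; it quotes it from Berndtsson's notes, so the relevant comparison is with the standard proof there, whose structure (duality reduction, basic a priori inequality, regularization and exhaustion) your outline follows. Your reduction via $\alpha=\alpha_1+\alpha_2$ with $ST=0$, the pointwise Cauchy--Schwarz in the metric $(\psi_{j\ol k})$ together with $(\phi_{j\ol k})\ge(\psi_{j\ol k})$, and the limit procedure with $\xi_\varepsilon\downarrow\xi$, uniform bounds and Fatou are all correct and standard.

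The genuine gap is the density step. On a relatively compact sublevel set $\gO_\nu$ with the (incomplete) Euclidean metric and the same weight $e^{-\phi_\varepsilon}$ in all three $L^2$ spaces, it is \emph{not} true that $\cC^\infty_c$-forms are dense in $\Dom(T^*)\cap\Dom(S)$ for the graph norm; the Friedrichs-type lemma only gives density of forms smooth up to $\partial\gO_\nu$ satisfying the $\dbar$-Neumann boundary condition $\sum_j\alpha_j\,\partial\rho/\partial z_j=0$ on $\partial\gO_\nu$. If your density claim held, then the compactly supported Bochner identity (which has no boundary term) would give the basic estimate, hence $L^2$-solvability of $\dbar$ with bounds, on an arbitrary bounded domain, with pseudoconvexity entering only through the existence of an exhaustion; that conclusion is false, so the step as written fails precisely where pseudoconvexity of the boundary must be used. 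Two standard repairs: (i) integrate by parts for forms smooth up to the boundary in $\Dom T^*$, obtaining the Morrey--Kohn--H\"ormander identity whose boundary term $\int_{\partial\gO_\nu}\sum_{j,k}\rho_{j\ol k}\alpha_j\ol\alpha_k e^{-\phi}\,dS$ is nonnegative because sublevel sets of a strictly plurisubharmonic exhaustion have pseudoconvex (indeed strictly Levi-positive) boundary; or (ii) avoid the boundary altogether by H\"ormander's device of three mutually shifted weights (or a complete K\"ahler metric on $\gO$), chosen so that cut-off errors are absorbed and compactly supported forms \emph{do} become dense in the graph norm. With either repair inserted at that point, the rest of your argument goes through.
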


\begin{proof}[Proof of Theorem \ref{EUC-Main-Thm}]
    We will prove that $B$ satisfy the Bergman projection condition \ref{Bergman kernel condition} for $\kla_{\phi_0} $. 
    First, we compute the Kodaira Laplacian $$\begin{aligned}
        \kla_{k\hat{\phi}_k} & =  \sum_{j,\ell=1}^n (e_j^\wedge\circ (Z_j+ kZ_j(\hat{\phi}_k))) (e_\ell^{\wedge,*}\circ (Z_\ell^*+ k\ol{Z}_\ell(\hat{\phi}_k)))\\
        &\quad+ (e_\ell^{\wedge,*}\circ (Z_\ell^*+ k\ol{Z}_\ell(\hat{\phi}_k))) (e_j^\wedge\circ (Z_j+ kZ_j(\hat{\phi}_k)))\\
        &= \sum_{j,\ell =1}^n e_j^\wedge e_\ell^{\wedge,*}(Z_j+ kZ_j(\hat{\phi}_k))(Z_j^*+ k\ol{Z}_j(\hat{\phi}_k)) + e_\ell^{\wedge,*}e_j^\wedge(Z_j^*+ k\ol{Z}_j(\hat{\phi}_k))(Z_j+ kZ_j(\hat{\phi}_k))\\
        &= \sum_{j=1}^n (Z_j+ kZ_j(\hat{\phi}_k))(Z_j^*+ k\ol{Z}_j(\hat{\phi}_k)) + \sum_{j,\ell=1}^n e_j^\wedge e_\ell^{\wedge,*}[(Z_j+ kZ_j(\hat{\phi}_k)),(Z_\ell^*+ k\ol{Z}_\ell(\hat{\phi}_k))]\\
        &=\sum_{j=1}^n (Z_j+ kZ_j(\hat{\phi}_k))(Z_j^*+ k\ol{Z}_j(\hat{\phi}_k))= \sum_{j=1}^n Z_jZ_j^* +k \lambda_j + k O(|z|) + kZ_j(\hat{\phi}_k) Z_j^*+ k\ol{Z}_j(\hat{\phi}_k)Z_j.
    \end{aligned}$$
    Then the the scaling is of the form $$\kla_{(k\hat{\phi}_k)} = \sum_{j} Z_j Z_j^* + \lambda_j + o(1). $$ Here $o(1)$ is $kZ_j(\hat{\phi}_k) Z^*_j + k \ol{Z}_j(\hat{\phi}_k)Z_j$ after scaling. Thus, $ \kla_{(k\hat{\phi}_k)}$ converges to $  \kla_{\phi_0}$, where the convergence is understood in terms of the coefficients of the corresponding partial differential equation. Then $\kla_{\phi_0} B = \lim_{k\to \infty} \kla_{(k\hat{\phi}_k)} B_{(k\hat{\phi}_k)}  =0$. Let $u = z^\alpha \in H^0 (\bbC^n,\phi_0)$ with $\alpha\in \bbN^n_0$, take $v_k (z)= u(z) \chi(\frac{z}{\log k}) $ where $\chi$ is a cut-off function. Then $v_k \in L^2(\bbC^n, \delta_k(k\hat{\phi}_k),d\hat{\vol}_{(k)}) $ and $\|\dbar v_k\|_{L^2_{(0,1)}(\bbC^n, \delta_k(k\hat{\phi}_k),d\hat{\vol}_{(k)})} = O(k^{-\infty} ) $. By Hörmander $L^2$-estimate, there are $r_k $ such that $\dbar r_k = \dbar v_k$ and $$\|r_k\|_{L^2(\bbC^n, \delta_k(k\hat{\phi}_k),d\hat{\vol}_{(k)})} \le \|\dbar v_k \|_{L^2_{(0,1)}(\bbC^n, \delta_k(k\hat{\phi}_k),d\hat{\vol}_{(k)})} = O(k^{-\infty}) .$$ Then $u_k = v_k -r_k$ is in $H^0(\bbC^n,\delta_k(k\hat{\phi}_k),d\hat{\vol}_{(k)}) $ and converge to $u$ as $k\to \infty$.

    Now for every $g\in \cC^\infty_c(\bbC^n) $, we have $(B_{(k\hat{\phi}_k)}u_k | g)_{\delta_k(k\hat{\phi}_k)} = (u_k | g)_{\delta_k(k\hat{\phi}_k)}$ since $u_k \in H^0(\bbC^n,\delta_k(k\hat{\phi}_k)) $. By the fact $u_k$ converges $u$ pointwisely, we have $(u_k| g)_{\delta_k(k\hat{\phi}_k)}$ converge to  $(u|g)_{\phi_0}$. Let $\rho\in\cC^\infty_c(\bbC^n,[0,1])$ and $\rho=1 $ on $B(0,1) $ and $\rho=0$ outsider $B(0,2)$. Then we consider $\rho_\ell= \rho(\frac{z}{\ell})$, 

    $$
    \begin{aligned}
    (B_{(k\hat{\phi}_k)}u_k | g)_{\delta_k(k\hat{\phi}_k)}  =(B_{(k\hat{\phi}_k)}\rho_\ell u_k | g)_{\delta_k(k\hat{\phi}_k)}+(B_{(k\hat{\phi}_k)}(1-\rho_\ell)u_k | g)_{\delta_k(k\hat{\phi}_k)} \\
    \end{aligned}
    $$ 
    The first term converge to $\(B\rho_\ell u | g\)_{\phi_0}$ and we estimate the second term $$\begin{aligned}
        \left|(B_{(k\hat{\phi}_k)}(1-\rho_\ell)u_k | g)_{\delta_k(k\hat{\phi}_k)}\right|^2 &\le \|g\|_{\delta_k(k\hat{\phi}_k)}\|B_{(k\hat{\phi}_k)}(1-\rho_\ell)u_k\|_{\delta_k(k\hat{\phi}_k)}\\
        &\le \|g\|_{\delta_k(k\hat{\phi}_k)}\|(1-\rho_\ell)u\|_{\delta_k(k\hat{\phi}_k)}.
    \end{aligned}$$
    Since for any $\varepsilon>0,\, \delta>0$ we can take $k_0$ and $\ell_0$ such that for all $k>k_0$ and $\ell>\ell_0$, we have $|\tau(\frac{\sqrt{k}z}{\log k})\phi_1(z)|\le C\frac{\log^3 k}{\sqrt{k}}\le \delta$ and $\|(1-\rho_\ell) u\|_{\phi_0}\le \varepsilon$. Thus, the second term can be sufficiently small.
    Combine this observation with $\rho_\ell u $ converge to $u$, we conclude that  $B_{(k\hat{\phi}_k)} u_k$ converge to $Bu$ weakly. Thus $B=B_{\phi} $ in the distribution sense.
\end{proof}

\begin{rem}
\label{general phi}
    We can replace $\hat{\phi}_k$ to be $$\phi_k(z) = \sum_{j} \lambda_{j,k} |z_j -p_{k,j}|^2 + \tau(\frac{\sqrt{k}}{\log k} (z-p_{k,j}))O(|z-p_k|^3),$$
    $$\hat{\go}_k =i \sum_{j,\ell=1}^n( \delta_{j,\ell} + \tau(\frac{\sqrt{k}}{\log k }(z-p_k))O(|z-p_j|))dz_j\wedge d\ol{z}_\ell$$ and 
    $$d\hat{vol}_{k}(z) = (1+ \tau(\frac{\sqrt{k}}{\log k}(z-p_k)) O(|z-p_j|)) d\lambda(z) $$
    where $\lambda_{j,k} \rightarrow \lambda_j$ and $p_k\rightarrow 0$ as $k\to \infty$, this will also preserve the locally uniform convergence of the scaling Bergman kernel with the scaling respect to $0$. 
\end{rem}

\begin{rem}
    For the quadratic weight $\phi_0(z) = \sum_{j=1}^n \lambda_j |z_j|^2$ with $\lambda_j>0$, the Bergman projection $B_{\phi_0}: L^2(\bbC^n,\phi_0) \rightarrow H^0(\bbC^n,\phi_0)$ is well-known. We can compute the Bergman kernel $B_{\phi_0}(x,y) $ by using the orthonormal basis $\{c_\alpha z^\alpha\}_{\alpha \in \bbN_0^n}$. That is $$B_{\phi_0}(x,y) =  \(\frac{1}{\pi}\)^n \prod_{j=1}^n \lambda_j  e^{ 2\sum_j \lambda_j (x_j \ol{y}_j -|y_j|^2)}. $$
    For the detailed computation, we refer to \cite[Proposition 11]{hou2022asymptotic}. 
\end{rem}

\subsection{Compact Manifold Case}
\label{Compact Complex Manifold}
Let $M$ be a complex manifold and $(L,h^L)$ be a Hermitian line bundle over $M$. We denote $L^k:= L^{\otimes k}$ the $k$-th tensor of the line bundle and  $\(\cdot |\cdot \)_k$ the $L^2$-inner product of $L^2(M,L^k)$ which is the set of all $L^2$ finite setion frome $M$ to $L^k$, see subsection \ref{Complex Manifold}. Let $\dbar_k$ be the Cauchy-Riemann operator and $\dbar_k^*$ be the formal adjoint with respect to the previous inner product. The Kodaira Laplacian is denoted by $\kla_k = \dbar_k^*\dbar_k$, and $P_k$ is the Bergman projection with its distribution kernel $P_k(x,y) $, which is called the Bergman kernel. 

For a local trivialization $s: U \rightarrow L$ with local weight $\phi(z) = \sum_j \lambda_j |z_j|^2 + O(|z|^3) $. We have an identify between $L^2(U, L^k)$ and the $L^2$ space $L^2(U,\vol_M)$ by 
$$
\begin{aligned}
\tau:\gO_c(U,L^k) &\rightarrow \gO_c(U)\\
u\otimes s^k &\mapsto  ue^{-k\phi}.
\end{aligned}
$$
This identify also give the localized Cauchy-Riemann operator $\dbar_{k,s}$ satisfy $\tau(\dbar_k u) = \dbar_{k,s}\tau(u)$, its formal adjoint $\dbar_{k,s}^*$ with respect to the $L^2$-inner product in $L^2(U,\vol_M)$ and the localized Kodaira Laplacian $\kla_{k,s}$. Also, the localized Bergman projection $P_{k,s} $ defined by $$ P_k(\hat{u}\otimes s^k) =( e^{k\phi}P_{k,s} e^{-k\phi} \hat{u}) \otimes s^k$$ and localized Bergman kernel $P_{k,s}(x,y)$. As in subsection \ref{Differntial Operators}, we can derive the local expression of the localized Kodaira Laplacian. In the following, we will assume $M$ is compact and $L$ is positive. Then use this expression and the localized Bergman kernel to asymptotically study the large $k$ behavior of the Bergman kernel.

We will assume the manifold $M$ is compact and the line bundle $(L,h^L)$ is positive in the following.

\begin{thm}
\label{MFD-Main-Thm}
Let $M$ be a compact complex manifold with a positive Hermitian line bundle $L$. Let $P_k$ is the Bergman projection $P_k: L^2(M, L^k) \rightarrow H^0(M,L^k)$. For any $p\in M $, there is a local trivialization $(s, U)$ with Chern-Moser coordinate on $U$ center at $p$ and the Bergman kernel has the following expansion on the neighborhood $B(0, \frac{\log k }{\sqrt{k}})$
         $$P_{k,s}(x,y) = k^n \det(\frac{\dot{R}_L}{2\pi})(p) e^{k \sum_j \lambda_j (2x_j \ol{y}_j - |x_j|^2- |y_j|^2)} +o(k^n). $$
\end{thm}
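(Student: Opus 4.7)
The plan is to reduce the theorem to the Euclidean Theorem \ref{EUC-Main-Thm} via the scaling method. First I fix a Chern-Moser trivialization $(z,s)$ on $U$ centered at $p$ as in Lemma \ref{chern-moser coordinate}, so that locally $\phi = \phi_0 + \phi_1$ with $\phi_0 = \sum_j \lambda_j |z_j|^2$ and $\phi_1 = O(|z|^3)$, and $\Omega = \sum (\delta_{j\ell}+O(|z|)) dz_j\wedge d\overline{z}_\ell$. I then extend this geometric data from a small ball around $0 \in \bbC^n$ to all of $\bbC^n$ by interpolating with the cut-off at scale $\log k/\sqrt{k}$, in the manner of Remark \ref{general phi}: define $\hat{\phi}_k$, $\hat{\omega}_k$, $d\hat{\vol}_k$ that agree with the pull-back of the local data on $B(0,\log k/\sqrt{k})$ and become the standard flat data outside $B(0, 2\log k/\sqrt{k})$. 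Let $B_{k\hat{\phi}_k}$ denote the associated model Bergman projection on $\bbC^n$ constructed in subsection \ref{Euclidean Case}.

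The heart of the argument is to show that, on $B(0, \log k/\sqrt{k})$, the localized manifold kernel and the model kernel agree up to $o(k^n)$, i.e.\ $P_{k,s}(x,y) = B_{k\hat{\phi}_k}(x,y) + o(k^n)$. For this I intend to exploit the spectral gap of $\kla_k$, which is available precisely because $M$ is compact and $L$ is positive: Bochner--Kodaira--Nakano gives $\operatorname{Spec}(\kla_k)\cap(0, C k) = \emptyset$ for some $C>0$ and all $k$ large, so in particular Definition \ref{Local Spectral Gap} holds on every open $U$. I will then perform the standard two-way comparison. Starting from an element of the model Bergman space $H^0(\bbC^n, k\hat{\phi}_k, d\hat{\vol}_k)$, multiplying by a cut-off $\chi$ supported in $U$ produces an approximately holomorphic section of $L^k$ with $\|\dbar_k(\chi u)\|_k = O(k^{-\infty})$; applying Hörmander's $L^2$-estimate (Theorem \ref{Hörmander L2 Estimate}) together with the spectral gap corrects this to a true element of $H^0(M,L^k)$ with the same leading behavior on the small ball. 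The reverse comparison works symmetrically.

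The convergence is then extracted by scaling. After the change of variables $x\mapsto x/\sqrt{k}$, Theorem \ref{EUC-Main-Thm} (with Remark \ref{general phi} to allow the prescribed $\hat{\phi}_k$) gives
\begin{equation*}
k^{-n} B_{k\hat{\phi}_k}\bigl(x/\sqrt{k},y/\sqrt{k}\bigr) \longrightarrow B_{\phi_0}(x,y)
\end{equation*}
locally uniformly in $C^\infty$ on $\bbC^n\times\bbC^n$. The uniform Sobolev estimates of Lemma \ref{Euc-lem-1} and Lemma \ref{Euc-lem-2}, applied to the scaled $P_{k,s}$, allow the comparison of the previous paragraph to pass to $C^\infty$-limits along subsequences; the limit must satisfy the defining relations \eqref{Bergman kernel condition} for $\kla_{\phi_0}$ and therefore equals $B_{\phi_0}$. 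Converting back to $P_{\phi_0} = e^{-\phi_0}B_{\phi_0}e^{\phi_0}$ (see \eqref{conjugate bergman kernel}) and recognizing $\prod_j \lambda_j/\pi = \det(\dot{R}^L/(2\pi))(p)$ yields the claimed leading term.

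The main obstacle will be the comparison step, since $P_{k,s}$ and $B_{k\hat{\phi}_k}$ live on different underlying $L^2$-spaces whose weights and volume forms disagree off the shrinking ball. The remedy is the pair of ingredients above: the spectral gap of $\kla_k$ on the compact, positive side, and Hörmander's $L^2$-estimate on the $\bbC^n$ side. Together with the uniform Sobolev bounds for the scaled model kernel, they force both projections to have the same leading limit on any compact subset of $\bbC^n\times\bbC^n$ after scaling, which is exactly the statement of the theorem.
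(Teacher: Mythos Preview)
Your proposal is correct and follows essentially the same route as the paper: Chern--Moser trivialization, the spectral gap on the compact positive side (Proposition~\ref{Manifodl Global Spectral Gap}) feeding into H\"ormander's $L^2$-estimate (Proposition~\ref{Mfd Hörmander L2}), uniform Sobolev bounds for the scaled kernel via G{\aa}rding (the paper's Lemma~\ref{compact manifold lemma}, which is the manifold analogue of Lemmas~\ref{Euc-lem-1}--\ref{Euc-lem-2}), Arzel\`a--Ascoli, and identification of the limit with $B_{\phi_0}$ by verifying \eqref{Bergman kernel condition}.

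One organizational difference worth noting: you frame the heart of the argument as first establishing $P_{k,s}=B_{k\hat\phi_k}+o(k^n)$ and then invoking Theorem~\ref{EUC-Main-Thm}. The paper does \emph{not} go through this intermediate comparison in Section~\ref{Compact Complex Manifold}; instead it applies the scaling and Sobolev machinery directly to $P_{(k),s}$ and re-runs the Euclidean argument in situ, so Theorem~\ref{EUC-Main-Thm} is used only as a template, not as a black box. The kernel comparison $\tau_k B_{k,s}\chi_k=\tau_k B_{k\hat\phi_k}\chi_k+O(k^{-\infty})$ you describe is in fact proved later (Proposition~\ref{k-infty approx}) and reserved for the full expansion in Section~\ref{Full Expansion}. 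Your ``two-way comparison'' of holomorphic sections is exactly what the paper uses to identify the limit, but it does not by itself yield a pointwise kernel identity; that comes only after the Sobolev/Arzel\`a--Ascoli step you describe in your third paragraph, which is the actual engine of the proof.
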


Before presenting the proof of the theorem (see after Proposition \ref{Mfd Hörmander L2}), we first establish key properties of the Bergman kernel and Kodaira Laplacian in this case.

\begin{pro}
\label{Manifodl Global Spectral Gap}
    For $q\ge 1$ and large enough $k$, there is a positive constant $C$ such that  $$(\kla_k^{(q)} u| u )_{k}\ge C k \|u\|_{k}^2$$ for all $u\in \gO^{(0,q)}(M, L^k)$.
\end{pro}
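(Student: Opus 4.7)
The plan is to prove this quantitative Kodaira vanishing via the Bochner--Kodaira--Nakano identity: positivity of $L$ produces an order-$k$ curvature contribution on $(0,q)$-forms for $q\ge 1$ that dominates all other terms of $\kla_k^{(q)}$.

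Working locally in a Chern--Moser trivialization $(z,s)$ on a chart $U$, I would set $D_j := Z_j + kZ_j(\phi)$, whose formal adjoint with respect to $(\cdot|\cdot)_U$ is $D_j^* = Z_j^* + k\ol{Z}_j(\phi)$. The local expression \eqref{Localized Kodaira Laplacian} then reads
$$\kla_{k,s}^{(q)} = \sum_{j=1}^n D_j D_j^* + \sum_{j,\ell=1}^n e_j^\wedge e_\ell^{\wedge,*}\,[D_j, D_\ell^*] + R_k,$$
where $R_k$ gathers first-order terms whose coefficients are $O(1)$ in $k$. A direct computation using $[Z_j, \ol{Z}_\ell(\phi)] = Z_j\ol{Z}_\ell(\phi)$ and $[Z_j(\phi), Z_\ell^*] = \ol{Z}_\ell Z_j(\phi)$ as multiplication operators (at leading order) gives $[D_j, D_\ell^*] = 2k\,\ol{Z}_\ell Z_j(\phi) + O(1) = kR^L_{j\ell}(z) + O(1)$, so the dominant part is the bundle endomorphism $k\sum_{j,\ell} R^L_{j\ell}(z)\, e_j^\wedge e_\ell^{\wedge,*}$.

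Compactness of $M$ and positivity of $L$ force the Hermitian matrix $(R^L_{j\ell}(z))_{j,\ell}$ to have eigenvalues uniformly bounded below by some $2c_0 > 0$. A standard linear-algebra fact then shows $\sum_{j,\ell} R^L_{j\ell}\, e_j^\wedge e_\ell^{\wedge,*}$ acts on $T^{*,(0,q)}M$ with minimum eigenvalue $\ge 2c_0$ as soon as $q\ge 1$ (this is exactly where the $q\ge 1$ hypothesis enters). For $u\in\gO^{(0,q)}_c(U,L^k)$, the manifest non-negativity $\sum_j (D_j D_j^* u|u)_U = \sum_j \|D_j^* u\|_U^2 \ge 0$ together with Cauchy--Schwarz absorption of the $R_k$-errors into $\tfrac{1}{2}(\|\dbar_k u\|_U^2 + \|\dbar_k^* u\|_U^2) + C'\|u\|_U^2$ yields the local lower bound $(\kla_{k,s}^{(q)} u|u)_U \ge (2kc_0 - C)\|u\|_U^2$.

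To globalize, I would take a finite cover $\{U_i\}$ of $M$ by Chern--Moser charts together with a subordinate partition of unity $\{\rho_i^2\}$ satisfying $\sum_i \rho_i^2 = 1$, and expand
$$(\kla_k^{(q)} u|u)_k = \sum_i (\kla_k^{(q)}(\rho_i u)|\rho_i u)_k + E_k,$$
where $E_k$ collects cross-terms produced by $[\dbar_k,\rho_i]$ and $[\dbar_k^*,\rho_i]$; since the partition is fixed, $|E_k| \le C\|u\|_k^2$ uniformly in $k$. Applying the local bound on each chart and summing yields $(\kla_k^{(q)} u|u)_k \ge (kc_0 - C)\|u\|_k^2 \ge Ck\|u\|_k^2$ for $k$ large. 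The main technical obstacle is the sign-indefinite first-order content of $R_k$ and of the partition-of-unity cross-terms: these are handled by the completion-of-squares already encoded in the $D_j D_j^* + [D_j,D_\ell^*]$ decomposition together with Cauchy--Schwarz, so that the resulting $k$-independent error is swamped by the leading $kc_0\|u\|_k^2$ once $k$ is large enough.
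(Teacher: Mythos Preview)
Your proposal is correct and follows essentially the same route as the paper: establish the local estimate from the expression \eqref{Localized Kodaira Laplacian} by isolating the order-$k$ curvature endomorphism $k\sum_{j,\ell} R^L_{j\ell}\,e_j^\wedge e_\ell^{\wedge,*}$ (positive on $(0,q)$-forms for $q\ge 1$) and absorbing the $O(1)$ remainder, then globalize by a partition of unity with the commutator cross-terms controlled by a $k$-independent constant. The only cosmetic difference is that the paper uses $\sum_j \sigma_j = 1$ and tracks the off-diagonal pairings $(\kla_k^{(q)}\sigma_j u\,|\,\sigma_\ell u)_k$ explicitly, whereas you use $\sum_i \rho_i^2 = 1$; both localization schemes produce the same $O(\|u\|_k^2)$ error.
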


\begin{proof}
    First, we have this proposition for localized Kodaira Laplacian with compact support $(0,q)$-forms by directly computing the localized Kodaira Laplacian, see (\ref{Localized Kodaira Laplacian}). That is, for any local trivialization $(s,U)$, we have $$\(\kla_{k,s}^{(q)} u | u \)_{k,U}\ge C k\|u\|_{k,U}^2 $$ for all $u\in \gO_c^{(0,q)} (U)$. Then we use the partition of unity to prove the global version.

 Let $\{\sigma_j\}_{j=1}^m$ be a partition of unity with $(s_j,U_j)$ be the local trivialization with $\supp \sigma_j \Subset U_j$
\begin{equation}
\label{partiton spectral gap}
\begin{aligned}
    \(\kla_k^{(q)} u|u\)_k&= \(\kla_k^{(q)} (\sum_{j=1}^M \sigma_j )u |(\sum_{j=1}^M \sigma_j)u\)_k\\
    & = \sum_{j=1}^M \(\kla_k^{(q)} \sigma_j u| \sigma_j u \)_k + \sum_{j\ne \ell} \(\kla_k^{(q)} \sigma_j u| \sigma_\ell u \)_k\\
    &= \sum_{j=1}^m \(\kla_{k,s_j}^{(q)} \sigma_j u| \sigma_j u \)_k + \sum_{j\ne \ell} \(\kla_k^{(q)} \sigma_j u| \sigma_\ell u \)_k\\
    &\ge \sum_{j=1}^m C_j k \|\sigma_j u\|_k^2-  \sum_{j\ne \ell} \left|\(\kla_k^{(q)} \sigma_j u| \sigma_\ell u \)_k\right|\\
    &\ge C' k \|u\|_k^2-  \sum_{j\ne \ell} \left|\(\dbar_k \sigma_j u| \dbar_k\sigma_\ell u \)_k\right|-\sum_{j\ne \ell}\left|\(\dbar^*_k \sigma_j u| \dbar^*_k\sigma_\ell u \)_k\right|\\
\end{aligned}
\end{equation}

Then we estimate 
$$
\begin{aligned}
\left|\(\dbar_k \sigma_j u| \dbar_k\sigma_\ell u \)_k\right| &\le \left|\(L_s(\sigma_j)  \hat{u}| L_s(\sigma_\ell) \hat{u} \)_{k,s}\right|+\left|\(\sigma_j  (\dbar_{k,s} \hat{u})| L_s(\sigma_\ell)  \hat{u} \)_{k,s}\right|\\
&\quad+\left|\(L_s(\sigma_j)  \hat{u}| \sigma_\ell (\dbar_{k,s} \hat{u}) \)_{k,s}\right|+\left|\(\sigma_j \dbar_k u| \sigma_\ell \dbar_k u \)_{k,s}\right|\\
&\le C''\(\|u\|_k^2 + \|u\|_k\|\dbar_k u \|_k\) + \(\sigma_j \dbar_k u| \sigma_\ell \dbar_k u \)_k
\end{aligned}
$$
and similarly $$\left|\(\dbar^*_k \sigma_j u| \dbar^*_k\sigma_\ell u \)_k\right|\le C'''\(\|u\|_k^2 + \|u\|_k\|\dbar_k^* u \|_k\) + \(\sigma_j \dbar_k^* u| \sigma_\ell \dbar_k^* u \)_k$$
where $L_s$ is the first order differential operator, thus $L_s(\sigma_j)$ has uniform bound on $M$ and $j$. We may choose $C''=C'''$, say $C_1$.  For any $\varepsilon>0$, we have $$\|u\|_k\|\dbar_k u\|_k\le \frac{1}{2}\(\frac{1}{\varepsilon}\|u\|_k^2 + \varepsilon\|\dbar_k u\|_k^2\) $$ and 
$$ \|u\|_k\|\dbar_k^* u\|_k\le \frac{1}{2}\(\frac{1}{\varepsilon}\|u\|_k^2 + \varepsilon\|\dbar_k^* u\|_k^2\).$$ Since $\sum_{j=1}^m \sigma_j = 1 $ and $$\frac{1}{2} \ge \frac{\sigma_j+ \sigma_\ell}{2}\ge \sqrt{\sigma_j \sigma_\ell},$$ it follows that $\sigma_j\sigma_\ell\le \frac{1}{4}$. 

Then we can finish our proof, from \ref{partiton spectral gap}, 
$$
\begin{aligned}
\(\kla_k^{(q)} u|u\)_k&\ge C'k\|u\|_k^2 - C''\(\|u\|_k^2 + \|u\|_k\|\dbar_k u \|_k\) - \(\sigma_j \dbar_k u| \sigma_\ell \dbar_k u \)_k\\
&\quad-C'''\(\|u\|_k^2 + \|u\|_k\|\dbar_k^* u \|_k\) -\(\sigma_j \dbar_k^* u| \sigma_\ell \dbar_k^* u \)_k\\
&\ge  (C'k-2C_1- \frac{C_1}{\varepsilon})\|u\|_k^2\\
&\quad- (\frac{C_1\varepsilon}{2}+\frac{1}{4}) \|\dbar_k u \|_k^2- (\frac{C_1\varepsilon}{2}+\frac{1}{4})\|\dbar_k^* u \|_k^2.
\end{aligned}
$$
Thus $\(\kla_k^{(q)} u|u\)_k \ge \(1+ \frac{C_1\varepsilon}{2}+\frac{1}{4}\)^{-1}(C'k)\|u\|^2$ and we get $$\(\kla_k^{(q)} u|u\)_k \ge Ck\|u\|_k^2$$
\end{proof}

\begin{fact}
\label{Manifold global equation}
    Let $q\in \{1,\ldots,n\}$. There exists a bounded operator $N_k^{(q)}:\gO^{(0,q)}(M,L^k) \rightarrow \gO^{(0,q)}(M,L^k)$ which can be extend to $L^2(M,L^k) \rightarrow L^2(M,L^k)$ such that $$\begin{cases}
        N_k^{(q)} \kla_k^{(q)} = I &\text{on } \Dom \kla_k^{(q)}\\
        \kla_k^{(q)}N_k^{(q)} = I & \text{on } L^2_{(0,q)}(M,L^k)
    \end{cases}$$
    Consequently, from the above proposition and this fact, for $u\in L^2_{(0,q)}(M, L^k) $, we obtain the inequality $$C k\|N_k^{(q)} u\|_k^2\le\|\kla_k^{(q)} N_k^{(q)} u\|_k^2 = \|u\|_k^2.$$ It follows that $N_k^{(q)} = O(\frac{1}{k})$.
\end{fact}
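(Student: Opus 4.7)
The plan is to construct $N_k^{(q)}$ as the inverse of $\kla_k^{(q)}$ via the spectral theorem, exploiting the uniform lower bound established in Proposition~\ref{Manifodl Global Spectral Gap}. Recall that the Gaffney extension (see~\eqref{Gaffney Extension}) makes $\kla_k^{(q)}: \Dom \kla_k^{(q)} \subset L^2_{(0,q)}(M,L^k) \to L^2_{(0,q)}(M,L^k)$ a densely defined non-negative self-adjoint operator. Since $M$ is compact and $\kla_k^{(q)}$ is an elliptic second-order differential operator, its spectrum is discrete, consisting of eigenvalues of finite multiplicity accumulating only at infinity.

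First I would read off from Proposition~\ref{Manifodl Global Spectral Gap} that for $q\ge 1$ and $k$ large the spectrum $\sigma(\kla_k^{(q)})\subset [Ck,\infty)$: if $\lambda$ is an eigenvalue with smooth eigenform $u$ (elliptic regularity gives smoothness of eigenforms), then $\lambda\|u\|_k^2=(\kla_k^{(q)}u|u)_k\ge Ck\|u\|_k^2$. In particular $\ker\kla_k^{(q)}=\{0\}$. The functional calculus then produces a bounded self-adjoint operator
\[
N_k^{(q)}:=(\kla_k^{(q)})^{-1}:L^2_{(0,q)}(M,L^k)\to L^2_{(0,q)}(M,L^k),
\]
with operator norm bounded by $\sup_{\lambda\in\sigma(\kla_k^{(q)})}\lambda^{-1}\le (Ck)^{-1}$. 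By construction $\kla_k^{(q)}N_k^{(q)}=I$ on $L^2_{(0,q)}(M,L^k)$ and $N_k^{(q)}\kla_k^{(q)}=I$ on $\Dom\kla_k^{(q)}$.

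Next I would verify that $N_k^{(q)}$ sends $\gO^{(0,q)}(M,L^k)$ to itself. Let $u\in\gO^{(0,q)}(M,L^k)$ and put $v:=N_k^{(q)}u\in L^2_{(0,q)}(M,L^k)$. Then $v\in\Dom\kla_k^{(q)}$ and $\kla_k^{(q)}v=u$ in the distributional sense. Since $\kla_k^{(q)}$ is elliptic and $u$ is smooth, elliptic regularity on the compact manifold $M$ gives $v\in\gO^{(0,q)}(M,L^k)$, as required.

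Finally, the asserted estimate is an immediate consequence: for any $u\in L^2_{(0,q)}(M,L^k)$, Proposition~\ref{Manifodl Global Spectral Gap} applied to $N_k^{(q)}u\in\Dom\kla_k^{(q)}$ yields
\[
Ck\,\|N_k^{(q)}u\|_k^2\le (\kla_k^{(q)}N_k^{(q)}u\,|\,N_k^{(q)}u)_k=(u\,|\,N_k^{(q)}u)_k\le \|u\|_k\,\|N_k^{(q)}u\|_k,
\]
so $\|N_k^{(q)}u\|_k\le (Ck)^{-1}\|u\|_k$, i.e.\ $N_k^{(q)}=O(1/k)$. There is no serious obstacle here; the only technical point is being careful about the fact that Proposition~\ref{Manifodl Global Spectral Gap} is stated for smooth forms, which is why the eigenform argument (coupled with ellipticity) is used to transfer the lower bound to the full spectrum, and why elliptic regularity is invoked to confirm that $N_k^{(q)}$ preserves smoothness.
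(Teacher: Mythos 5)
Your proposal is correct, but it cannot be compared line-by-line with the paper because the paper offers no proof of this statement at all: it is recorded as a ``Fact'', i.e.\ a standard result (the invertibility of $\kla_k^{(q)}$ for $q\ge 1$ on a compact manifold with positive line bundle, as in the Ma--Marinescu framework), and only the norm consequence is derived from Proposition~\ref{Manifodl Global Spectral Gap}. What you supply is a genuine proof of the unproved part: the spectral-theorem construction $N_k^{(q)}=(\kla_k^{(q)})^{-1}$, the transfer of the lower bound of Proposition~\ref{Manifodl Global Spectral Gap} from smooth forms to the whole spectrum via discreteness and elliptic regularity of eigenforms (this is exactly the technical point one must address, since the proposition is stated only on $\gO^{(0,q)}(M,L^k)$), and the preservation of smoothness by elliptic regularity. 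Your derivation of the estimate is also the right one: from Cauchy--Schwarz,
\[
Ck\,\|N_k^{(q)}u\|_k^2\le \bigl(\kla_k^{(q)}N_k^{(q)}u\,\big|\,N_k^{(q)}u\bigr)_k\le \|u\|_k\,\|N_k^{(q)}u\|_k,
\]
which gives $\|N_k^{(q)}u\|_k\le (Ck)^{-1}\|u\|_k$, i.e.\ $N_k^{(q)}=O(1/k)$; note that the inequality displayed in the Fact, $Ck\|N_k^{(q)}u\|_k^2\le\|\kla_k^{(q)}N_k^{(q)}u\|_k^2$, would literally only yield $O(k^{-1/2})$, whereas your version is the one the paper itself uses later in the proof of Proposition~\ref{Mfd Hörmander L2}. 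So your route is sound and in fact fills in (and slightly tightens) what the paper leaves implicit.
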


\begin{pro}[Hörmander $L^2$-Estimate]
\label{Mfd Hörmander L2}
    For any $\dbar_k$-closed $(0,1)$-form $f$, there exists a solution $g\in \cC^\infty(M,L^k)$ for the equation $\dbar_k g = f$ such that $\|g\|^2_k \le \frac{1}{Ck}\|f\|^2_k$ for some constant $C$.
\end{pro}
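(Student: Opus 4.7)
The plan is to solve $\dbar_k g = f$ via the Neumann operator $N_k^{(1)}$ provided by Fact~\ref{Manifold global equation}, in the usual Hodge-theoretic way, and then read off the norm estimate from the spectral gap in Proposition~\ref{Manifodl Global Spectral Gap}.

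Concretely, assuming $f \in \gO^{(0,1)}(M,L^k)$ is $\dbar_k$-closed, I would set
\[
   g \;=\; \dbar_k^{*}\, N_k^{(1)} f .
\]
By Fact~\ref{Manifold global equation}, $\kla_k^{(1)} N_k^{(1)} f = f$, and since $\kla_k^{(1)} = \dbar_k\dbar_k^{*} + \dbar_k^{*}\dbar_k$ on $(0,1)$-forms, one gets
\[
   \dbar_k g \;=\; \dbar_k\dbar_k^{*}N_k^{(1)} f \;=\; f - \dbar_k^{*}\dbar_k N_k^{(1)} f .
\]
So the first main step is to verify $\dbar_k N_k^{(1)} f = 0$. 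This is the step that actually uses $\dbar_k f = 0$: one applies $\dbar_k$ to $\kla_k^{(1)}N_k^{(1)} f = f$ and uses the commutation $\dbar_k\kla_k^{(1)} = \kla_k^{(2)} \dbar_k$ to obtain $\kla_k^{(2)}(\dbar_k N_k^{(1)} f) = \dbar_k f = 0$. Then Proposition~\ref{Manifodl Global Spectral Gap} applied at level $q=2$ gives, for $k$ large,
\[
   Ck\,\|\dbar_k N_k^{(1)} f\|_k^{2} \;\le\; \bigl(\kla_k^{(2)}\dbar_k N_k^{(1)} f\,\big|\,\dbar_k N_k^{(1)} f\bigr)_k \;=\; 0,
\]
so $\dbar_k N_k^{(1)} f = 0$ and hence $\dbar_k g = f$. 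Smoothness of $g$ follows because $N_k^{(1)}$ preserves $\gO^{(0,1)}(M,L^k)$ (or, alternatively, by elliptic regularity of $\kla_k^{(1)}$).

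For the norm estimate, using $\dbar_k N_k^{(1)} f = 0$ and self-adjointness of $N_k^{(1)}$,
\[
   \|g\|_k^{2} \;=\; \bigl(\dbar_k^{*} N_k^{(1)} f\,\big|\,\dbar_k^{*} N_k^{(1)} f\bigr)_k
   \;=\; \bigl(N_k^{(1)} f\,\big|\,\dbar_k\dbar_k^{*} N_k^{(1)} f\bigr)_k
   \;=\; \bigl(N_k^{(1)} f\,\big|\,f\bigr)_k .
\]
Applying Cauchy--Schwarz and the bound $\|N_k^{(1)}\| = O(1/k)$ from Fact~\ref{Manifold global equation} gives $\|g\|_k^{2} \le \|N_k^{(1)} f\|_k\|f\|_k \le \tfrac{1}{Ck}\|f\|_k^{2}$, as desired.

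The main obstacle is really the verification that $\dbar_k N_k^{(1)} f = 0$: it is routine once one has the spectral gap for $(0,2)$-forms, but it is the one place where positivity of the line bundle (via Proposition~\ref{Manifodl Global Spectral Gap} at degree $q=2$) is used in an essential way. Everything else is the standard Hodge-theoretic manipulation together with the operator-norm estimate on $N_k^{(1)}$ that was already extracted in Fact~\ref{Manifold global equation}.
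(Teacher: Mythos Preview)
Your proposal is correct and follows essentially the same route as the paper: take $g=\dbar_k^*N_k^{(1)}f$ and read off the estimate from $\|N_k^{(1)}\|=O(1/k)$. The only difference is in how you dispose of the unwanted term: you show $\dbar_k N_k^{(1)}f=0$ by invoking the spectral gap at degree $q=2$, whereas the paper instead shows the weaker $\dbar_k^*\dbar_k N_k^{(1)}f=0$ purely algebraically, using $N_k^{(1)}\kla_k^{(1)}=I$ and the commutation $[\kla_k^{(1)},\dbar_k^*\dbar_k]=0$ (since $\dbar_k^2=0=\dbar_k^{*2}$), so that $\dbar_k^*\dbar_k N_k^{(1)}f = N_k^{(1)}\dbar_k^*\dbar_k\,\kla_k^{(1)}N_k^{(1)}f = N_k^{(1)}\dbar_k^*\dbar_k f = 0$. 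Your argument is perfectly valid; the paper's variant has the small advantage of not needing Proposition~\ref{Manifodl Global Spectral Gap} at $q=2$.
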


\begin{proof}
    Combine the previous Proposition \ref{Manifodl Global Spectral Gap} and Fact\ref{Manifold global equation}, we have $$\|v\|_k \|N_k^{(q)}v\|_k \ge \(\kla_k^{(q)} N_k^{(q)} v | N_k^{(q)}v\)_k \ge C k \| N_k^{(q)}v\|_k^2,\, \forall v\in \gO^{(0,q)}(M,L^k).$$
    Then $$\|v\|_k \ge C k \| N_k^{(q)} v\|_k.$$
    For a  $\dbar$-closed $ f \in \Omega^{0,1} (M,L^k)$,
    $$\dbar_k^* \dbar_k N_{k}^{(1)}  f = N_k^{(1)} \kla_k^{(1)} \dbar^*_k \dbar_k N_k^{(1)}f= N_k^{(1)} \dbar^*_k \dbar_k \kla_k^{(1)} N_k^{(1)}f = N_k^{(1)} \dbar_k^* \dbar_k f = 0.$$
    By the Fact \ref{Manifold global equation} and the previous equation, $$ f=\kla_k^{(1)} N_k^{(1)} f = \dbar_k \dbar_k^* N_k^{(1)}  f.$$
    Thus we can take $g= \dbar_k^* N_k^{(1)} f$ and $$\|g\|_k^2 = \(\dbar_k^* N_k^{(1)} f | \dbar_k^* N_k^{(1)} f \)_k =\( \kla_k^{(1)} N_k^{(1)} f| N_k^{(1)} f\)_k \le \| f\|_k \| N_k^{(1)}f\|_k\le \frac{1}{Ck } \|f\|^2 .$$
\end{proof}

Then we can start proving the main theorem. 
First, we fix $p\in U\subset  M $ and get Chern-Moser trivialization $(z,s)$ on $U$ centered at $p$ with local weight $\phi= \phi_0+ \phi_1$ and a cut-off function $\tau =1 $ on $B(0,1)$ and vanish outside $B(0,2)$. We defined a weight $\hat{\phi}_k = \phi_0 + \tau(\frac{\sqrt{k}z}{\log k}) \phi_1$, the Hermitian form $\hat{\go}_k =i \sum_{j,\ell=1}^n( \delta_{j,\ell} + \tau(\frac{\sqrt{k}z}{\log k })O(|z|))dz_j\wedge d\ol{z}_\ell$, and a volume form $d\hat{vol}_{M,k}(z) = (1+ \tau(\frac{\sqrt{k}z}{\log k}) O(|z|)) d\lambda(z)$.

Similar as Euclidean case, we consider the \textbf{scaling localized Bergman kernel} $$P_{(k),s}(x,y) = k^{-n} P_{k,s} (\frac{x}{\sqrt{k}},\frac{y}{\sqrt{k}})$$ and $$B_{(k)}(x,y) = e^{\delta_k(k\hat{\phi}_k)(x)} P_{(k),s}(x,y) e^{-\delta_k(k\hat{\phi}_k)}.$$

We let $$(B_{(k)}g) (x) = \int_{B(0,\log k)} B_{(k)}(x,y)g(y)d\hat{\vol}_{M,(k)},\, \forall g\in \cC^\infty_c(B(0,\log k))$$ where $d\hat{\vol}_{M,(k)}(z) = d\hat{\vol}_{M,k} (\frac{z}{\sqrt{k}})$.  
Notice that for $u,v\in \cC^\infty_c(B(0,\log k))$, we have $$\(B_{(k)}u|v\)_{B(0,\log k),\delta_k(k\hat{\phi}_k)}  = \(u|B_{(k)}v\)_{B(0,\log k),\delta_k(k\hat{\phi}_k)}.$$
We then define 
 $$B_{(k)}: L^2(B(0,\log k ), \delta_k(k\hat{\phi}_k),d\vol_{M,(k)})\rightarrow L^2(B(0,\log k ), \delta_k(k\hat{\phi}_k),d\vol_{M,(k)})$$
by $B_{(k)} u = v$ such that for any test function $g\in \cC^\infty_c(B(0,\log k))$, we have $$\(B_{(k)} u | g\)_{B(0,\log k),\delta_k(k\hat{\phi}_k)} = \(v|g\)_{B(0,\log k),\delta_k(k\hat{\phi}_k)} = \(u|B_{(k)}g\)_{B(0,\log k),\delta_k(k\hat{\phi}_k)}.$$ 

Also defined \textbf{scaling localized Kodaira laplacian} defined by $$\delta_k(\kla_{k,s} u ) = k \kla_{(k),s} \delta_k(u), $$ denote
$$\kla_{(k)} = e^{\delta_k(k\hat{\phi}_k)} \kla_{(k),s}e^{-\delta_k(k\hat{\phi}_k)},$$ and still denote 
$$\kla_{(k)}:\Dom \kla_{(k)}\subset L^2(B(0,\log k),\delta_k(k\hat{\phi}_k), d\hat{\vol}_{M,(k)}) \rightarrow L^2(B(0,\log k),\delta_k(k\hat{\phi}_k),d\hat{\vol}_{M,(k)})$$  be the Gaffney extension. Note that $\kla_{(k)}u = \kla_{(k\hat{\phi}_k)} u$ on   $B(0,\log k)$ and $u\in \cC^\infty_c(B(0,\log k))$. Then we have 
$$
\(B_{(k)} u|\kla_{(k)} v\)_{B(0,\log k),\delta_k(k\hat{\phi}_k)} = \(u| B_{(k)}\kla_{(k)} v\)_{B(0,\log k),\delta_k(k\hat{\phi}_k)} =0,
$$ for all $u\in L^2(B(0,\log k ), \delta_k(k\hat{\phi}_k),d\vol_{M,(k)})$ and $v\in \cC^\infty_c(B(0,\log k ))$, which means $$\kla_{(k)} B_{(k)}= 0$$ in the sense of distribution. Since $\kla_{(k)}$ is self-adjoint, so is $I+\kla_{(k)}$. As $I+\kla_{(k)}$ is bounded below by $1$, the operator $$(I+ \kla_{(k)}) ^{-1}:L^2(B(0,\log k ), \delta_k(k\hat{\phi}_k),d\vol_{M,(k)})\rightarrow \Dom \kla_{(k)}$$ is well-define. Moreover, combine this with $\kla_{(k)} B_{(k)} = 0$, we have  $$(I+ \kla_{(k)}) ^{-s}B_{(k)}= B_{(k)}$$ on $L^2(B(0,\log k), \delta_k(k\hat{\phi}_k), d\hat{\vol}_{M,(k)}).$

Then for $K\Subset U\Subset V$ where $K$ is compact and $U,\,V$ are bound open subset of $\bbC^n$. Since for large $k$, the three subsets are contained in $B(0,\log k )$. Using the same idea in the Euclidean case, we have the following lemma 
\begin{lem}
\label{compact manifold lemma}
Let $u\in\cC^\infty_c(\bbC^n)$. For all $\ell \in \bbN$, $U\Subset V\subset \bbC^n$ be open subsets and $K\Subset U$ be a compact subset, there is a constant $C_{\ell,U,V}$ and $C_\ell$ independent of $k$ and $(x,y)\in K\times K$ such that for all $k\gg 1$, 
     $$\| B_{(k\hat{\phi}_k)}  u \|_{2\ell,U}\le C_{\ell, U,V}\|u\|_{-2\ell,V}.$$ 
     and 
     $$\sup_{\genfrac{}{}{0pt}{}{\ga+\gb \le \ell}{ x,y\in K}} |\dr_x^\ga \dr_y^\gb B_{(k\hat{\phi}_k)}(x,y)|\le C_\ell.$$ 
\end{lem}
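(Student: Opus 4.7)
The plan is to mirror the Euclidean proof (Lemmas \ref{Euc-lem-1} and \ref{Euc-lem-2}), replacing the global Bergman projection $B_{(k\hat\phi_k)}$ on $\bbC^n$ by the scaled localized Bergman projection $B_{(k)}$ on the ball $B(0,\log k)$, and exploiting the two ingredients already set up just before the lemma: the identity $(I+\kla_{(k)})^{-s}B_{(k)}=B_{(k)}$ and the $L^2$-boundedness $\|B_{(k)}u\|\le \|u\|$. The key observation that lets the Euclidean argument be recycled verbatim is that, for any fixed bounded open $V\subset\bbC^n$ and all $k$ large enough, one has $V\Subset B(0,\log k)$, and on $V$ the coefficients of $\kla_{(k)}$ are uniformly bounded in $k$ (they converge to those of $\kla_{\phi_0}$, exactly as in the Euclidean case: the weight on the scale $1/\sqrt k$ is $\hat\phi_k$, whose higher-order part is pointwise $O(\log^3 k /\sqrt k)$, and the scaled Hermitian form and volume form approach the Euclidean ones). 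Hence $I+\kla_{(k)}$ is uniformly elliptic on $V$ with constants independent of $k$.

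First I would fix bounded open sets $K\Subset U\Subset V$ contained in $B(0,\log k)$ for $k$ large, and apply Gårding's inequality together with elliptic regularity for the generalized Laplacian $I+\kla_{(k)}$ on $V$, exactly as in \eqref{ECU-GARDING}. This yields
\[
\|B_{(k)}u\|_{2\ell, U}\ \approx\ \|(I+\kla_{(k)})^{-\ell} B_{(k)}u\|_{2\ell, U}\ \lesssim\ \|B_{(k)}u\|_{0,V}\ \le\ \|u\|_{0,V},
\]
with constants depending only on $U$, $V$, $\ell$ (and not on $k$), where the last inequality is the $L^2$-contraction of $B_{(k)}$. In other words, $B_{(k)}:L^2_{\mathrm{comp}}(V)\to H^{2\ell}_{\mathrm{loc}}(V)$ uniformly in $k$.

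Second, taking the Hilbert-space adjoint of this bounded map (using that $B_{(k)}$ is self-adjoint with respect to the scaled inner product, and that the scaled weight and volume form are uniformly comparable to the Euclidean ones on the fixed bounded set $V$, so the weighted and unweighted Sobolev norms are equivalent with constants independent of $k$) yields
\[
\|B_{(k)}u\|_{0,U}\ \lesssim\ \|u\|_{-2\ell,V},
\]
which is the first claimed estimate (bootstrapping the target regularity by repeating the first step raises it to $\|B_{(k)}u\|_{2\ell,U}\lesssim \|u\|_{-2\ell,V}$ as stated).

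Third, the pointwise $\cC^\ell$-estimate follows exactly as in Lemma \ref{Euc-lem-2}: choose $\chi\in\cC_c^\infty(\bbC^n)$ with $\int\chi=1$, set $\chi_\varepsilon(z)=\varepsilon^{-2n}\chi(z/\varepsilon)$, so that $\|\chi_\varepsilon\|_{-2n-1}$ is bounded uniformly in $\varepsilon$. For $(x,y)\in K\times K$,
\[
|\partial_x^\ga\partial_y^\gb B_{(k)}(x,y)|
= \lim_{\varepsilon,\delta\to 0}\Bigl|\int B_{(k)}(z,w)\,(\partial_z^\ga\chi_\delta(z-x))(\partial_w^\gb\chi_\varepsilon(w-y))\,d\lambda(z)\,d\lambda(w)\Bigr|,
\]
and the first estimate (with $2\ell\ge 2n+1+|\ga|+|\gb|$) bounds this by a constant times $\|\partial^\ga\chi_\delta\|_{-2n-1+|\ga|}\|\partial^\gb\chi_\varepsilon\|_{-2n-1+|\gb|}$, which is uniform in $\varepsilon,\delta,x,y,k$.

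The main obstacle is the uniform-in-$k$ Gårding/elliptic estimate for $I+\kla_{(k)}$ on a fixed compact set. Once the explicit scaled expression from the Euclidean section is invoked, together with the Chern-Moser normalization of the metric and weight (so that the metric coefficients are $\delta_{j\ell}+O(|z|/\sqrt k)$ and the weight perturbation is $O(|z|^3/\sqrt k)$ on balls of radius $\log k$), this becomes a routine perturbation of the Bargmann-Fock Laplacian, and the rest of the argument is a verbatim transcription of the Euclidean proofs.
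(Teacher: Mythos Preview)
Your proposal is correct and follows precisely the approach the paper intends: the paper's proof of this lemma is literally the sentence ``Using the same idea in the Euclidean case, we have the following lemma,'' and you have faithfully transcribed the argument of Lemmas~\ref{Euc-lem-1} and~\ref{Euc-lem-2}, substituting $B_{(k)}$ for $B_{(k\hat\phi_k)}$ and invoking the identities $(I+\kla_{(k)})^{-s}B_{(k)}=B_{(k)}$ and the $L^2$-contraction (the latter inherited from the global projection $P_k$) that the paper establishes immediately before the lemma. Your remark that the uniform-in-$k$ Gårding constants come from the convergence of the coefficients of $\kla_{(k)}$ to those of $\kla_{\phi_0}$ on fixed compacta is exactly the point that justifies the transcription.
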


The argument by Azerlà-Ascoli Theorem is still valid here, then there is a $B(x,y)$ that is the locally uniform limit of $B_{(k)}(x,y) $. The corresponding operator is denoted by $B$. We remain to prove that $B$ is exactly $B_{\phi_0} $.

First, since on a compact subset $K$ and for $k>k_0 $, we have $\kla_{(k)} = \kla_{(k\hat{\phi}_k)}$ on $K$ , thus $$\kla_{(k)}\rightarrow \kla_{\phi_0}$$ in sense of PDE and $$\kla_{\phi_0} B = \lim_{k\to \infty} \kla_{(k)} B_{(k)} = 0 .$$ 

For $\hat{u} = z^\alpha\in H^0(\bbC^n, \phi_0)$ with $\alpha\in\bbN_0^n$ and a suitable cut-off function $\chi$, we consider $\hat{h}_k(z) = \chi(\frac{\sqrt{k}}{\log k } z ) \hat{u}(\sqrt{k}z)$ and $h_k = \hat{h}_k\otimes s^k$. By the Hörmander $L^2$-Estimate, we have $h_k - r_k = v_k  \in H^0(M,L^k)$ with $\|r_k\|_k \le \|\dbar_k h_k\| = O(k^{-\infty})$. We have $$\hat{v}_k \otimes s^k = v_k = P_kv_k = e^{k\phi}P_{k,s}(e^{-k\phi} \hat{v}_k)\otimes s^k.$$
Let $\hat{u}_k = \delta_k(\hat{v}_k)$, we have the pointwise limit of $\hat{u}_k$ is $u$. As in the Euclidean case, for any test function $g$ we have $$\(B_{(k)} \hat{u}_k|g\)_{B(0,\log k), \delta_k(k\hat{\phi}_k)} =\(\hat{u}_k|g\)_{B(0,\log k), \delta_k(k\hat{\phi}_k)} +O(k^{-\infty})$$ and $$  \(u_k|g\)_{B(0,\log k), \delta_k(k\hat{\phi}_k)}\to \(u|g\)_{\phi_0}, \,\(B_{(k)} u_k|g\)_{B(0,\log k), \delta_k(k\hat{\phi}_k)}\to \(Bu|g\)_{\phi_0}.$$
Thus $B_{(k)}u_k$ converge to $Bu$ weakly and $B= B_\phi$ in distribution sense.

Thus we have $$B_{(k)}(x,y)  = B_{\phi_0}(x,y)+o(1) =  \(\frac{1}{\pi}\)^n \prod_{j=1}^n \lambda_j  e^{ 2\sum_j \lambda_j (x_j \ol{y}_j -|y_j|^2)} + o(1)$$
and
\begin{equation}
\label{manifold Bergman equation}
\begin{aligned}
P_{(k),s}(x,y) &=  e^{\delta_k(k\hat{\phi}_k)(x)}B_{(k)}(x,y) e^{-\delta_k(k\hat{\phi}_k)(y)} (x,y)\\  
&= e^{\delta_k(k\hat{\phi}_k)(x)} \(\(\frac{1}{\pi}\)^n \prod_{j=1}^n \lambda_j  e^{ 2\sum_j \lambda_j (x_j \ol{y}_j -|y_j|^2)} \)e^{-\delta_k(k\hat{\phi}_k)(y)} +o(1)\\
&= \det(\frac{\dot{R}_L}{2\pi})(p) e^{\sum_j \lambda_j (2x_j \ol{y}_j - |x_j|^2- |y_j|^2)}  +o(1)
\end{aligned}
\end{equation}

Furthermore, we obtain uniform behavior in the diagonal part.
\begin{thm}
    There exists $k_0, C$ such that for all $p\in M$ and $k\ge k_0$, $$|P_k(p,p)| \ge C k^n$$
\end{thm}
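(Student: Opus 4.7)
The plan is to combine the pointwise asymptotic of Theorem \ref{MFD-Main-Thm} with the compactness of $M$. In the Chern–Moser trivialization $(z,s)$ centered at $p$, the local weight satisfies $\phi(0)=0$, so the intrinsic diagonal value $|P_k(p,p)|$ equals $P_{k,s}(0,0)$, and Theorem \ref{MFD-Main-Thm} gives
$$P_{k,s}(0,0) \;=\; k^n \det\!\left(\tfrac{\dot{R}^L}{2\pi}\right)\!(p) \;+\; o(k^n).$$
Since $L$ is positive and $M$ is compact, the continuous function $p\mapsto \det(\dot{R}^L/2\pi)(p)$ has a strictly positive minimum $C_0>0$. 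If the $o(k^n)$ error can be made uniform in $p\in M$, we obtain $|P_k(p,p)|\ge (C_0/2)k^n$ for all $p$ and all $k$ sufficiently large, which is exactly the claim.

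To establish the uniformity I would argue by contradiction. Suppose there exist sequences $k_j\to\infty$ and $p_j\in M$ with $|P_{k_j}(p_j,p_j)|/k_j^n\to 0$. By compactness of $M$ pass to a subsequence so that $p_j\to p^*\in M$. For each $p_j$ take the Chern–Moser trivialization $(z_{p_j},s_{p_j})$ centered at $p_j$; the construction of Lemma \ref{chern-moser coordinate} depends smoothly on the base point, so the eigenvalues $\lambda_{i,p_j}$ converge to $\lambda_{i,p^*}$, and the coefficients of the localized Kodaira Laplacian $\kla_{k,s_{p_j}}$ depend continuously on $p_j$ and are uniformly controlled.

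The heart of the proof is to rerun the scaling argument of Theorem \ref{MFD-Main-Thm} along the sequence $(p_j,k_j)$, verifying that each constant can be chosen uniformly in $p_j$. The global spectral gap (Proposition \ref{Manifodl Global Spectral Gap}) and the Hörmander $L^2$-estimate (Proposition \ref{Mfd Hörmander L2}) are intrinsic and independent of any trivialization. The Sobolev/elliptic bounds of Lemma \ref{compact manifold lemma} depend only on the coefficients of $\kla_{k,s_{p_j}}$, which vary continuously in $p_j$ and hence admit uniform bounds by compactness of $M$. Arzelà–Ascoli then extracts a locally uniform subsequential limit of the scaled kernels $B_{(k_j),p_j}$, and the Hörmander-based argument from the proof of Theorem \ref{MFD-Main-Thm} forces the limit to coincide with the Fock-space Bergman kernel $B_{\phi_{0,p^*}}$. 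Evaluating on the diagonal gives
$$\frac{|P_{k_j}(p_j,p_j)|}{k_j^n} \;=\; B_{(k_j),p_j}(0,0) \;\longrightarrow\; B_{\phi_{0,p^*}}(0,0) \;=\; \det\!\left(\tfrac{\dot{R}^L}{2\pi}\right)\!(p^*) \;>\; 0,$$
contradicting the hypothesis.

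The main obstacle is not conceptual but bookkeeping: one must track how each constant in the scaling method — particularly the Sobolev constants in Lemma \ref{compact manifold lemma} and the convergence of the PDE coefficients of $\kla_{(k),s_{p_j}}$ to those of $\kla_{\phi_{0,p^*}}$ — depends on the base point and verify that the smooth dependence of the Chern–Moser construction on $p$ is enough to guarantee uniformity. A mild technical nuisance is that $p_j$ might approach $p^*$ more slowly than $1/\sqrt{k_j}$, which is exactly why one must use the trivialization centered at $p_j$ rather than at $p^*$: the asymptotic at $p^*$ alone does not control $|P_{k_j}(p_j,p_j)|$ once $\sqrt{k_j}|z(p_j)|\to\infty$.
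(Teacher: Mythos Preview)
Your proposal is correct and follows essentially the same approach as the paper: a contradiction argument in which a sequence $p_j\to p^*$ with $k_j^{-n}|P_{k_j}(p_j,p_j)|\to 0$ is shown to be impossible by rerunning the scaling method with base points varying along the sequence. The paper packages the ``uniformity in the base point'' step that you spell out in detail as Remark~\ref{general phi}, which asserts precisely that the scaling argument survives when the Chern--Moser center $p_k$ drifts toward a limit point and the eigenvalues $\lambda_{j,k}$ converge.
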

\begin{proof}
    Suppose not, there is $p\in M$ and $q_{k_j} \in M$ such that $q_{k_j} \to p$ with $$|k_j^{-n}P_{k_j}(q_{k_j}, q_{k_j})|\le \frac{1}{j}. $$
    By  \ref{general phi}, we know we can have a similar result that scaling the Bergman kernel at $q_{k_j}$ is also convergent to the known kernel corresponding to $p$. 
    Which is a contradiction to the assumption. 
\end{proof}

From the main theorem in the Manifold Case, we have the following application \cite{hsiao2014bergman}. See also the Theorem \ref{Kodaira-Baily Theorem} and its proof, which is the generalization of the following theorem. 
\begin{thm}[Kodaira Embedding Theorem]
    Let $M$ be a compact complex manifold. If there is a positive holomorphic line bundle $L$ over $M$, then $M$ can be holomorphic embedded into $\bbC\bbP^N$, for some $N\in \bbN$.
\end{thm}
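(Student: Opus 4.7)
The proof plan rests on combining the diagonal asymptotic $P_k(p,p)\sim k^n\det(\dot R^L/2\pi)(p)$ and the uniform lower bound $P_k(p,p)\ge Ck^n$ from Theorem~\ref{MFD-Main-Thm} with the off-diagonal scaled asymptotic \eqref{manifold Bergman equation}. First, since $P_k(p,p)=\sum_{j=0}^{N_k}|s_j^{(k)}(p)|^2_{h^{L^k}}$ for any $L^2$-orthonormal basis $\{s_j^{(k)}\}$ of $H^0(M,L^k)$, the uniform lower bound forces at least one section to be nonzero at every $p$, so the Kodaira map
\[
\Phi_k : M \to \bbC\bbP^{N_k}, \qquad \Phi_k(p) := [s_0^{(k)}(p):\cdots:s_{N_k}^{(k)}(p)],
\]
is a well-defined holomorphic map for all large $k$. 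The strategy is then to show $\Phi_k$ is an injective immersion for $k$ large enough; by compactness of $M$ this upgrades automatically to a holomorphic embedding.

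For injectivity, I would introduce the normalized coherent state $s^{(k)}_p(q):=P_k(q,p)/\sqrt{P_k(p,p)}\in H^0(M,L^k)$, which has unit $L^2$-norm and satisfies $|s^{(k)}_p(p)|^2_{h^{L^k}}\sim k^n$. Suppose $\Phi_k(p_k)=\Phi_k(q_k)$ with $p_k\ne q_k$ along some subsequence. By compactness we may assume $(p_k,q_k)\to(p_*,q_*)\in M\times M$. If $p_*\ne q_*$, an off-diagonal decay estimate $|P_k(p_k,q_k)|^2\le C k^{2n}e^{-c k\,d(p_*,q_*)^2}$ (established from Proposition~\ref{Manifodl Global Spectral Gap} together with Hörmander-type weighted $L^2$ bounds, cf.\ Proposition~\ref{Mfd Hörmander L2}) shows that $s^{(k)}_{p_k}$ takes a value of size $\gtrsim k^{n/2}$ at $p_k$ and a value $o(k^{n/2})$ at $q_k$, contradicting the assumption that these points map to the same projective class. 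If $p_*=q_*$, fix a Chern-Moser trivialization $(z,s)$ centered at $p_*$ and rescale: set $x_k:=\sqrt{k}\,z(p_k)$, $y_k:=\sqrt{k}\,z(q_k)$. After a further subsequence we may assume $x_k,y_k$ converge in a compactification of $\bbC^n$, and the scaled convergence $P_{(k),s}(x,y)\to P_{\phi_0}(x,y)$ from \eqref{manifold Bergman equation} together with the explicit formula \eqref{conjugate bergman kernel} shows that the rescaled coherent states at $p_k$ and $q_k$ remain linearly independent in the limit, again contradicting $\Phi_k(p_k)=\Phi_k(q_k)$.

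For the immersion at $p$, I work in the Chern-Moser trivialization $(z,s)$ and use the affine chart of $\bbC\bbP^{N_k}$ around $\Phi_k(p)$ adapted to the coherent-state frame at $p$. In these coordinates $\Phi_k$ is represented by $z\mapsto \bigl(F_j^{(k)}(z)\bigr)_{j=1}^n$ where each $F_j^{(k)}(z)$ is proportional to $\partial_{\bar w_j} P_{k,s}(z,w)\bigr|_{w=0}/P_{k,s}(z,0)$. Differentiating the scaled expansion \eqref{manifold Bergman equation} (equivalently, using the explicit Bergman-Fock kernel \eqref{conjugate bergman kernel}) gives
\[
\partial_{z_\ell} F_j^{(k)}(0) = 2\lambda_j \delta_{j,\ell}\,k + o(k),
\]
so the Jacobian of $\Phi_k$ at $p$ is nonsingular for $k$ large, and by a standard compactness argument this holds uniformly in $p\in M$.

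The principal obstacle I anticipate is establishing the uniform off-diagonal decay $|P_k(p,q)|^2\le Ck^{2n}e^{-ck\,d(p,q)^2}$ away from the diagonal: Theorem~\ref{MFD-Main-Thm} gives only pointwise convergence of the scaled kernel on compact subsets of $B(0,\log k)$, so the uniform Gaussian decay on $M\times M$ has to be obtained separately, e.g.\ from the global spectral gap in Proposition~\ref{Manifodl Global Spectral Gap} combined with Agmon-type weighted estimates or a maximum-principle argument tailored to the Kodaira Laplacian. Once this uniform decay is in place, the injectivity argument above splits cleanly into the two complementary regimes ``$d(p_k,q_k)$ bounded below'' and ``$d(p_k,q_k)\to 0$,'' and the theorem follows.
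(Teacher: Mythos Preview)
Your overall architecture (well-definedness via the diagonal asymptotic, injectivity by contradiction with coherent states, immersion via kernel derivatives) matches the paper's, but there is a genuine gap in the near-diagonal injectivity step. When $p_*=q_*$ you pass to rescaled points $x_k=\sqrt{k}\,z(p_k)$, $y_k=\sqrt{k}\,z(q_k)$ and claim that in the limit the coherent states remain linearly independent. This fails precisely in the regime $\sqrt{k}\,d(p_k,q_k)\to 0$: then $x_k-y_k\to 0$, the two limit Bergman--Fock coherent states $P_{\phi_0}(\cdot,x_\infty)$ and $P_{\phi_0}(\cdot,y_\infty)$ coincide, and linear independence gives you nothing. (There is also the opposite sub-regime $\sqrt{k}\,d(p_k,q_k)\to\infty$, where the rescaled points escape every compact set and the convergence in \eqref{manifold Bergman equation} does not directly apply; you gesture at a compactification but do not say what the limit statement is.) The paper, in its Kodaira--Baily proof which specializes to the manifold case, splits $p_*=q_*$ into exactly these two sub-cases: for $\limsup\sqrt{k}\,d(p_k,q_k)>0$ it compares $|P_k(x_k,y_k)|$ with $P_k(y_k,y_k)$ directly from the expansion, while for $\sqrt{k}\,d(p_k,q_k)\to 0$ it uses a second-derivative trick on
\[
f_k(t)=\frac{|P_k(tx_k+(1-t)y_k,\,y_k)|^2}{P_k(tx_k+(1-t)y_k)\,P_k(y_k)},
\]
which by Cauchy--Schwarz satisfies $0\le f_k\le 1$ with $f_k(0)=f_k(1)=1$, forcing a sign contradiction on $f_k''$. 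Your leading-order argument cannot resolve this sub-case; some infinitesimal refinement of this type is needed.

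For the far case $p_*\ne q_*$, the uniform Gaussian off-diagonal decay you flag as the ``principal obstacle'' is not actually required. The paper bypasses it entirely: using H\"ormander's $L^2$-estimate (Proposition~\ref{Mfd Hörmander L2}) one constructs directly a holomorphic section $u_k=P_k\bigl(e^{k\phi}\chi(\sqrt{k}\,\cdot)\otimes s^k\bigr)$ peaked at $p$, with $u_k(p)\ne 0$ and $u_k(q)=O(k^{-\infty})$, which already separates $p$ from $q$ projectively. This is strictly easier than proving Agmon-type decay for $P_k$ and uses only tools already developed in Section~\ref{Compact Complex Manifold}.
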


\subsection{Manifold Case Without Spectral Gap}
\label{No Spectral Gap Manifold Case}

Follow the setting in the previous section without the compact and positive conditions. We will use the spectral projection, which is introduced in subsection \ref{pre of Bergman kernel}. We will denote $P_{k,\mu_k}= \chi_{[0,\mu_k]}(\kla_k)$ where $\chi_{[0,\mu_k]}$ is the characteristic function of $[0,\mu_k]$ and $E_{\mu_k}(X, L^k)$ be its image. Note that the Bergman kernel $P_k$ is just the case $\mu_k=0 $. We also use the identify between $L^2(U,L^k) $ and $L^2(U,\vol_M)$ to define the localized spectral projection $P_{k,\mu_k,s}$ and localized spectral kernel $P_{k,\mu_k,s}(x,y) $. We will assume $L$ is positive near a point $p$ and use the spectral projection to asymptotic the large $k$-behavior of the Bergman kernel near $p$.

\begin{thm}
\label{spectral kernel theorem}
   Let $M$ be a complex manifold with Hermitian line bundle $L$ and assume $\mu_k= o(k)$ and $\frac{k}{\mu_k}=o(\log k)$. For any $p\in M $, there is a local trivialization $(s, U)$ with Chern-Moser coordinate on $U$ center at $p$ and the spectral kernel has the following expansion on the neighborhood $B(0, \frac{\log k }{\sqrt{k}})$,
         $$P_{k,\mu_k,s}(x,y) = k^n P_{\phi_0}(\sqrt{k}x, \sqrt{k}y)+o(k^n)$$ where $B_{\phi_0}: L^2(\bbC^n,\phi_0)\rightarrow H^0(\bbC^n, \phi_0)$ and $P_{\phi_0} = e^{-\phi_0} B_{\phi_0} e^{\phi_0}$. 
         Moreover, if $L$ is positive over $U$, we have 
         $$P_{k,\mu_k,s}(x,y) = k^n \det(\frac{\dot{R}_L}{2\pi})(p) e^{k \sum_j \lambda_j (2x_j \ol{y}_j - |x_j|^2- |y_j|^2)} +o(k^n), $$ and if $L$ is non-positive over $p$, $P_{\phi_0}= 0$. 
\end{thm}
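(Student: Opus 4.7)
The plan is to adapt the scaling argument of Theorem~\ref{MFD-Main-Thm}, replacing the Bergman projection by the spectral projection $P_{k,\mu_k}$. The key replacement is that $P_{k,\mu_k}$ is not exactly annihilated by $\kla_k$, but only controlled via the spectral calculus: $\|\kla_k P_{k,\mu_k} u\|_k \le \mu_k \|u\|_k$, and dually, for any $u\in \Dom \kla_k$,
$$\|(I - P_{k,\mu_k})u\|_k^2 \le \mu_k^{-1}(\kla_k u|u)_k.$$
The two hypotheses on $\mu_k$ play complementary roles: $\mu_k=o(k)$ makes the first estimate vanish after scaling by $k^{-1}$, while $k/\mu_k = o(\log k)$ enters in the reproducing step, to certify that local holomorphic approximations are captured by $E_{\mu_k}(M,L^k)$ uniformly on the geometric scale $\log k/\sqrt k$.

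Fix a Chern--Moser trivialization $(z,s)$ centered at $p$ and define the scaled spectral kernel and its weight-conjugate
$$P_{(k),\mu_k,s}(x,y) = k^{-n} P_{k,\mu_k,s}(x/\sqrt k, y/\sqrt k), \quad B_{(k)}(x,y) = e^{\delta_k(k\hat\phi_k)(x)} P_{(k),\mu_k,s}(x,y) e^{-\delta_k(k\hat\phi_k)(y)},$$
exactly as in Subsection~\ref{Compact Complex Manifold}. First I would establish the analogue of Lemma~\ref{compact manifold lemma}, i.e. uniform $\cC^\ell$-bounds for $B_{(k)}$ on every compact $K\subset\bbC^n$. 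The argument proceeds via Gårding's inequality and duality, just as in the compact positive case; the only new feature is that $(I+\kla_{(k)})^s B_{(k)} u$ is no longer equal to $B_{(k)}u$ but differs from it by a polynomial in $\kla_{(k)}B_{(k)}$ of total $L^2$-operator norm $O((\mu_k/k)^s)$. Since $\mu_k/k \to 0$ this correction is bounded and the Sobolev estimates go through unchanged. By Arzelà--Ascoli, some subsequence $B_{(k_j)}(x,y)$ then converges locally uniformly in $\cC^\infty(\bbC^n\times\bbC^n)$ to a kernel $B(x,y)$, which defines a bounded operator $B$ on $L^2(\bbC^n,\phi_0)$.

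Next I characterize $B$. Since $\kla_{(k)} \to \kla_{\phi_0}$ as PDE coefficients and $\kla_{(k)} B_{(k)} \to 0$ in the distribution sense (using $\mu_k/k\to 0$ together with the above operator-norm bound), we obtain $\kla_{\phi_0} B = 0$. When $L$ is non-positive at $p$, $\phi_0$ fails to be strictly plurisubharmonic, so $L^2(\bbC^n,\phi_0) \cap \ker\dbar = \{0\}$; combined with $\kla_{\phi_0} B=0$ and ellipticity this forces $B \equiv 0$ and hence $P_{\phi_0}\equiv 0$, which gives the theorem in that case. When $L$ is positive at $p$, one must additionally establish the reproducing property. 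For a polynomial $\hat u = z^\alpha$, set $\hat h_k(z) = \chi(\sqrt k z/\log k)\hat u(\sqrt k z)$ and $h_k = \hat h_k\otimes s^k$; the cutoff against the Gaussian weight gives $\|\dbar_k h_k\|_k = O(k^{-\infty})$, so the spectral inequality yields
$$\|(I - P_{k,\mu_k})h_k\|_k^2 \le \mu_k^{-1}\|\dbar_k h_k\|_k^2 = O(k^{-\infty}).$$
Thus $v_k = P_{k,\mu_k} h_k$ automatically lies in $E_{\mu_k}(M,L^k)$ and satisfies $\|v_k - h_k\|_k = O(k^{-\infty})$. The reproducing identity $P_{k,\mu_k,s} v_k = v_k$ passes to the scaled limit to give $(Bu|g)_{\phi_0} = (u|g)_{\phi_0}$ for every $g\in\cC^\infty_c(\bbC^n)$. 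By density of polynomials in $H^0(\bbC^n,\phi_0)$, this forces $B = B_{\phi_0}$, and conjugation by $e^{\phi_0}$ recovers $P_{\phi_0}$. Subsequence-independence of the limit then promotes convergence of the full sequence.

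The main obstacle is that $v_k = P_{k,\mu_k} h_k$ lies in the spectral space only as a global section on $M$, so a priori its mass can leak outside the trivializing chart $U$; controlling this leakage requires an Agmon-type off-diagonal decay estimate for $P_{k,\mu_k,s}(x,y)$ on the scale $\log k/\sqrt k$, and it is precisely here that the quantitative lower bound $k/\mu_k = o(\log k)$ enters. Without it the scaled approximants $\delta_k \hat v_k$ need not concentrate near the origin at the required rate and the weak-convergence identification of $B$ with $B_{\phi_0}$ would break down; with it, the decay kills the tails of $v_k$ outside $B(0,\log k/\sqrt k)$ and the argument closes.
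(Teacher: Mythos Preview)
Your uniform-bound and Arzel\`a--Ascoli steps, and the identification $\kla_{\phi_0}B=0$, are correct and match the paper. The divergence is in the reproducing step, and there you have manufactured an obstacle that does not exist.

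The paper never constructs a global section $v_k=P_{k,\mu_k}h_k$. It works entirely in the scaled local model: with $u_k(w)=\chi(w/\log k)\,w^\alpha\in\cC^\infty_c(B(0,\log k))$ it applies the scaled spectral inequality directly,
\[
\bigl\|(I-B_{(k),\mu_k})u_k\bigr\|^2_{\delta_k(k\hat\phi_k)}\;\le\;\frac{k}{\mu_k}\,\|\dbar u_k\|^2_{\delta_k(k\hat\phi_k)}\;\lesssim\;\frac{k}{\mu_k\,\log^2 k}\,\|u_k\|^2\;=\;o(1),
\]
and this is exactly where the hypothesis $k/\mu_k=o(\log k)$ is invoked. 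No leakage can occur, since $u_k$ has compact support from the start. Your route through a global $v_k$ can also be made to close, but the ``mass leakage'' you flag is harmless: you only ever pair against compactly supported test functions $g$, and for the associated global section $\tilde g$ with $\supp\tilde g\subset U$ one has
\[
(B_{(k),\mu_k}u_k\,|\,g)=(P_{k,\mu_k}h_k\,|\,\tilde g)_k=(h_k\,|\,\tilde g)_k+O(k^{-\infty})=(u_k\,|\,g)+O(k^{-\infty}),
\]
using only $\|(I-P_{k,\mu_k})h_k\|_k=O(k^{-\infty})$. This is precisely the manoeuvre already used in the compact case (Theorem~\ref{MFD-Main-Thm}); no Agmon-type off-diagonal estimate is required, and your attribution of the hypothesis $k/\mu_k=o(\log k)$ to such an estimate is misplaced.
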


Let us review some properties of Spectral projection and Spectral kernel, then we will give the proof soon (see after Lemma \ref{non-compact lem 2}). From the definition of the spectral projection, we can easily obtain the following fact, 
\begin{fact}
\label{Spectral projection property}
For all $\ell\in \bbN$, $u\in L^2(M,L^k)$
\begin{equation}
    \label{mu-1}
    \|\kla_k^\ell   P_{k,\mu_k} u \|_k\le \mu_k^\ell \| u\|_k
\end{equation}  
and
\begin{equation}
    \label{mu-2}
    \|(I- P_{k,\mu_k}) u\|_k \le \frac{1}{\mu_k^\ell}\|\kla_k^\ell u\|_k.
\end{equation}
\end{fact}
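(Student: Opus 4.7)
The plan is to invoke the Spectral Theorem stated earlier in the excerpt to reduce both inequalities to elementary pointwise bounds on multiplication operators on a function space. Recall that $\kla_k$ is self-adjoint and semi-positive, so its spectrum $S$ lies in $[0,\infty)$. By the Spectral Theorem there is a unitary $U:L^2(M,L^k)\to L^2(S\times\bbN,d\mu)$ under which $\kla_k$ becomes multiplication by $h(s,n)=s$, and for any bounded Borel function $f$ on $\bbR$, $f(\kla_k)$ becomes multiplication by $f\circ h$. In particular, $P_{k,\mu_k}=\chi_{[0,\mu_k]}(\kla_k)$ corresponds to multiplication by $\chi_{[0,\mu_k]}(s)$, and $I-P_{k,\mu_k}$ to multiplication by $\chi_{(\mu_k,\infty)}(s)$.

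For the first bound \eqref{mu-1}, I would note that $\kla_k^\ell P_{k,\mu_k}$ corresponds to multiplication by the bounded function $s^\ell\chi_{[0,\mu_k]}(s)$, whose $L^\infty$-norm is at most $\mu_k^\ell$ since $s\in[0,\mu_k]$ on its support. Because this multiplier is bounded, $P_{k,\mu_k}u$ automatically lies in $\Dom(\kla_k^\ell)$ for every $u\in L^2(M,L^k)$, and the operator norm estimate
\[
\|\kla_k^\ell P_{k,\mu_k}u\|_k=\|s^\ell\chi_{[0,\mu_k]}(s)\,Uu\|_{L^2(d\mu)}\le\mu_k^\ell\|Uu\|_{L^2(d\mu)}=\mu_k^\ell\|u\|_k
\]
follows immediately from the unitarity of $U$.

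For the second bound \eqref{mu-2}, I would use the same translation and estimate directly, assuming $u\in\Dom(\kla_k^\ell)$ (otherwise the right-hand side is $+\infty$ and the inequality is vacuous):
\[
\|(I-P_{k,\mu_k})u\|_k^2=\int_{(\mu_k,\infty)\times\bbN}|Uu|^2\,d\mu\le\int_{(\mu_k,\infty)\times\bbN}\frac{s^{2\ell}}{\mu_k^{2\ell}}|Uu|^2\,d\mu\le\mu_k^{-2\ell}\|\kla_k^\ell u\|_k^2,
\]
where the first inequality uses $s/\mu_k>1$ on the region of integration and the second extends the integral to all of $S\times\bbN$ and applies Plancherel for $U$. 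Taking square roots yields \eqref{mu-2}.

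Since both statements are direct translations of pointwise inequalities for the multipliers $s^\ell\chi_{[0,\mu_k]}(s)$ and $\chi_{(\mu_k,\infty)}(s)/\mu_k^\ell\le s^\ell\chi_{(\mu_k,\infty)}(s)/\mu_k^{2\ell}$, there is essentially no hard step; the only care needed is in handling domains of the unbounded operator $\kla_k^\ell$, which is why I would state the second bound with the implicit convention that it is automatic when $u\notin\Dom(\kla_k^\ell)$.
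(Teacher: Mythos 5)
Your argument is correct and is exactly the spectral-calculus computation the paper has in mind: the paper states this as a Fact "easily obtained from the definition of the spectral projection" without further proof, and your reduction to the multipliers $s^\ell\chi_{[0,\mu_k]}(s)$ and $\chi_{(\mu_k,\infty)}(s)$ via the unitary $U$ is the standard way to make that precise. Your handling of the domain issue for \eqref{mu-2} (interpreting the bound as vacuous when $u\notin\Dom(\kla_k^\ell)$) is also appropriate.
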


Similar to the Euclidean case, we first notice that the localized spectral projection $P_{k,\mu_k,s}$ satisfying $$ P_{k,\mu_k} (u) = P_{k,\mu_k}(\hat{u}\otimes s^k) = (e^{k\phi}P_{k,\mu_k,s}(e^{-k\phi}\hat{u}))\otimes s^k$$ and its distribution kernel $P_{k,\mu_k,s}(x,y)$.
We can defined the \textbf{scaling localized spectral kernel} $$P_{(k), \mu_k, s} (x,y) = k^{-n} P_{k,\mu_k, s} \(\frac{x}{\sqrt{k}}, \frac{y}{\sqrt{k}}\)$$ and $$B_{(k),\mu_k}(x,y) = e^{\delta_k(k\hat{\phi}_k)(x)} P_{(k),\mu_k,s}(x,y) e^{-\delta_k(k\hat{\phi}_k)(x)}  $$ with $$B_{(k),\mu_k}:L^2(B(0,\log k),\delta_k(k\hat{\phi}_k), d\hat{\vol}_{M,(k)})\rightarrow L^2(B(0,\log k),\delta_k(k\hat{\phi}_k), d\hat{\vol}_{M,(k)})$$
defined by $$(B_{(k),\mu_k}u) (x) = \int_{B(0,\log k)} B_{(k),\mu_k}(x,y)u(y)d\hat{\vol}_{M,(k)},\, \forall u\in \cC^\infty(B(0,\log k))$$ and extend $B_{(k),\mu_k} $ to whole $L^2$ space by in the distribution sense as the discussion before Lemma \ref{compact manifold lemma}. As in the compact manifold case, we can define $\kla_{(k)}$.

Similarly to the Fact \ref{Spectral projection property}, we still have
 $$\|\kla_{(k)} B_{(k),\mu_k} u\|_{\delta_k(k\hat{\phi}_k)} \le \frac{\mu_k^\ell}{k^\ell} \|u\|_{\delta_k(k\hat{\phi}_k)}$$ and $$\|(I-B_{(k),\mu_k} )u\|_{\delta_k(k\hat{\phi}_k)}^2 \le \frac{k^\ell}{\mu_k^\ell} \(\kla_{(k)}^\ell u|u \)_{\delta_k(k\hat{\phi}_k)}.$$

As in the compact manifold case, for $K\Subset U\Subset V$ where $K$ is compact and $U,\, V$ are bounded open subsets of $\bbC^n$. Since for large $k$, the three subsets are contained in $B(0,\log k )$.
\begin{lem} If $\mu_k= o(k)$ and  $u\in \cC^\infty_c(\bbC^n)$. For all $U\Subset V$ and $\ell \in \bbN$, there is a constant $C_{\ell,U,V}$ such that for $k\gg 1$,
    $$\| B_{(k),\mu_k} u  \|_{2\ell, U}\le C_\ell \| u \|_{-2\ell,V}.$$
\end{lem}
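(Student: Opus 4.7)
The plan is to follow the structure of the proof of Lemma~\ref{Euc-lem-1}, with the one new ingredient that $B_{(k),\mu_k}$ is only an \emph{approximate} element of $\ker \kla_{(k)}$ rather than an exact one. Quantitatively, the spectral bound (inherited from \eqref{mu-1} after scaling) says
$$
\|\kla_{(k)}^s B_{(k),\mu_k} u\|_{\delta_k(k\hat\phi_k)} \le (\mu_k/k)^s \, \|u\|_{\delta_k(k\hat\phi_k)},
$$
and under $\mu_k = o(k)$ the prefactor $(\mu_k/k)^s$ is $o(1)$, so for the purposes of interior elliptic regularity $B_{(k),\mu_k}$ behaves as though its range lay in $\ker \kla_{(k)}$.

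First I would establish uniform-in-$k$ interior regularity: because the scaled coefficients of $\kla_{(k)}$ converge in $\cC^\infty_{\mathrm{loc}}$ to those of the constant-coefficient operator $\kla_{\phi_0}$ (this is exactly the convergence used in the proof of Theorem~\ref{EUC-Main-Thm}), Gårding's inequality yields, for any $U \Subset V$ and $s \in \bbN_0$,
$$
\|v\|_{2s, U} \le C_{s,U,V} \bigl( \|v\|_{0, V} + \|\kla_{(k)}^s v\|_{0, V} \bigr),
$$
with $C_{s,U,V}$ independent of $k$ for $k$ large. Specializing $v = B_{(k),\mu_k} u$ and combining $L^2$-contractivity of the spectral projection with the above spectral bound gives $\|B_{(k),\mu_k} u\|_{2s, U} \le C' \|u\|_{0, V}$, i.e.\ a $k$-uniform mapping $L^2_{\mathrm{comp}}(V) \to H^{2s}_{\mathrm{loc}}(U)$.

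Next I would dualize. Since $B_{(k),\mu_k}$ is self-adjoint for the weighted pairing $(\cdot|\cdot)_{\delta_k(k\hat\phi_k)}$ and $\delta_k(k\hat\phi_k) \to \phi_0$ in $\cC^\infty_{\mathrm{loc}}$, the weighted and unweighted Sobolev norms on a fixed compact set are equivalent with $k$-independent constants; taking the adjoint of the previous bound yields $\|B_{(k),\mu_k} u\|_{0, U} \le C'' \|u\|_{-2s, V}$. Finally I compose via the projection identity: $B_{(k),\mu_k} = \chi_{[0,\mu_k]}(\kla_k)$ satisfies $B_{(k),\mu_k}^2 = B_{(k),\mu_k}$, so inserting an intermediate open set $U \Subset U' \Subset V$ and applying the $H^{-2\ell} \to L^2$ bound first and then the $L^2 \to H^{2\ell}$ bound second produces the desired estimate $\|B_{(k),\mu_k} u\|_{2\ell, U} \le C_\ell \|u\|_{-2\ell, V}$.

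The main obstacle will be keeping all constants uniform in $k$. This rests on (i) the $\cC^\infty_{\mathrm{loc}}$ stabilization of the coefficients of $\kla_{(k)}$, which freezes the elliptic regularity constants for large $k$, and (ii) the $k$-uniform equivalence of the weighted and unweighted Sobolev norms on compacts, which lets me convert self-adjointness on the weighted space into an adjoint estimate on the standard Sobolev scale. A minor technical point is the cutoff to $B(0,\log k)$: for any fixed $u \in \cC^\infty_c(V)$ with $V \Subset \bbC^n$, one has $V \subset B(0,\log k)$ for $k$ large, so the truncation is inactive on $\supp u$ and the identity $B_{(k),\mu_k}^2 = B_{(k),\mu_k}$ may be applied directly.
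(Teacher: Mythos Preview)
Your proposal is correct. It differs from the paper's argument in one respect: the paper does not pass through $B_{(k),\mu_k}^2=B_{(k),\mu_k}$ but instead replaces the identity $(I+\kla_{(k)})^{-\ell}B_{(k)}=B_{(k)}$ of Lemma~\ref{Euc-lem-1} by the spectral bound $\|(I+\kla_{(k)})^{\ell}B_{(k),\mu_k}\|_{\mathrm{op}}\le(1+\mu_k/k)^\ell\le 2^\ell$, which yields $\|B_{(k),\mu_k}u\|_{0}\le 2^\ell\|(I+\kla_{(k)})^{-\ell}u\|_{0}$ directly; together with G{\aa}rding's inequality this gives $\|B_{(k),\mu_k}u\|_{2\ell,U}\lesssim\|(I+\kla_{(k)})^{-\ell}u\|_{0}\lesssim\|u\|_{-2\ell,V}$ in one stroke. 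Your route---prove the $L^2\to H^{2\ell}_{\mathrm{loc}}$ bound, dualize, then compose via idempotence---is slightly longer but more modular, and it sidesteps the need to identify $\|(I+\kla_{(k)})^{-\ell}u\|_0$ with a negative Sobolev norm on the truncated domain $B(0,\log k)$.

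Two minor comments on the write-up. First, the intermediate set $U'$ is unnecessary: your step-2 estimate already controls $\|B_{(k),\mu_k}w\|_{2\ell,U}$ by the \emph{global} weighted $L^2$-norm of $w$ (not a localized one), so applying it with $w=B_{(k),\mu_k}u$ gives $\|B_{(k),\mu_k}u\|_{2\ell,U}\le C'\|B_{(k),\mu_k}u\|_{0}$ at once, and your dualized bound then converts the right-hand side to $\|u\|_{-2\ell,V}$. Second, in the duality step the adjoint of $B_{(k),\mu_k}$ equals $B_{(k),\mu_k}$ only with respect to the \emph{weighted} pairing, not the standard $H^{-2s}\times H^{2s}$ pairing; the cleanest way to avoid a mismatch is to run the whole argument in a weighted Sobolev scale and convert to the standard norms only at the very end, using exactly the local equivalence you already mention.
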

\begin{proof}
    Let $U\Subset W \Subset V$. From the Gårding inequality and the property of spectral projection, we have 
    $$
    \begin{aligned}
    \| B_{(k),\mu_k}  u\|_{2\ell,U}&\le C^1(\|\kla_{(k)}^\ell B_{(k),\mu_k} u \|_{0,V} + \| B_{(k),\mu_k} u \|_{0,W})\\
    &\le C^1 \(\frac{\mu_k}{k}\)^\ell \|u\|_{0,W}+ C^2 \|(I+ \kla_{(k)})^{-\ell} u \|_{0,W}\\
    &\le C \|u\|_{-2\ell,V}.
    \end{aligned}
    $$
\end{proof}

From this lemma, we can immediately get the following lemma through the same method used in the Euclidean case.
\begin{lem}
\label{non-compact lem 2}
    If $\mu_k= o(k)$. Let $K$ be a compact subset of $\bbC^n$. For all $\ell \in \bbN$, there exists a constant $C_\ell $ independent of $k$ and $(x,y) \in K\times K$ such that for $k\gg 1$, $$\sup_{\genfrac{}{}{0pt}{}{\ga+\gb \le \ell}{ x,y\in K}} |\dr_x^\ga \dr_y^\gb B_{(k), \mu_k}(x,y)|\le C_\ell.$$ 
\end{lem}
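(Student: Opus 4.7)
The plan is to mimic the argument in Lemma \ref{Euc-lem-2} from the Euclidean case, with the previous lemma (the Sobolev-type estimate $\|B_{(k),\mu_k}u\|_{2\ell,U}\le C_{\ell,U,V}\|u\|_{-2\ell,V}$) serving as the workhorse in place of Lemma \ref{Euc-lem-1}. Fix the compact set $K\subset\bbC^n$, and choose bounded open sets $U\Subset V\subset\bbC^n$ with $K\subset U$. For all $k$ sufficiently large we have $V\subset B(0,\log k)$, so the operator $B_{(k),\mu_k}$ acts on functions supported in $V$. The strategy is to test the distribution kernel against approximations to $\delta$-functions centered at points of $K$, then transfer the decay onto the mollifier via the Sobolev estimate.

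Concretely, I would fix a standard mollifier $\chi\in\cC^\infty_c(\bbC^n,[0,1])$ supported near $0$, set $\chi_\varepsilon(z)=\varepsilon^{-2n}\chi(\varepsilon^{-1}z)$, and observe that for any multi-indices $\alpha,\beta$, by the Fourier characterization of $H^{-s}$,
$$\|\partial^{\alpha}\chi_\varepsilon(\,\cdot\,-x)\|_{-2n-1-|\alpha|}\le C_{\alpha},$$
with $C_\alpha$ independent of $\varepsilon\in(0,1]$ and $x$ as $x$ ranges over $K$ (once $\varepsilon$ is small enough that the support stays in $V$). Writing
$$\partial^\alpha_x\partial^\beta_y B_{(k),\mu_k}(x,y)=\lim_{\varepsilon,\delta\to 0}\int B_{(k),\mu_k}(z,w)\,\partial^\alpha_z\chi_\delta(z-x)\,\partial^\beta_w\chi_\varepsilon(w-y)\,d\lambda(z)d\lambda(w),$$
one can first pair in $z$ against the adjoint of $B_{(k),\mu_k}$ (which is $B_{(k),\mu_k}$ itself, since the spectral projection is self-adjoint with respect to the weighted inner product), and then use the previous lemma together with the Sobolev embedding in the $w$-variable. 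Choosing Sobolev orders large enough relative to $|\alpha|+|\beta|$, each application yields a bound
$$|\partial^\alpha_x\partial^\beta_y B_{(k),\mu_k}(x,y)|\le C_{\alpha,\beta,U,V}\|\partial^\alpha\chi_\delta(\,\cdot\,-x)\|_{-2n-1-|\alpha|}\|\partial^\beta\chi_\varepsilon(\,\cdot\,-y)\|_{-2n-1-|\beta|}\le C_{\alpha,\beta},$$
with constant uniform over $(x,y)\in K\times K$ and over large $k$. Taking the supremum over $|\alpha|+|\beta|\le\ell$ gives the desired bound with $C_\ell=\max_{|\alpha|+|\beta|\le\ell}C_{\alpha,\beta}$.

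The step that needs the most care is the transfer from the Sobolev estimate (on functions) to a pointwise bound (on the kernel) via duality: one must verify that the mollifier approximation genuinely converges to the kernel evaluation, uniformly in $k$, and that the weighted measures $d\hat{\vol}_{M,(k)}$ and weights $\delta_k(k\hat{\phi}_k)$ do not spoil this on the compact set $K$. Since $K$ is compact and the weights and volume factors converge smoothly on $K$ as $k\to\infty$ (the perturbation terms involve $\tau(\sqrt{k}z/\log k)$, which equals $1$ on $K$ for large $k$), the weight factor $e^{\delta_k(k\hat{\phi}_k)}$ is uniformly bounded on $K\times K$, and the identity $B_{(k),\mu_k}=e^{\delta_k(k\hat{\phi}_k)}P_{(k),\mu_k,s}e^{-\delta_k(k\hat{\phi}_k)}$ together with smoothness of the unweighted kernel on $K$ makes the mollifier limit legitimate. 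Once this is in place, the previous lemma gives the uniform $k$-independent control, concluding the proof.
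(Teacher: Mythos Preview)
Your proposal is correct and follows essentially the same approach as the paper: the paper simply states that the lemma follows ``through the same method used in the Euclidean case,'' i.e., by repeating the mollifier/delta-function argument of Lemma~\ref{Euc-lem-2} with the Sobolev estimate of the preceding lemma replacing Lemma~\ref{Euc-lem-1}. Your additional remarks about the uniform boundedness of the weight factors on $K$ are a reasonable sanity check but not a departure from the paper's method.
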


Using the Azerlà-Ascoli Theorem argument, we have a locally uniform limit $B(x,y)$ of $B_{(k),\mu_k}(x,y)$. We can similarly define an operator $B$ using $B(x,y)$ as the kernel. The remaining thing again is to prove $B$ is exactly $B_{\phi_0}$.

\begin{proof}[Proof of Theorem \ref{spectral kernel theorem}]
First, from the directly computation, $\kla_{(k)}$ converges $\kla_{\phi_0}$. For any $u\in \cC^\infty_c(\bbC^n)$, we have $$ \|\kla_{\phi_0}B u\| = \lim_{k\to \infty}\|\kla_{(k)} B_{(k),\mu_k} u\|\le \lim_{k\to \infty}\frac{\mu_k}{k} = 0 .$$

For $u = z^\alpha\in H^0(\bbC^n , \phi_0)$ with $\alpha\in \bbN_0^n$ and a suitable cut-off function $\chi$, let $u_k= \chi(\frac{z}{\log k})u(z)$. Since we have $u_k \in L^2(B(0,\log k), \delta_k(k\hat{\phi}_k), d\hat{\vol}_{M,(k)})$, then $$
\begin{aligned}
\|(I-B_{(k),\mu_k})  u_k\|_{\delta_k(k\hat{\phi})}^2 &\le \frac{k}{\mu_k}\(\kla_{(k)} u_k | u_k\)_{\delta_k(k\phi)}\\
&= \frac{k}{\mu_k} \|\dbar u_k\|_{\delta_k(k\hat{\phi}_k)}^2 = O(k^{-\infty}) + \frac{k}{\mu_k\log^2 k }\|u_k\|_{\delta_k(k\hat{\phi}_k)}^2 = o(1).
\end{aligned}
$$
The $o(1)$ is from the assumption of $\mu_k$. Then we have $B_{(k)}= I + o(1)$ as $k$ tend to $\infty$. 

Thus $$B_{(k),\mu_k} u_k = u_k + o(1)$$, 
$$B_{(k),\mu_k}(x,y) = B_{\phi_0}(x,y)+o(1)$$
and 
$$
\begin{aligned}
P_{(k),\mu_k,s} (x,y) &= e^{-\delta_k(k\hat{\phi}_k)(x)}B_{(k),\mu_k}(x,y) e^{\delta_k(k\hat{\phi}_k)(y)} (x,y) \\
&= e^{-\delta_k(k\hat{\phi}_k)(x)} \(B_{\phi_0}(x,y) \)e^{\delta_k(k\hat{\phi}_k)(y)} +o(1)\\
&= e^{-\delta_k(k\phi)(x)} \(B_{\phi_0}(x,y) \)e^{\delta_k(k\phi)(y)} +o(1)
\end{aligned}
$$ on $B(0,\log k)$.

For $L$ positive over $p$, we have $\phi_0 = \sum_{j=1}^n \lambda_j |z_j|^2$w with $\lambda_j>0$, then 
$$B_{(k),\mu_k}(x,y) = B_{\phi_0}(x,y)+o(1) =  \(\frac{1}{\pi}\)^n \prod_{j=1}^n \lambda_j  e^{ 2\sum_j \lambda_j (x_j \ol{y}_j -|y_j|^2)} + o(1) .$$
and thus
$$
\begin{aligned}
P_{(k),\mu_k,s} (x,y) &= e^{-\delta_k(k\hat{\phi}_k)(x)}B_{(k),\mu_k}(x,y) e^{\delta_k(k\hat{\phi}_k)(y)} (x,y)\\  
&= e^{-\delta_k(k\hat{\phi}_k)(x)} \(\(\frac{1}{\pi}\)^n \prod_{j=1}^n \lambda_j  e^{ 2\sum_j \lambda_j (x_j \ol{y}_j -|y_j|^2)} \)e^{\delta_k(k\hat{\phi}_k)(y)} +o(1)\\
&= \det(\frac{\dot{R}_L}{2\pi})(p) e^{\sum_j \lambda_j (2x_j \ol{y}_j - |x_j|^2- |y_j|^2)} + o(1)
\end{aligned}
$$
\end{proof}

Recall the local spectral gap condition (see Definition \ref{Local Spectral Gap}), that is for a subset $U\subset M$ we have $$\|(I- P_k) u \|_k^2 \le C k^r \(\kla_k u |u \)_k \text{ for all } u \in \gO_c(U, L^k)$$ for some $C>0$ and $r\in \bbR$. For a fixed point $p\in U$, we take the Chern-Moser trivialization $(z,s)$ on $V\Subset U$ centered at $p$. For any $\hat{u}= z^\alpha\in H^0(\bbC^n, \phi_0)$, we take a suitable cut-off function $\chi$ such that $u_k= (\chi_k \hat{u})\otimes s^k\in \gO_c(V,L^k)$, here $(\chi_k\hat{u})(z) = \chi(\frac{\sqrt{k}z}{\log k})\hat{u}(\sqrt{k}z)$. Thus we have $$\|u_k - P_ku_k\|_k^2 = \|(I-P_k) u_k\|_k^2 \le Ck^r \(\kla_k u_k |u_k\)_k = Ck^r\|\dbar_k u_k\|_{k}^2 = O(k^{-\infty}).$$ Thus we know $P_k u_k = u_k + O(k^{-\infty})$, this will replace the role of Hörmander $L^2$-Estimate in the proof of Theorem \ref{MFD-Main-Thm}. Then we repeat the method of localized and scaling, we have 
$$
\begin{aligned}
P_{k,s}(x,y) &= e^{-k\phi(x)} k^nB_{\phi_0}(\sqrt{k}x, \sqrt{k}y) e^{k\phi(y)} + o(k^n) \\
&= k^n e^{-k\phi_0(x)} B_{\phi_0}(\sqrt{k}x,\sqrt{k}y) e^{k\phi_0(y)} + o(k^n)
\end{aligned}
$$
on $B(0,\frac{\log k}{\sqrt{k}})$. Thus, we conclude the following theorem.

\begin{thm}
    Let $M$ be a complex manifold with Hermitian line bundle $L$. Let $\kla_k$ satisfy the local spectral gap condition over $U\subset M$. For any $p\in U$, there is a local trivialization $(s, U)$ with Chern-Moser coordinate on $U$ center at $p$ and the Bergman kernel has the following expansion on the neighborhood $B(0, \frac{\log k }{\sqrt{k}})$
          $$P_{k,s} = k^n P_{\phi_0}(\sqrt{k}x, \sqrt{k}y)+o(k^n)$$ where $B_{\phi_0}: L^2(\bbC^n,\phi_0)\rightarrow H^0(\bbC^n, \phi_0)$ and $P_{\phi_0} = e^{-\phi_0} B_{\phi_0} e^{\phi_0}$. 
         Moreover, if $L$ is positive over $U$, we have 
         $$P_{k,s}(x,y) = k^n \det(\frac{\dot{R}_L}{2\pi})(p) e^{k \sum_j \lambda_j (2x_j \ol{y}_j - |x_j|^2- |y_j|^2)} +o(k^n), $$ and if $L$ is non-positive over $p$, $P_{\phi_0}= 0$. 
\end{thm}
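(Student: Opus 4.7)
The plan is to mirror the scaling proof of Theorem \ref{MFD-Main-Thm}, with the local spectral gap assumption taking over the role that the global Hörmander $L^2$-estimate (Proposition \ref{Mfd Hörmander L2}) played in the compact positive case. First I would fix a point $p\in U$, choose the Chern-Moser trivialization $(z,s)$ on a small $V\Subset U$ centered at $p$ given by Lemma \ref{chern-moser coordinate}, and build the patched weight $\hat{\phi}_k=\phi_0+\tau(\tfrac{\sqrt{k}z}{\log k})\phi_1$, patched Hermitian form $\hat{\omega}_k$, and patched volume form $d\hat{\vol}_{M,k}$ exactly as in subsection \ref{Euclidean Case}. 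Define the scaled localized Bergman kernel and its conjugate
\[
P_{(k),s}(x,y)=k^{-n}P_{k,s}\!\left(\tfrac{x}{\sqrt{k}},\tfrac{y}{\sqrt{k}}\right),\qquad B_{(k)}(x,y)=e^{\delta_k(k\hat{\phi}_k)(x)}P_{(k),s}(x,y)e^{-\delta_k(k\hat{\phi}_k)(y)},
\]
together with the scaled localized Kodaira Laplacian $\kla_{(k)}$; then $\kla_{(k)}B_{(k)}=0$ in the distribution sense on $B(0,\log k)$ and $(I+\kla_{(k)})^{-s}B_{(k)}=B_{(k)}$.

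Second, I would set up uniform local bounds on $B_{(k)}$ following Lemmas \ref{Euc-lem-1}, \ref{Euc-lem-2}, \ref{compact manifold lemma}, and \ref{non-compact lem 2}: Gårding's inequality together with the identity above yields $\|B_{(k)}u\|_{2\ell,U'}\le C_{\ell,U',W}\|u\|_{-2\ell,W}$ for $U'\Subset W\Subset B(0,\log k)$, and the same mollifier argument as before gives locally uniform $\cC^\ell$ bounds on $B_{(k)}(x,y)$. By Arzelà--Ascoli a subsequence converges locally uniformly in $\cC^\infty$ to a smooth limit kernel $B(x,y)$ defining an operator $B$. Since the coefficients of $\kla_{(k)}$ converge to those of $\kla_{\phi_0}$ on any fixed compact set (the $\phi_1$ contribution and the non-flat part of $\hat{\omega}_k$ are scaled away by factors of $1/\sqrt{k}$ times $\log k$), passing to the distributional limit in $\kla_{(k)}B_{(k)}=0$ gives $\kla_{\phi_0}B=0$.

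The crucial new step, where the local spectral gap enters, is the reproducing property. For each monomial $\hat{u}(z)=z^\alpha\in H^0(\bbC^n,\phi_0)$ I would take a cutoff $\chi\in\cC^\infty_c(\bbC^n)$ equal to $1$ near $0$, set $u_k=\bigl(\chi_k(z)\hat{u}(\sqrt{k}z)\bigr)\otimes s^k$ with $\chi_k(z)=\chi(\tfrac{\sqrt{k}z}{\log k})$, which is compactly supported in $U$ for $k\gg 1$. A direct computation gives $\|\dbar_k u_k\|_k^2=O(k^{-\infty})$ because $\dbar\chi_k$ is supported on an annulus of radius $\sim\log k/\sqrt{k}$ where the Gaussian weight $e^{-2k\phi_0}$ decays faster than any polynomial. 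The local spectral gap then produces
\[
\|(I-P_k)u_k\|_k^2\le Ck^r(\kla_k u_k|u_k)_k=Ck^r\|\dbar_k u_k\|_k^2=O(k^{-\infty}),
\]
which, after scaling, becomes $B_{(k)}\tilde{u}_k=\tilde{u}_k+O(k^{-\infty})$ for the scaled localization $\tilde{u}_k$ of $u_k$. Testing against any $g\in\cC^\infty_c(\bbC^n)$ and arguing as in the proof of Theorem \ref{EUC-Main-Thm} (using $\|(1-\rho_\ell)\hat{u}\|_{\phi_0}\to 0$ to control tails), I conclude $Bu=u$ for every $u$ in the dense subspace of $H^0(\bbC^n,\phi_0)$ spanned by monomials. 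Combined with $\kla_{\phi_0}B=0$, this forces $B=B_{\phi_0}$, so the subsequential limit is unique and hence the full sequence converges. Unscaling and writing $P_{\phi_0}=e^{-\phi_0}B_{\phi_0}e^{\phi_0}$ gives the stated expansion; in the positive case $B_{\phi_0}$ has the explicit Gaussian form, while if any $\lambda_j\le 0$ then $L^2(\bbC^n,\phi_0)\cap\ker\dbar=\{0\}$ forces $B_{\phi_0}=0$.

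The main obstacle is bookkeeping in the fourth step: the test sections $u_k$ live on the shrinking ball $V$, while the target $u\in L^2(\bbC^n,\phi_0)$ is global, so one must ensure the tails $(1-\chi_k)\hat{u}$ contribute negligibly in the weighted inner products and that passing the weak limit through the scaling is valid. This is handled exactly as in the Euclidean proof of Theorem \ref{EUC-Main-Thm}, but it is the only place where the ingredients particular to the non-compact/non-globally-positive setting (spectral gap instead of Hörmander, plus the patched weight $\hat{\phi}_k$) must be combined carefully.
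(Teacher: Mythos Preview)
Your proposal is correct and follows essentially the same approach as the paper's proof: fix the Chern-Moser trivialization, run the scaling/Arzelà--Ascoli argument from Theorem \ref{MFD-Main-Thm}, and replace the Hörmander $L^2$-estimate by the local spectral gap applied to the test sections $u_k=(\chi_k\hat{u})\otimes s^k$ with $\hat{u}=z^\alpha$ to get $\|(I-P_k)u_k\|_k^2\le Ck^r\|\dbar_k u_k\|_k^2=O(k^{-\infty})$. The paper's own proof is terse---it simply says ``this will replace the role of Hörmander $L^2$-Estimate in the proof of Theorem \ref{MFD-Main-Thm}. Then we repeat the method of localized and scaling''---whereas you have carefully spelled out each step of that repetition, including the tail-control bookkeeping you flag at the end.
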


\section{Asymptotic Behavior of Bergman Kernels on Complex Orbifold}

\label{Orbifold Case}
In this section, we prove the leading term of the Bergman kernel on an orbifold and use the result to give a pure analytic proof of the Kodaira-Baily Embedding Theorem. 

\subsection{Notation and Set-up}
\label{orbifold}
We introduce the background we need about orbifolds, most of the content in this section is from  \cite{ma2007holomorphic},  \cite{ma2012berezintoeplitz},  \cite{adem_leida_ruan_2007}, and  \cite{SasakianGeomertry}.

Let $X$ be a paracompact Hausdorff topological space. A \textbf{complex orbifold chart} on $X$ is a triple $(\ti{U}, G, \varphi)$, where $\ti{U} $ is a connected open subset of $\bbC^n$, $G:\ti{U}\rightarrow \ti{U}$ is a finite group action, and $\varphi: \ti{U} \rightarrow U $ is a continuous map onto a open subset $U\subset X$ such that $\varphi\circ g = \varphi$ for all $g\in G$ and induced a homeomorphism from $\ti{U}/G $ to $U$. An \textbf{injection} between two orbifold charts $(\ti{U} , G, \varphi) $ and $(\ti{V} , H, \psi)$ is a holomorphic embedding $\iota:\ti{U} \rightarrow \ti{V} $ such that $\psi\circ \iota = \varphi$. A \textbf{complex orbifold atlas} on $X$ is a family $\cU  = \{ (\ti{U}_j , G_j, \varphi_j)\}$ of complex orbifold charts such that \begin{enumerate}[(i)]
    \item $X= \bigcup_j \varphi(\ti{U}_j) = \bigcup_j U_j$,
    \item Given two charts $(\ti{U}_j , G_j, \varphi_j)$ and $(\ti{U}_\ell , G_\ell , \varphi_\ell)$ and a point $x\in \varphi_j(\ti{U}_j) \cap \varphi_\ell(\ti{U}_\ell)$, there exists a open neighborhood $U_k$ of $x$ and a complex orbifold charts $(\ti{U}_k, G_k , \varphi_k)$ such that there are two injections $\iota_{jk}:(\ti{U}_k, G_k , \varphi_k)\rightarrow (\ti{U}_j , G_j, \varphi_j)$ and $\iota_{\ell k} : (\ti{U}_k , G_k, \varphi_k)\rightarrow(\ti{U}_\ell , G_\ell, \varphi_\ell) $.
\end{enumerate}
If we can find a complex atlas on $X$, we call $X$ a complex orbifold. A general orbifold is defined by replacing open subsets $\ti{U}$ of $\bbR^N$ with quotients by finite group $G$ actions, and requiring that the transition maps between charts are smooth and compatible with the group actions. Note that sometimes we will denote the orbifold structure by $(\ti{U}, G)$.

For each $x\in X$, the isotropy group $G_x$ is defined as the stabilizer of $\ti{x}\in \ti{U}$ in a local chart $(\ti{U}_x, G_x)$ near $x$. Note that $G_x$ is well-defined up to isomorphism. For $x\in X$ whose isotropy subgroup $ G_x\ne \{ e\} $ is called a \textbf{singular} point. Those points with $G_x = \{e\} $ are called \textbf{regular} points. The set of singular points is called the \textbf{orbifold singular locus} or \textbf{orbifold singular set}, and denoted by $X_{sing}$

We say that $E$ is an \textbf{orbibundle} over an orbifold $X$ if $E$ is an orbifold and for $(\ti{U}, G_U )\in \cU$, $(\ti{E}_U, G^{E}_U, \ti{\pi}_U: \ti{E}_U\rightarrow \ti{U}) $ is a $G^{E}_U$-equivariant vector bundle and $( \ti{E}_U,G^E_U)$ is the orbifold structure of $E$ with $(\ti{U}, G_U)$ is the orbifold structure of $X$ where $G_U = G^E_U/ K^E_U$, $K^E_U$ is the kernel of the map $G^E_U $ to the diffeomorphism over $\ti{U}$. If $K^E_U$ is trivial, we call $E$ a proper orbifold vector bundle. For a section $s:U\rightarrow E$, we will denote $\ti{s}: \ti{U}\rightarrow \ti{E}_U$.
\begin{rem}
    The orbibundle we mention in the following will be assumed to be proper.
\end{rem}

Analogous to manifolds, we can define orbifold tangent bundle, cotangent bundle, and so on. And we will use same notation as in manifold, for an open subset $U$ of $X$, denote $\cC^\infty(U)= \gO(U)$ the smooth function on $U$ and $\cC^\infty_c(U) = \gO_c(U) $ be the subspace of $\cC^\infty(U) $ whose elements have compact support in $U$. Let $H^0(U)$ denote the space of holomorphic functions on $U$. Let $E\rightarrow X$ be a complex orbifold vector bundle, we denote $\cC^\infty(U,E)= \gO(U,E)$ the smooth section over $U$ and $\cC^\infty_c(U,E) = \gO_c(U,E) $ be the subspace of $\cC^\infty(U,E) $ whose elements have compact support in $U$ for any open subset $U\subset X$. For $E$ a holomorphic vector bundle, we let $H^0(U, E)$ be the space of holomorphic sections of $E$ over $U$. We observe that the complex structure, eigenspace decomposition of complexified orbifold tangent and cotangent bundle, and local weight can be defined on orbifolds in the same way. Note that all the functions, sections, or local weights on orbifolds will be $G_U$ invariant over an orbifold chart $(\ti{U}, G_U)$.

We define the integral of $\eta\in \cC^\infty(X, T^*X)$ with $\supp(\eta)\subset U$ as 
\begin{equation}
\label{integral on orbifold}
\int_X \eta = \frac{1}{|G_U|}\int_{\ti{U}} \ti{\eta}_U.
\end{equation}
Similar to manifold, when we fix a Hermitian metric $\go$ on $X$, we can define the corresponding volume form $d\vol_X = \frac{\go^n}{n!}$ and the inner product on $(0,q)$-form. Using this, we can construct the inner product, the $L^2$-norm, and the $L^2$-space for functions, $(0,q)$-forms, or sections.


Let us discuss about the kernel on orbifold. Let $X$ be a complex orbifold and $E$ be a proper orbibundle over $X$. To simplify the notation, for any open chart $(\ti{U},G)$ of $X$, we will add $\sim$ to the corresponding object on $\ti{U}$. Assume that $\ti{K}(\ti{x}, \ti{y})\in \cC^\infty (\ti{U}\times \ti{U} , \ti{E}\boxtimes \ti{E})$ satisfy $$(g,1)\ti{K}(g\invs \ti{x}, \ti{y} ) = (1,g\invs) \ti{K}(\ti{x}, g\ti{y} )$$ where $(g_1, g_2)$ acts on $\ti{E}_{\ti{x}}\times \ti{E}_{\ti{y}} $ by $(g_1, g_2) (\xi_1, \xi_2) = (g_1\xi_1, g_2\xi_2)$. We can define an operator $\ti{K} : \cC^\infty_c(\ti{U},\ti{E}) \rightarrow \cC^\infty(\ti{U}, \ti{E}) $ by $$(\ti{K}\ti{s}) (\ti{x}) = \int_{\ti{U} } \ti{K}(\ti{x}, \ti{y}) \ti{s}(\ti{y} ) d\vol_{\ti{U}}(\ti{y}) \text{ for } \ti{s} \in \cC^\infty_c(\ti{U} , \ti{E}).$$ Thus we can define an operator $K: \cC^\infty_c(U , E)\rightarrow \cC^\infty(U, E) $ by $$(Ks)(x) = \frac{1}{|G|} \int_{\ti{U}} \ti{K}(\ti{x},\ti{y}) \ti{s}(\ti{y}) d\vol_{\ti{U} }(\ti{y}) \text{ for } s \in \cC^\infty_c(U , E). $$ Then the smooth kernel of the operator $K$ with respect to $d\vol_X$ is given by $$K(x, y) = \sum_{g\in G} (g,1) \ti{K}(g\invs \ti{x}, \ti{y}) .$$

In the following, we let $X$ be a complex orbifold with dimension $n$, and $L\rightarrow X$ be a holomorphic orbifold line bundle with Hermitian metric $h^L$.

\begin{rem}
    For any holomorphic orbifold line bundle and any point $p\in X$, there exists an integer $N_p>0$ such that $L^{N_p}$ is trivially acted near $p$. This means there exists a complex orbifold chart $(\ti{U},G)$ around $p$ such that $$\wt{L^{N_p}|_U} = \ti{U}\times \bbC\text{, with }g\cdot (\ti{x},v)= (g\cdot \ti{x},v) \text{ for all } g\in G,\, (\ti{x},v)\in \ti{U}\times \bbC.$$ Moreover, there exists a global integer $N>0$ such that $L^N$ is trivially acted on globally if $X$ is compact. Since we will work locally near the point $p$ in the following, we will assume without loss of generality that $L$ is already trivially acted.
\end{rem}

Let $$P_k: L^2(X,L^k)\rightarrow \ker \kla_k$$ be Bergman projection and $P_k(x,y)$ be Bergman kernel. Also, $$P_{k,\mu_k}: L^2(X,L^k)\rightarrow E_{\mu_k}(X,L^k)$$ be spectral projection with $P_{k,\mu_k}(x,y) $ the  spectral kernel. Let $s$ be a local holomorphic trivialization of $L$ on an open subset of $X$ and $\phi$ be the local weight of $h^L$ with respect to $s$. We will denote $\dbar_k$ the Cauchy-Riemann operator and $\dbar_k^*$ its formal adjoint with respect to the inner product for $L^2(X, L^k)$.

Notice that in the case of smooth manifolds, we can define the curvature forms and the curvature operators, and the following lemma holds.

\begin{lem}
    Let $X$ be a complex orbifold, and let $(L,h^L)$ be a holomorphic orbifold line bundle on $X$. Fix a point $p\in X$, we can choose a complex orbifold chart $(\ti{z}_1, \ldots, \ti{z}_n,G) $ on an open neighborhood $U\subset M$ of $p$  with $G$ is the isotropy group at $p$ and a holomorphic trivializing section $s\in H^0(U, L) $ such that $\ti{z}_j(p)=0 $, $\<\frac{\dr}{\dr \ti{z}_j}|\frac{\dr }{\dr \ti{z}_\ell}\>  = \delta_{j,\ell}+O(|z|)$ for $j,\ell=1,\ldots, n$ and $|s(z)|^2_{h^L} = e^{-2\phi(z)} $  with $G$-invariant local weight $\phi(\ti{z}) = \sum_{j=1}^n \lambda_{j,p} |\ti{z}_j|^2+ O(|\ti{z}|^3)$, where $2\lambda_{j,p} $ are eigenvalues of curvature operator at $p$. We usually denote $\phi_0(\ti{z}) = \sum_{j=1}^n \lambda_{j, p} |\ti{z}_j|^2$.
\end{lem}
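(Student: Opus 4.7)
My plan is to mimic the construction of Lemma~\ref{chern-moser coordinate} but to perform every step $G$-equivariantly with respect to the isotropy group $G = G_p$, so that the resulting chart and trivialization descend to the orbifold. There are essentially three ingredients: a linearization of the $G$-action, a $G$-equivariant simultaneous diagonalization of the metric at $p$ and of the curvature operator $\dot R^L(h^L)(p)$, and the same holomorphic modification of the trivialization used in the manifold proof.

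First I would apply the Bochner linearization theorem for finite holomorphic actions to pick an orbifold chart $(\wt U, G, \varphi)$ around $p$ with $\ti p = 0 \in \wt U \subset \bbC^n$ on which $G$ acts through a finite unitary representation fixing the origin. On $T^{(1,0)}_0 \wt U$ both the Hermitian metric (averaged over $G$ if necessary) and the curvature operator are $G$-invariant Hermitian forms. Since the $G$-isotypic decomposition of $T^{(1,0)}_0 \wt U$ is orthogonal with respect to any $G$-invariant Hermitian form, I can simultaneously diagonalize the two forms by a $G$-equivariant unitary change of basis, diagonalizing each isotypic summand independently. Taking the dual basis as holomorphic coordinates $(\ti w_1, \ldots, \ti w_n)$ gives $\<\frac{\dr}{\dr \ti w_j}|\frac{\dr}{\dr \ti w_\ell}\>(0) = \delta_{j,\ell}$ and $\dot R^L(h^L)(p) \frac{\dr}{\dr \ti w_j} = 2\lambda_{j,p} \frac{\dr}{\dr \ti w_j}$.

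Next, using the remark preceding the lemma I may assume $L$ is trivially acted over $U$, so the constant function $1$ on $\wt U$ yields a $G$-invariant holomorphic trivialization $s_1$, whose local weight $\phi_1$ is $G$-invariant. Because $G$ acts linearly fixing $0$, every Taylor coefficient of $\phi_1$ at the origin is a $G$-invariant tensor; in particular the holomorphic polynomial
$$\psi(\ti w) = \phi_1(0) + 2\sum_j \frac{\dr \phi_1}{\dr \ti w_j}(0)\, \ti w_j + \sum_{j,\ell} \frac{\dr^2 \phi_1}{\dr \ti w_j \dr \ti w_\ell}(0)\, \ti w_j \ti w_\ell$$
is $G$-invariant. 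Replacing $s_1$ by $s := e^{\psi} s_1$ therefore yields another $G$-invariant holomorphic trivialization, whose local weight $\phi = \phi_1 - \frac{1}{2}(\psi + \ol{\psi})$ has no pure holomorphic or antiholomorphic terms up to order two. By the diagonalization in the previous step, its mixed quadratic part reduces to $\sum_j \lambda_{j,p}|\ti w_j|^2$, and setting $\ti z_j := \ti w_j$ produces the desired Chern-Moser chart centered at $p$.

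The only genuinely new step compared with the manifold proof is the $G$-equivariant simultaneous diagonalization in the second paragraph, which I anticipate as the main obstacle: one must verify that the isotypic decomposition splits both Hermitian forms orthogonally and that the within-block spectral basis can be chosen so as to respect the $G$-action. Once this linear-algebraic step is in place, the remaining manipulations with $\psi$ are $G$-invariant tensor computations identical to those in Lemma~\ref{chern-moser coordinate}, and no further modification is required.
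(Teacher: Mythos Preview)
Your proposal is correct and is precisely the $G$-equivariant adaptation of the manifold argument in Lemma~\ref{chern-moser coordinate}; the paper in fact states this orbifold lemma without proof, treating it as a direct analogue, so your write-up is more explicit than what the paper provides. The one point you rightly single out as new---the $G$-equivariant simultaneous diagonalization via the isotypic decomposition and Schur's lemma---is exactly the extra ingredient needed, and your justification that $\psi$ is $G$-invariant (because $G$ acts linearly holomorphically, hence preserves each bi-homogeneous Taylor component of the $G$-invariant weight $\phi_1$) is sound.
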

We will still call $(\ti{z}, G,s)$ a Chern-Moser trivialization on $U$ centered at $p$.

Recall the local spectral gap condition \ref{Ck Local Spectral Gap}, we will assume our Kodaira Laplacian satisfies this condition in the rest of this section. Note that when $X$ is compact and $L$ is positive, the condition will always be satisfied.

We will study the asymptotic behavior of the Bergman kernel near $p$ with $L$ positive around the point $p$.

\begin{thm}
\label{Orbifold-Main-Thm}
    For any $p\in U\subset X$ with $L$ positive over $U$ and $\kla_k$ satisfying local spectral gap, then there is a Chern-Moser trivialization $(z,s)$ on $U$ centered at $p$ such that $$P_k(x,y) = k^n \sum_{g\in G} P_{\phi_0} (\sqrt{k}\ti{x}, g\cdot \sqrt{k}\ti{y}) +\delta_k(x,y)  $$ on $B(0,\frac{\log k}{\sqrt{k}})$ where $\phi(z) = \phi_0(z)+ O(|z|^3)$ is the local wight with respect to this trivialization, $G$ is the isotropy group at $p$, and $\pi(\ti{x}) = x$ is the natural projection. The remaining term $\delta_k$ satisfying $$\left\| \delta_k\(\frac{\ti{x}}{\sqrt{k}}, \frac{ \ti{y}}{\sqrt{k}}\)\right\|_{\cC^\ell(K)}= o(k^n)$$ for all $\ell\in \bbN_0$ and any compact subset $K\subset B(0,\log k)$.
\end{thm}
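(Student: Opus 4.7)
Fix an orbifold chart $(\ti{U}, G, \varphi)$ around $p$ with $\varphi(\ti{p}) = p$, and choose a Chern--Moser trivialization $(\ti{z}, s)$ on $\ti{U}$ centered at $\ti{p}$, so the local weight reads $\ti{\phi}(\ti{z}) = \phi_0(\ti{z}) + O(|\ti{z}|^3)$ with $\phi_0 = \sum_j \lambda_j |\ti{z}_j|^2$ and $\lambda_j > 0$. Since $G$ fixes $\ti{p}$ and preserves the complex structure and the curvature, a Bochner-type linearization lets us arrange that $G$ acts as a subgroup of $U(n)$ on $\ti{U}$ preserving the eigenspaces of $\dot{R}^L(p)$; in particular $\phi_0$, and hence $P_{\phi_0}$, are $G$-biinvariant, so $P_{\phi_0}(g\invs \ti{x}, \ti{y}) = P_{\phi_0}(\ti{x}, g\ti{y})$. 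By the orbifold kernel identity from subsection~\ref{orbifold}, one writes
$$ P_{k,s}(x,y) = \sum_{g\in G} \ti{P}_{k,s}(g\invs \ti{x}, \ti{y}), $$
where $\ti{P}_{k,s}$ is the lift of the localized Bergman kernel to $\ti{U}\times\ti{U}$ produced by treating $\ti{U}$ as an ordinary complex manifold. The plan is thus to establish a manifold-type asymptotic $\ti{P}_{k,s}(\ti{x},\ti{y}) = k^n P_{\phi_0}(\sqrt{k}\ti{x},\sqrt{k}\ti{y}) + o(k^n)$ and then sum over $G$.

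To obtain the upstairs asymptotic, introduce the smoothed weight $\hat{\ti{\phi}}_k = \phi_0 + \tau(\sqrt{k}\ti{z}/\log k)(\ti{\phi} - \phi_0)$ and the scaled, conjugated kernel
$$ \ti{B}_{(k)}(x,y) = k^{-n} e^{\delta_k(k\hat{\ti{\phi}}_k)(x)} \ti{P}_{k,s}\!\(\tfrac{x}{\sqrt{k}}, \tfrac{y}{\sqrt{k}}\) e^{-\delta_k(k\hat{\ti{\phi}}_k)(y)}. $$
Repeating the three-step scaling argument of subsection~\ref{No Spectral Gap Manifold Case}, Gårding's inequality combined with the pointwise convergence of the scaled Kodaira Laplacian $\ti{\kla}_{(k)} \to \kla_{\phi_0}$ yields locally uniform $\cC^\ell$ bounds on $\ti{B}_{(k)}$ over $B(0, \log k)$; Arzel\`a--Ascoli then extracts a $\cC^\infty$ subsequential limit $B$ annihilated by $\kla_{\phi_0}$.

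The identification $B = B_{\phi_0}$ is where the $C_k$ local spectral gap enters, and is also the step most sensitive to the orbifold structure. We test against monomials $z^\alpha \in H^0(\bbC^n,\phi_0)$, but the natural cut-off trial function $\hat{u}_k(\ti{z}) = \chi(\sqrt{k}\ti{z}/\log k)(\sqrt{k}\ti{z})^\alpha$ must be symmetrized, $\hat{u}_k^{\mathrm{sym}} = \frac{1}{|G|}\sum_{g\in G} g^*\hat{u}_k$, before it descends to a section $u_k^{\mathrm{sym}}\in \gO_c(U, L^k)$ on the orbifold. One still has $\|\dbar_k u_k^{\mathrm{sym}}\|_k = O(k^{-\infty})$, so the hypothesis $\|(I - P_k) u\|_k^2 \le C_k(\kla_k u | u)_k$ with $kC_k = o(\log^{1-\varepsilon}k)$ gives $P_k u_k^{\mathrm{sym}} = u_k^{\mathrm{sym}} + o_{L^2}(\|u_k^{\mathrm{sym}}\|_k)$. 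Unfolding the scaling shows that $B$ agrees with $B_{\phi_0}$ on every $G$-invariant monomial; together with the $G$-equivariance of $B$ inherited from $\ti{P}_{k,s}$, this determines $B$ on each isotypic component of $H^0(\bbC^n,\phi_0)$, hence $B = B_{\phi_0}$.

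The principal obstacle is the symmetrization trade-off: non-invariant monomials only feel $B$ through their $G$-orbits, so one must verify that testing on $G$-invariant polynomials is enough to recover the full limit kernel. This works precisely because every subsequential limit of $\ti{B}_{(k)}$ is a priori $G\times G$-equivariant, so identifying one representative per isotypic class suffices. With the upstairs expansion in place, summing
$$\ti{P}_{k,s}(\ti{x},\ti{y}) = k^n P_{\phi_0}(\sqrt{k}\ti{x}, \sqrt{k}\ti{y}) + o(k^n)$$
over $g \in G$ and applying $P_{\phi_0}(g\invs \ti{x}, \ti{y}) = P_{\phi_0}(\ti{x}, g\ti{y})$ yields the stated formula, with the error term $\delta_k$ inheriting its $\cC^\ell$-smallness directly from the locally uniform $\cC^\infty$ convergence $\ti{B}_{(k)} \to B_{\phi_0}$ on every compact subset of $B(0,\log k)$.
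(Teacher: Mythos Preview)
Your proposal contains a genuine error in the identification of the upstairs limit. The object $\ti{P}_{k,s}$ you invoke via the orbifold kernel identity is, under any sensible interpretation, $G\times G$-invariant on $\ti{U}\times\ti{U}$ (it is essentially $\frac{1}{|G|}(\pi\times\pi)^*P_{k,s}$). Consequently every subsequential limit $B$ of the rescaled $\ti{B}_{(k)}$ is $G\times G$-invariant as well. But $B_{\phi_0}$ is \emph{not} $G\times G$-invariant unless $G$ is trivial, so the claimed single-term asymptotic $\ti{P}_{k,s}(\ti{x},\ti{y})=k^nP_{\phi_0}(\sqrt{k}\ti{x},\sqrt{k}\ti{y})+o(k^n)$ is false. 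Your isotypic argument cannot repair this: knowing that $B$ agrees with $B_{\phi_0}$ on $G$-invariant monomials, together with any form of $G$-equivariance, does not determine $B$ on the nontrivial isotypes. Indeed $B_{\phi_0}\circ\Pi$ (with $\Pi=\frac{1}{|G|}\sum_g g^*$) satisfies both conditions yet differs from $B_{\phi_0}$. The correct scaled limit is $\frac{1}{|G|}\sum_{g\in G}P_{\phi_0}(\ti{x},g\ti{y})$; if you establish this, the subsequent $G$-sum becomes tautological and the argument can be salvaged, but as written the key step is wrong.

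The paper proceeds quite differently and sidesteps this issue. Rather than lifting the orbifold Bergman kernel, it introduces an auxiliary \emph{manifold} object on $\ti{U}$: the spectral kernel $P_{k,\mu_k}$ (with $\mu_k=k/\log^{1-\varepsilon}k$) for the Gaffney extension of $\kla_{k,s}$ on $\ti{U}$. This genuinely has the single-term asymptotic $k^nP_{\phi_0}$ by Theorem~\ref{spectral kernel theorem}. One then forms $\hat{P}_{k,\mu_k}(x,y)=\sum_{g\in G}P_{k,\mu_k}(\ti{x},g\ti{y})$ and proves the comparison $\|\hat{\chi}P_k\chi-\hat{\chi}\hat{P}_{k,\mu_k}\chi\|_\ell=o(k^\ell)$. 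The comparison is the heart of the proof: starting from the Hodge identity $\hat{N}_{k,\mu_k}\kla_k+\hat{P}_{k,\mu_k}=I$, composing with $\tau P_k\hat\chi$, and using the off-diagonal property yields $\hat{\chi}P_k\chi\approx\hat{\chi}P_k\tau\hat{P}_{k,\mu_k}\chi$; the $C_k$ local spectral gap then controls $\|(I-P_k)\tau\hat{P}_{k,\mu_k}\chi u\|$ by $C_k\mu_k\|u\|^2$, which is $o(1)$ precisely because $C_k\mu_k\to 0$. Your use of the spectral gap (on symmetrized trial sections) only probes the invariant isotype, whereas the paper applies it to $\tau\hat{P}_{k,\mu_k}\chi u$ for arbitrary $u$, which is what makes the comparison go through in all directions at once.
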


\subsection{Proof of Theorem \ref{Orbifold-Main-Thm}}
 We need the following off-diagonal property of the Bergman kernel and the Spectral kernel.

\begin{Def}
Let $A_k: L^2(X,L^k) \rightarrow L^2(X,L^k)$ be a family of continuous linear operators. We say that $A_k$ is $k$-negligible, denote by $A_k= O(k^{-\infty}) $, if for two local holomorphic trivializations $(s,U)$ and $(t,V)$ and for any compact subset $K\subset U\times V$, $\alpha,\beta\in \bbN^{2n} $ and $N\in \bbN$, the kernels with trivialization satisfying $$| \dr_x^\alpha \dr_y^\beta A_{k,s,t}(x,y) |\le C_{\alpha, \beta ,K ,N} k^{-N}.$$
\end{Def}
From \cite[Proof of Theorem 3.8]{Hsiao-Marinescu17}, we have the following: 

\begin{pro}[Off-diagonal Property]
Let $Y$ be a complex orbifold and $L$ positive over $Y$. Assume $\mu_k = o(k)$. Let $\tau_1, \tau_2 \in \mathcal{C}^\infty_c(Y)$ with $\supp \tau_1 \cap \supp \tau_2 = \emptyset$. Then
$$ \tau_1 P_{k,\mu_k,\ell} \tau_2 = O(k^{-\infty}). $$
\end{pro}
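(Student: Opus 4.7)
The proposal is to prove that the operator $A_k := \tau_1 P_{k,\mu_k,\ell}\tau_2$ is smoothing with all $C^\infty$-seminorms of its Schwartz kernel bounded by $O(k^{-N})$ for every $N$. By a standard duality/Sobolev-embedding argument (identical in spirit to the one used in Lemma \ref{Euc-lem-2}, passing from operator norms against bump functions to pointwise derivative bounds of the kernel), this reduces to proving that for every $N,s\in\bbN$ the operator norm
$$\|A_k\|_{H^{-s}\to H^s}=O(k^{-N}).$$
Because $\supp\tau_1\cap\supp\tau_2=\emptyset$, we can choose a finite chain of cutoffs $\chi_0,\chi_1,\ldots,\chi_M\in \mathcal{C}^\infty_c(Y)$ with $\chi_0\equiv 1$ near $\supp\tau_1$, $\chi_{j+1}\equiv 1$ near $\supp\chi_j$, and $\chi_M\cdot\tau_2\equiv 0$, so that $\tau_1\chi_j=\tau_1$ and $\chi_j\tau_2=0$ for every $j$.

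The heart of the argument is to combine the bound $\|\kla_k^N P_{k,\mu_k,\ell}\|\le \mu_k^N$ from Fact \ref{Spectral projection property} with the semiclassical ellipticity/positivity of $\kla_{k,s}$. First, for $v:=P_{k,\mu_k,\ell}(\tau_2 u)$ we have $\|\kla_k^Nv\|\le\mu_k^N\|u\|$, and $\tau_1 A_k u=\tau_1\chi_0 v$. Since $\chi_0\tau_2=0$, the identity $\chi_0 P_{k,\mu_k,\ell}\tau_2=[\chi_0,P_{k,\mu_k,\ell}]\tau_2$ reduces the estimate to bounding a commutator. Iterating this with the chain $\chi_0,\ldots,\chi_M$, and expressing $P_{k,\mu_k,\ell}$ via a Cauchy-type functional-calculus integral $\frac{1}{2\pi i}\oint_\gamma(\lambda-\kla_k)^{-1}d\lambda$ around $[0,\mu_k]$, each commutator $[\chi_j,(\lambda-\kla_k)^{-1}]=(\lambda-\kla_k)^{-1}[\kla_k,\chi_j](\lambda-\kla_k)^{-1}$ trades one application of the resolvent for a first-order differential operator whose coefficients are supported in the (shrinking) annulus $\supp d\chi_j$. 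Combined with the positivity of $L$, which yields the semiclassical lower bound $\kla_{k,s}\gtrsim k$ away from the near-holomorphic cone (cf.\ Proposition \ref{Manifodl Global Spectral Gap} and its localized versions used in Section \ref{Compact Complex Manifold}), each such trade produces an extra factor of $k^{-1}$ in the resolvent bound against $\chi_{j+1}$. After $M$ iterations we obtain the desired $k^{-N}$ decay, with any $N$ achievable by taking $M$ large.

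Finally, pairing this operator bound with a Gårding-type inequality applied on neighborhoods $U\supset\supp\tau_1$ that are disjoint from $\supp\tau_2$ promotes the $L^2$-estimate to Sobolev (hence $C^\ell$) estimates on the kernel. For the orbifold case, the argument is unchanged: one works in local uniformizing charts $(\wt U,G)$, where all operators lift to $G$-equivariant operators on $\wt U$, and the kernel $P_{k,\mu_k,\ell}(x,y)$ equals $\sum_{g\in G}(g,1)\wt P_{k,\mu_k,\ell}(g^{-1}\ti x,\ti y)$; the estimate for $\wt P_{k,\mu_k,\ell}$ then transfers verbatim.

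The main obstacle, and the step that requires the assumption $\mu_k=o(k)$ in an essential way, is the resolvent estimate for $(\lambda-\kla_k)^{-1}$ in the commutator expansion: for $\lambda$ on a contour around $[0,\mu_k]$ one must control $(\lambda-\kla_k)^{-1}$ uniformly, including in the ``direction orthogonal to holomorphic sections'' where the positivity of $L$ forces $\kla_{k,s}\gtrsim k$, so that $\|(\lambda-\kla_k)^{-1}\chi_{j+1}\|=O(k^{-1})$ on that direction. Tracking these norms carefully through the iteration, while remaining uniform in the contour $\gamma$ as $k\to\infty$ (since $\mu_k=o(k)$ keeps $\gamma$ away from the spectral cluster at scale $k$), is the delicate part; once that is in hand the $k^{-\infty}$ decay of $A_k$ falls out.
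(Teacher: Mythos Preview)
The paper itself does not prove this proposition; it simply cites \cite[Proof of Theorem 3.8]{Hsiao-Marinescu17}. So there is no ``paper's own proof'' to compare against, and your sketch must stand on its own.

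Your overall architecture (reduce to $H^{-s}\to H^s$ bounds, use a chain of cutoffs, exploit commutators) is reasonable, but the step you yourself flag as ``the delicate part'' is in fact a genuine gap. Two concrete problems:
\begin{itemize}
\item The contour representation $P_{k,\mu_k}=\frac{1}{2\pi i}\oint_\gamma(\lambda-\kla_k)^{-1}\,d\lambda$ requires a spectral gap \emph{above} $\mu_k$ so that $\gamma$ separates $[0,\mu_k]$ from the rest of $\mathrm{Spec}(\kla_k)$. Nothing in the hypotheses (positivity of $L$ on an orbifold, not assumed compact) guarantees this, and without it the formula is not even well defined. Even when such a gap exists, the best resolvent bound on $\gamma$ is $\|(\lambda-\kla_k)^{-1}\|\lesssim 1/\mathrm{dist}(\lambda,\mathrm{Spec})$, which is of order $1/\mu_k$ or worse, not $1/k$.
\item Your appeal to Proposition~\ref{Manifodl Global Spectral Gap} for the bound $\kla_{k,s}\gtrsim k$ is misplaced: that proposition is for $(0,q)$-forms with $q\ge 1$, whereas the Bergman/spectral projection here is on functions. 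On functions $\kla_k$ has large kernel, so no such global lower bound holds. Moreover the commutator $[\kla_k,\chi_j]$ involves $\dbar_{k,s}^*(\dbar\chi_j)^\wedge+(\dbar\chi_j)^{\wedge,*}\dbar_{k,s}$, and $\dbar_{k,s}$ carries a zeroth-order piece $k(\dbar\phi)^\wedge$ of size $O(k)$; so each commutator \emph{costs} a factor of $k$, which you must beat by two resolvents. With resolvents only $O(1/\mu_k)$ this nets $k/\mu_k^2$, not $k^{-1}$.
\end{itemize}
The method in \cite{Hsiao-Marinescu17} avoids these issues by working in rescaled coordinates $z\mapsto z/\sqrt{k}$, where $\frac{1}{k}\kla_k$ becomes a uniformly elliptic operator with $k$-independent coefficients and the disjoint supports of $\tau_1,\tau_2$ are pushed to infinite distance; the decay then comes from standard elliptic regularity combined with $\|\kla_{(k)}^N P_{(k),\mu_k}\|\le(\mu_k/k)^N\to 0$, rather than from a resolvent/contour argument. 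If you want to salvage your approach, you should either first rescale (so that all $k$-powers in the commutator bookkeeping become $O(1)$) or replace the contour integral by a functional-calculus argument using a smooth cutoff $f_k\in C_c^\infty(\bbR)$ with $f_k\equiv 1$ on $[0,\mu_k]$ and Helffer--Sj\"ostrand's formula, which does not require a spectral gap.
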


\begin{pro}
Let $Y$ be a complex orbifold and $L$  positive over $Y$. Assume $\mu_k = o(k)$. Let $\tau, \chi \in \mathcal{C}^\infty_c(Y)$ with $\tau = 1$ on $\supp \chi$. Define $$ \tau_k(z) = \tau\left( \frac{\sqrt{k} z}{\log k} \right), \quad \chi_k(z) = \chi\left( \frac{\sqrt{k} z}{\log k} \right). $$
Then $$ (1 - \tau_k) P_{k,\mu_k} \chi_k = O(k^{-\infty}). $$
\end{pro}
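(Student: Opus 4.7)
The plan is to derive the proposition from a uniform Gaussian off-diagonal estimate for the spectral kernel, which on the separation scale $\log k/\sqrt k$ collapses to $O(k^{-\infty})$.

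First, since $\tau\equiv 1$ on the compact set $\supp\chi$, there exists $\delta>0$ with $\tau\equiv 1$ on the closed $\delta$-neighborhood of $\supp\chi$. Pulling back to a Chern--Moser chart and writing $\tau_k,\chi_k$ in the rescaled variable yields
\[
\mathrm{dist}\bigl(\supp(1-\tau_k),\ \supp\chi_k\bigr)\geq \delta\,\log k/\sqrt k,
\]
uniformly in $k$. Consequently the distribution kernel of $(1-\tau_k)P_{k,\mu_k}\chi_k$ is supported in the region $|x-y|\geq \delta\log k/\sqrt k$, and it will suffice to establish a uniform Gaussian off-diagonal bound
\[
\bigl|\dr_x^\alpha \dr_y^\beta P_{k,\mu_k}(x,y)\bigr|\leq C_{\alpha,\beta}\,k^{\,n+(|\alpha|+|\beta|)/2}\,e^{-c\,k|x-y|^2},\qquad c>0,
\]
for $|x-y|\lesssim\log k/\sqrt k$. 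On the relevant support the exponential factor is at most $e^{-c\delta^2\log^2 k}=k^{-c\delta^2\log k}$, which absorbs any polynomial prefactor and gives $O(k^{-\infty})$ in every $\cC^\ell$ seminorm.

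Second, I would prove the Gaussian estimate by an Agmon-type weighted $L^2$ argument carried out in the scaled variables $\hat x=\sqrt k x$. Lemma \ref{non-compact lem 2} already supplies uniform $\cC^\ell$ bounds for $B_{(k),\mu_k}$ on fixed compact sets; the point is to upgrade these to uniform Gaussian decay away from the diagonal. Fix a base point $\hat y_0$ and, for a small parameter $s>0$, introduce a weight $\rho_s(\hat x)=s\,\psi(|\hat x-\hat y_0|)$, where $\psi$ is a smoothed squared distance with bounded gradient and Hessian. Conjugating the scaled Kodaira Laplacian by $e^{\rho_s}$ produces a first-order perturbation with coefficients of size $O(s)$ uniformly in $k$ and $\hat y_0$. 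For $s$ small enough, G{\aa}rding's inequality for $\kla_{(k)}$ absorbs this perturbation, and combining with the spectral gap property
\[
\|\kla_{(k)}^\ell B_{(k),\mu_k}u\|\leq (\mu_k/k)^\ell\|u\|=o(1)\|u\|
\]
yields a uniform weighted Sobolev bound $\|e^{\rho_s}B_{(k),\mu_k}(\cdot,\hat y_0)\|_{H^m}\leq C_m$. Sobolev embedding in $\hat x$, together with the self-adjointness of $P_{k,\mu_k}$ to swap the roles of $\hat x$ and $\hat y_0$, upgrades this into the desired pointwise Gaussian bound; derivative bounds follow by applying the same argument after differentiating the weight or by one more round of elliptic regularity.

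Third, the transfer back to the orbifold $Y$ is routine: in a uniformizing chart the kernel $P_{k,\mu_k}$ is a finite sum over the isotropy group of copies obeying the same Gaussian bound, so passing to the quotient preserves the $O(k^{-\infty})$ rate on compacta. The main obstacle will be executing the Agmon estimate cleanly, namely choosing $\psi$ so that the G{\aa}rding and spectral-gap estimates remain valid after conjugation uniformly in both $k$ and $\hat y_0$. Because the perturbation induced by $e^{\rho_s}$ is first order with bounded symbol, while the spectral-gap factor $(\mu_k/k)^\ell\to 0$, a standard Cauchy--Schwarz and absorption argument closes the estimate for all sufficiently small $s>0$, completing the proof.
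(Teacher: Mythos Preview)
Your overall strategy---reduce to a Gaussian off-diagonal bound $|P_{k,\mu_k}(x,y)|\lesssim k^n e^{-ck|x-y|^2}$ and then observe that on the separation scale $|x-y|\ge \delta\log k/\sqrt k$ this collapses to $k^{-c\delta^2\log k}=O(k^{-\infty})$---is sound in principle, but your Agmon step does not deliver a Gaussian. You take $\rho_s=s\psi(|\hat x-\hat y_0|)$ with $\psi$ a ``smoothed squared distance with bounded gradient and Hessian''; these two requirements are incompatible. If $\psi$ has bounded gradient, conjugation by $e^{\rho_s}$ indeed perturbs $\kla_{(k)}$ by a first-order operator with $O(s)$ coefficients, and the absorption works---but then the weighted estimate only yields exponential decay $e^{-s_0|\hat x-\hat y_0|}$ for some fixed $s_0>0$. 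At separation $|\hat x-\hat y_0|\ge\delta\log k$ this gives $k^{-s_0\delta}$, a single fixed power of $k$, not $O(k^{-\infty})$. If instead you let $\psi$ grow quadratically to obtain Gaussian decay, then $\nabla\psi$ is linear in $|\hat x-\hat y_0|$, the conjugated perturbation is unbounded, and G{\aa}rding plus $(\mu_k/k)^\ell\to0$ alone cannot absorb it. One can push a quadratic weight through, but only by exploiting the specific confining structure of the model operator $\kla_{\phi_0}$ (the first-order terms $\lambda_j\bar z_j\partial_{\bar z_j}$), and that requires a substantially more careful argument than the generic Agmon scheme you sketch.

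The paper itself does not supply a proof, citing instead \cite[Proof of Theorem~3.8]{Hsiao-Marinescu17}. The argument there does not go through a direct Gaussian kernel estimate. It uses the Hodge-type identity $I=P_{k,\mu_k}+N_{k,\mu_k}\kla_k$ together with a telescoping family of intermediate cutoffs $\chi_k=\sigma_k^{(0)}\prec\sigma_k^{(1)}\prec\cdots\prec\sigma_k^{(N)}=\tau_k$, each of the same scaled form. Writing $(1-\sigma_k^{(j+1)})P_{k,\mu_k}\sigma_k^{(j)}$ and commuting $\kla_k$ past the cutoffs, every step gains a definite power of $k$ from the operator norm of $N_{k,\mu_k}$ combined with the size of the commutators; iterating $N$ times gives $O(k^{-N})$ for arbitrary $N$, with elliptic regularity upgrading this to all $\cC^\ell$ seminorms. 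If you wish to salvage your approach, you should either carry out the quadratic-weight Agmon estimate in full (controlling the unbounded perturbation against the curvature term), or switch to the nested-cutoff iteration, which is more robust and is what the cited reference actually does.
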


To asymptotic the Bergman kernel, we focus on the open neighborhood $U$ with Chern-Moser trivialization $(\ti{z}, G,s)$ on $U$ centered at $p$. We denote $P_{k,\mu_k}$ the spectral projection on $\ti{U}$ associate to the Kodaira Laplacian $\kla_{k,s}$ with respect to $k\phi$ and its distribution kernel, the spectral kernel, $P_{k,\mu_k}(\ti{x}, \ti{y} )$ on $\ti{U}$. Let $$\hat{P}_{k,\mu_k}(x,y) = \sum_{g\in G} P_{k,\mu_k}(\ti{x}, g\cdot \ti{y})$$ be a kernel on $U $ and let $\hat{P}_{k,\mu_k}$ be the induced operator.

Let us recall the well-known global result.

\begin{fact}
There exists a bounded operator $N_k:L^2(X,L^k) \rightarrow \Dom \kla_k $ such that 
    \begin{equation}
    \label{orbifold global equation 1}
    \begin{cases}   
        N_k \kla_k + P_k =I  \text{ on } \Dom \kla_k\\
        \kla_k N_k + P_k = I \text{ on } L^2(X,L^k).
    \end{cases}
    \end{equation}
    Also, there exists bounded operators $N_{k,\mu_k}: L^2(\ti{U},d\vol_X)\rightarrow \Dom \kla_{k}$
    \begin{equation}
    \begin{cases}
        N_{k,\mu_k} \kla_{k,s} + P_{k,\mu_k} = I \text{ on } \Dom \kla_{k,s}\\
        \kla_{k,s} N_{k,\mu_k} + P_{k,\mu_k} = I \text{ on }L^2(\ti{U}, d\ti{\vol}_X)
    \end{cases}.
    \end{equation}
    For $\hat{P}_{k,\mu_k}$, we have $\hat{N}_{k,\mu_k}$ such that
     \begin{equation}
    \label{orbifold global equation 2}
    \begin{cases}
        \hat{N}_{k,\mu_k} \kla_k + \hat{P}_{k,\mu_k} = I\text{ on } \Dom \kla_k\\
        \kla_k \hat{N}_{k,\mu_k} + \hat{P}_{k,\mu_k} = I \text{ on }L^2(U,L^k)
    \end{cases}.
    \end{equation}
\end{fact}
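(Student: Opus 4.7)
The plan is to produce each of $N_k$, $N_{k,\mu_k}$, $\hat N_{k,\mu_k}$ via the Borel functional calculus for the self-adjoint Gaffney extensions of $\kla_k$ and $\kla_{k,s}$, so that the two Hodge-type identities reduce to the pointwise scalar identity $\lambda\cdot f(\lambda)+\chi_E(\lambda)=1$ on the spectrum, where $E$ is the relevant low-frequency set. The main issue is not algebraic but is the boundedness of the inverse on the complement of $E$, which requires a spectral gap at the right end of $E$; for $N_{k,\mu_k}$ this gap is built in, while for $N_k$ it must be supplied by the global hypotheses (compact orbifold with $L$ positive, or local spectral gap propagated globally as in Proposition \ref{Manifodl Global Spectral Gap}).

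First I would construct $N_{k,\mu_k}$, since here the spectral gap is automatic. Because $P_{k,\mu_k}=\chi_{[0,\mu_k]}(\kla_{k,s})$ by definition (see Subsection~\ref{pre of Bergman kernel}), I define
\[
N_{k,\mu_k}:=g(\kla_{k,s}),\qquad g(\lambda):=\begin{cases}\lambda^{-1},&\lambda>\mu_k,\\ 0,&\lambda\le \mu_k.\end{cases}
\]
Since $g$ is Borel and $\|g\|_\infty\le \mu_k^{-1}$, spectral calculus yields a bounded operator with $\|N_{k,\mu_k}\|\le\mu_k^{-1}$. The identity $\lambda g(\lambda)+\chi_{[0,\mu_k]}(\lambda)=1$ holding on $\sigma(\kla_{k,s})\subset[0,\infty)$ gives $\kla_{k,s}N_{k,\mu_k}+P_{k,\mu_k}=I$ on all of $L^2(\ti U,d\vol_X)$, and applying it on the other side on $\Dom\kla_{k,s}$ gives $N_{k,\mu_k}\kla_{k,s}+P_{k,\mu_k}=I$ there; this uses only that $g$ and $\mathrm{id}$ commute in the functional calculus and that $\Dom\kla_{k,s}$ is invariant under bounded Borel functions of $\kla_{k,s}$.

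Next I would build $N_k$ by the same recipe with $f(\lambda)=\lambda^{-1}$ for $\lambda>0$ and $f(0)=0$, so that $N_k:=f(\kla_k)$ and the identity $\lambda f(\lambda)+\chi_{\{0\}}(\lambda)=1$ produces $N_k\kla_k+P_k=I$ and $\kla_k N_k+P_k=I$ from the two sides. The only subtle point is that $f$ is essentially bounded on $\sigma(\kla_k)$ precisely when $0$ is an isolated point of $\sigma(\kla_k)$, i.e.\ a spectral gap exists. In the compact/positive setting this gap is provided directly by Proposition~\ref{Manifodl Global Spectral Gap} (the $q\ge 1$ estimate forces $\ker\kla_k$ to be closed and separated from the rest of the spectrum, and in the present scalar $q=0$ setting the analogous compact-orbifold estimate is what one uses), so $\|N_k\|\le \mathrm{dist}(0,\sigma(\kla_k)\setminus\{0\})^{-1}$ is finite and $N_k$ is a bounded operator $L^2(X,L^k)\to \Dom\kla_k$.

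Finally, for $\hat N_{k,\mu_k}$, the operator is the descent of $N_{k,\mu_k}$ to the orbifold quotient: I would define its Schwartz kernel on $U$ by the same averaging formula used for $\hat P_{k,\mu_k}$, namely $\hat N_{k,\mu_k}(x,y):=\sum_{g\in G}N_{k,\mu_k}(\ti x,g\cdot \ti y)$, and verify that the composition rule for such $G$-averaged kernels is compatible with the group action on $\kla_{k,s}$ (which is $G$-equivariant because the Chern-Moser weight $\phi$ is $G$-invariant). Composing $\hat N_{k,\mu_k}$ with $\kla_k$ on $U$ then reduces, term by term in the $g$-sum, to the identity $\kla_{k,s}N_{k,\mu_k}+P_{k,\mu_k}=I$ already proved, yielding $\kla_k\hat N_{k,\mu_k}+\hat P_{k,\mu_k}=I$ on $L^2(U,L^k)$ and its adjoint counterpart on $\Dom\kla_k$. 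The hard part I expect is checking that this $G$-averaging is consistent with the Gaffney domains, i.e.\ that $\hat N_{k,\mu_k}$ maps into $\Dom\kla_k$ and that the sum of kernels indeed represents a bounded operator on the orbifold $L^2$ space; this is where one uses the properness assumption on the orbibundle and $G$-equivariance of $\dbar_{k,s}$, $\dbar_{k,s}^*$.
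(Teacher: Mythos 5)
Your treatment of the second and third systems is correct and is essentially the standard argument this Fact silently invokes (the paper itself gives no proof, it only recalls the statement): $N_{k,\mu_k}=g(\kla_{k,s})$ with $g(\lambda)=\lambda^{-1}\chi_{(\mu_k,\infty)}(\lambda)$ is bounded by $\mu_k^{-1}$, maps into $\Dom\kla_{k,s}$ because $\lambda g(\lambda)\le 1$, and both identities follow from $\lambda g(\lambda)+\chi_{[0,\mu_k]}(\lambda)=1$ on $\sigma(\kla_{k,s})\subset[0,\infty)$. For $\hat N_{k,\mu_k}$, since $\kla_{k,s}$ commutes with the unitary $G$-action (the Chern--Moser weight and metric are $G$-invariant), so does $g(\kla_{k,s})$; the cleanest route is therefore to restrict $N_{k,\mu_k}$ to the $G$-invariant subspace and descend to $L^2(U,L^k)$, which makes the boundedness and domain issues you worry about immediate and shows that your averaged kernel $\sum_{g}N_{k,\mu_k}(\ti x,g\cdot\ti y)$ is exactly the kernel of that restriction with respect to $d\vol_X$.

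The genuine gap is in the first system \eqref{orbifold global equation 1}. A bounded $N_k$ with $\kla_k N_k+P_k=I$ on all of $L^2(X,L^k)$ forces $(\ker\kla_k)^{\perp}\subset\mathrm{Range}(\kla_k)$, hence $\kla_k$ has closed range, i.e. $\inf\bigl(\sigma(\kla_k)\setminus\{0\}\bigr)>0$ --- precisely the gap you identify. But your justification of that gap is not available in the setting of this Fact: in this section $X$ is not assumed compact, $L$ is positive only over $U$, and the only hypothesis is the $C_k$ local spectral gap of Definition~\ref{Ck Local Spectral Gap}, which constrains only $u\in\Omega_c(U,L^k)$ and gives no global spectral information; Proposition~\ref{Manifodl Global Spectral Gap} concerns a compact manifold with globally positive $L$ and, crucially, $(0,q)$-forms with $q\ge 1$. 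Moreover, even in the compact globally positive (orbifold) case, the $q\ge1$ estimate does not by itself force a gap for $\kla_k=\kla_k^{(0)}$: one needs the additional step via solving $\dbar_k$ (the orbifold analogue of Proposition~\ref{Mfd Hörmander L2}), namely for $u\in\Dom\kla_k$ with $u\perp\ker\dbar_k$ solve $\dbar_k v=\dbar_k u$ with $\|v\|_k^2\le (Ck)^{-1}\|\dbar_k u\|_k^2$, note $u-v\in\ker\dbar_k$ so $\|u\|_k\le\|v\|_k$, and conclude $(\kla_k u|u)_k\ge Ck\|u\|_k^2$. So either restrict the first system to the compact/positive case and supply this argument explicitly, or observe that the proposition following the Fact only uses \eqref{orbifold global equation 2}, so the unconditional global claim should be weakened rather than obtained from functional calculus alone.
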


Let $\chi,\tau,\hat{\chi}\in \cC^\infty_c (U)$ with $\hat{\chi}=1$ on $\supp \chi$ and $\tau=1$ on $\supp \hat{\chi}$.

\begin{pro}
For $\mu_k = \frac{k}{\log^{1-\varepsilon}k}$ with $\varepsilon>0$ in the Definition \ref{Ck Local Spectral Gap}, we have, for all $\ell\in \bbN_0$,
    $$\begin{aligned}
        \|\hat{\chi}P_k\chi - \hat{\chi}\hat{P}_{k,\mu_k} \chi\|_\ell = o(k^{\ell}).
    \end{aligned}$$
\end{pro}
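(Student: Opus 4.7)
The plan is to reduce $\hat\chi(\hat P_{k,\mu_k} - P_k)\chi u$ to something controlled by the local spectral gap, via two algebraic identities. Because $P_k(\chi u)$ is a globally holomorphic section, its $U$-restriction lies in $\ker\kla_{k,s}$, hence in the image $E_{\mu_k}$ of the localized spectral projection; therefore $\hat P_{k,\mu_k}(P_k\chi u|_U) = P_k\chi u|_U$, which rewrites as
\[(\hat P_{k,\mu_k} - P_k)\chi u = \hat P_{k,\mu_k}(I - P_k)\chi u.\]
Taking the adjoint of $\hat P_{k,\mu_k}P_k = P_k|_U$ and using the self-adjointness of both $P_k$ and $\hat P_{k,\mu_k}$, one obtains the dual identity $P_k\hat P_{k,\mu_k}g = 0$ for any $g\in(\ker\kla_k)^\perp$, which I will apply with $g := (I - P_k)\chi u$.

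Next, localize by inserting the cutoff $\tau$: set $v := \tau\,\hat P_{k,\mu_k}(I - P_k)\chi u\in\gO_c(U, L^k)$. The off-diagonal property for both $P_k$ and $\hat P_{k,\mu_k}$ gives $(1-\tau)\hat P_{k,\mu_k}\chi = O(k^{-\infty})$ and $(1-\tau)P_k\chi = O(k^{-\infty})$ (since $\supp(1-\tau)\cap\supp\chi = \emptyset$), so $\hat\chi v = \hat\chi\hat P_{k,\mu_k}(I - P_k)\chi u + O(k^{-\infty})\|u\|_k$. The $C_k$-local spectral gap applied to $v$ yields
\[\|(I - P_k)v\|_k^2 \le C_k(\kla_k v\,|\,v)_k = C_k\|\dbar_k v\|_k^2.\]

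Expanding $\dbar_k v = (\dbar\tau)\wedge \hat P_{k,\mu_k}(I - P_k)\chi u + \tau\,\dbar_k\hat P_{k,\mu_k}(I - P_k)\chi u$, the first piece is $O(k^{-\infty})\|u\|_k$ by off-diagonal. For the second, the spectral inequality
\[\|\dbar_k\hat P_{k,\mu_k}h\|_k^2 = (\kla_k\hat P_{k,\mu_k}h\,|\,\hat P_{k,\mu_k}h)_k \le \mu_k\|h\|_k^2\]
gives $\|\tau\dbar_k\hat P_{k,\mu_k}(I - P_k)\chi u\|_k^2 \le \mu_k\|u\|_k^2$. With $\mu_k = k/\log^{1-\varepsilon}k$ and the defining hypothesis $kC_k/\log^{1-\varepsilon}k\to 0$, the product $C_k\mu_k = o(1)$, so $\|(I - P_k)v\|_k = o(1)\|u\|_k$. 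Combined with $P_k v = O(k^{-\infty})\|u\|_k$ (which follows from the dual identity $P_k\hat P_{k,\mu_k}(I-P_k) = 0$ after absorbing $\tau$ via off-diagonal), the decomposition $\hat\chi v = \hat\chi(I - P_k)v + \hat\chi P_k v$ yields the $L^2$ estimate $\|\hat\chi(\hat P_{k,\mu_k} - P_k)\chi u\|_k = o(1)\|u\|_k$.

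To upgrade to the stated $\|\cdot\|_\ell$ norm I would apply elliptic regularity for $\kla_k$ iteratively: Gårding's inequality from Subsection~\ref{Differntial Operators} converts $L^2$-smallness of the image into $H^{2\ell}$-control, with the $k$-scaling of the coefficients of $\kla_k$ contributing at most $k^\ell$ per iteration, yielding the desired $o(k^\ell)$ rate. The main obstacle is coordinating the algebraic identities with the off-diagonal substitutions and the Sobolev bootstrap so that the $O(k^{-\infty})$ residuals remain negligible after $\ell$-fold differentiation, and verifying that the spectral inequality $\|\dbar_k\hat P_{k,\mu_k}h\|^2\le \mu_k\|h\|^2$ indeed transfers from the lifted chart $\ti U$ to $U$ under the $G$-average defining $\hat P_{k,\mu_k}$.
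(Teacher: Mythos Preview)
Your overall strategy is close to the paper's, but the step where you invoke a ``dual identity'' to dispose of $\hat\chi P_k v$ is not correct as written, and this is where the argument breaks down. The identity $\hat P_{k,\mu_k}(P_k\chi u|_U)=P_k\chi u|_U$ is a statement about an operator $L^2(X)\to L^2(U)$; its Hilbert-space adjoint reads $P_k\circ\mathrm{ext}_0\circ\hat P_{k,\mu_k}=P_k\circ\mathrm{ext}_0$ as maps $L^2(U)\to L^2(X)$, where $\mathrm{ext}_0$ is extension by zero. Applied to $g=(I-P_k)\chi u\big|_U$ this gives $P_k\bigl(\mathrm{ext}_0\,\hat P_{k,\mu_k}g\bigr)=P_k\bigl(\mathrm{ext}_0\,g\bigr)$, and the right-hand side is \emph{not} zero: $\mathrm{ext}_0\bigl((I-P_k)\chi u\big|_U\bigr)=\chi u-(P_k\chi u)\cdot\mathbf 1_U$, so $P_k$ of it equals $P_k\bigl((P_k\chi u)\cdot\mathbf 1_{X\setminus U}\bigr)$, which has no reason to vanish. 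One can still rescue the conclusion $\hat\chi P_k v=O(k^{-\infty})$ by invoking the off-diagonal decay of $P_k$ twice more (between $\supp\hat\chi$ and $X\setminus U$, and between $\supp\hat\chi$ and $\supp(1-\tau)$), but this is real work that your proposal does not do, and the parenthetical ``after absorbing $\tau$ via off-diagonal'' does not cover it.

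The paper sidesteps this issue entirely by composing in the other order. Starting from the Hodge identity $\hat N_{k,\mu_k}\kla_k+\hat P_{k,\mu_k}=I$, multiplying on the left by $\chi$ and on the right by $\tau P_k\hat\chi$, and using $\kla_k\tau P_k\hat\chi=[\kla_k,\tau]P_k\hat\chi=O(k^{-\infty})$ (off-diagonal), one obtains directly $\hat\chi P_k\chi=\hat\chi P_k\tau\hat P_{k,\mu_k}\chi+O(k^{-\infty})$. Then the spectral gap is applied to $\tau\hat P_{k,\mu_k}\chi u$, giving $\|\hat\chi(I-P_k)\tau\hat P_{k,\mu_k}\chi u\|\lesssim (C_k\mu_k)^{1/2}\|u\|=o(1)\|u\|$, and since $\hat\chi\tau=\hat\chi$ the proof is finished. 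The point is that by inserting $\hat P_{k,\mu_k}$ \emph{before} comparing $P_k$ with $I$, the only error term is an $(I-P_k)$ term, handled by the spectral gap alone; there is no residual $P_k v$ piece to chase. Your ordering $(\hat P_{k,\mu_k}-P_k)\chi=\hat P_{k,\mu_k}(I-P_k)\chi$ creates exactly that extra piece. The Sobolev bootstrap and the $G$-average compatibility you mention at the end are handled in the paper essentially as you suggest.
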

\begin{proof}
First, from \eqref{orbifold global equation 2}, we have $$\chi\hat{N}_{k,\mu_k} \kla_k + \chi\hat{P}_{k,\mu_k}  = \chi.$$
Composing $\tau P_k \hat{\chi}$ from the right, we obtain $$\chi\hat{N}_{k,\mu_k} \kla_k  \tau P_k \hat{\chi}+ \chi\hat{P}_{k,\mu_k} \tau P_k \hat{\chi} = \chi\tau P_k \hat{\chi} = \chi P_k \hat{\chi}.$$
By the off-diagonal property and the known properties of the Bergman kernel, it follows that $$\hat{\chi}P_k \chi - \hat{\chi} P_k \tau \hat{P}_{k,\mu_k} \chi =O(k^{-\infty}).$$
Next, we use the $C_k$ local spectral gap condition, 
$$
\begin{aligned}
\|\hat{\chi}P_k\tau\hat{P}_{k,\mu_k}\chi u -\hat{\chi}\tau\hat{P}_{k,\mu_k}\chi  u\|&= \|\hat{\chi}\(I-P_k\)\tau\hat{P}_{k,\mu_k} \chi u\|\\
&\le C_k\(\kla_k \tau \hat{P}_{k,\mu_k} \chi u | \tau \hat{P}_{k,\mu_k} \chi u\)_k\\
&\lesssim C_k \mu_k \|u\|^2+ O(k^{-\infty}).
\end{aligned}
$$
Note that $\lim_{k\to \infty} C_k\mu_k= 0$ and $\mu_k$ ensure that  $P_{k,\mu_k}$ have an asymptotic expansion as in Theorem \ref{spectral kernel theorem}. 

Finally, for $2\ell$-norm, by Gårding inequality and noting that $\mu_k= o(k)$, we have $$\|\hat{\chi}P_k\chi u - \tau \hat{P}_{k,\mu_k}\chi u\|_{2\ell}\lesssim \(C_k\mu_k+ C_k \mu^{2\ell+1}_k\)\|u\|_{-2\ell} = o(k^{2\ell}).$$
\end{proof}

The order of the right-hand side allows us to apply the similar scaling method and obtain the following asymptotic behavior of the Bergman kernel, 
$$\begin{aligned}
    \lim_{k\to \infty } P_k\(x, y\) = \lim_{ k\to \infty}  \hat{P}_{k,\mu_k}\(x, y\) = \sum_{g\in G} k^n P_{\phi_0} (\sqrt{k}\ti{x}, \sqrt{k}(g\cdot \ti{y})) +\delta_k(x,y),
\end{aligned}$$
where $\|\delta_k\(\frac{\ti{x}}{\sqrt{k}}, \frac{ \ti{y}}{\sqrt{k}}\)\|_{\cC^\ell(K)} = o(k^n) $ for all $\ell\in \bbN_0$ and any compact subset $K\subset B(0,\log k)$.

\subsection{Kodaira Baily Embedding Theorem}
As an application of the asymptotic of the Bergman kernel, we will give a purely analytic proof of the Kodaira-Baily Embedding Theorem \cite{Baily1957} through the Bergman kernel with an extra condition.

Let $X$ be a compact complex orbifold and $(L,h^L)$ be a positive orbifold line bundle over $X$. The Kodaira-Baily $\Phi_k: H^0(X,L^k)\rightarrow \bbC\bbP^{d_k-1}$ where $d_k$ is the dimension of $H^0(X,L^k)$ is given by $x\mapsto [s_1(x): \cdots: s_{d_k}(x)] $ where $\{s_j\}_{j=1}^{d_k} $ is an orthonormal basis.

From the asymptotic of Bergman kernel, we have that on the diagonal of Bergman kernel $P_k(p,p)$ will greater than $k^n $ for $k\ge k_p $ thus there always exists a small neighborhood $U_p$ of $p$ such that there is $u_p\in H^0(X,L^{k_0})$ with $|u_p(x)|\ge 1$ if $x\in U_p$. From the compactness of $X$, we can have a finite open cover by $\{U_{p_\ell}\}_{\ell=1}^N$ and take $M$ be the least common multiple of $k_{p_\ell} $. 
We will denote $L$ as $L^M$ in the following. Thus there is $u_{p_\ell} \in H^0(X,L)$ such that $|u_{p_\ell}(x) | \ge 1 $ if $x\in U_{p_\ell}$, which implies that the Kodaira-Baily map is well-defined.

Suppose that the Kodaira-Baily map is not injective, that is there is $k_j$ such that there are $\{x_{k_j}\}$ and $\{y_{k_j}\}$ with $x_{k_j} \ne y_{k_j}$ $\Phi_{k_j}(x_{k_j}) = \Phi_{k_j}(y_{k_j}) $. We will simplify write $k_j$ to $k$, and let $g(x_k) = \lambda_k g(y_k) $ with $|\lambda_k|\ge 1$ for all $g\in H^0(M,L^k) $. From the compactness, we assume $x_k\to p$ and $y_k \to q$ as $k\to \infty$. Let $\chi\in\cC^\infty_c(\bbR)$ be a cut-off function, and defined $$\chi_{G,k}(\ti{z}) = \sum_{g\in G} \chi(\sqrt{k}|g\cdot \ti{z}|).$$ Let $(\ti{z},G,s)$ be a Chern-Moser trivializations  centered at $p$. Consider $u_k = P_k\(e^{k\phi}\chi_{G,k}(\ti{z})\otimes s\)$. By the Hörmander $L^2$-estimate, we have $u_k(p)\ne 0$ and $u_k(q) = O(k^{-\infty}) $.  This leads to a contradiction if $p\ne q$, so we conclude $p=q$.
    \begin{enumerate}[\bf \text{Case} 1.]
        \item $\limsup_{k\to \infty} \sqrt{k} \min_{g,h\in G} |g\cdot\ti{x}_k-h\cdot\ti{y}_k|= M >0 $\\
            We may assume that $ \lim_{k\to \infty} \sqrt{k}|x_k-y_k| = M $. Let $v_k(z)  = P_k(z, y_k)$, we may have $$|v_k(x_k)| = |P_k(x_k, y_k)|< | P_k(y_k , y_k)| = |v_k(y_k)|$$
            from the explicit expansion of $P_k$. This induced a contradiction. 
        \item $\limsup_{k\to \infty} \min_{g,h\in G} |g\cdot\ti{x}_k-h\cdot\ti{y}_k|= 0 $\\
            Let $f_k(t) = \frac{|P_k(tx_k +(1-t) y_k, y_k)|^2}{P_k(tx_k+(1-t)y_k) P_k(y_k)}$. By Cauchy inequality, we have $ 0 \le f_k(t) \le 1$ and $f_k(0) = f_k(1) = 1$. There is a contradiction that $f''(t)< 0 $ by the direct computation and there exists $\{t_k\} $ such that $\liminf \frac{f''(t_k)}{k|x_k-y_k|^2} \ge 0  $.
    \end{enumerate}
    Thus, the Kodaira-Baily map is globally injective.

We will prove the following version of the Kodaira-Baily theorem, 
\begin{thm}
\label{Kodaira-Baily Theorem}
    The Kodaira-Baily map $\Phi_k$ is injective and immersion near every regular point. Near singular point, let $p$ be a singular point and let $(\ti{U}, G_U)$ be an orbifold chart defined near $p$. Then, there is an immersion $\ti{\Phi}_k$ defined near $\ti{V}$ such $\Phi_k(x)=\sum_{g\in G_U}\ti{\Phi}_k(g\cdot \ti{x})$ where $\pi(\ti{p})=p, \pi:\ti{U}\to U$ is the natural projection.
\end{thm}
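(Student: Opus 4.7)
The plan is to split the proof into three steps: injectivity, immersion at regular points, and the modified immersion statement at singular points. Injectivity was sketched just above the theorem statement (the two-case contradiction comparing $|P_k(x_k,y_k)|^2$ with $P_k(x_k,x_k)P_k(y_k,y_k)$), so I focus on the immersion claims.

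At a regular point $p$, fix a Chern-Moser trivialization $(z,s)$ on a small neighborhood $U\ni p$ with local weight $\phi(z)=\sum_j\lambda_j|z_j|^2+O(|z|^3)$ and $\lambda_j>0$. For each multi-index $\alpha$ with $|\alpha|\le 1$, form the quasi-holomorphic candidate $f_{k,\alpha}(z):=z^\alpha \chi(\sqrt{k}\,z/\log k)\otimes s^k$ and solve $\dbar_k v_{k,\alpha}=\dbar_k f_{k,\alpha}$ using the Hörmander-type $L^2$-estimate (Proposition~\ref{Mfd Hörmander L2}, or its orbifold analogue under the local spectral gap hypothesis). The estimate gives $\|v_{k,\alpha}\|_k=O(k^{-\infty})$, and elliptic regularity promotes this to $\cC^\ell$-smallness on compact subsets of $U$. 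Setting $u_{k,\alpha}:=f_{k,\alpha}-v_{k,\alpha}\in H^0(X,L^k)$ and rescaling $z\mapsto z/\sqrt{k}$, the leading-order $1$-jet matrix $\bigl(D^\beta u_{k,\alpha}(p)\bigr)_{|\alpha|,|\beta|\le 1}$ is diagonal up to $o(1)$, so the evaluation map $H^0(X,L^k)\to J^1_p(L^k)$ is surjective, which is precisely the condition for $\Phi_k$ to be an immersion at $p$.

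At a singular point $p$ with orbifold chart $(\tilde U,G_U)$, every $s_j\in H^0(X,L^k)$ lifts to a $G_U$-invariant holomorphic section $\tilde s_j$ on $\tilde U$, so its differential at $\tilde p$ is forced to lie in the $G_U$-invariant subspace of $T^{*,(1,0)}_{\tilde p}\tilde U\otimes L^k_{\tilde p}$, which is strictly smaller than the full cotangent space whenever $G_U$ acts non-trivially. Hence $\Phi_k|_U$ factors through $\tilde U/G_U=U$ and cannot be an immersion of $\tilde U$. The theorem therefore asserts the existence of a \emph{non-invariant} lift $\hat\Phi_k$ whose orbit-sum recovers $\Phi_k$ in an affine chart. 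I construct $\hat\Phi_k$ as follows: shrink $\tilde V\ni\tilde p$ so that the orbit components $\{g\tilde V\}_{g\in G_U}$ are pairwise disjoint, and use Theorem~\ref{Orbifold-Main-Thm} to write
$$P_k(x,y)=\sum_{g\in G_U}\hat P_k(\tilde x,g\cdot\tilde y)+o(k^n),$$
where $\hat P_k(\tilde x,\tilde y)\sim k^n P_{\phi_0}(\sqrt{k}\tilde x,\sqrt{k}\tilde y)$ is the identity-element summand. Interpret $\hat P_k$ as the kernel of an approximate projection onto a space of non-$G_U$-invariant near-holomorphic sections on $\tilde V$, pick an approximately orthonormal system $\{\hat s_j\}$ for this local space, and set $\hat\Phi_k:=(\hat s_1,\hat s_2,\ldots)$; the orbit sum $\sum_{g\in G_U}\hat s_j(g\tilde x)$ then recovers $\tilde s_j(\tilde x)$ up to negligible error, yielding $\Phi_k(x)=\sum_{g\in G_U}\hat\Phi_k(g\cdot\tilde x)$ in a fixed trivialization.

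To verify that $\hat\Phi_k$ is an immersion at $\tilde p$, I repeat the regular-point peak-jet construction upstairs on $\tilde V$ \emph{without} demanding $G_U$-invariance: the upstairs local $\dbar$-problem under the local spectral gap hypothesis produces non-invariant sections $\hat u_{k,\alpha}$, $|\alpha|\le 1$, whose $1$-jets at $\tilde p$ exhaust $J^1_{\tilde p}(L^k)$, forcing $d\hat s_j(\tilde p)$ to span $T^{*,(1,0)}_{\tilde p}\tilde U$. The main obstacle is making the construction of $\hat P_k$ sufficiently canonical that the identity $\Phi_k=\sum_g\hat\Phi_k\circ g$ holds modulo negligible errors rather than only on principal parts; this requires carefully combining the off-diagonal $O(k^{-\infty})$ decay $(1-\tau_k)P_k\chi_k=O(k^{-\infty})$ from the off-diagonal property with the upstairs spectral gap, so that the lifted reproducing kernel $\sum_g\hat P_k(\cdot,g\cdot)$ matches $P_k$ to sufficient order uniformly on $\tilde V$ and the resulting $\hat s_j$ are effectively holomorphic.
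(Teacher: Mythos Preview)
Your treatment of injectivity and of regular points is fine and matches the paper's approach (the regular-point argument is the standard peak-section/jet-surjectivity argument referred to via \cite{hsiao2014bergman}).

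At singular points, however, your construction has a genuine gap, and it is exactly the obstacle you flag in your last paragraph. You propose to split the full orthonormal basis $\{s_j\}$ by extracting an ``identity-element summand'' $\hat P_k$ from the orbifold Bergman kernel and then choosing an ``approximately orthonormal system $\{\hat s_j\}$'' so that $\sum_g \hat s_j(g\tilde x)$ recovers $\tilde s_j(\tilde x)$. But there is no canonical way to do this for \emph{all} $j$ simultaneously, and the $o(k^n)$ error in Theorem~\ref{Orbifold-Main-Thm} is far too weak to force such an exact (or even $O(k^{-\infty})$) identity componentwise. The paper avoids this problem entirely: it never tries to decompose the whole basis. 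Instead it manufactures \emph{specific} global sections that have the orbit-sum structure by construction. Concretely, with $\chi_k(\tilde z)=\chi(\sqrt{k}|\tilde z|)$ and $\tilde f_{j,k}(\tilde z)=\sqrt{k}\,\tilde z_j$, the paper takes the non-invariant upstairs objects to be
\[
\tilde v_k^{(j)}:=P_{k,\mu_k}\bigl(\tilde f_{j,k}\chi_k\bigr),
\]
using the \emph{spectral} projection on $\tilde U$, and the global invariant sections to be
\[
v_k^{(j)}:=P_k\Bigl(e^{k\phi}\,\hat f_{j,k}\,\chi_{G,k}\otimes s^k\Bigr),\qquad \hat f_{j,k}=\sum_{g\in G}\tilde f_{j,k}(g\cdot\,),\quad \chi_{G,k}=\sum_{g\in G}\chi_k(g\cdot\,).
\]
The kernel identity $P_k(x,y)=\sum_{g}P_{k,\mu_k}(\tilde x,g\cdot\tilde y)+\delta_k$ from Theorem~\ref{Orbifold-Main-Thm} then gives $v_k^{(j)}(x)=\sum_{g}\tilde v_k^{(j)}(g\cdot\tilde x)+\varepsilon_k$ directly, with no need to make any abstract splitting ``canonical.'' One more section $u_k=P_k(e^{k\phi}\chi_{G,k}\otimes s^k)$ with $u_k(p)\neq 0$ fixes the affine chart; then $\tilde\Psi_k=(\tilde v_k^{(1)}/u_k,\ldots,\tilde v_k^{(n)}/u_k)$ is an immersion at $\tilde p$ by the regular-point argument applied on $\tilde U$ (here $P_{k,\mu_k}$ plays the role of the Bergman projection via Theorem~\ref{spectral kernel theorem}), which gives the immersion $\tilde\Phi_k$. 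Finally a projective change of coordinates $F$ takes $\hat\Phi_k:=[u_k:v_k^{(1)}:\cdots]$ back to the original $\Phi_k$. In short: replace your attempt to split every basis element by the paper's trick of building $n+1$ tailor-made sections whose orbit-sum decomposition is automatic.
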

Because near the regular point, we know that the isotropy group is trivial. Thus, the asymptotic of the Bergman kernel is just the same as the manifold case; we refer to \cite{hsiao2014bergman} for the regular point case.

For the singular point $p$ and the orbifold chart $(\ti{U}, G_U)$ with natural projection $\pi: \ti{U} \rightarrow U \simeq \ti{U}/G_U$. Recall that we define $\chi_{G,k}(\ti{z})= \sum_{g\in G}  \chi(\sqrt{k}|g\cdot \ti{z}|) $ and  $\chi_k(\ti{z}) =\chi(\sqrt{k}|\ti{z}|).$
For $j=1,\ldots, n$, we denote $\ti{f}_{j,k}(\ti{z}) = \sqrt{k}\ti{z}_j$ and $\hat{f}_{j,k}(\ti{z})= \sum_{g\in G}\sqrt{k}(g\cdot \ti{z}_j)$. Let $$v_k^{(j)} = P_k\(e^{k\phi}\hat{f}_{j,k} \chi_{G,k}\otimes s^k\)$$ and $$\ti{v}_k^{(j)} = P_{k,\mu_k}(\ti{f}_{j,k}\chi_k).$$  We know that $P_k(x,y)= \sum_{g\in G_U} P_{k,\mu_k}(\ti{x}, g \cdot \ti{y}) + \delta_k(x,y) $. Hence,
$$
\begin{aligned}
v_{k}^{(j)}(x) &= \sum_{g,h\in G} \frac{1}{|G|} \int  P_{k,\mu_k} (\ti{x}, g\cdot \ti{y}) \ti{f}_{j,k}(h\cdot\ti{y}) \chi_{k}(h\cdot \ti{y}) d \ti{y}+ \varepsilon_k(x)\\
&= \sum_{g\in G} \int  P_{k,\mu_k} (\sqrt{k}\ti{x}, \sqrt{k} \ti{y}) \ti{f}_{j,k}(g\cdot \ti{y})\chi_k(g\cdot \ti{y})d \ti{y}+ \varepsilon_k(x)\\
&= \sum_{g\in G} \ti{v}_k^{(j)}(g\cdot\ti{z})+ \varepsilon_k(x).
\end{aligned}
$$
Let $u_k = P_k(e^{k\phi}\chi_{G,k}\otimes s^k)\in H^0(X,L^k)$ be a global holomorphic section such that $u_k(p)\ne 0 $. We consider two maps
$$
\begin{aligned}
\hat{\Phi}_k: X &\rightarrow \bbC\bbP^{d_k-1}\\
x&\mapsto [u_{k}(x):v_{k}^{(1)}(x):\cdots: v_{k}^{(n)}(x) : g^{(k)}_{n+2}(x) : \cdots:g^{(k)}_{d_k}(x)]
\end{aligned}
$$
and 
$$
\begin{aligned}
\ti{\Phi}_k: \ti{U} &\rightarrow \bbC\bbP^{d_k-1}\\
\ti{x} &\mapsto [u_{k}(\ti{x} ):\ti{v}_{k}^{(1)}(\ti{x}):\cdots: \ti{v}_{k}^{(n)}(\ti{x}) : \ti{g}^{(k)}_{n+2}(\ti{x}) : \cdots:\ti{g}^{(k)}_{d_k}(\ti{x})].
\end{aligned}
$$
Since $$\ti{\Psi}_k(\ti{x}) = \(\frac{\ti{v}_k^{(1)}}{u_k}(\ti{x}),\ldots, \frac{\ti{v}_k^{(n)}}{u_k}(\ti{x})\)$$ is an immersion near the origin by apply the same method as for a regular point, it follows that $\ti{\Phi}_k$ is also an immersion. 
Therefore, locally we have $$\hat{\Phi}_k(x)= \sum_{g\in G_U} \ti{\Phi}_k(g\cdot \ti{x})  $$ and $\ti{\Phi}_k$ is an immersion at $p= 0$. When we switch back to an orthonormal basis of $H^0(X, L^k)$, there exists a biholomorphic map $F: \mathbb{CP}^{d_k - 1} \rightarrow \mathbb{CP}^{d_k - 1}$ such that $$\Phi_k(x)= F(\hat{\Phi}_k(x)) = F\(\sum_{g\in G_U} \ti{\Phi}_k(g\cdot \ti{x})\) .$$

\begin{rem}
\label{Extra Condition}
    As we assume that $$\sum_{g\in G_p}d\hat{g} $$ has full rank where $(\ti{U}_p, G_p)$ is orbifold chart near $p$ and $\hat{g}:\ti{U} \rightarrow\ti{U}$ denote the action of $g$. The immersion can be proof by take $\hat{f}_k^{(j)}(z) = \sum_{g\in G} (g\cdot \ti{z})_j$ and consider $u_{k}^{(j)}= P_k(e^{k\phi} s^k f_k^{(j)})u_{p} $ where $u_p$ is as above. Thus, the embedding is clearer in this case. 
\end{rem}

\subsection{Toeplitz Operator}
Recall that $X$ is not assumed to be compact, but with $C_k$ local spectral gap condition and $R^L>0$ on $U$ near a point $p\in X$. Let $f\in \cC^\infty(X)$ and $$T_{f,k} = P_k \circ M_f \circ P_k:L^2(X,L^k)\rightarrow L^2(X,L^k)$$ be the Toeplitz operator where $M_f$ is multiple operator. 
We want to study the large $k$-behavior of $T_{f,k}$ near $p$. 

\begin{thm}
\label{toeplitz theorem}
For $k$ large enough, we have the following statement 
\begin{enumerate}[(i)]
    \item For $\chi,\tau \in \cC^\infty_c(X)$ with $\supp \chi \cap \supp \tau = \emptyset$, $$\chi T_{f,k}\tau = O(k^{-\infty}),$$
    \item $$T_{f,k} (x,y)=k^n \sum_{g\in G_p} P_{\phi_0}(\sqrt{k}\ti{x} , \sqrt{k}(g\cdot\ti{y})) f(p) + \varepsilon_k(x,y)  $$ for $(x,y)$ near $(p,p)$, where $\varepsilon_k(x,y)$ satisfying $$\left\|\varepsilon_k\(\frac{\ti{x}}{\sqrt{k}}, \frac{\ti{y}}{\sqrt{k}}\)\right\|_{\cC^\ell(K)}=o(k^n)$$ for all $ \ell\in \bbN_0$ and any compact subset $K\subset B(0,\log k)$,
    \item $\lim_{k\to \infty} \|T_{f,k}\| = \| f\|_{L^\infty}$.
\end{enumerate}
\end{thm}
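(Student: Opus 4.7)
All three parts hinge on the kernel formula
$$T_{f,k}(x,y) = \int_X P_k(x,z)\,f(z)\,P_k(z,y)\,d\vol_X(z)$$
together with the off-diagonal decay of $P_k$ and the scaling asymptotics of Theorem~\ref{Orbifold-Main-Thm}. For (i), I fix $\rho \in \cC^\infty_c(X)$ with $\rho \equiv 1$ near $\supp\chi$ and $\rho \equiv 0$ near $\supp\tau$, and split the $z$-integral by $1 = \rho + (1-\rho)$: in the $\rho$-piece, $z$ is separated from $\supp\tau$, so for $y \in \supp\tau$ the factor $P_k(z,y)$ contributes $O(k^{-\infty})$ by the off-diagonal property; in the $(1-\rho)$-piece, $z$ is separated from $\supp\chi$, so $P_k(x,z)$ is $O(k^{-\infty})$ for $x \in \supp\chi$.

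For (ii), take the Chern–Moser trivialization on a neighborhood $U$ of $p$ and a cut-off $\tau \in \cC^\infty_c(U)$ with $\tau \equiv 1$ on a smaller $V \Subset U$. Writing $1 = \tau(z) + (1-\tau(z))$ in the $z$-integral, part (i) shows that the $(1-\tau)$-piece contributes $O(k^{-\infty})$ for $(x,y)$ near $(p,p)$. In the $\tau$-piece, split $f(z) = f(p) + (f(z)-f(p))$. By the idempotency $\int P_k(x,z)P_k(z,y)\,d\vol_X = P_k(x,y)$ and another application of off-diagonal decay, the $f(p)$-term equals $f(p) P_k(x,y) + O(k^{-\infty})$, which by Theorem~\ref{Orbifold-Main-Thm} produces $k^n f(p)\sum_{g\in G_p} P_{\phi_0}(\sqrt{k}\ti x, \sqrt{k}(g\cdot \ti y))$ plus an $o(k^n)$ rescaled $\cC^\ell$-error. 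For the $(f(z)-f(p))$-term, rescale $z = w/\sqrt{k}$, $x = \ti x/\sqrt{k}$, $y = \ti y/\sqrt{k}$; the two rescaled Bergman kernels are uniformly $\cC^\ell$-bounded on compacts and decay Gaussian-like in $w$ via $P_{\phi_0}$, so the effective integration concentrates to bounded $w$, where $|f(z)-f(p)| \to 0$ by continuity. Differentiating under the integral in $\ti x, \ti y$ and invoking dominated convergence delivers the $o(k^n)$ $\cC^\ell$ remainder.

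For (iii), the upper bound $\|T_{f,k}\| \le \|f\|_{L^\infty}$ is immediate from $\|P_k\|=1$ and $\|M_f\|_{L^2 \to L^2} = \|f\|_{L^\infty}$. For the lower bound, pick $q \in U$ with $|f(q)|$ close to $\|f\|_{L^\infty}$ and form the normalized coherent state $u_k(x) := P_k(x,q)/\sqrt{P_k(q,q)}$; since $P_k(\cdot,q) \in H^0(X,L^k)$ and by the reproducing property $\int |P_k(z,q)|^2 d\vol_X = P_k(q,q)$, one has $u_k \in H^0(X,L^k)$ with $\|u_k\|_k = 1$. Then
$$\langle T_{f,k}u_k, u_k\rangle_k = \frac{1}{P_k(q,q)}\int_X f(z)\,|P_k(z,q)|^2\,d\vol_X(z),$$
and rescaling $z = w/\sqrt{k}$ turns the measure $|P_k(z,q)|^2/P_k(q,q)\,d\vol_X$ into a probability measure concentrating at $q$ (by the explicit Gaussian form of $P_{\phi_0}$), so continuity of $f$ yields $\langle T_{f,k}u_k,u_k\rangle_k \to f(q)$.

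The main obstacle is the $\cC^\ell$-remainder control in (ii) after rescaling: one has to differentiate under the $z$-integral, exploit the Schwartz-type Gaussian decay of $P_{\phi_0}$ to effectively localize the integration to a compact set in the rescaled variable $w$, and combine this with the modulus of continuity of $f$ to absorb the $k^n$ normalization. Parts (i) and (iii) then follow essentially as corollaries — (i) from two applications of the off-diagonal decay, and (iii) via the coherent-state construction together with the concentration of the coherent-state density established by (ii).
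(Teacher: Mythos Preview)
Your proposal is correct, and for (i) it is essentially identical to the paper's (which simply invokes the off-diagonal property of $P_k$ twice). For (ii) and (iii) you take genuinely different but equally valid routes.

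For (ii), you first use the exact idempotency $P_k^2 = P_k$ to collapse the $f(p)$-piece to $f(p)\,P_k(x,y)$ and only then invoke Theorem~\ref{Orbifold-Main-Thm}; the paper instead inserts the model expansion $P_k = \sum_{g} k^n P_{\phi_0} + \delta_k$ into \emph{both} Bergman factors, multiplies out four terms, and uses the reproducing identity for the model kernel $P_{\phi_0}$ rather than for $P_k$. Your version is shorter and avoids tracking the cross terms $P_{\phi_0}\cdot\delta_k$, $\delta_k\cdot P_{\phi_0}$, $\delta_k\cdot\delta_k$; the paper's version, on the other hand, makes the mechanism (stationary-phase concentration in the rescaled variable) more visible, which it reuses in Section~\ref{Full Expansion}.

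For (iii), your lower bound uses the coherent states $u_k = P_k(\cdot,q)/\sqrt{P_k(q,q)}$ and the reproducing property to write $\langle T_{f,k}u_k,u_k\rangle_k$ as an average of $f$ against the concentrating probability density $|P_k(\cdot,q)|^2/P_k(q,q)$. The paper instead tests against rescaled bump functions $\chi_k(\ti z) = k^{n/2}\chi(\sqrt{k}\ti z)$ and reduces to the model Toeplitz operator $T_{f_k,\phi_0}$ on $\bbC^n$. Your coherent-state argument is more self-contained (the reproducing identity gives the normalization for free), while the paper's bump-function argument ties the computation back to the explicit model on $\bbC^n$. Both require that the point $q$ where $|f|$ nearly attains its sup lie in the region where the Bergman asymptotics are available, an assumption implicit in both proofs.
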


Before proving this theorem, we recall the stationary phase formula(see Subsection \ref{Stationary Phase Formula}).

\begin{proof}[Proof of Theorem \ref{toeplitz theorem}]
For $x\ne y$, take two cut-off functions with disjoint support. Then the off-diagonal property of Bergman projection will give us the $O(k^{-\infty}) $. 

For the diagonal, we use the asymptotic of the Bergman kernel to estimate $P_k$, we know the convergence is in $\cC^\infty$-topology. Fix $p\in X$,

    $$\begin{aligned}
        \frac{1}{k^n}T_{f,k}(x,y) &= \frac{1}{k^n}\int  P_k(x,z) f(z) P_{k}(z,y)d\vol_X \\
        &= \sum_{g, h\in G_p} \frac{1}{|G_p|}\int  P_{\phi_0}((\sqrt{k}\ti{x}, g\cdot\ti{z}) f(\frac{\ti{z}}{\sqrt{k}}) P_{\phi_0}(\sqrt{k}\ti{z}, \sqrt{k}(h\cdot\ti{y}))  d\ti{z} \\
        &+\sum_{g, h\in G_p} \frac{1}{|G_p|}\int  P_{\phi_0}(\sqrt{k}\ti{x},\sqrt{k}(g\cdot \ti{z})) f(\ti{z}) \delta_k(\ti{z}, h\cdot\ti{y})  d\ti{z}\\
       &+\sum_{g, h\in G_p} \frac{1}{|G_p|}\int  \delta_k(\ti{x},g\cdot \ti{z}) f(\ti{z}) P_{\phi_0}(\ti{z}, \sqrt{k}(h\cdot\ti{y}))  d\ti{z}\\
       &+\frac{1}{k^n}\int \delta_k(x,z)f(z)\delta_k(z,y) d\vol_X+O(k^{-\infty})\\
       &= \sum_{g\in G_p} \int  P_{\phi_0}((\sqrt{k}\ti{x},\ti{z}) f(\frac{\ti{z}}{\sqrt{k}}) P_{\phi_0}(\ti{z}, \sqrt{k}(g\cdot\ti{y}))  d\ti{z}+\varepsilon_k'(x,y)\\
       &= \sum_{g\in G_p} f(p)\int P_{\phi_0} (\sqrt{k}\ti{x}, \ti{z}) P_{\phi_0}(\ti{z}, \sqrt{k}(g\cdot \ti{y}))d\ti{z}\\
       &+\sum_{g\in G_p} \int  P_{\phi_0}((\sqrt{k}\ti{x},\ti{z}) \(f(\frac{\ti{z}}{\sqrt{k}})-f(p)\) P_{\phi_0}(\ti{z}, \sqrt{k}(g\cdot\ti{y}))  d\ti{z}+ \varepsilon'_k(x,y)\\
       &= \sum_{g\in G_p}f(p) P_{\phi_0}(\sqrt{k}\ti{x}, \sqrt{k}(g\cdot \ti{y}))+ \varepsilon_k(x,y)
    \end{aligned}$$
    where $G_p$ is the isotropy group of $p$ and $\pi(\ti{x}) =x $ is the natural projection. We use the stationary phase formula to derive that the $\cC^\ell$-norm tends to zero.

    For the third statement, we have $\|T_{f,k} \| \le \|f\|_{L^\infty}$ because the operator norm of Bergman projection is $1$ and $M_f$ is less than the sup-norm of $f$. 
    For the reverse inequality, we will find some $\chi_{k} \in L^2(X, L^k) $ with $\|\chi_k\|_k = 1 $ such that $\| T_{f,k} \chi_k \| _k \to \|f\|_{L^\infty}$ as $k\to \infty$. 
    
    We take $\chi_k(\ti{z})= k^{\frac{n}{2}} \chi(\sqrt{k}\ti{z})$ where $\chi(\ti{z}) = \begin{cases}
        1&\text{on }B(0,r)\\
        0&\text{outside } B(0,2r)
    \end{cases}$ and $\|\chi\|_{L^2} = 1$. Note that we choose the point $f$ tend its supremum to be $0$.
    By computation, we have $(T_{f,k} \chi_k)(\ti{x}) = k^{\frac{n}{2}}(T_{f_k, \phi_0}\chi)(\sqrt{k}\ti{x})+o(1) $ where $f_k( \ti{x}) = f(\frac{\ti{x} }{\sqrt{k}}) $ and $T_{f, \phi_0} = P_{\phi_0} \circ M_f\circ P_{\phi_0}$. We also have $\|T_{f,k}\chi_k\|_{L^2} = \| T_{f_k, \phi_0} \chi\|_{L^2} +o(1)$. Thus, when $k$ is large enough and tends to $\infty$, the $L^2$-norm will be monotone increasing since $P_{\phi_0} \chi $ has most mass near $0$ and $f_k$ are outspread. Then when $k$ tend to $\infty$, we will get the $L^2$-norm converge to $\|f\|_{L^\infty}$.
\end{proof}

\section{Full Expansion}
\label{Full Expansion}

In this section, we will introduce a new method to obtain the full expansion of the Bergman kernel asymptotic and establish the Berezin-Toeplitz for pseudodifferential operators.  

\subsection{Bergman Kernel}
\label{bergman full expansion}

As in the leading term, we first focus on the Euclidean cases. To get the full expansion, we compare the Bergman kernel and the leading term in the previous sections. 
\subsubsection{Euclidean Cases}
Let $\phi\in \cC^\infty(\bbC^n,\bbR)$, $\phi(z)= \phi_0(z) + \phi_1(z) = \sum_{j=1}^n \gl_j |z_j|^2 + O(|z|^3) $, $\lambda_j>0$, $\phi_1=O(|z|^3)$, and $\tau\in \cC^\infty_c(\bbC^n,[0,1])$ with $\tau = 1$ on $B(0,1)$ and vanish outside $B(0,2)$. Let $\hat{\phi}_k = \phi_0+ \tau(\frac{\sqrt{k}z }{\log k}) \phi_1 $, $\hat{\go}_k = \sum_{j,\ell=1}^n \(\delta_{j,\ell} + \tau(\frac{\sqrt{k}z}{\log k})(w_{j,\ell}- \delta_{j,\ell})\) dz_j\wedge d\ol{z}_\ell$ and $d\hat{\vol}_k = (1+ \tau(\frac{\sqrt{k} z}{\log k }) O(|z|)) d\lambda(z)$ with $d\lambda(z)$ is the standard volume form. Recall (cf. Subsection \ref{Euclidean Case}) that $B_{k\hat{\phi}_k}$ is the Bergman projection $$B_{k\hat{\phi}_k}: L^2(\bbC^n, k\hat{\phi}_k ,d\hat{\vol}_k) \rightarrow  H^0(\bbC^n, k\hat{\phi}_k, d\hat{\vol}_k).$$ We also have a standard Bergman projection $$B_{k\phi_0}: L^2(\bbC^n ,k\phi_0, d\lambda) \rightarrow H^0(\bbC^n, k\phi_0, d\lambda).$$
The two Bergman kernels satisfying $$(B_{k\hat{\phi}_k}u)(x) = \int_{\bbC^n} B_{k\hat{\phi}_k} (x,y) u(y) d\vol_k(y)$$ and $$(B_{k\phi_0}u)(x) = \int_{\bbC^n} B_{k\phi_0} (x,y) u(y) d\lambda(y)$$ respectively. We will denote $B_{k\phi_0}^*$ be the adjoint operator with respect to $\(\cdot|\cdot\)_{k\hat{\phi}_k} $. Notice we have the following Facts:
\begin{fact}
    Since the difference of two weight functions $k\phi_0$ and $k\hat{\phi}_k$ and two volume forms have compact support, we have the equivalent of two $L^2$ spaces, that is $$L^2(\bbC^n,k\hat{\phi}_k,d\hat{\vol}_k) = L^2(\bbC^n,k\phi_0, d\lambda)$$ and $$H^0(\bbC^n,k\hat{\phi}_k,d\hat{\vol}_k) = H^0(\bbC^n,k\phi_0, d\lambda).$$
    Moreover, there is an operator $A_k$ where $A_k u= e^{2k(\phi_0- \hat{\phi}_k)}\vol_k u$ such that $$\(u|v\)_{k\hat{\phi}_k} = \(u|A_k v\)_{k\phi_0}$$ and we have the equvialent of $L^2$ norms $$\|\cdot\|_{k\hat{\phi}_k}^2 = \(1+O(\frac{\log^3 k}{\sqrt{k}})\) \|\cdot\|_{k\phi_0}.$$
\end{fact}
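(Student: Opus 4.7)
The plan is to trace through the consequences of the compact-support property carefully. The key quantitative observation is that $\tau(\tfrac{\sqrt{k}z}{\log k})$ is supported in $B(0,\tfrac{2\log k}{\sqrt{k}})$, so on this support we have $|\phi_1(z)| = O(|z|^3) = O(\tfrac{\log^3 k}{k^{3/2}})$ and the $O(|z|)$ terms entering $\hat{\go}_k$ and $d\hat{\vol}_k$ are of size $O(\tfrac{\log k}{\sqrt{k}})$. Consequently
\[
k\bigl(\hat{\phi}_k - \phi_0\bigr) = k\,\tau\!\Bigl(\tfrac{\sqrt{k}z}{\log k}\Bigr)\phi_1(z) = O\!\Bigl(\tfrac{\log^3 k}{\sqrt{k}}\Bigr)
\]
uniformly in $z\in\bbC^n$, so that $e^{-2k\hat{\phi}_k} = e^{-2k\phi_0}\bigl(1+O(\tfrac{\log^3 k}{\sqrt{k}})\bigr)$ pointwise and uniformly. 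In the same fashion, $\tfrac{d\hat{\vol}_k}{d\lambda} = 1 + O(\tfrac{\log k}{\sqrt{k}})$ uniformly.

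For the $L^2$ identification, I would simply multiply these two uniform bounds: for any measurable $u$,
\[
\|u\|_{k\hat{\phi}_k}^2 \;=\; \int |u|^2\, e^{-2k\phi_0}\, e^{2k(\phi_0-\hat{\phi}_k)}\, \tfrac{d\hat{\vol}_k}{d\lambda}\, d\lambda \;=\; \Bigl(1+O\!\bigl(\tfrac{\log^3 k}{\sqrt{k}}\bigr)\Bigr)\|u\|_{k\phi_0}^2,
\]
which gives both the norm equivalence and, as a formal consequence, the equality of Hilbert spaces as sets. The identification of the Bergman spaces $H^0$ is then immediate: a function is holomorphic independently of the weight or volume, and the defining holomorphy condition is a pointwise/distributional one, so $H^0(\bbC^n,k\hat{\phi}_k,d\hat{\vol}_k)$ and $H^0(\bbC^n,k\phi_0,d\lambda)$ agree as subsets of the common underlying $L^2$ space.

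For the operator $A_k$, I would unfold the definitions: writing $A_k v = e^{2k(\phi_0-\hat{\phi}_k)}\vol_k\, v$, where I interpret $\vol_k$ as the positive real density $\tfrac{d\hat{\vol}_k}{d\lambda}$, one computes directly
\[
\(u\mid A_k v\)_{k\phi_0} \;=\; \int u\,\overline{v}\, e^{2k(\phi_0-\hat{\phi}_k)}\, \vol_k\, e^{-2k\phi_0}\, d\lambda \;=\; \int u\,\overline{v}\, e^{-2k\hat{\phi}_k}\, d\hat{\vol}_k \;=\; \(u\mid v\)_{k\hat{\phi}_k},
\]
using that both the weight correction and the Jacobian are real and positive, so complex conjugation does not affect them. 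This is the whole identity; no further argument is needed.

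There is no real obstacle here beyond bookkeeping: all three factors ($e^{2k(\phi_0-\hat\phi_k)}$, $\vol_k$, and their combined correction) are uniformly bounded with explicit size control on the support of $\tau(\tfrac{\sqrt k z}{\log k})$, and are identically $1$ off that support. The only mildly delicate point is ensuring that $\log^3 k/\sqrt k \to 0$, which secures that for $k$ large the multiplicative constants stay in a fixed interval around $1$, so the norm equivalence really produces equality of the $L^2$ spaces. Once that is observed, the rest is algebraic manipulation.
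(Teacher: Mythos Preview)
Your proposal is correct. The paper states this as a Fact without supplying a proof, and your argument is precisely the routine verification the paper is implicitly invoking: the same size estimate $k(\hat\phi_k-\phi_0)=O(\log^3 k/\sqrt k)$ reappears verbatim a few lines later when the paper bounds the operator norm of $r_k$, so you have filled in exactly the computation the author had in mind.
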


\begin{fact}
    $$B_{k\phi_0} \circ B_{k\hat{\phi}_k} = B_{k\hat{\phi}_k}$$
    and
    $$B_{k\hat{\phi}_k} \circ B_{k\phi_0} = B_{k\phi_0}.$$
\end{fact}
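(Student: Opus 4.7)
The plan is to reduce both identities to the reproducing property of a Bergman projection, using the preceding Fact that the two weighted $L^2$ spaces (and hence their holomorphic subspaces) coincide as sets with equivalent norms. The key observation is that, since $\hat\phi_k - \phi_0 = \tau(\tfrac{\sqrt{k}z}{\log k})\phi_1$ and the density of $d\hat\vol_k$ with respect to $d\lambda$ both have compact support and are uniformly bounded above and below (by the discussion around $A_k$), the identity map
\[
\mathrm{Id}\colon L^2(\bbC^n, k\phi_0, d\lambda)\ \longrightarrow\ L^2(\bbC^n, k\hat\phi_k, d\hat\vol_k)
\]
is a bounded isomorphism onto the same underlying space of functions, restricting to an isomorphism on the holomorphic subspaces
\[
H^0(\bbC^n, k\phi_0, d\lambda)\ =\ H^0(\bbC^n, k\hat\phi_k, d\hat\vol_k),
\]
where the equality is as sets of holomorphic functions (with a bounded change of norm).

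For the first identity $B_{k\phi_0}\circ B_{k\hat\phi_k} = B_{k\hat\phi_k}$, first note that for any $u \in L^2(\bbC^n, k\hat\phi_k, d\hat\vol_k)$, the image $B_{k\hat\phi_k}u$ lies in $H^0(\bbC^n, k\hat\phi_k, d\hat\vol_k)$ by the definition of the Bergman projection. By the set-theoretic equality above, this element is also in $H^0(\bbC^n, k\phi_0, d\lambda)$. Then apply the reproducing property $B_{k\phi_0} f = f$ for $f \in H^0(\bbC^n, k\phi_0, d\lambda)$, which follows directly from the characterization of $B_{k\phi_0}$ as the orthogonal projection onto its image. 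This yields $B_{k\phi_0}(B_{k\hat\phi_k}u) = B_{k\hat\phi_k}u$. The second identity is entirely symmetric: $B_{k\phi_0}u \in H^0(\bbC^n, k\phi_0, d\lambda) = H^0(\bbC^n, k\hat\phi_k, d\hat\vol_k)$, and then apply the reproducing property of $B_{k\hat\phi_k}$.

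The only point that deserves a moment of care, and the nearest thing to an obstacle, is the verification that $H^0(\bbC^n, k\phi_0, d\lambda)$ and $H^0(\bbC^n, k\hat\phi_k, d\hat\vol_k)$ genuinely agree as sets (not merely that the ambient $L^2$ spaces do). This is immediate from the uniform estimate
\[
\bigl|e^{-2k(\hat\phi_k-\phi_0)}\,\vol_k - 1\bigr| = O\!\left(\tfrac{\log^3 k}{\sqrt{k}}\right)
\]
on the support of $\tau(\tfrac{\sqrt{k}\,\cdot}{\log k})$ and the identity of the weight and measure outside that support, which means an entire function is in one space iff it is in the other, with comparable norms. Once this bookkeeping is done, the two identities are tautological consequences of the orthogonal-projection definition of $B_{k\phi_0}$ and $B_{k\hat\phi_k}$.
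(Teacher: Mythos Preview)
Your argument is correct and is exactly the reasoning the paper has in mind: the statement is labeled a \emph{Fact} and given no proof, precisely because it follows immediately from the preceding Fact that $H^0(\bbC^n,k\phi_0,d\lambda)=H^0(\bbC^n,k\hat\phi_k,d\hat\vol_k)$ as sets, together with the reproducing property of each orthogonal projection on its range. Your additional remark checking that the holomorphic subspaces (not just the ambient $L^2$ spaces) coincide is the only nontrivial point, and you handle it correctly.
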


Since $B_{k\hat{\phi}_k} $ is self-adjoint with respect to $\(\cdot|\cdot\)_{k\hat{\phi}_k}$. Thus we have 
\begin{equation}
\label{full eqn1}
B_{k\hat{\phi}_k}= B_{k\hat{\phi}_k}^* = (B_{k\phi_0}B_{k\hat{\phi}_k})^* = B_{k\hat{\phi}_k} B_{k\phi_0}^* = B_{k\phi_0} + B_{k\hat{\phi}_k} (B_{k\phi_0}^*- B_{k\phi_0})
\end{equation}
where $^*$ is the adjoint with respect to $\(\cdot| \cdot\)_{k\hat{\phi}_k}$ and we will denote $r_k = B_{k\phi_0}^* - B_{k\phi_0}$. Since 
$$
\begin{aligned}
r_k =B_{k\phi_0}^*- B_{k\phi_0}&= A_k^{-1} B_{k\phi_0}A_k - B_{k\phi_0}\\
&=(A_k^{-1}-I)B_{k\phi_0} A_k + B_{k\phi_0} (A_k-I),
\end{aligned}
$$
the operator norm of $r_k$ is $O(\frac{\log^3 k }{\sqrt{k}})$ and we can write $$(1-r_k)^{-1} = 1+ r_k + r_k^2 + \cdots = \sum_{\ell=0}^\infty r_k^\ell,$$ where the series coverges in $L^2$-sense. By direct computation, we know that $$r_k(x,y) =  B_{k\phi_0}(x,y) (e^{2k(\psi_k(x) - \psi_k(y))}\vol_{k}^{-1}(x)\vol_k(y)-1)$$ where $\psi_k = \hat{\phi}_k - \phi_0$ and $\vol_k(z) d\lambda(z) = d\hat{\vol}_k(z)$ with $$(r_k u) (x) = \int_{\bbC^n} r_k(x,y)u(y) d\lambda(y),\,\forall u \in L^2(\bbC^n, k\phi_0,d\lambda).$$

From \eqref{full eqn1}, we have $B_{k\hat{\phi}_k} = B_{k\phi_0} (1+r_k + r_k^2+ \cdots)$. Thus we can compute $B_{ k\hat{\phi}_k} $ by compute each term of $B_{k\phi_0} \circ r_k^\ell$. The main idea is to use the stationary phase formula (see subsection \ref{Stationary Phase Formula}) to compute the $(B_{k\phi_0} \circ r_k^\ell) (x,y)$. Recall that for $\phi_0(z) = \sum_{j=1}^n \lambda_j |z_j|^2$ with $\lambda_j>0$, we have $$B_{k\phi_0} = k^n B_{\phi_0}(\sqrt{k}x, \sqrt{k} y) =k^n C_0 e^{-2k \sum_{j=1}^n \lambda_j(|y_j|^2- x_j \ol{y}_j)},$$ where $C_0 = \prod_{j=1}^n\( \frac{\lambda_j}{\pi}
\)$. Thus 
$$
\begin{aligned}
(B_{k\phi_0}\circ r_k^\ell)(z,z) &= \int_{\bbC^n} B_{k\phi_0}(z,w) r_k^\ell(w,z) d\lambda(w)\\
&= k^{n(\ell+1)} C_0^{\ell+1} \int_{\bbC^{n\ell} } e^{ikF_{\ell,z}(w^1,\ldots, w^\ell)} u_{k,\ell,z}(w^1,\ldots, w^\ell) dw^1 \cdots dw^\ell
\end{aligned}
$$
where $$F_{\ell,z} (w^1,\ldots, w^\ell) = \Psi(z,w^1) + \Psi(w^\ell,z) + \sum_{\nu=1}^{\ell-1} \Psi(w^\nu, w^{\nu+1})$$
and
$$u_{k,\ell,z}(w^1,\ldots, w^\ell) = v_{k}(w^\ell,z)\prod_{\nu=1}^{\ell-1} v_k(w^\nu, w^{\nu+1})$$
with 
$$\Psi(x,y) = 2i\(\sum_j \lambda_j(|y_j|^2 -z_j\ol{y}_j)\)$$ and 
$$v_k(x,y) = \(e^{2k(\psi_k(x) - \psi_k(y))} \vol_k^{-1}(x) \vol_k(y) -1\).$$

For fix $x\in \bbC^n$, $F_{\ell,z} $ has unique critical point at $(w^1,\ldots, w^\ell) = (z,\ldots, z)$ and satisfy the condition in stationary phase formula with $$\det \(\frac{k F_{\ell,z}''(z)}{2\pi i}\)^{\frac{-1}{2}} = \frac{1}{C_0^\ell k^{n\ell}}.$$ We now use the same notations as in subsection \ref{Stationary Phase Formula}. Since $g_z^{(\ell)}= 0$, we know  that $L_j^{(\ell),z} u_{k,\ell,z} = (\Delta_{\ell,z}^j u_{k,\ell,z})(z)$ where $$\Delta_{\ell,z} =  i \<(F''_{\ell,z}(z))^{-1}D,D\>  = \sum_{j=1}^n \frac{1}{\lambda_j} \(\sum_{\nu=1}^\ell \frac{\dr^2}{\dr w_j^\nu \dr \ol{w}_j^\nu} +\sum_{\nu<\mu}\frac{\dr^2}{\dr \ol{w}_j^\nu \dr w_j^{\mu}} \).$$

Thus 
\begin{equation}
\label{Bergman kernel equation}
\begin{aligned}
    B_{k\hat{\phi}_k} (0,0)&= \(B_{k\phi_0}\(\sum_{\ell=0}^\infty r_k^\ell\)\)(0,0)\(\vol(0)\)^{-1}\\
    &= B_{k\phi_0}(0,0)\sum_{\ell=0}^\infty \sum_{j=0}^\infty \frac{(2k)^{-j}}{j!} \Delta^j_{\ell,0}(u_{k,\ell,0})(0)\(\vol(0)\)^{-1}
\end{aligned}
\end{equation}

Since the derivative of $\psi_k$ at $0$ vanishing up to the order $2$ and  $\(L_j^{(\ell),0}u_{\ell,k,0}\)(0)$ equal to zero for $2j<\ell$. Futhermore, we need to compute $\(L_j^{(\ell), 0}u_{k,\ell,0}\)(0)$ for $j\in [m,3m]$ and $\ell\le j-m$ for the coefficient of $k^{n-m}$, i.e. it depend on the derivative of $\phi$ at zero up to $3m$. Otherwise, we also know $\(L_j^{(\ell), 0}u_{k,\ell,0}\)(0)$ is relative to $k^{n-m} $ for $m\in [\frac{2j-\ell+1}{3}, j]$.

Here is a table about the pair $(j,\ell)$ relate to the coefficient of $k^{n-m}$ when $\phi_1 = O(|z|^3)$. 

\begin{table}[h]
    \centering
    \begin{tabular}{c|ccccc}
        \diagbox{$j$}{$\ell$}& $0$ &$1$ &$2$&$3$&$\cdots$  \\
         \hline
        $0$ &  $k^n$& & & & \\
        $1$&&$k^{n-1}$&$k^{n-1}$&&\\
        $2$&&$k^{n-2}-k^{n-1}$&$k^{n-2}-k^{n-1}$&$k^{n-2}$&\\
        $3$&&$k^{n-3}-k^{n-1}$&$k^{n-3}-k^{n-1}$&$k^{n-3}-k^{n-2}$&$\cdots$\\
        $4$&&$k^{n-4}-k^{n-2}$&$k^{n-4}-k^{n-2}$&$k^{n-4}-k^{n-2}$&\\
        $\vdots$&&&$\vdots$&&$\ddots$
    \end{tabular}
\end{table}

\subsubsection{Manifold Cases}
Let $M$ be a compact complex manifold and $(L,h^L)$ be a positive Hermitian line bundle over $M$. Recall (cf. subsection \ref{pre of Bergman kernel}) that $P_k: L^2(M,L^k) \rightarrow H^0(M,L^k) $ is the Bergman projection and $P_k(x,y)$ is the Bergman kernel. For a fixed point $p\in M$, we can take a Chern-Moser trivialization $(z,s)$ on $U$ centered at $p$. We localized the Bergman kernel $P_{k,s}:L^2_{comp}(U,d\vol_M) \rightarrow  L^2(U, d\vol_M)$ and its corresponding diestribution kernel $P_{k,s}(x,y) $. We will consider $$B_{k,s} = e^{k\phi} P_{k,s} e^{-k\phi}.$$

From \cite[Proof of Theorem 3.8]{Hsiao-Marinescu17}, we have the following:
\begin{pro}
\label{k-infty approx} 
    For cut-off functions $\chi, \tau$ with $\tau= 1$ on $\supp \chi$, and let $\chi_k (z) = \chi(\frac{\sqrt{k}z}{\log k})$ and $\tau_k(z)= \tau(\frac{\sqrt{k}z}{\log k} )$. We have 
    $$\tau_k B_{k,s} \chi_k - \tau_k B_{k\hat{\phi}_k} \chi_k = O(k^{-\infty}) .$$
\end{pro}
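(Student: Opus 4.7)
The plan is to decompose $v_1 - v_2 = (B_{k,s} - B_{k\hat{\phi}_k})\chi_k u$ as
\[
(B_{k,s} - B_{k\hat{\phi}_k})\chi_k u = B_{k,s}\bigl((I - B_{k\hat{\phi}_k})\chi_k u\bigr) + \bigl(B_{k,s}B_{k\hat{\phi}_k} - B_{k\hat{\phi}_k}\bigr)\chi_k u,
\]
and show each piece is $O(k^{-\infty})$ by distinct mechanisms, where $v_1 = B_{k,s}\chi_k u$ and $v_2 = B_{k\hat{\phi}_k}\chi_k u$. I first arrange the cutoffs $\chi \prec \tilde{\chi} \prec \tau$ so that $\hat{\phi}_k = \phi$ on $\supp\tau_k$ (by choosing the cutoff defining $\hat{\phi}_k$ wider than $\tau_k$) and $\operatorname{dist}(\supp\chi_k,\supp\dbar\tau_k) \gtrsim \log k/\sqrt{k}$. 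By the Gaussian off-diagonal decay of both weighted Bergman kernels $|K(x,y)|e^{-k\phi(x)-k\phi(y)} \leq Ck^n e^{-ck|x-y|^2}$ at the scale $|x-y|\sim\log k/\sqrt{k}$ one gets $e^{-ck|x-y|^2} \leq e^{-c'\log^2 k} = k^{-c'\log k}$, so that $(\dbar\tau_k)v_i = O(k^{-\infty})$ and $(1-\tau_k)v_i = O(k^{-\infty})$ in the relevant weighted $L^2$-norms.

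For the second piece, I apply H\"ormander's $L^2$-estimate (Proposition~\ref{Mfd Hörmander L2}) on $M$ to the compactly supported section $\tau_k v_2 \otimes s^k$: since $\dbar_k(\tau_k v_2\otimes s^k) = (\dbar\tau_k)v_2\otimes s^k = O(k^{-\infty})$, one can solve $\dbar_k w = \dbar_k(\tau_k v_2\otimes s^k)$ with $\|w\|_k = O(k^{-\infty})$; as $\tau_k v_2 \otimes s^k - w$ is then holomorphic, $P_k$ fixes it, yielding $B_{k,s}(\tau_k v_2) = \tau_k v_2 + O(k^{-\infty})$. Combined with $(1-\tau_k)v_2 = O(k^{-\infty})$ this upgrades to $B_{k,s}(v_2) = v_2 + O(k^{-\infty})$ (interpreting $B_{k,s}(v_2)$ via a preliminary cutoff), so the second piece equals $B_{k,s}(v_2) - v_2 = O(k^{-\infty})$.

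For the first piece, I use the Kodaira-Neumann identity $I - B_{k\hat{\phi}_k} = \dbar^*_{k\hat{\phi}_k}\dbar N_{k\hat{\phi}_k}$ on $(0,0)$-forms, so $(I - B_{k\hat{\phi}_k})\chi_k u = \dbar^*_{k\hat{\phi}_k}\alpha$ with $\alpha = \dbar N_{k\hat{\phi}_k}(\chi_k u)$. On $\supp\tau_k$ the two adjoints $\dbar^*_{k\hat{\phi}_k}$ and $\dbar^*_{k,s}$ coincide because $\hat{\phi}_k = \phi$ there, and $B_{k,s}$ annihilates the range of $\dbar^*_{k,s}$ (the Bergman projection kills the image of $\dbar^*$ since $\ker\kla_{k,s} = \ker\dbar_{k,s}$ on $(0,0)$-forms). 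The off-diagonal Agmon-type decay of the Neumann operator $N_{k\hat{\phi}_k}$ at scale $\log k/\sqrt{k}$ forces $\alpha = O(k^{-\infty})$ outside $\supp\tau_k$, so the contribution of $(\dbar^*_{k\hat{\phi}_k} - \dbar^*_{k,s})\alpha$ is $O(k^{-\infty})$. Hence $B_{k,s}(I - B_{k\hat{\phi}_k})\chi_k u = O(k^{-\infty})$ in $L^2$.

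Combining gives $\tau_k(v_1 - v_2) = O(k^{-\infty})$ in $L^2$; interior elliptic regularity for the common Kodaira Laplacian ($\kla_{k,s} = \kla_{k\hat{\phi}_k}$ on $\supp\tau_k$) then upgrades the $L^2$-smallness to the pointwise $\mathcal{C}^\ell$-control of the kernels demanded by the definition of $O(k^{-\infty})$. The hard part will be the Agmon/Gaussian off-diagonal estimate for the Neumann operator $N_{k\hat{\phi}_k}$ at the $k$-dependent scale $\log k/\sqrt{k}$, which requires Ma--Marinescu-type machinery (the weighted $L^2$ with $k\hat{\phi}_k$ strictly plurisubharmonic gives a $1/k$ resolvent bound and an exponential decay of the Schwartz kernel of $N_{k\hat{\phi}_k}$ at the correct scale); by contrast the H\"ormander step, the Kodaira-Neumann identity, and the elliptic upgrade are all standard.
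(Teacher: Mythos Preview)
The paper does not prove this itself but cites \cite[Proof of Theorem 3.8]{Hsiao-Marinescu17}; the argument sketched for the analogous comparison in Section~\ref{Orbifold Case} of the present paper indicates the method. That method is iterative and purely operator-theoretic: one composes the Hodge identities $I = B_{k\hat{\phi}_k} + \kla_{k\hat{\phi}_k}N_{k\hat{\phi}_k}$ and $I = P_k + \kla_k N_k$ with a nested family of scaled cutoffs $\chi_k \prec \tau_k^{(1)} \prec \cdots \prec \tau_k^{(N)}$, uses that the two Laplacians coincide on $\supp\tau_k$ so that the discrepancy is pushed into commutators $[\kla,\tau_k^{(j)}]$ supported on $\supp d\tau_k^{(j)}$, and gains a factor $O(k^{-1})$ at each step from the operator-norm bound on the Neumann operators coming from the spectral gap. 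After $N$ iterations one has $O(k^{-N})$ in $L^2$, and G{\aa}rding's inequality upgrades this to the kernel estimate. No pointwise kernel decay is ever used as input.

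Your decomposition is algebraically the same starting point, and your treatment of the second piece via H\"ormander's estimate is fine and close to the standard argument. The divergence is in the first piece: instead of iterating with nested cutoffs, you appeal in one shot to Agmon-type off-diagonal decay of $N_{k\hat{\phi}_k}$ and to Gaussian decay of both Bergman kernels. Two concerns. First, the Gaussian off-diagonal decay of the \emph{manifold} kernel $B_{k,s}$ at scale $\log k/\sqrt{k}$ is essentially the content of the full Bergman asymptotics, which in this paper is deduced \emph{from} Proposition~\ref{k-infty approx}; using it here is circular unless you establish it independently. Second, the Agmon estimate for $N_{k\hat{\phi}_k}$ on $(\bbC^n,k\hat{\phi}_k)$, while plausible, is not a standard off-the-shelf lemma; you acknowledge it as the hard part and defer to unspecified ``Ma--Marinescu-type machinery'', but that is exactly the heavy input the elementary iterative argument avoids. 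If you import both decay results your proof goes through, but it is more expensive than necessary, and as written the Agmon step is a genuine gap rather than a routine citation.
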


With this we can using the full expansion of $B_{k\hat{\phi}_k}$ to get the full expansion of $B_{k,s} $ near $p$ with neighborhood $B(0,\frac{1}{\sqrt{k}})$.

\begin{thm}
\label{Bergman Full Expansion Theorem}
    There are $a_j(z) \in \cC^\infty(M)$ for $j=1,2,\ldots$ such that $$B_k(p)\sim \sum_{j=0}^\infty k^{n-j} a_j(p)$$ for all $p\in M$.
\end{thm}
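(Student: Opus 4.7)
The plan is to reduce the problem to the Euclidean model at each point and then invoke the explicit expansion already worked out in Subsection~\ref{bergman full expansion}. Fix $p\in M$ and take a Chern-Moser trivialization $(z,s)$ on a neighborhood $U$ of $p$. By Proposition~\ref{k-infty approx}, if we let $\chi_k(z)=\chi(\sqrt{k}z/\log k)$ and $\tau_k(z)=\tau(\sqrt{k}z/\log k)$ with $\tau=1$ on $\supp\chi$, then
\[
\tau_k B_{k,s}\chi_k-\tau_k B_{k\hat\phi_k}\chi_k=O(k^{-\infty}).
\]
Evaluating distribution kernels at $(0,0)$ and using that $B_{k,s}(0,0)=P_{k,s}(0,0)$ together with an approximate identity argument (apply both sides to a sequence $\chi_\varepsilon(\cdot-0)$ as in Lemma~\ref{Euc-lem-2}), this gives $P_{k,s}(0,0)=B_{k\hat\phi_k}(0,0)+O(k^{-\infty})$. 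Thus the whole question is moved to the Euclidean model, where $B_{k\hat\phi_k}(0,0)$ is given by the explicit formula \eqref{Bergman kernel equation}.

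Next, I would extract the asymptotic expansion from \eqref{Bergman kernel equation}. Recall
\[
B_{k\hat\phi_k}(0,0)=B_{k\phi_0}(0,0)\sum_{\ell=0}^{\infty}\sum_{j=0}^{\infty}\frac{(2k)^{-j}}{j!}\Delta_{\ell,0}^{j}(u_{k,\ell,0})(0)\,(\vol(0))^{-1}.
\]
The derivatives of $\psi_k=\hat\phi_k-\phi_0$ at $0$ vanish to order $2$ because $\phi_1=O(|z|^3)$, and $v_k(z,z)=0$, so $\Delta_{\ell,0}^j(u_{k,\ell,0})(0)$ vanishes unless $2j\geq 3\ell\cdot$ (something)\,; the discussion following \eqref{Bergman kernel equation} and the accompanying table show that each monomial $\Delta_{\ell,0}^j u_{k,\ell,0}(0)$ contributes to $k^{n-m}$ for $m$ in a precise finite range, and that for fixed $m$ only \emph{finitely many} pairs $(j,\ell)$ with $j\leq 3m$, $\ell\leq j$ produce a contribution of size $k^{n-m}$ or larger. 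Collecting all such contributions defines a candidate coefficient $a_m(p)$. Since the cutoff $\tau$ and the Chern-Moser normal form depend smoothly on the base point, and $a_m(p)$ is a universal polynomial in derivatives up to order $3m$ of $\phi$, $\Omega$ and their inverses at $p$, we get $a_m\in\cC^\infty(M)$.

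For the remainder estimate, I would combine two ingredients. First, the tail beyond $(j,\ell)$ with $j>3N$ is controlled by the explicit $O(k^{-N-1})$ error in the stationary phase formula (Theorem~\ref{stationary phase 2}), applied to each integral $(B_{k\phi_0}\circ r_k^\ell)(0,0)$ and summed using the operator-norm bound $\|r_k\|=O(\log^3 k/\sqrt k)$ which makes the series convergent. Second, to obtain the $\cC^\ell(M)$-uniform bound required by Definition~\ref{Asymptotic sum}, I would differentiate in the base point $p$, note that the entire construction (Chern-Moser frame, cutoff, $\hat\phi_k$, stationary-phase coefficients) is smooth in $p$, and then appeal to compactness of $M$ to uniformize the constants $C_{N,\ell}$ coming from Theorem~\ref{stationary phase 2} and from Proposition~\ref{k-infty approx}.

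The main obstacle I anticipate is the last step: upgrading the pointwise expansion $P_{k,s}(0,0)\sim\sum_j k^{n-j}a_j(p)$ to a genuine $\cC^\ell(M)$-asymptotic in the sense of Definition~\ref{Asymptotic sum}. Doing this requires simultaneously controlling (i) the $p$-dependence of $\tau_k B_{k,s}\chi_k$ (so the $O(k^{-\infty})$ in Proposition~\ref{k-infty approx} is uniform in $p$ and in all derivatives in $p$), and (ii) the $p$-dependence of the stationary-phase remainder, whose constants involve the $\cC^{3N+1}$-norm of the phase $F_{\ell,z}$ parametrized by $z$. Both are handled by the smooth dependence of the Chern-Moser data and compactness of $M$, but writing this out cleanly is the substantive part of the proof; once done, the coefficients $a_j$ and their explicit expression via $\Delta_\ell^j u_{k,\ell,0}$ fall out automatically.
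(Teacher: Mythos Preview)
Your proposal is correct and follows the same overall strategy as the paper: reduce to the Euclidean model via Proposition~\ref{k-infty approx}, then extract the expansion from \eqref{Bergman kernel equation} using the stationary phase counting. The one genuine difference is the argument for smoothness of the coefficients. The paper first obtains, from stationary phase at \emph{general} $z$ with $|z|\le C/\sqrt{k}$, an expansion of $B_{k,s}(z)$ in \emph{half-integer} powers $k^{n-j/2}$ with coefficients $c_{j/2,s}(z)$ that are automatically smooth in $z$; it then observes that at the Chern--Moser center $z=0$ the expansion is in integer powers (your step using the table after \eqref{Bergman kernel equation}), so by uniqueness the odd half-integer coefficients vanish and $a_j(p)=c_{j,s}(0)$, which is smooth. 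You instead argue smoothness directly, from the fact that $a_m(p)$ is a universal polynomial in finitely many jets of $\phi$ and $\Omega$ at $p$ in a Chern--Moser frame, together with smooth dependence of that frame on $p$. Both routes work; the paper's is shorter because smoothness-in-$z$ comes for free from the stationary phase lemma, while your route is more direct but requires checking that the Chern--Moser normalization (in particular the diagonalization of $\dot R^L$) can be chosen smoothly in $p$---which, as you correctly flag, is the substantive point.
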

\begin{proof}
    For all $N$ and a point $p$ with local trivialization $s$, there exists $c_{\frac{j}{2},s}(z)$ and $C_{N,s}>0$ such that $$|B_{k,s}(z) - \sum_{j=0}^{2N}k^{n-\frac{j}{2}}c_{\frac{j}{2},s}(z)| \le C_{N,s}k^{n-N},\, \forall |z|\le \frac{C}{\sqrt{k}}.$$ So we have there is $c_{\frac{j}{2}}$ and $C_N>0$ such that $$|B_k(p) - \sum_{j=0}^{2N}k^{n-\frac{j}{2}} c_{\frac{j}{2}}(p)| \le C_{N} k^{n-N}.$$ Also, when we fix a point $p\in M$, there is $a_{j}(p) $ such that $$|B_k(p) - \sum_{j=0}^N k^{n-j} a_j(p) | \le C_{N,p} k^{n-N}.$$ Combine two inequalities on manifold, we have $c_{\frac{j}{2}}(p)\equiv 0$ for odd $j$ and $a_j$ is smooth since all $c_{\frac{j}{2}} $ are smooth by the stationary phase formula.
\end{proof}

Then we will compute the first two coefficients. 
\begin{exe}
     Using the Chern-Moser trivialization $(z,s)$ centered at $p$ with local weight is $\phi = \phi_0+ \phi_1$. We will compute the first two coefficients of $P_k $ with $\phi_1=O(|z|^4)$. Here is a table about the pair $(j,\ell)$ relate to the coefficient of $k^{n-m}$ when $\phi_1 = O(|z|^4)$. 
    \begin{table}[h]
        \centering
        \begin{tabular}{c|ccccc}
            \diagbox{$j$}{$\ell$}& $0$ &$1$ &$2$&$3$&$\cdots$  \\
             \hline
            $0$ &  $k^n$& & & & \\
            $1$&&$k^{n-1}$&$k^{n-1}$&&\\
            $2$&&$k^{n-2}-k^{n-1}$&$k^{n-2}$&$k^{n-2}$&\\
            $3$&&$k^{n-3}-k^{n-2}$&$k^{n-3}-k^{n-2}$&$k^{n-3}-k^{n-2}$&$\cdots$\\
            $4$&&$k^{n-4}-k^{n-2}$&$k^{n-4}-k^{n-3}$&$k^{n-4}-k^{n-3}$&\\
            $\vdots$&&&$\vdots$&&$\ddots$
        \end{tabular}
    \end{table}\\

    The leading term at origin is just $$B_{k\phi_0}(0,0).$$ The second term relate to the case $(j,\ell) = (1,1),\,(1,2)$ and $(2,1)$ in \eqref{Bergman kernel equation}.
     $$\begin{aligned}
         (1,1)&: \frac{1}{2k}B_{k\phi_0}(0,0)\sum_{j=1}^n \frac{1}{\lambda_j}\(-\frac{\dr^2 \vol}{\dr z_j\dr\ol{z}_j} + 2 \left|\frac{\dr \vol}{\dr z_j}\right|^2\)\\
         (1,2)&:  \frac{1}{2k}B_{k\phi_0}(0,0)\sum_{j=1}^n \frac{1}{\lambda_j}\(- \left|\frac{\dr \vol}{\dr z_j}\right|^2\)\\
         (2,1)&: \frac{1}{8k^2}\sum_{j,\ell=1}^n \frac{1}{\lambda_j \lambda_\ell} \frac{\dr^4 2k \phi}{\dr z_j\dr \ol{z}_j\dr z_\ell\dr \ol{z}_\ell}\\
     \end{aligned}$$ 
     Then we get 
     \begin{equation}
    \label{Bergman second coeff}
     B_{k\phi_0}(0,0)\(\frac{1}{2}\sum_{j=1}^n \frac{1}{\lambda_j} \(\left|\frac{\dr \vol}{\dr z_j}\right|^2 - \frac{\dr^2 \vol}{\dr z_j\dr \ol{z}_j}\) + \frac{1}{4}\sum_{j,\ell=1}^n \frac{1}{\lambda_j\lambda_\ell}\frac{\dr^4\phi}{\dr z_j\dr\ol{z}_j\dr z_\ell\dr\ol{z}_\ell}\)(0).
     \end{equation}

    From \eqref{Bergman second coeff}, we can verify that $$a_1 (z) = \det\(\frac{\dot R^L}{2\pi}\)(z)\( \frac{1}{4\pi} \hat{r}(z) - \frac{1}{8\pi}r(z)\),$$
    where $\hat{r}$ denotes the scalar curvature associated with the curvature form  $R^L$ of the line bundle, and $r$ is the scalar curvature corresponding to the Hermitian metric on the tangent bundle of $M$. For precise definitions of $r$ and $\hat{r}$, we refer the reader to \cite{hsiao2011coefficientsasymptoticexpansionkernel}. This result was originally obtained by \cite{hsiao2011coefficientsasymptoticexpansionkernel}, \cite{lu1999lowerordertermsasymptotic}, and \cite{ma2012berezintoeplitz}.
\end{exe}

\subsection{Toeplitz Operator}
\label{Toeplitz operator}
Let $M$ be a compact complex manifold with a positive Hermitian line bundle $L\rightarrow M$ and the Bergman projection $P_k: L^2(M, L^k) \rightarrow H^0(M, L^k)$. Recall that the Toeplitz operator $T_{f,k} = P_k\circ M_f\circ P_k$ is defined in Section \ref{Introduction}. 

 First, $$\chi_k T_{f,k} \chi_k = \chi_k B_{k,s} \tau_k M_f \tau_k B_{k,s} \chi_k +O(k^{-\infty})$$ by the off-diagonal property of Bergman kernel where $\chi_k(z) =\chi\(\frac{\sqrt{k}z}{\log k}\)$ and $\tau_k(z)= \tau\(\frac{\sqrt{k}z}{\log k}\)$ with suitable cut-off functions $\chi,\tau$. From Proposition \ref{k-infty approx}, we have $$\chi_k T_{f,k} \chi_k = \chi_k B_{k\hat{\phi}_k} \tau_k M_f \tau_k B_{k\hat{\phi}_k} \chi_k +O(k^{-\infty}).$$

 We now apply our method, $$(\chi_kT_{f,k}\chi_k)= \chi_k B_{k\phi_0}\circ(1+r_k+\cdots) \circ \tau_k M_f \tau_k \circ B_{k\phi_0}\circ (1+r_k+\cdots)\chi_k +O(k^{-\infty}).$$

 Thus we need to compute near $0$: $$\(B_{k\phi_0}\circ r_k^{\ell_1} \circ \tau_k M_f \tau_k \circ B_{k\phi_0}\circ r_k^{\ell_2}\)(z,z)= k^{n(\ell+2)} C_0^{\ell+2} \int_{\bbC^{n(\ell+1)}}e^{iF_{\ell+1,z}(w)} u_{k,\ell_1,f,\ell_2,z}(w) dw,$$ where $\ell= \ell_1+ \ell_2$ and $$u_{k,\ell_1,f,\ell_2,z}(w) = u_{k,\ell_1,w^{\ell_1+1}}(w^1,\ldots, w^{\ell_1})\tau_k^2(w^{\ell_1+1}) f(w^{\ell_1+1}) u_{k,\ell_2,z}(w^{\ell_1+2},\ldots, w^{\ell+1}).$$

Let $T_{f,k}(x,y)$ be the distribution kernel of the Toeplitz operator defined by $$(T_{f,k}u)(x) = \int_{M} T_{f,k} (x,y) u(y) d\vol_M(y).$$ As in the case of the Bergman kernel, we also write $T_{f,k}(x) =T_{f,k}(x,x)$. Then we will get 
 \begin{equation}
 \label{Toeplitz operator equation}
 (\chi_k T_{f,k} \chi_k)(z,z) = B_{k\phi_0}(z,z) \sum_{\ell=0}^\infty \sum_{j=0}^\infty \frac{(2k)^{-j}}{j!} \Delta^j_{\ell+1,z} \(\sum_{\ell_1+\ell_2= \ell} u_{k,\ell_1, f,\ell_2,z}\)(z).\end{equation}
 Using the similar argument of Theorem \ref{Bergman Full Expansion Theorem}, we get 
\begin{thm}
    There exist $a_{j,f}\in \mathcal{C}^\infty(M)$ such that
    $$
    T_{f,k}(x)\sim \sum_{j=0}^\infty k^{n-j} a_{j,f}(x),
    $$
    which gives the full expansion of the Toeplitz operator.
\end{thm}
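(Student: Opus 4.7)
The plan is to adapt the comparison argument from the proof of Theorem \ref{Bergman Full Expansion Theorem} to the formula \eqref{Toeplitz operator equation}. Fix $p\in M$ and a Chern-Moser trivialization $(z,s)$ centered at $p$. By off-diagonal smallness of $P_k$ together with Proposition \ref{k-infty approx}, $T_{f,k}(p) = (\chi_k T_{f,k}\chi_k)(0,0) + O(k^{-\infty})$, so \eqref{Toeplitz operator equation} becomes directly applicable.

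First I would apply the stationary-phase formula termwise to \eqref{Toeplitz operator equation} and truncate, producing, for each $N$, smooth functions $c_{j/2,s,f}(z)$ on a $C/\sqrt{k}$-neighborhood of the chart origin satisfying
$$\Bigl| T_{f,k,s}(z,z) - \sum_{j=0}^{2N} k^{n-j/2} c_{j/2,s,f}(z) \Bigr| \le C_{N,s}\, k^{n-N}.$$
Half-integer powers appear here for the same reason as in the Bergman case: a derivative of $\psi_k$ evaluated at $|z| = O(1/\sqrt{k})$ brings in a fractional power of $k$, and similarly for Taylor expansion of $f$ away from the center. Patching these local expansions over a finite open cover of $M$ yields global functions $c_{j/2,f}$ on $M$ satisfying an analogous pointwise remainder bound.

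Next I would compare this half-integer expansion, evaluated at the center $p$, with the pointwise integer-power asymptotic of the Toeplitz kernel from the literature (for instance, Ma-Marinescu \cite{Ma2011}): $\bigl| T_{f,k}(p) - \sum_{j=0}^{N} k^{n-j} a_{j,f}(p)\bigr| \le C_{N,p}\, k^{n-N}$. Uniqueness of asymptotic expansions then forces $c_{j/2,f}(p) \equiv 0$ for odd $j$ and identifies $a_{j,f} = c_{j,f}$. Since each $c_{j,f}$ is produced by the stationary-phase formula as a universal polynomial in finitely many derivatives at $p$ of $\phi$, $f$, and of the volume coefficient of the Chern-Moser chart, and since these derivatives depend smoothly on $p$, one obtains $a_{j,f}\in \cC^\infty(M)$. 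The asymptotic-sum property in the sense of Definition \ref{Asymptotic sum} then follows from the uniform remainder bounds in $p$ (using compactness of $M$ together with Gårding-type inequalities to upgrade the pointwise bounds to $\cC^\ell$ bounds).

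The main obstacle, exactly as in the Bergman case, will be the combinatorial bookkeeping underlying the first step: for each target order $m$, one has to determine the finite set of triples $(j,\ell_1,\ell_2)$ in \eqref{Toeplitz operator equation} that can contribute to the coefficient of $k^{n-m}$, now with the added complication that the non-vanishing derivatives of the inserted factor $f(w^{\ell_1+1})$ interact with the order-three vanishing of $\psi_k$ at the origin in the two surrounding $u_k$-chains $u_{k,\ell_1,w^{\ell_1+1}}$ and $u_{k,\ell_2,z}$. Once this table of contributions is laid out, smoothness of the $a_{j,f}$ and the asymptotic-sum property drop out simultaneously.
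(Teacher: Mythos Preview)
Your outline mirrors the paper's argument closely, with one important deviation: for the pointwise integer-power expansion at the chart center $p$, you appeal to Ma--Marinescu \cite{Ma2011}, whereas the paper extracts it directly from \eqref{Toeplitz operator equation} evaluated at $z=0$. At the Chern--Moser center one has $\tau_k\equiv 1$ near $0$, so $\psi_k=\phi_1$ and $\vol_k=\vol_M$ are independent of $k$; the only $k$-dependence left in $u_{k,\ell_1,f,\ell_2,0}$ sits in the factors $e^{2k\phi_1(w^\nu)}$, whose derivatives at $w=0$ are polynomials in $k$. Combined with the prefactor $(2k)^{-j}$ this yields only integer powers, which is exactly the source of the $a_{j,f}(p)$ in the paper's argument---no outside input is used.

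Invoking \cite{Ma2011} here is problematic on two counts. First, that result already gives the full diagonal expansion of $T_{f,k}$ with smooth coefficients, so once you cite it there is nothing left to prove and the stationary-phase machinery of this section becomes decorative; the comparison with your half-integer expansion is then vacuous. Second, \cite{Ma2011} is stated for K\"ahler metrics, while the present section works with an arbitrary Hermitian form $\omega$, so the citation does not cover the generality claimed. The self-contained route---produce the integer expansion at each $p$ from \eqref{Toeplitz operator equation} at $z=0$, then compare it with the half-integer expansion coming from a nearby chart to force the odd $c_{j/2,f}$ to vanish and to read off smoothness of $a_{j,f}=c_{j,f}$---is precisely what makes this an independent proof rather than a restatement.
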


\begin{exe}
     Using the Chern-Moser trivialization $(z,s)$ centered at $p$ with local weight is $\phi = \phi_0+ \phi_1$. We will compute the first two coefficients of $T_{f,k} $ with $\phi_1=O(|z|^4)$. 
     Here is a table about the pair $(j,\ell)$ relate to the coefficient of $k^{n-m}$ when $\phi_1 = O(|z|^4)$. 
    \begin{table}[h]
        \centering
        \begin{tabular}{c|ccccc}
            \diagbox{$j$}{$\ell$}& $0$ &$1$ &$2$&$3$&$\cdots$  \\
             \hline
            $0$ &  $k^n$& & & & \\
            $1$&$k^{n-1}$&$k^{n-1}$&$k^{n-1}$&&\\
            $2$&$k^{n-2}$&$k^{n-2}-k^{n-1}$&$k^{n-2}$&$k^{n-2}$&\\
            $3$&$k^{n-3}$&$k^{n-3}-k^{n-2}$&$k^{n-3}-k^{n-2}$&$k^{n-3}-k^{n-2}$&$\cdots$\\
            $4$&$k^{n-4}$&$k^{n-4}-k^{n-2}$&$k^{n-4}-k^{n-3}$&$k^{n-4}-k^{n-3}$&\\
            $\vdots$&&&$\vdots$&&$\ddots$
        \end{tabular}
    \end{table}\\
     The leading term is just $$Cf(p)$$ where $C$ is the leading coefficient of $B_{k\phi_0}$. The second term is much complicated, we have to compute the case $(j,\ell) = (1,0),\,(1,1),\,(1,2)$ and $(2,1)$ in \eqref{Toeplitz operator equation}. The result is  $$C\(\frac{1}{2}\sum_{j=1}^n \frac{1}{\lambda_j} \(\left|\frac{\dr \vol}{\dr z_j}\right|^2 - \frac{\dr^2 \vol}{\dr z_j\dr \ol{z}_j}\)f + \frac{1}{4}\sum_{j,\ell=1}^n \frac{1}{\lambda_j\lambda_\ell}\frac{\dr^4\phi}{\dr z_j\dr\ol{z}_j\dr z_\ell\dr\ol{z}_\ell}+ \sum_{j=1}^n \frac{1}{\lambda_j}\frac{\dr^2 f}{\dr z_j\dr \ol{z}_j}\)(0)$$ where $C$ is as above. Here, we also reproduce the results obtained in \cite{hsiao2011coefficientsasymptoticexpansionkernel}.
\end{exe}

Another interesting result for the Toeplitz operator is the deformation quantization. Before stating the result, we first give some definitions. 
\begin{Def}
\label{Line bundle Poisson bracket}
    We denote by $\{\cdot, \cdot\}_L$ the Poisson bracket associated with the line bundle $L$, which is defined for $f,g\in \cC^\infty(M)$ by $$\{f,g\}_L = (\sqrt{-1}R^L)^{-1}(df,dg)$$ . Here, $(\sqrt{-1}R^L)^{-1} $ is determined by the relation $$(\sqrt{-1}R^L)^{-1}(\alpha, \gb)= \sqrt{-1}R^L(\alpha^\sharp\wedge \beta^\sharp)\text{ for all } \alpha, \beta \in T^*M,$$ where $ \alpha^\sharp \in TM$ is given by $$\sqrt{-1}R^L(\alpha^\sharp\wedge\ol{v}) = \alpha(v) \text{ for all }v\in TM.$$
\end{Def}

Recall that in deformation quantization \cite{Kontsevich2003}, we work with the space of formal power series $\cC^\infty(M)[[k^{-1}]]$ and define a noncommutative product(the star product $\star_k$) of $f,g\in \cC^\infty(M)\subset\cC^\infty(M)[[k^{-1}]]$ by  $$f\star_k g = \sum_{j=0}^\infty k^{-j} C_j(f,g),$$ where $C_0(f,g) = fg$. This extends naturally to the entire space $\cC^\infty(M)[[k^{-1}]]$ via the distributive law: $$\(\sum_{\ell_1\ge 0} k^{-\ell_1} f_{\ell_1}\)\star_k \(\sum_{\ell_2\ge 0 } k^{-\ell_2}g_{\ell_2}\) = \sum_{\ell_1,\ell_2\ge 0} k^{-\ell_1-\ell_2} f_{\ell_1} g_{\ell_2} + \sum_{\ell_1,\ell_2\ge 0, \,j\ge 1}C_j(f_{\ell_1},g_{\ell_2})k^{-\ell_1-\ell_2-j} $$ To endow $\cC^\infty(M)[[k^{-1}]]$ with an algebra structure under $\star_k$, we require that the product be associative: $$(f\star_k g) \star_k h = f\star_k(g\star_k h),$$ for all $f,g,h\in \cC^\infty(M)[[k^{-1}]]$. This is equivalent to 
\begin{equation}
\label{associative law}
\sum_{j+\ell=m} C_j(f,C_\ell(g,h)) = \sum_{j+\ell=m} C_\ell(C_j(f,g),h)
\end{equation}
 for all $m\ge 0$. 

We give a new proof of deformation quantization for projective manifolds.
\begin{thm}
    There exist functions $C_j(f,g)\in \cC^\infty(M)$, $j=0,1,\ldots$, such that 
    $$ T_{f,k} \circ T_{g,k} \sim \sum_{j=0}^\infty k^{n-j} T_{C_j(f,g),k}$$ in $L^2$-sense. Furthermore, $C_0(f,g) = fg$, $\{C_j(f,g)\}_{j\ge 0}$ satisfies \eqref{associative law}, and $C_1(f,g) - C_1(g,f) = i \{f,g\}_L$ where $\{\cdot , \cdot\}_L$ is the Poisson bracket.
\end{thm}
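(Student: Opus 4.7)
The plan is to extend the local expansion technique used for a single Toeplitz operator in \eqref{Toeplitz operator equation} to the composition $T_{f,k}\circ T_{g,k}$. Fix $p\in M$ with Chern-Moser trivialization $(z,s)$ centered at $p$. Using the off-diagonal decay of the Bergman kernel together with Proposition~\ref{k-infty approx}, the composition reduces modulo $O(k^{-\infty})$ to
$$\chi_k\, B_{k\hat\phi_k}\, M_f\, B_{k\hat\phi_k}\, M_g\, B_{k\hat\phi_k}\,\chi_k,$$
and then via the Neumann expansion $B_{k\hat\phi_k} = B_{k\phi_0}(1+r_k+r_k^2+\cdots)$ to a triple sum indexed by $(\ell_1,\ell_2,\ell_3)\in \mathbb{N}_0^3$ of oscillatory integrals on $\mathbb{C}^{n(\ell_1+\ell_2+\ell_3+2)}$ with phase $F_{\ell+2,z}$ (where $\ell=\ell_1+\ell_2+\ell_3$) and amplitude $u_{k,\ell_1,f,\ell_2,g,\ell_3,z}$, the obvious bilinear generalization of the single-operator amplitude $u_{k,\ell_1,f,\ell_2,z}$. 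The critical point of $F_{\ell+2,z}$ is still at the origin in all integration variables, so the stationary phase formula applies and produces a formal asymptotic series.

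Applying stationary phase and collecting powers of $k^{-1}$, one obtains a local expansion of $(T_{f,k}T_{g,k})(z,z)$ whose coefficients at each order are bilinear differential expressions in $f$ and $g$ of bounded order. The functions $C_j(f,g)$ are then extracted by matching this local expansion with the full Toeplitz expansion of $T_{C_j(f,g),k}$ from Theorem~\ref{toeplitz theorem}: inductively in $j$, the diagonal match determines $C_j(f,g)\in \mathcal{C}^\infty(M)$, and a standard kernel-level off-diagonal $O(k^{-\infty})$ argument upgrades this to the claimed expansion in operator $L^2$-norm. The leading contribution $(\ell_1,\ell_2,\ell_3,j)=(0,0,0,0)$ gives $B_{k\phi_0}(z,z)f(z)g(z)$, so $C_0(f,g)=fg$. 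Associativity \eqref{associative law} is then essentially automatic: applying the expansion to both sides of $T_{f,k}(T_{g,k}T_{h,k}) = (T_{f,k}T_{g,k})T_{h,k}$ and matching coefficients produces the required identity order by order.

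The main technical step will be the commutator computation $C_1(f,g)-C_1(g,f) = i\{f,g\}_L$. The $k^{-1}$ corrections to $C_0(f,g)=fg$ come from $(\ell_1,\ell_2,\ell_3)=(0,0,0)$ with $j=1$ and from exactly one of the $\ell_i$ equal to $1$ with $j=0$; all terms built from self-derivatives $\partial_{z_j}\partial_{\bar z_j}f$, $\partial_{z_j}\partial_{\bar z_j}g$, volume-form corrections, or higher derivatives of $\phi_1$ are symmetric in $f\leftrightarrow g$ and cancel in the commutator. The sole surviving antisymmetric term comes from the cross-variable part
$$\sum_{\nu<\mu}\sum_{j=1}^n \frac{1}{\lambda_j}\,\frac{\partial^2}{\partial \bar w^\nu_j\,\partial w^\mu_j}$$
of $\Delta_{\ell+2,z}$ applied to $f(w^{\ell_1+1})\,g(w^{\ell_1+\ell_2+2})$ at the critical point; because the integration variable carrying $f$ precedes that carrying $g$ in the chain, this yields precisely $\sum_j \lambda_j^{-1}\,\partial_{\bar z_j}f\,\partial_{z_j}g$ at $p$, and its antisymmetrization over $f\leftrightarrow g$ matches $i\{f,g\}_L$ under the coordinate formula for $\{\cdot,\cdot\}_L$ induced by $\sqrt{-1}R^L = 2i\sum_j \lambda_j\, dz_j\wedge d\bar z_j$ in the Chern-Moser frame.

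The hardest part will be twofold. First, keeping honest combinatorial control of all contributions at order $k^{-1}$ as $(\ell_1,\ell_2,\ell_3,j)$ range so that the symmetric pieces verifiably cancel. Second, rigorously passing from the pointwise kernel asymptotic on the diagonal to the $L^2$-operator-norm asymptotic $T_{f,k}T_{g,k}\sim \sum_j k^{-j} T_{C_j(f,g),k}$; this should follow by combining the off-diagonal $O(k^{-\infty})$ estimates with the uniform kernel bounds of Theorem~\ref{toeplitz theorem} together with an $N$-term remainder argument, but care is needed to ensure that the smooth local coefficients $C_j^{\mathrm{loc}}(f,g)$ patch globally in $\mathcal{C}^\infty(M)$ independently of the choice of Chern-Moser trivialization.
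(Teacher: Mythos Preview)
Your approach is essentially the same as the paper's: the same Neumann expansion $B_{k\hat\phi_k}=B_{k\phi_0}(1+r_k+\cdots)$, the triple-index stationary phase with phase $F_{\ell+2,z}$ and amplitude $u_{k,\ell_1,f,\ell_2,g,\ell_3,z}$, the inductive definition of $C_j$, and the identification of the antisymmetric cross-derivative in $\Delta_{\ell+2,z}$ for the commutator. One minor bookkeeping correction: the contributions with one $\ell_i=1$ and stationary-phase index $j=0$ actually vanish (since $v_k(z,z)=0$ at the critical point), and the full list of $(j,\ell)$ pairs entering at order $k^{-1}$ is $(1,0),(1,1),(1,2),(2,1)$ as in the paper's table---but as you correctly argue, only the $(j,\ell)=(1,0)$ cross-term $\sum_j\lambda_j^{-1}\partial_{\bar z_j}f\,\partial_{z_j}g$ survives antisymmetrization.
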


Similarly, we consider $T_{f,k}$ and $T_{g,k}$ with $\chi_k T_{f,k} \circ T_{g,k} \chi_k$. Then we compute 
$$
\begin{aligned}
 ( \chi_kB_{k\phi_0,s}\circ r_{k,s}^{\ell_1} \circ M_f\circ B_{k\phi_0,s}\circ r_{k,s}^{\ell_2}\circ M_g \circ B_{k\phi_0,s}\circ r_{k,s}^{\ell_3}\chi_k)(z,z) \\
= k^{n(\ell+3)} C_0^{\ell+3} \int_{\mathbb{C}^{n(\ell+2)}}e^{ikF_{\ell+2,z}(w)}u_{k,\ell_1,f,\ell_2,g,\ell_3,s,z}(w) dw,
\end{aligned}
$$
where $\ell=\ell_1+ \ell_2+\ell_3$ and
$$
\begin{aligned}
u_{k,\ell_1,f,\ell_2,g,\ell_3,z}(w) = u_{k,\ell_1,w^{\ell_1+1}}(\rho_k^2f)(w^{\ell_1+1}) u_{k,\ell_2,w^{\ell_2+2}} (\tau_k^2 g)(w^{\ell_1+\ell_2+2})u_{k,\ell_3,z},
\end{aligned}
$$
and
\begin{equation}
\label{Toeplitz composition equation}
\begin{aligned}
(\chi_k T_{f,k}\circ T_{g,k}&\chi_k)(z,z) \\&=B_{k\phi_0}(z,z)\sum_{\ell=0}^\infty \sum_{j=0}^\infty  \frac{(2k)^{-j}}{j!}\Delta_{\ell+2,z}^j\left(\sum_{\ell_1+\ell_2+\ell_3 = \ell} u_{k,\ell_1,f,\ell_2,g,\ell_3,z}\right)(z).
\end{aligned}
\end{equation}

Using the method of stationary phase on the Toeplitz operator and its composition, we can take $C_0(f,g)=fg$. For $C_\ell(f,g)$ with $\ell\ge 1$, we consider the asymptotic expansion $$\(T_{f,k}\circ T_{g,k} - \sum_{j=0}^{\ell-1} k^{-j} T_{C_j(f,g),k}\)(x)\sim k^{n} \sum_{j=\ell}^\infty k^{-j} b_j^{(\ell)}(x). $$ We then define $C_\ell(f,g)$ by identifying the leading coefficient on the right-hand side $$b_\ell^{(\ell)}(x) = a_0(x)C_\ell(f,g)(x)$$ where $a_0$ is the leading coefficient in the asymptotic of Bergman kernel (cf. Theorem~\ref{Bergman Full Expansion Theorem}). This procedure can be iterated to determine all $C_j(f,g)$ inductively. This construction automatically satisfies the associative law, as it follows from the properties of the Toeplitz operators. It remains to prove that $$C_1(f,g) - C_1(g,f) = i\{f,g\}_L=i\cdot i \sum_{j=1}^n \frac{1}{2\lambda_j}\(\frac{\dr f}{\dr z_j} \frac{\dr g}{\dr \ol{z}_j}- \frac{\dr f}{\dr \ol{z}_j} \frac{\dr g}{\dr z_j}\). $$ Since $C_0(f,g)= fg = gf = C_0(g,f)$, we have $T_{C_0(f,g),k} = T_{C_0(g,f),k}$. Then
$$
\chi_kT_{f,k}\circ T_{g,k}\chi_k -\chi_kT_{g,k}\circ T_{f,k}\chi_k  = k^{n-1} C (C_1(f,g)- C_1(g,f))
$$ where $B_{k\phi_0}(0,0)= Ck^n$. 
Calculating the coefficient of $k^{n-1}$ for $$\chi_k \(T_{f,k}\circ T_{g,k} - T_{g,k}\circ T_{f,k}\)\chi_k $$ corresponds to $(j,\ell) = (1,0),\,(1,1),\,(1,2)$ and $(2,1)$ in \eqref{Toeplitz composition equation} and we get 
$$
\begin{aligned}
    \(C_1(f,g) - C_1(g,f)\)(0)  &= \(\frac{1}{2}\sum_{j=1}^n \frac{1}{\lambda_j} \(\frac{\dr f}{\dr \ol{z}_j} \frac{\dr g }{\dr z_j}-\frac{\dr g}{\dr \ol{z}_j} \frac{\dr f}{\dr z_j}\)\)(0)\\
    &= i\{f,g\}_L(0) .
\end{aligned}$$

\subsection{Toeplitz Operator with Pseudodifferential Operator}
\label{Toeplitz operator with PDO}

\subsubsection{Euclidean Cases}
To generalize the Toeplitz operator, we consider replacing functions by pseudodifferential operators. For the details of pseudodifferential operators and microlocal analysis, we refer to \cite{grigis1994microlocal}, \cite{hörmander2007analysis}, and \cite{hintz_introduction_nodate}.

We define the symbol space 
\begin{Def}
    Let $m\in \bbR$, the space of symbols of order $m$, denoted by $$S^m(\bbC^n)\subset \cC^\infty(\bbC^{n}\times \bbC^n),$$ is the set of all functions $p(z, \xi) \in \cC^\infty(\bbC^{n}\times \bbC^n)$ satisfying the following: for all $\ga,\gb\in \bbN_0^{2n}$ and every compact subset $K\subset \bbC^n$, there exists $C_{K,\ga,\gb}>0$ such that $$|\dr_{z}^\ga \dr_\xi^\gb p(z,\theta)|\le C_{K,\ga,\gb} \<\theta\>^{m-|\gamma|},\, (z,\theta)\in K\times  \bbC^n$$ where $\<\theta\> = \sqrt{1+ |\theta|^2}$.
\end{Def}
We also introduce pseudodifferential operators 
\begin{Def}
    A pseudodifferential operator $P$ is a continuous operator from $\cC_c^\infty(\bbC^n) $ to $\cC^\infty(\bbC^n)$ satisfying $$(Pu)(z) = \frac{1}{(2\pi)^{2n}}\int_{\bbR^{2n}\times \bbR^{2n}} e^{i(z-w)\cdot \theta} p(z,\theta) u(w) dw d\theta +Fu$$ where $F$ is smoothing and $p(z,\theta) \sim \sum_{j=0}^\infty p_j(z,\theta) $ where $p_j\in S^{m-j}(\bbC^n)$. We also defined $P$ is classical operator if $p_j(z,\lambda \theta) = \lambda^{m-j} p_j(z,\theta) $ for all $\lambda\ge 1$, $\theta\ne 0$ and  $j$.
\end{Def}

\begin{rem}
    We will only discuss the classical pseudodifferential operator case in the following.
\end{rem}

To define Toeplitz operators with pseudodifferential operators, we have to define the classical pseudodifferential operator of order $m$ with $\frac{1}{2}$-density. For a classical symbol $p(z,,\theta)$, we can define $$\ti{P}_k: L^2(\bbC^n, k\hat{\phi}_k ,d\hat{\vol}_k) \rightarrow L^2(\bbC^n, k\hat{\phi}_k ,d\hat{\vol}_k) $$ by $$(\ti{P}_ku)(z) = \frac{e^{k\hat{\phi}_k}}{(2\pi)^{2n}\sqrt{\vol_k(z)}} \int_{\bbR^{2n}\times \bbR^{2n} } e^{i(z-w)\cdot \theta} p(z,\theta)(e^{-k\hat{\phi}_k(w)} \sqrt{\vol_k(w)}u(w))dw d\theta$$ where $d\hat{\vol}_k(z) = \vol_k(z)d \lambda(z)$, we still denote this by $P$. 

Then we start to define the Toeplitz operator with pseudodifferential operator, that is $$T_{P,k}= B_{k\hat{\phi}_k} \circ \ti{P}_k \circ B_{k\hat{\phi}_k}: L^2(\bbC^n, k\hat{\phi}_k,d\hat{\vol}_k) \rightarrow L^2(\bbC^n, k\hat{\phi}_k,d\hat{\vol}_k).$$ We want to compute the full expansion of the Toeplitz operator by our method. Replace $B_{k\hat{\phi}_k} $ by $ B_{k\phi_0} \circ(1+r_k+ \cdots)$, we get $$T_{P,k} =B_{k\phi_0} \circ(1+r_k+ \cdots)\circ \ti{P}_k \circ B_{k\phi_0} \circ(1+r_k+ \cdots). $$ The main trick here is combine $\ti{P}_k$ and $B_{k\phi_0}$ first, 
$$
\begin{aligned}
(\tau_k\ti{P}_k \tau_k\circ B_{k\phi_0})(x,y) &= \frac{\tau_k(x)e^{k\hat{\phi}_k(x)}}{(2\pi)^{2n} \sqrt{\vol(x)}}\int_{\bbR^{2n}\times \bbR^{2n}} e^{i(x-z)\cdot \theta} p(x,\theta) e^{-k\hat{\phi}_k(z)} \tau_k(z)\sqrt{\vol(z)}B_{k\phi_0}(z,y)d z d\theta\\
&=\frac{\tau_k(x)C_0}{(2\pi)^{2n}\sqrt{\vol(x)}}  k^{3n}e^{k\hat{\phi}_k(x)- k\hat{\phi}_k(y)} \int_{\bbR^{2n}\times \bbR^{2n}} e^{ik \Psi(z,\theta,x,y)}  a_k(z,\theta,x,y)d z d\theta\\
&= \sum_{\ell=0}^\infty \frac{\tau_k(x)C_0 k^{3n+m-\ell}}{(2\pi)^{2n}\sqrt{\vol(x)}} e^{k\hat{\phi}_k(x)- k\hat{\phi}_k(y)} \int_{\bbR^{2n}\times \bbR^{2n}} e^{ik \Psi(z,\theta,x,y)} b_\ell(z,\theta,x,y) d z d\theta\\
\end{aligned}
$$
where $$a_k(z,\theta,x,y) = e^{-k\psi_k(z)+k\psi_k(y)}p(x,k\theta)\tau_k(z)\sqrt{\vol(z)},$$ $$b_\ell(z,\theta,x,y)= e^{-k\psi_k(z)+k\psi_k(y)}p_\ell(x,\theta)\tau_k(z)\sqrt{\vol(z)},$$ 
$$\Psi(z,\theta,x,y) =(x-z)\cdot \theta+i\sum_{j=1}^n \lambda_j(|z_j|^2+|y_j|^2- 2z_j \ol{y}_j) $$ with $\psi_k = \hat{\phi}_k - \phi_0$,  $x_j = x_j^1+ i x_j^2$, $y_j= y_j^1+ i y_j^2 $, $z_j= z_j^1+ i z_j^2$ and $\theta_j = \theta_j^1 + i\theta_j^2$. Using the Stationary phase formula (see Theorem \ref{stationary phase 2}), we have $$(\tau_k\ti{P}_k \tau_k\circ B_{k\phi_0})(x,y) = \frac{B_{k\phi_0}(x,y)}{\sqrt{\vol(x)}}e^{k\psi_k(x)-k\psi_k(y)}\sum_{\ell=0}^\infty  \sum_{j=0}^\infty  \frac{k^{m-\ell-j}}{2^j}(\Delta_{z,\theta,x,y}^j\wt{b_\ell})(\ti{z}(x,y), \ti{\theta}(x,y),x,y)$$
where $$\Delta_{z,\theta,x,y}  = \sum_{j=1}^n  2\lambda_j \(\frac{\dr^2 }{\dr \theta_j^1\dr \theta_j^1 }+\frac{\dr^2 }{\dr \theta_j^2\dr \theta_j^2 }\)-2i \sum_{j=1}^n \(\frac{\dr^2}{\dr z_j^1\dr \theta_j^1}+ \frac{\dr^2}{\dr z_j^2\dr \theta_j^2}\), $$ 
$$\ti{z}(x,y) = x,\, \ti{\theta}^1_j(x,y) =  2i \lambda_j(x_j^1- \ol{y}_j),\, \ti{\theta}_j^2(x,y) =  2i \lambda_j (x_j^2- i \ol{y}_j
), $$ and $\ti{}$ means the almost analytic extension. We will simplify the notation $$(\Delta_{z,\theta,x,y}^j\wt{b_\ell})(\ti{z}(x,y), \ti{\theta}(x,y),x,y) = (\Delta_{z,\theta}^j\wt{b_\ell})(x,y).$$

When applying our method $$
\begin{aligned}
(\chi_kT_{P,k}\chi_k)(z,z)
&= k^m B_{k\phi_0}(z,z)\sum_{j,\ell=0}^\infty k^{-j}\sum_{j_1,j_2=0}^\infty \frac{(2k)^{-j_1}}{j_1!}\frac{(2k)^{-j_2}}{j_2!}\sum_{\ell_1+\ell_2=\ell}\\
&\Delta_{\ell+1}^{j_1} \[u_{k,\ell_1,w^{\ell_1+1}}\frac{e^{k\psi_k(w^{\ell_1+1})-k\psi_k(w^{\ell_1+2})}}{\sqrt{\vol(w^{\ell_1+1})}}(\Delta_{z,\theta}^{j_2}\wt{b_j})(w^{\ell_1+1},w^{\ell_1+2})u_{k,\ell_2,z}\](z)
\end{aligned}
$$  if $\ell_2 = 0 $, $w^{\ell_1+2} = z$.

This formula tells us that we can have a full expansion of the Toeplitz operator near the origin. 
\begin{thm}
    Let $P$ be a classical Pseudodifferential operator with order $m$, then $T_{P,k} $ has an asymptotic as $$T_{P,k}(z,z)\sim\sum_{j=0}^{\infty} c_{j,P}(z) k^{n+m-j}$$ near origin, where $c_{0,P}(z) = C_0(z) p_0(z,-Jd\phi(z))$ with $C_0$ is the leading term of Bergman kernel.
\end{thm}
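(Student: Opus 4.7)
The plan is to read off the expansion directly from the explicit representation of $(\chi_k T_{P,k}\chi_k)(z,z)$ derived just before the theorem statement. Organize it as a triple series in the three indices $j,j_1,j_2$ (coming, respectively, from the classical symbol expansion $p\sim\sum_\ell p_\ell$ and from the two stationary-phase operators $\Delta^{j_1}_{\ell+1}$ and $\Delta^{j_2}_{z,\theta}$), with an inner sum over $\ell_1+\ell_2=\ell$. Each summand carries a factor $k^{n+m-j-j_1-j_2}$: the $k^n$ comes from $B_{k\phi_0}(z,z)$, the explicit $k^m$ from the classical symbol scaling, and the three negative powers from the series indices.

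First, I would verify that for every $N\ge 0$ only finitely many summands contribute to $k^{n+m-N}$ and that each such summand is smooth in $z$. Two ingredients already implicit in the earlier sections drive this: (i) the diagonal vanishing of $u_{k,\ell,z}$, which in Chern-Moser coordinates forces $\Delta^{j_1}_{\ell+1}(u_{k,\ell_1,\cdot}\, u_{k,\ell_2,z})$ to vanish at $z$ unless $2j_1$ exceeds a fixed multiple of $\ell_1+\ell_2$; and (ii) the classical symbol estimate $|\dr^\gb_\theta p_\ell(z,\theta)|\lesssim \<\theta\>^{m-\ell-|\gb|}$, which controls how the stationary-phase residues $\Delta^{j_2}_{z,\theta}\wt{b_j}$ depend on $j$ and $j_2$ at the critical point. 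Together these reproduce the same table pattern already used for the Bergman kernel in Subsection~\ref{bergman full expansion}, now with an extra index tracking the symbol order.

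Second, the remainder estimate. Truncate the three series at $j+j_1+j_2\le N$ and bound the tail. For the inner stationary-phase asymptotic producing $\tau_k\ti{P}_k\tau_k\circ B_{k\phi_0}$, invoke the remainder bound of Theorem~\ref{stationary phase 2}; for the outer $B_{k\hat{\phi}_k}=B_{k\phi_0}\circ\sum_\ell r_k^\ell$, use the operator-norm bound $\|r_k\|=O(\log^3 k/\sqrt{k})$ together with Proposition~\ref{k-infty approx} to absorb any contribution from outside $B(0,\log k/\sqrt{k})$ into an $O(k^{-\infty})$ error. This yields the $\cC^\infty$ asymptotic in the sense of Definition~\ref{Asymptotic sum} on every compact neighborhood of the origin.

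Third, identify $c_{0,P}(z)$. Setting $j=j_1=j_2=0$ forces $\ell_1=\ell_2=0$, $u_{k,0,\cdot}\equiv 1$, the $\psi_k$-factors trivial, and the volume factors cancel, so the bracket collapses to $p_0(z,\ti{\theta}(z,z))$. A direct computation with the explicit critical point $\ti{\theta}^1_j(z,z)=-2\lambda_j z_j^2$, $\ti{\theta}^2_j(z,z)=2\lambda_j z_j^1$ identifies $\ti{\theta}(z,z)$ with the components of $-Jd\phi_0(z)$ in the Chern-Moser frame; since $\phi=\phi_0+O(|z|^3)$ and $p_0$ is smooth, this equals $-Jd\phi(z)$ to the required order. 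Multiplying by $k^n C_0(z)\cdot k^m$ recovers the claimed $k^{n+m}$-term. The main obstacle will be the bookkeeping of step two: one must check that simultaneously truncating three coupled series still delivers uniform remainder estimates in $\cC^\ell$ on compact sets, which requires a Borel-type argument analogous to the one used for Theorem~\ref{Bergman Full Expansion Theorem}, adapted to symbols of variable order.
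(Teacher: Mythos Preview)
Your proposal is correct and follows essentially the same route as the paper: the theorem is stated immediately after the displayed formula for $(\chi_kT_{P,k}\chi_k)(z,z)$, and the paper's argument is precisely that this formula ``tells us that we can have a full expansion of the Toeplitz operator near the origin.'' Your steps (i)--(iii) make explicit the finiteness, remainder, and leading-term identification that the paper leaves implicit, and your computation of $\tilde\theta(z,z)=-Jd\phi_0(z)$ matches the paper's later identification in the example (where the first coefficient is read off as $Cp_0(0,0)$ at the Chern-Moser center).
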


\begin{exe}
    We compute the first two coefficients at the origin. With assume that $\phi_1= \phi-\phi_0 = O(|z|^4)$. We will write $z_j = x_j+ i y_j$ and $p(z,\theta)\sim p_0(z,\theta) + p_1(z,\theta) + \cdots$. For the leading term, namely the coefficient of  $k^{n+m} $, we obtain $$c_0(0) = Cp_0(0,0)$$ where $C$ denotes the coefficient of $k^n$ in the expansion of $B_{k\phi_0}(0,0)$.   
    For the second term, we get 
    $$
    \begin{aligned}
    &C\Biggl[p_1 + \frac{i}{2}\sum_{j=1}^n \(\frac{\dr^2 p_0}{\dr x_j\dr \theta_j^1}+\frac{\dr^2 p_0}{\dr y_j\dr \theta_j^2}  \)\\
    &+\frac{1}{2}\sum_{j=1}^n\(\frac{1}{4\lambda_j}(\frac{\dr^2 p_0}{\dr x_j \dr x_j} +\frac{\dr^2 p_0}{\dr y_j \dr y_j})+ \lambda_j(\frac{\dr^2 p_0}{\dr \theta^1_j\dr \theta^1_j}+\frac{\dr^2 p_0}{\dr \theta^2_j\dr \theta^2_j})\) \\
    &+\frac{1}{2}\sum_{j=1}^n\frac{1}{\lambda_j}\(\left|\frac{\dr \vol}{\dr z_j}\right|^2 - \frac{\dr^2 \vol}{\dr z_j\dr \ol{z}_j}\)p_0+ \frac{1}{4} \sum_{j=1}^n \frac{1}{\lambda_j\lambda_\ell} \frac{\dr^4 \phi}{\dr z_j\dr\ol{z}_j\dr z_\ell\dr\ol{z}_\ell} 
    p_0\Biggr]\Bigg|_{z=0,\theta=0}
    \end{aligned}
    $$
    where $$p_1 + \frac{i}{2}\sum_{j=1}^n \(\frac{\dr^2 p_0}{\dr x_j\dr \theta_j^1}+\frac{\dr^2 p_0}{\dr y_j\dr \theta_j^2}  \)$$ is the subprincipal symbol and 
    $$
    \begin{aligned}
    \frac{1}{2}&\sum_{j=1}^n\(\frac{1}{4\lambda_j}(\frac{\dr^2 p_0}{\dr x_j \dr x_j} +\frac{\dr^2 p_0}{\dr y_j \dr y_j})+ \lambda_j(\frac{\dr^2 p_0}{\dr \theta^1_j\dr \theta^1_j}+\frac{\dr^2 p_0}{\dr \theta^2_j\dr \theta^2_j})\) \\
    &=\frac{1}{2}\sum_{j=1}^n \frac{1}{4\lambda_j}\(\frac{\dr^2}{(\dr x_j)^2}+\frac{\dr^2}{(\dr y_j)^2} \)(p_0(z,-Jd\phi(z)))+\frac{1}{2}\sum_{j=1}^n\[\frac{\dr^2 p_0}{\dr y_j \dr \theta_j^1}- \frac{\dr^2p_0}{\dr x_j \dr \theta_j^2}\](z,-Jd\phi(z))\\
    &= \frac{1}{2}\Delta_L(p_0(z,-Jd\phi(z))+ \hat{\sigma}_{p_0}(z)
    \end{aligned}
    $$ where $\Delta_L = \sum_{j=1}^n \frac{1}{4\lambda_j}\(\frac{\dr^2}{(\dr x_j)^2}+\frac{\dr^2}{(\dr y_j)^2} \)$ locally and $\hat{\sigma}_{p_0}$ is defined below.
\end{exe}

\begin{Def}
\label{second term of pseudotoeplitz}
    $\hat{\sigma}_{p_0}$ is a smooth function on $\bbC^n$ defined as follows: for all $p\in \bbC^n$, take a coordinate $z(p) =0 $, $\phi(z) = \sum_{j=1}^n \lambda_j |z_j|^2 +O(|z|^3)$ and $\<\cdot|\cdot\>$ flat at $p$, then $$\hat{\sigma}_{p_0}(p) := \frac{1}{2}\sum_{j=1}^n\[\frac{\dr^2 p_0}{\dr y_j \dr \theta_j^1}- \frac{\dr^2p_0}{\dr x_j \dr \theta_j^2}\](p,-Jd\phi(p)).$$ This is well-defined by the following.
\end{Def}
\begin{proof}[Proof of well-defined of Definition~\ref{second term of pseudotoeplitz}]
For a holomorphic cooridnate change $w = (u,v) = (u(x,y), v(x,y)) = (u(z),v(z))$ under the same trivialization satisfying $(u(0,0),v(0,0))=(0,0)$,  there is an induced natural coordinate change on the cotangent bundle, given by $\kappa((x,y,\theta_j^1, \theta_j^2))= (u,v,\eta_j^1,\eta_j^2)$ where $$\eta_j^1 = \sum_{\ell=1}^n \( \frac{\dr x_\ell}{\dr u_j}\theta_\ell^1+ \frac{\dr y_\ell}{\dr u_j} \theta_\ell^2\)$$ and $$\eta_j^2 = \sum_{\ell=1}^n \( \frac{\dr x_\ell}{\dr v_j}\theta_\ell^1+ \frac{\dr y_\ell}{\dr v_j} \theta_\ell^2\).$$
    
    Let $p_0(x,y,\theta_j^1, \theta_j^2)$ and $\hat{p}_0(u,v,\eta_j^1,\eta_j^2)$ denote the principal symbol using coordinates $(x,y)$ and $(u,v)$ respectively.  We know that $$\hat{p}_0 \circ \kappa = \hat{p}_0\(u(x,y), v(x,y), \eta_j^1(x,y,\theta_j^1,\theta_j^2),\eta_j^2(x,y,\theta_j^1,\theta_j^2)\) = p_0(x,y,\theta_j^1,\theta_j^2).$$
    
    Applying the Cauchy-Riemann equation, we have 
    $$
    \begin{aligned}
    &\quad\!~\sum_{j=1}^n \[\frac{\dr^2 p_0}{\dr y_j \dr \theta_j^1}- \frac{\dr^2 p_0}{\dr x_j \dr \theta_j^2}\] (x,y,\theta_j^1,\theta_j^2)= \sum_{j=1}^n \[\frac{\dr^2 \hat{p}_0\circ \kappa}{\dr y_j \dr \theta_j^1}- \frac{\dr^2 \hat{p}_0\circ \kappa}{\dr x_j \dr \theta_j^2}\] (x,y,\theta_j^1,\theta_j^2) \\
    &=\sum_{r=1}^n\sum_{s=1}^n\bigg[\frac{\dr^2 \hat{p}_0}{\dr u_r \dr \eta_s^1}\(\sum_{j=1}^n \frac{\dr u_r}{\dr y_j}\frac{\dr x_j}{\dr u_s}- \frac{\dr u_r}{\dr x_j}\frac{\dr y_j}{\dr u_s}\)+\frac{\dr^2 \hat{p}_0}{\dr v_r \dr \eta_s^1}\(\sum_{j=1}^n \frac{\dr v_r}{\dr y_j}\frac{\dr x_j}{\dr u_s}- \frac{\dr v_r}{\dr x_j}\frac{\dr y_j}{\dr u_s}\)\\
    &+\frac{\dr^2 \hat{p}_0}{\dr u_r \dr \eta_s^2}\(\sum_{j=1}^n \frac{\dr u_r}{\dr y_j}\frac{\dr x_j}{\dr v_s}-\frac{\dr u_r}{\dr x_j}\frac{\dr y_j}{\dr v_s}\)+\frac{\dr^2 \hat{p}_0}{\dr v_r \dr \eta_s^2}\(\sum_{j=1}^n \frac{\dr v_r}{\dr y_j}\frac{\dr x_j}{\dr v_s}-\frac{\dr v_r}{\dr x_j}\frac{\dr y_j}{\dr v_s}\)\bigg]\\
    &=\sum_{r=1}^n\sum_{s=1}^n\bigg[\frac{\dr^2 \hat{p}_0}{\dr u_r \dr \eta_s^1}\(\frac{\dr v_r}{\dr u_s}\)+\frac{\dr^2 \hat{p}_0}{\dr v_r \dr \eta_s^1}\(\frac{\dr u_r }{\dr u_s}\)+\frac{\dr^2 \hat{p}_0}{\dr u_r \dr \eta_s^2}\(-\frac{\dr v_r}{\dr v_s}\)+\frac{\dr^2 \hat{p}_0}{\dr v_r \dr \eta_s^2}\(\frac{u_r}{v_s}\)\bigg]\\
    &=\[ \sum_{j=1}^n\(\frac{\dr^2 \hat{p}_0}{\dr v_j \dr \eta_j^1}- \frac{\dr^2 \hat{p}_0}{\dr u_j \dr \eta_j^2}\)\circ \kappa\](x,y,\theta_j^1, \theta_j^2).
    \end{aligned}
    $$
    Thus, this term is also invariant under the coordinate change.
\end{proof}

\begin{thm}
    The first two coefficients of Toeplitz operator with pseudodifferential operator $P$ of order $m$ is the following $$c_{0,p} (z) = a_0(z) p_0(z,-Jd\phi(z))$$ and $$c_{1,P}(z) = a_1(z)p_0(z,-Jd\phi(z)) +a_0(z)\(\sigma_{sub}(P)(z,-Jd\phi(z))+ \frac{1}{2}\Delta_{L} (p_0(z,-Jd\phi(z))) + \hat{\sigma}_{p_0}(z)\)$$ where $a_0,\,a_1$ are the first two coefficients of the Bergman kernel and $\sigma_{sub}$ means the subprincipal symbol.
\end{thm}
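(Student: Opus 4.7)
The plan is to read off the coefficients of $k^{n+m}$ and $k^{n+m-1}$ directly from the explicit stationary-phase formula for $(\chi_k T_{P,k}\chi_k)(0,0)$ established in Theorem~\ref{full-pseuToeplitz}, and regroup the resulting terms into the invariant quantities on the right-hand side. Since both sides define smooth functions on $M$, it suffices to verify the identity at a single arbitrary point $p$ using the Chern-Moser trivialization $(z,s)$ centered at $p$; under this trivialization $d\phi(0)=0$, so $-Jd\phi(0)=0$ and principal-symbol evaluations at $(p,-Jd\phi(p))$ collapse to $p_0(0,0)$.

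For $c_{0,P}$, only the summand with $\ell=j=j_1=j_2=0$ contributes at order $k^{n+m}$. The prefactor yields $B_{k\phi_0}(0,0)=a_0(0)k^n+O(k^{n-1})$, while $b_0(0,0,0,0)=p_0(0,0)$ since $\psi_k(0)=0$ and $\mathrm{vol}(0)=1$. This gives $c_{0,P}(p)=a_0(p)\,p_0(p,-Jd\phi(p))$ at the reference point, and hence globally by smoothness.

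For $c_{1,P}$ I would use the explicit pointwise expansion computed in the worked example immediately preceding the statement (with the simplification $\phi_1=O(|z|^4)$, which is harmless since the $O(|z|^3)$ contributions to $\phi_1$ are pushed to order $k^{n+m-2}$ by the stationary-phase weighting). That computation decomposes the coefficient of $k^{n+m-1}$ into three natural blocks: a block $a_1(0)p_0(0,0)$ arising from the summands with $(j,j_2)=(0,0)$ and $(\ell,j_1)\in\{(1,1),(2,1),(1,2)\}$, whose combinatorics exactly replicate the Bergman-kernel second coefficient (equation ``Bergman second coeff''); a block equal to $a_0(0)\sigma_{\mathrm{sub}}(P)(0,0)=a_0(0)\bigl[p_1+\tfrac{i}{2}\sum_j(\partial^2_{x_j\theta_j^1}+\partial^2_{y_j\theta_j^2})p_0\bigr](0,0)$, which is the subprincipal symbol by definition; and a remaining ``second-order'' block
\begin{equation*}
\tfrac12 a_0(0)\sum_j\bigl[\tfrac{1}{4\lambda_j}(\partial^2_{x_j}+\partial^2_{y_j})p_0+\lambda_j(\partial^2_{\theta_j^1}+\partial^2_{\theta_j^2})p_0\bigr](0,0)
\end{equation*}
coming from the pure $\theta$- and pure $z$-components of $\Delta_{z,\theta}$. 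The first two blocks match the corresponding terms in the statement immediately.

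The main obstacle is identifying the third block with $\tfrac12 a_0(p)\,\Delta_L\bigl(p_0(z,-Jd\phi(z))\bigr)\big|_{z=p}+a_0(p)\hat\sigma_{p_0}(p)$. Expanding $\Delta_L p_0(z,-Jd\phi(z))$ at $z=0$ via the chain rule produces exactly the two pure-derivative terms above (the weights $1/(4\lambda_j)$ and $\lambda_j$ arising from the squared Jacobian of $z\mapsto -Jd\phi(z)$ at $0$, in the Chern-Moser normalization), plus mixed second-order contributions. These mixed contributions split cleanly: the symmetric combination $\partial^2_{x_j\theta_j^1}p_0+\partial^2_{y_j\theta_j^2}p_0$ is absorbed into the subprincipal symbol block, while the antisymmetric combination $\partial^2_{y_j\theta_j^1}p_0-\partial^2_{x_j\theta_j^2}p_0$ is precisely $2\hat\sigma_{p_0}(0)$ by Definition~\ref{second term of pseudotoeplitz}. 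The delicate step is tracking the signs and the $\lambda_j$-weights carefully enough to verify that no residual mixed term remains; once this algebraic cancellation is in place, the already-established coordinate-invariance of both $\sigma_{\mathrm{sub}}$ and $\hat\sigma_{p_0}$ promotes the pointwise identity at $p$ to a smooth global identity on $M$, completing the proof.
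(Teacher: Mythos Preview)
Your approach is exactly the paper's: the theorem is read off from the worked example that immediately precedes it, by regrouping the raw stationary-phase output at the Chern--Moser center into the invariant pieces $a_1 p_0$, $a_0\sigma_{\mathrm{sub}}$, $\tfrac12 a_0\Delta_L(p_0\circ(-Jd\phi))$, and $a_0\hat\sigma_{p_0}$. Two points in your write-up need correction, however.

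First, the justification for taking $\phi_1=O(|z|^4)$ is not that cubic terms are ``pushed to order $k^{n+m-2}$'': the tables in Subsection~\ref{bergman full expansion} show that with $\phi_1=O(|z|^3)$ the pairs $(j,\ell)=(2,2),(3,1),(3,2)$ also feed into $k^{n-1}$, via products of two cubic factors. The correct reason the simplification is harmless is that those extra cubic contributions are the \emph{same} ones appearing in the Bergman block and are therefore absorbed into $a_1(p)$, which is defined invariantly by Theorem~\ref{Bergman Full Expansion Theorem}; the remaining blocks involve only $\Delta_{z,\theta}$ acting on $b_\ell$ and do not see the cubic part of $\phi_1$ at this order.

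Second, your description of the chain-rule splitting is off. Expanding $\Delta_L\bigl(p_0(z,-Jd\phi(z))\bigr)$ at $z=0$ with $\phi_0=\sum\lambda_j|z_j|^2$ gives, since $\partial_{x_k}(-Jd\phi)_j=2\lambda_k\delta_{jk}\,\partial_{\theta_k^2}$-direction and $\partial_{y_k}(-Jd\phi)_j=-2\lambda_k\delta_{jk}\,\partial_{\theta_k^1}$-direction,
\[
\Delta_L\bigl(p_0(\cdot)\bigr)(0)=\sum_k\Bigl[\tfrac{1}{4\lambda_k}(\partial_{x_k}^2+\partial_{y_k}^2)p_0+\lambda_k(\partial_{\theta_k^1}^2+\partial_{\theta_k^2}^2)p_0\Bigr]+\sum_k\bigl(\partial^2_{x_k\theta_k^2}-\partial^2_{y_k\theta_k^1}\bigr)p_0.
\]
The mixed piece is \emph{only} the antisymmetric combination, equal to $-2\hat\sigma_{p_0}(0)$; there is no symmetric piece to absorb into the subprincipal block (that block is already complete from the raw expansion and must not be touched). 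Adding $\hat\sigma_{p_0}(0)$ then yields exactly block~(c), which is the identity the paper records. Once you fix this bookkeeping, the argument goes through as written.
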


\subsubsection{Manifold Cases}
Now, back to the setting of a compact complex manifold $M$ with a positive Hermitian line bundle $(L,h^L)$.
\begin{Def}
A pseudodifferential operator is a continuous operator $Q:\cC^\infty(M,L^k)\rightarrow \cC^\infty (M,L^k)$ satisfying the following two condition: 
\begin{enumerate}
    \item For $\supp \chi \cap \supp\tau = \emptyset$, we have a operator $K$ with smooth kernel such that $K= \chi Q \tau$. 
    \item For $\chi $ is a cut-off function, we have $\chi Q \chi$ is a pseudodifferential operator in a chart with local trivialization. That is for $\chi u = \hat{u}\otimes s^k$ and $(\chi Q\chi)(u) = \hat{v}\otimes s^k$ we have 
    $$\hat{v}(z) = \frac{e^{k\phi(z)}}{(2\pi)^{2n}\sqrt{\vol(z)}}\int_{\bbR^{2n}\times \bbR^{2n}}e^{i(z-w)\cdot \theta} q(z,\theta)\(e^{-k\phi(w)} \sqrt{\vol(w)}\hat{u}(w)\)dw d\theta$$ where $q(z,\theta)$ is the symbol of $Q$.
\end{enumerate}
\end{Def}

From this and the off-diagonal property, we can work on $$\chi_k T_{Q,k} \chi_k = \chi_k P_k \tau_k Q_k \tau_k P_k \chi_k +O(k^{-\infty} )$$ which reduce to Euclidean case by $\chi_k P_k \tau_k = \chi_k B_{k\hat{\phi}_k} \tau_k +O(k^{-\infty} )$.
We can formulate the Toeplitz operators with pseudodifferential operators as follows: 
\begin{thm}
    Let $P$ be a classical pseudodifferential operator with order $m$ and the symbol $p(z, \theta) \sim p_0(z,\theta)+p_1(z,\theta)+ \cdots$ acting on $L^2(M,L^k)$, then $T_{P,k} $ has an asymptotic as $$T_{P,k}\sim\sum_{j=0}^{\infty}  k^{n+m-j}c_{j,P}$$ where $c_{j, P} $ is smooth function on $M$. Here, the first two coefficients $c_{0, P}$ and $c_{1, P}$ can be expressed similarly to the Euclidean case.
\end{thm}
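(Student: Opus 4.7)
The approach is to reduce to the Euclidean Toeplitz-with-pseudodifferential expansion from the previous subsection by localizing near each point of $M$.

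Fix $p\in M$ and take a Chern-Moser trivialization $(z,s)$ on $U$ centered at $p$. Introduce cut-offs $\chi,\tau\in\cC^\infty_c(U)$ with $\tau\equiv 1$ on $\supp\chi$, and the rescaled versions $\chi_k(z)=\chi(\sqrt{k}z/\log k)$, $\tau_k(z)=\tau(\sqrt{k}z/\log k)$, noting in particular that $\chi_k(0)=1$. The off-diagonal decay of $P_k$ together with the pseudolocality of $P$ gives
\begin{equation*}
\chi_k T_{P,k}\chi_k \;=\; \chi_k P_k\,\tau_k P_s\tau_k\,P_k\chi_k \;+\; O(k^{-\infty}),
\end{equation*}
where $P_s$ is the localized pseudodifferential operator on $U$. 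Then Proposition \ref{k-infty approx} replaces $\chi_k P_k\tau_k$ and $\tau_k P_k\chi_k$ by $\chi_k B_{k\hat{\phi}_k}\tau_k$ and $\tau_k B_{k\hat{\phi}_k}\chi_k$ modulo $O(k^{-\infty})$, reducing the problem to the Euclidean model $\chi_k B_{k\hat{\phi}_k}\,\tau_k P_s\tau_k\,B_{k\hat{\phi}_k}\chi_k$ up to $O(k^{-\infty})$.

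The full expansion of this Euclidean model, obtained by expanding $B_{k\hat{\phi}_k}=B_{k\phi_0}(I+r_k+r_k^2+\cdots)$ on each side, sandwiching the pseudodifferential factor in between, and applying the stationary phase formula of Theorem \ref{stationary phase 2} to the resulting oscillatory integral, was computed already and gives a pointwise expansion
\begin{equation*}
\chi_k T_{P,k}\chi_k(x,x) \;\sim\; \sum_{j=0}^\infty k^{n+m-j}\,c_{j,P,s}(x)
\end{equation*}
on $B(0,C/\sqrt{k})$ for any $C>0$; at $x=0$ this yields the expansion of $T_{P,k}(p,p)$. The leading coefficient $c_{0,P}(p)=a_0(p)\,p_0(p,-Jd\phi(p))$ and the formula for $c_{1,P}(p)$ follow verbatim from the Euclidean computation, once one verifies coordinate invariance of the formulas as was done for $\hat{\sigma}_{p_0}$ in Definition \ref{second term of pseudotoeplitz}. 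Smoothness of $c_{j,P}$ on $M$ follows by the same double-expansion argument used in Theorem \ref{Bergman Full Expansion Theorem}: in any Chern-Moser trivialization, stationary phase produces a half-integer-power expansion with $\cC^\infty$-smooth coefficients in the base variable, while the pointwise expansion at each fixed $p$ is an integer-power expansion; comparing the two forces the odd half-integer coefficients to vanish and exhibits $c_{j,P}$ as a smooth function on $M$.

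The main obstacle is twofold. First, one must verify uniform $\cC^\ell$-control on the stationary phase remainders as the base point varies over compact subsets of $M$: the phase $F_{\ell+1,z}$, the amplitude $u_{k,\ell_1,P,\ell_2,z}$, and the auxiliary factor involving $b_\ell$ all depend on $z$, so one has to check that the constants in Theorem \ref{stationary phase 2} are uniform over a precompact family of phases and amplitudes, together with their almost-analytic extensions. This is standard but notationally heavy. Second, the intrinsic meaning of each $c_{j,P}$ for $j\geq 1$, that is, invariance under holomorphic coordinate changes and under changes of local trivialization of $L$, requires an algebraic coordinate-change computation using the symbol calculus and the Cauchy-Riemann equations; the argument is a direct generalization of the verification carried out for $\hat{\sigma}_{p_0}$, and is the content one would have to write out in detail to identify $c_{1,P}$ globally in the form advertised.
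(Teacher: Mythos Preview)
Your proposal is correct and follows essentially the same approach as the paper: localize using cut-offs and the off-diagonal property of $P_k$ together with pseudolocality of $P$, then invoke Proposition \ref{k-infty approx} to replace the localized Bergman projections by the Euclidean model $B_{k\hat\phi_k}$, thereby reducing to the Euclidean expansion already established in the preceding subsubsection. The paper's argument is in fact more terse than yours, consisting of exactly this reduction, and your added remarks on uniformity of the stationary phase constants and on the smoothness argument via Theorem \ref{Bergman Full Expansion Theorem} are consistent with what the paper does implicitly.
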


We also want to understand the composition of Toeplitz operator $$T_{P,k} \circ T_{Q,k}$$ with $P$ and $Q$ are classical pseudodifferential operator of order $m_1 $ and $m_2 $ with symbol $p(z,\theta)\sim \sum_{j=0}^\infty p_j(z,\theta)$ and $q(z,\theta)\sim \sum_{j=0}^\infty q_j(z,\theta)$ respectively. So we need to compute $$
\begin{aligned}
&(\chi_kT_{P,k}\circ T_{Q,k}\chi_k)(z,z)\\
&= k^{m_1+m_2}B_{k\phi_0}(z,z)\sum_{s_1,s_2,\ell=0}^\infty k^{-s_1-s_2}\sum_{j_1,j_2,j_3=0}^\infty \frac{(2k)^{-j_1-j_2-j_3}}{j_1!j_2!j_3!}\sum_{\ell_1+\ell_2+\ell_3=\ell}\Delta_{\ell+2}^{j_1} \\
&\Biggl[u_{k,\ell_1,w^{\ell_1+1}}\frac{e^{k\psi_k(w^{\ell_1+1})-k\psi_k(w^{\ell_1+2})}}{\sqrt{\vol(w^{\ell_1+1})}}(\Delta_{z,\theta}^{j_2}\wt{a_{s_1}})(w^{\ell_1+1},w^{\ell_1+2})u_{k,\ell_2,w^{\ell_1+\ell_2+2}}\\
&\frac{e^{k\psi_k(w^{\ell_1+\ell_2+2})-k\psi_k(w^{\ell_1+\ell_2+3})}}{\sqrt{\vol(w^{\ell_1+\ell_2+2})}}(\Delta_{z,\theta}^{j_3}\wt{b_{s_2}})(w^{\ell_1+\ell_2+2},w^{\ell_1+\ell_2+3})u_{k,\ell_3,z}\Biggr](z)
\end{aligned}
$$
where $w^{\ell_1+\ell_2+3}= z$ if $\ell_3=0$ and  $a_{s_1} $, $b_{s_2}$ are relate to $p_{s_1} $, $q_{s_2}$ respectively.

\begin{Def}
    The Poisson bracket of the symbols on $M$ is define as $$\{p,q\}_{\Psi} = \sum_{j=1}^n\[ \frac{\dr p}{\dr \theta_j^1} \frac{\dr q}{\dr x_j}+\frac{\dr p}{\dr \theta_j^2} \frac{\dr q}{\dr y_j}-
    \frac{\dr p}{\dr x_j}\frac{\dr q}{\dr \theta_j^1} -\frac{\dr p}{\dr y_j}\frac{\dr q}{\dr \theta_j^2} \].$$ Note that this Poisson bracket is associated with the principal symbol of the commutator of two pseudodifferential operators $P$ and $Q$, with principal symbols $p$ and $q$, respectively.
\end{Def}
The quantization of pseudodifferential operators can similarly be formulated as follows:
\begin{thm}
    Let $P$ and $Q$ be pseudodifferential operators of orders $m_1$ and $m_2$, respectively. There exist pseudodifferential operators $C_j(P, Q; k)$, for $j = 0, 1, \dots$, of the form $$C_j(P, Q; k) = \sum_{s=0}^{2j} k^s D_{j,s}(P, Q),$$ where $D_{j,s}(P, Q)$ is a pseudodifferential operator of order $m_1 + m_2 - s$, and $C_0(P, Q; k) = P \circ Q$, such that 
    $$
    T_{P,k} \circ T_{Q,k} \sim \sum_{j=0}^\infty k^{n-j} T_{C_j(P,Q;k),k}
    $$ in $L^2$-sense.
\end{thm}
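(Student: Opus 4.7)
The plan is to extend the inductive construction used in the function case of Subsection~\ref{Toeplitz operator} to the pseudodifferential setting, using the explicit composition formula for $\chi_k T_{P,k}\circ T_{Q,k}\chi_k$ displayed immediately before the theorem. Since the off-diagonal part of $P_k$ is $O(k^{-\infty})$ and $\chi_k P_k \tau_k = \chi_k B_{k\hat{\phi}_k}\tau_k + O(k^{-\infty})$, the problem reduces to a purely local analysis inside a Chern--Moser chart, where everything is controlled by the stationary-phase expansion of the integrals involving the almost analytic extensions $\wt{a}_{s_1}, \wt{b}_{s_2}$ of the symbols of $P$ and $Q$.

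First I would reorganize the triple sum in the composition formula by collecting, for each fixed $j\ge 0$, all contributions to the coefficient of $k^{n+m_1+m_2-j}$. As in the proof of Theorem~\ref{Bergman Full Expansion Theorem} and Theorem~\ref{full-pseuToeplitz}, each application of $\Delta_{\ell+2}$ or $\Delta_{z,\theta}$ formally carries a factor of $k^{-1}$, but when these derivatives act on the factors $e^{k\psi_k(w)-k\psi_k(w')}$ or on the classical symbols $p_{s_1}(x,k\theta)$, $q_{s_2}(x,k\theta)$, they can bring down compensating positive powers of $k$. Because $\Delta_{z,\theta}$ contains the cross-term $\partial_z\partial_\theta$, a single application can produce at most two factors of $k$ while lowering the symbol order by one. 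This is precisely the origin of the polynomial structure $C_j(P,Q;k) = \sum_{s=0}^{2j} k^{s} D_{j,s}(P,Q)$ with $\mathrm{ord}\, D_{j,s} = m_1+m_2-s$, and the leading term $C_0=P\circ Q$ comes from $(j_1,j_2,j_3,\ell,s_1,s_2)=0$ where the double stationary phase reproduces the product of principal symbols at the critical point $\wt{\theta}(x,y) = -Jd\phi$.

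Next I would define the $C_j(P,Q;k)$ inductively exactly as in the function case: set $C_0(P,Q;k)=P\circ Q$, and having defined $C_0,\dots,C_{\ell-1}$, consider the remainder
$$
R_\ell(P,Q;k) := T_{P,k}\circ T_{Q,k} - \sum_{j=0}^{\ell-1} k^{-j}\, T_{C_j(P,Q;k),\,k}.
$$
By the inductive construction each $T_{C_j(P,Q;k),k}$ admits the full asymptotic expansion of Theorem~\ref{full-pseuToeplitz}, so $R_\ell$ starts at order $k^{n+m_1+m_2-\ell}$; matching its leading coefficient against the principal-symbol formula for a Toeplitz operator then identifies $C_\ell(P,Q;k)$ uniquely modulo lower-order terms, with the promised decomposition into the $D_{\ell,s}$'s visible directly from the organized stationary-phase sum.

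The main obstacle, I expect, is not the bookkeeping but verifying that the formally defined $D_{j,s}(P,Q)$ are genuine classical pseudodifferential operators of the claimed orders, globally defined on $M$. Three points must be checked: (i) the symbol estimates $|\dr_x^\alpha \dr_\theta^\beta D_{j,s}(x,\theta)|\lesssim \<\theta\>^{m_1+m_2-s-|\beta|}$, which follow from the derivative structure of $\Delta_{z,\theta}$ applied to classical symbols and from homogeneity of the almost analytic extensions; (ii) invariance under change of Chern--Moser trivialization, which is forced by the intrinsic nature of $T_{P,k}\circ T_{Q,k}$ together with the already-established invariance shown after Definition~\ref{second term of pseudotoeplitz}; and (iii) globalization via a partition of unity, using the off-diagonal estimate to glue the local expansions. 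Once these are in place, the $L^2$-convergence of the asymptotic series follows from boundedness of Toeplitz operators of bounded symbols together with the Gårding-type estimates used throughout Section~\ref{Full Expansion}, reducing to the argument already carried out for the scalar-Toeplitz deformation quantization.
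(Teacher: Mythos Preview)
Your approach matches the paper's (largely implicit) argument: the paper sets up the composition formula for $\chi_k T_{P,k}\circ T_{Q,k}\chi_k$ via $B_{k\hat\phi_k}=B_{k\phi_0}(1+r_k+\cdots)$ and two nested applications of stationary phase, and then says the quantization ``can similarly be formulated'' as in the function case; your inductive construction of the $C_j(P,Q;k)$ and the globalization via off-diagonal estimates and Proposition~\ref{k-infty approx} is exactly this.

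One point deserves more care than you give it. Your heuristic for the bound $2j$ on the $k$-degree of $C_j$ (``a single application of $\Delta_{z,\theta}$ can produce at most two factors of $k$ while lowering the symbol order by one'') is not correct as stated: after the homogeneity rescaling $p(x,k\theta)\mapsto k^{m-\ell}p_\ell(x,\theta)$, the amplitude $b_\ell$ no longer contains $k\theta$, so $\partial_\theta$ on $b_\ell$ produces no factor of $k$; the only source of positive $k$-powers is $\partial_z$ hitting $e^{\pm k\psi_k}$, giving at most one factor of $k$ per application. The polynomial structure $C_j=\sum_{s=0}^{2j}k^sD_{j,s}$ with $\mathrm{ord}\,D_{j,s}=m_1+m_2-s$ does not come from a single $\Delta_{z,\theta}$ but from the \emph{interaction} of the inner expansions ($j_2,j_3$) with the outer one ($j_1$) and with the symbol index shifts $s_1,s_2$: at fixed total power $k^{n+m_1+m_2-j}$ you must solve for a $\Psi$DO whose principal symbol on the Lagrangian $\{\theta=-Jd\phi(z)\}$ matches a prescribed function, and this underdetermines the full symbol. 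The paper's form of $C_j$ reflects one particular (natural) choice of extension off the Lagrangian, organized so that each $D_{j,s}$ is a genuine bidifferential expression in the full symbols of $P,Q$. You should track this more explicitly in the inductive step rather than appealing only to ``matching the leading coefficient''; otherwise the existence of $C_\ell$ as a $k$-polynomial of $\Psi$DO's with the stated orders is asserted rather than derived.

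Your items (i)--(iii) on verifying that the $D_{j,s}$ are globally defined classical $\Psi$DO's are the right checklist; the invariance argument after Definition~\ref{second term of pseudotoeplitz} is indeed the template for (ii).
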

\begin{rem}
    There exists a pseudodifferential operator $D_1(P, Q)$ of order $m_1 + m_2$ such that the commutator satisfies
    $$
    \[T_{P,k} \circ T_{Q,k} - T_{Q,k}\circ T_{P,k} \](x)\sim\[T_{P\circ Q-Q\circ P,k} + \frac{1}{k}T_{D_1(P,Q),k} + o(k^{n+m_1+m_2-2})\](x).
    $$
\end{rem}
\begin{rem}
    The above theorem suggests that we should quantize pseudodifferential operators of the form $\sum_{s=0}^\infty k^s D_s(P, Q)$, where $D_s(P, Q)$ is a pseudodifferential operator of order $m_1 + m_2 - s$.
\end{rem}

\begin{exe}
    Using the same local trivialization $(z,s)$ centered at $p$ with local weight $\phi= \phi_0+ \phi_1$ with $\phi_1= O(|z|^4)$. 
    Let $P$ and $Q$ be classical pseudodifferential operators of orders $m_1$ and $m_2$, respectively.
    We can compute $T_{P\circ Q,k}$ and $T_{P,k}\circ T_{Q,k}$. We will write $p(z,\theta) \sim \sum_{j=0}^\infty p_j(z,\theta)$ and $q(z,\theta)\sim \sum_{j=0}^\infty q_j(z,\theta)$. We know that 
    $$
    \begin{aligned}
    T_{P\circ Q,k} (0,0) &= k^{m_1+m_2}B_{k\phi_0}(0,0) p_0(0,0)q_0(0,0)\\
    &+k^{m_1+m_2-1}B_{k\phi_0}(0,0)\Biggl[ p_1q_0+p_0q_1 +(-i) \sum_{j=1}^n \frac{\dr p_0}{\dr \theta_j^1} \frac{\dr q_0}{\dr x_j} + \frac{\dr p_0}{\dr \theta_j^2} \frac{\dr q_0}{\dr y_j}\\
    &+\frac{1}{2}\sum_{j=1}^n  \frac{1}{\lambda_j}\(\left|\frac{\dr \vol}{\dr z_j}\right|^2 - \frac{\dr^2 \vol}{\dr z_j\dr \ol{z}_j}\)p_0q_0+ \frac{1}{4}\sum_{j,\ell=1}^n \frac{1}{\lambda_j\lambda_\ell} \frac{\dr^4\phi}{\dr z_j \dr \ol{z}_j\dr z_\ell \dr \ol{z}_\ell} p_0q_0\\
    &+\frac{1}{2}\sum_{j=1}^n \Biggl(\frac{1}{4\lambda_j} \(\frac{\dr^2p_0}{\dr x_j\dr x_j}q_0+2 \frac{\dr p_0}{\dr x_j}\frac{\dr q_0}{\dr x_j}+ p_0\frac{\dr^2 q_0}{\dr x_j\dr x_j} \)\\
    &+\frac{1}{4\lambda_j} \(\frac{\dr^2p_0}{\dr y_j\dr y_j}q_0+2 \frac{\dr p_0}{\dr y_j}\frac{\dr q_0}{\dr y_j} + p_0\frac{\dr^2 q_0}{\dr y_j\dr y_j}\)\\
    &+i\(\frac{\dr^2 p_0  }{\dr x_j \dr\theta_j^1}q_0+ \frac{\dr p_0  }{\dr x_j}\frac{\dr q_0}{\dr \theta_j^1} + \frac{\dr p_0}{\dr\theta_j^1}\frac{\dr q_0  }{\dr x_j}+ p_0\frac{\dr^2 q_0  }{\dr x_j \dr\theta_j^1}\)\\
    &+i\(\frac{\dr^2 p_0  }{\dr y_j \dr\theta_j^2}q_0+ \frac{\dr p_0  }{\dr y_j}\frac{\dr q_0}{\dr \theta_j^2} + \frac{\dr p_0}{\dr\theta_j^2}\frac{\dr q_0  }{\dr y_j}+ p_0\frac{\dr^2 q_0  }{\dr y_j \dr\theta_j^2}\)\\
    &+\lambda_j\(\frac{\dr^2 p_0}{\dr \theta_j^1 \dr\theta_j^1}q_0+ 2\frac{\dr p_0}{\dr \theta_j^1}\frac{\dr q_0}{\dr \theta_j^1} + p_0\frac{\dr^2 q_0}{\dr \theta_j^1 \dr\theta_j^1}\)\\
    &+\lambda_j\(\frac{\dr^2 p_0}{\dr \theta_j^2 \dr\theta_j^2}q_0+2 \frac{\dr p_0}{\dr \theta_j^2}\frac{\dr q_0}{\dr \theta_j^2}+ p_0\frac{\dr^2 q_0}{\dr \theta_j^2 \dr\theta_j^2}\)\Biggr)\Biggr]\Bigg|_{z=0, \theta=0}+o(k^{m_1+m_2+n-1})
    \end{aligned}
    $$
    and
     $$
    \begin{aligned}
    (T_{P,k}\circ T_{Q,k})(0,0) &= k^{m_1+m_2}B_{k\phi_0}(0,0) p_0(0,0)q_0(0,0)\\
    &+k^{m_1+m_2-1}B_{k\phi_0}(0,0)\Biggl[ p_1q_0+p_0q_1  \\
    &+\frac{1}{2}\sum_{j=1}^n  \frac{1}{\lambda_j}\(\left|\frac{\dr \vol}{\dr z_j}\right|^2 - \frac{\dr^2 \vol}{\dr z_j\dr \ol{z}_j}\)p_0q_0+ \frac{1}{4}\sum_{j,\ell=1}^n \frac{1}{\lambda_j\lambda_\ell} \frac{\dr^4\phi}{\dr z_j \dr \ol{z}_j\dr z_\ell \dr \ol{z}_\ell} p_0q_0\\
    &+\frac{1}{2}\sum_{j=1}^n \Biggl(\frac{1}{4\lambda_j} \(\frac{\dr^2p_0 }{\dr x_j\dr x_j}q_0+ p_0\frac{\dr^2 q_0 }{\dr x_j\dr x_j}+\frac{\dr p_0 }{\dr x_j}\frac{\dr q_0 }{\dr x_j}\)\\
    &+\frac{1}{4\lambda_j}\((-i)\frac{\dr p_0 }{\dr x_j}\frac{\dr q_0 }{\dr y_j}+i\frac{\dr p_0 }{\dr y_j}\frac{\dr q_0 }{\dr x_j}\)\\
    &+\frac{1}{4\lambda_j}\(\frac{\dr p_0 }{\dr y_j}\frac{\dr q_0 }{\dr y_j}+\frac{\dr^2p_0 }{\dr y_j\dr y_j}q_0 + p_0\frac{\dr^2 q_0 }{\dr y_j\dr y_j}\)\\
    &+i\(\frac{\dr^2 p_0 }{\dr x_j \dr\theta_j^1}q_0+ p_0\frac{\dr^2 q_0 }{\dr x_j \dr\theta_j^1}+\frac{\dr^2 p_0 }{\dr y_j \dr\theta_j^2}q_0+ p_0\frac{\dr^2 q_0 }{\dr y_j \dr\theta_j^2}\)\\
    &+\frac{i}{2}\(\frac{\dr p_0 }{\dr x_j}\frac{\dr q_0}{\dr \theta_j^1}+(-i)\frac{\dr p_0 }{\dr x_j}\frac{\dr q_0}{\dr \theta_j^2}+i \frac{\dr p_0 }{\dr y_j}\frac{\dr q_0}{\dr \theta_j^1}+\frac{\dr p_0 }{\dr y_j}\frac{\dr q_0}{\dr \theta_j^2}\)\\
    &-\frac{i}{2}\(\frac{\dr p_0}{\dr \theta_j^1}\frac{\dr q_0 }{\dr x_j}+i\frac{\dr p_0}{\dr \theta_j^2}\frac{\dr q_0 }{\dr x_j}+(-i) \frac{\dr p_0}{\dr \theta_j^1}\frac{\dr q_0 }{\dr y_j}+\frac{\dr p_0 }{\dr \theta_j^2}\frac{\dr q_0}{\dr y_j}\)\\
    &+\lambda_j\(\frac{\dr^2 p_0}{\dr \theta_j^1 \dr\theta_j^1}q_0 + p_0\frac{\dr^2 q_0}{\dr \theta_j^1 \dr\theta_j^1}+\frac{\dr^2 p_0}{\dr \theta_j^2 \dr\theta_j^2}q_0+ p_0\frac{\dr^2 q_0}{\dr \theta_j^2 \dr\theta_j^2}\)\\
    &+\lambda_j\(\frac{\dr p_0}{\dr \theta_j^1}\frac{\dr q_0}{\dr \theta_j^1}+(-i)\frac{\dr p_0}{\dr \theta_j^1}\frac{\dr q_0}{\dr \theta_j^2}+i\frac{\dr p_0}{\dr \theta_j^2}\frac{\dr q_0}{\dr \theta_j^1}+\frac{\dr p_0}{\dr \theta_j^2}\frac{\dr q_0}{\dr \theta_j^2}\)\Biggr)\\
    &\Biggr]+o(k^{m_1+m_2+n-1})
    \end{aligned}
    $$
    Thus we can have $$C_0(P,Q) = P\circ Q$$ and compute $C_1(P,Q)$ on each Chern-Moser trivialization center at $p$ in the following:
    $$
    \begin{aligned}
    C_1(P,Q)= \sum_{j=1}^n &\Biggl[-\frac{1}{8\lambda_j}\frac{\dr p_0 }{\dr x_j}\frac{\dr q_0 }{\dr x_j}-\frac{i}{8\lambda_j}\frac{\dr p_0 }{\dr x_j}\frac{\dr q_0 }{\dr y_j}+\frac{i}{8\lambda_j}\frac{\dr p_0 }{\dr y_j}\frac{\dr q_0 }{\dr x_j}-\frac{1}{8\lambda_j}\frac{\dr p_0 }{\dr y_j}\frac{\dr q_0 }{\dr y_j}\\
    &+\frac{-i}{4} \frac{\dr p_0}{\dr x_j}\frac{\dr q_0}{\dr \theta_j^1}+\frac{1}{4} \frac{\dr p_0}{\dr x_j}\frac{\dr q_0}{\dr \theta_j^2}+\frac{-1}{4} \frac{\dr p_0}{\dr y_j}\frac{\dr q_0}{\dr \theta_j^1}+\frac{-i}{4} \frac{\dr p_0}{\dr y_j}\frac{\dr q_0}{\dr \theta_j^2}\\
    &+\frac{i}{4} \frac{\dr p_0}{\dr \theta_j^1}\frac{\dr q_0}{\dr x_j}+\frac{1}{4}\frac{\dr p_0}{\dr \theta_j^2} \frac{\dr q_0}{\dr x_j}+\frac{-1}{4} \frac{\dr p_0}{\dr \theta_j^1}\frac{\dr q_0}{\dr y_j}+\frac{i}{4} \frac{\dr p_0}{\dr \theta_j^2}\frac{\dr q_0}{\dr y_j}\\
    &+ \frac{-\lambda_j}{2}\frac{\dr p_0}{\dr \theta_j^1}\frac{\dr q_0}{\dr \theta_j^1}+\frac{-i\lambda_j}{2}\frac{\dr p_0}{\dr \theta_j^1}\frac{\dr q_0}{\dr \theta_j^2}+\frac{i\lambda_j}{2}\frac{\dr p_0}{\dr \theta_j^2}\frac{\dr q_0}{\dr \theta_j^1}+\frac{-\lambda_j}{2}\frac{\dr p_0}{\dr \theta_j^2}\frac{\dr q_0}{\dr \theta_j^2}\Biggr].\
    \end{aligned}
    $$

    Thus 
    $$
    \begin{aligned}
    C_1(P,Q)-C_1(Q,P) &=\sum_{j=1}^n \Biggl[\frac{-i}{4\lambda_j} \frac{\dr p_0}{\dr x_j} \frac{\dr q_0}{\dr y_j} + \frac{i}{4\lambda_j}\frac{\dr p_0}{\dr y_j}\frac{\dr q_0}{\dr x_j}\\
    &+\frac{-i}{2}\frac{\dr p_0}{\dr x_j} \frac{\dr q_0}{\dr \theta_j^1}+\frac{-i}{2}\frac{\dr p_0}{\dr y_j} \frac{\dr q_0}{\dr \theta_j^2}+\frac{i}{2} \frac{\dr p_0}{\dr \theta_j^1}\frac{\dr q_0}{\dr x_j}+\frac{i}{2} \frac{\dr p_0}{\dr \theta_j^2}\frac{\dr q_0}{\dr y_j}\\
    &+(-i\lambda_j) \frac{\dr p_0}{\dr \theta_j^1}\frac{\dr q_0}{\dr \theta_j^2}+i\lambda_j\frac{\dr p_0}{\dr \theta_j^2}\frac{\dr q_0}{\dr \theta_j^1}\Biggr]\\
    &= i \left\{p_0(z, -J(d\phi(z))),q_0(z, -J(d\phi(z)))\right\}_L + i\{p_0, q_0\}_{\Psi}(z,-J(d\phi(z))
    \end{aligned}
    $$
    where $\{f,g\}_L $ is the Poisson bracket associated to $L$ and $\{\cdot, \cdot\}_\Psi $ is the Poisson bracket of the symbols on $M$.
\end{exe}

Summing up, we obtain the following. 
\begin{thm}
    $$C_1(P,Q;k) - C_1(Q,P;k) = [P,Q] + \frac{1}{k}\(D_1(P,Q)- D_1(Q,P) \)$$ where $D_1(P,Q)- D_1(Q,P) $ is a pseudodifferential operator of order $m_1+m_2$ whose principal symbol satisfies $$\sigma_0(D_1(P,Q)- D_1(Q,P) )(z,-Jd\phi(z)) = i\{p_0(z,-Jd\phi(z)),q_0(z,-Jd\phi(z))\}_L.$$
\end{thm}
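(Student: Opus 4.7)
The plan is to read off both sides from the stationary-phase expansion of $T_{P,k}\circ T_{Q,k}$ established in Subsection~\ref{Toeplitz operator with PDO}. Starting from equation \eqref{Toeplitz composition equation} and its $(P,Q)$-symmetric counterpart for $T_{Q,k}\circ T_{P,k}$, I would form the antisymmetric difference and inspect, term by term in the multi-index $(\ell_1,\ell_2,\ell_3)$ and the stationary-phase order $(j_1,j_2,j_3)$, which contributions survive. Many pieces (those coming from symmetric second-order operators $\Delta_{\ell+2}$ acting on symmetric factors of the amplitude) cancel pairwise, and what remains organizes itself into the two Poisson brackets already displayed in the example immediately preceding the theorem:
\begin{equation*}
C_1(P,Q) - C_1(Q,P) = i\{p_0(z,-Jd\phi),q_0(z,-Jd\phi)\}_L + i\{p_0,q_0\}_\Psi(z,-Jd\phi).
\end{equation*}

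Next, I would identify each bracket with an operator. By standard PDO calculus, $\{p_0,q_0\}_\Psi$ is (up to a conventional factor of $\pm i$) the principal symbol of the commutator $[P,Q]$, a classical PDO of order $m_1+m_2-1$; hence the contribution $i\{p_0,q_0\}_\Psi(z,-Jd\phi(z))$ matches the symbol of the Toeplitz operator $T_{[P,Q],k}$ at order $k^{n+m_1+m_2-1}$ via Theorem~\ref{full-pseuToeplitz}, allowing identification of the $k^0$-piece of $C_1(P,Q;k)-C_1(Q,P;k)$ with $[P,Q]$. For the remaining piece, I would \emph{define} $D_1(P,Q)-D_1(Q,P)$ to be any classical PDO of order $m_1+m_2$ whose principal symbol along the Lagrangian graph $\{(z,-Jd\phi(z))\}$ equals $i\{p_0,q_0\}_L(z,-Jd\phi(z))$; existence follows from a partition-of-unity construction, since a smooth function on $M$ always extends to a degree-$(m_1+m_2)$ homogeneous symbol on $T^*M$, and the Toeplitz expansion is insensitive to values of the symbol off this graph modulo lower-order remainders. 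The factor $1/k$ appears because this Poisson-bracket contribution arises from a first-order stationary-phase correction, placing it at asymptotic order $k^{n+m_1+m_2-2}$ rather than $k^{n+m_1+m_2-1}$.

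The main obstacle I anticipate is the $k$-bookkeeping inside the decomposition $C_1(P,Q;k) = \sum_{s=0}^{2} k^s D_{1,s}(P,Q)$. The clean split $[P,Q] + k^{-1}(D_1(P,Q)-D_1(Q,P))$ requires showing that the positive-$k$-power terms $k D_{1,1}$ and $k^2 D_{1,2}$ are symmetric in $(P,Q)$ and therefore drop out of the antisymmetric combination. I expect this to follow by inspection of the amplitudes $u_{k,\ell_1,\ldots}$ and the symbol-level Laplacians $\Delta_{z,\theta}$ that enter the stationary-phase expansion: at the relevant $(j_1,j_2,j_3)$-indices where positive powers of $k$ are produced, the amplitude factors symmetrically in $p_j$ and $q_j$. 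Making this symmetry argument rigorous, rather than merely plausible, is the step I see as most delicate; once it is in hand, the residual $k^0$ piece is forced by the composition law to equal $[P,Q]$ (not merely to share its principal symbol), because the full Toeplitz expansion already uniquely determines the $C_j(P,Q;k)$.
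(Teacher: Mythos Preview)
Your proposal is essentially correct and follows the paper's own route: the theorem is stated in the paper as ``Summing up, we obtain the following'' immediately after the explicit example computation, so the proof \emph{is} that computation. You correctly identify the key output
\[
C_1(P,Q)-C_1(Q,P)=i\{p_0(z,-Jd\phi),q_0(z,-Jd\phi)\}_L+i\{p_0,q_0\}_\Psi(z,-Jd\phi),
\]
and your identification of the two brackets with $[P,Q]$ and $D_1(P,Q)-D_1(Q,P)$ is exactly what the paper does.

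One point where you are overcomplicating: the $k$-bookkeeping worry about positive powers $kD_{1,1}$, $k^2D_{1,2}$ being symmetric is not how the paper (or the argument) actually works. In the example the paper simply computes $C_1(P,Q)$ directly as a $k$-independent symbol-level expression in Chern--Moser coordinates, and the $1/k$ in the theorem is purely an order-matching device: $[P,Q]$ has order $m_1+m_2-1$, while $D_1(P,Q)-D_1(Q,P)$ has order $m_1+m_2$, so $T_{[P,Q],k}$ and $\frac{1}{k}T_{D_1(P,Q)-D_1(Q,P),k}$ both land at Toeplitz order $k^{n+m_1+m_2-1}$. There is no separate symmetry argument needed for higher $k$-powers; the explicit computation already produces a $k$-independent $C_1(P,Q)-C_1(Q,P)$, and the decomposition into $[P,Q]+\frac{1}{k}D_1$ is just the statement that the two Poisson brackets correspond to PDOs of different orders.
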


\end{document}